\definecolor{red}{rgb}{1,0,0}
\def\longbfib{\DOTSB\twoheadleftarrow\joinrel\relbar}
\def\longfib{\DOTSB\relbar\joinrel\twoheadrightarrow}
\newtheorem{thm}[subsection]{Theorem}
\newtheorem{defn}[subsection]{Definition}
\newtheorem{prop}[subsection]{Proposition}
\newtheorem{cor}[subsection]{Corollary}
\newtheorem{lemma}[subsection]{Lemma}
\theoremstyle{definition}  % Bold headings and Roman body text.
\newtheorem{remark}[subsection]{Remark}
\newcommand{\dfn}{\textbf} % Make defined words bold.
\newcommand{\mdfn}[1]{\dfn{\mathversion{bold}#1}} % Even make math bold
\newcommand{\Wedge}             {\vee}
\newcommand{\iso}               {\cong}
\DeclareMathOperator{\isoc}{iso}
\newcommand{\cat}{\EuScript}    % Use \EuScript to name a category.
\newcommand{\cA}{{\cat A}}      % Only seems to work for caps, and only gets
\newcommand{\cC}{{\cat C}}
\newcommand{\cD}{{\cat D}}
\newcommand{\cM}{{\cat M}}
\newcommand{\cQ}{{\cat Q}}
\newcommand{\cT}{{\cat T}}
\newcommand{\cY}{{\cat Y}}
\newcommand{\kG}{{\mathcal G}}
\newcommand{\Set}{{\cat Set}}
\newcommand{\sSet}{{s{\cat Set}}}
\newcommand{\Kan}{{\cat Kan}}
\newcommand{\Cat}{{\cat Cat}}
\newcommand{\ho}{\text{Ho}\,}
\newcommand{\sSetJ}{\sSet_J}
\newcommand{\field}[1]  {\mathbb #1} % Use blackboard bold for these sets
\newcommand{\N}         {\field N}
\newcommand{\C}         {\field C}
\DeclareMathOperator*{\colim}{colim}
\DeclareMathOperator*{\hocolim}{hocolim}
\DeclareMathOperator{\Hom}{Hom}
\DeclareMathOperator{\Map}{Map}
\DeclareMathOperator{\hMap}{hMap}
\DeclareMathOperator{\ob}{ob}
\DeclareMathOperator{\sk}{sk}
\DeclareMathOperator{\id}{id}
\newcommand{\ra}{\rightarrow}                   % right arrow
\newcommand{\lra}{\longrightarrow}              % long right arrow
\newcommand{\la}{\leftarrow}                    % left arrow
\newcommand{\lla}{\longleftarrow}               % long left arrow
\newcommand{\llra}[1]{\stackrel{#1}{\lra}}      % labeled long right
\newcommand{\llla}[1]{\stackrel{#1}{\lla}}      % labeled long right
\newcommand{\we}{\llra{\sim}}                   % weak equivalence
\newcommand{\bwe}{\llla{\sim}}
\newcommand{\cof}{\rightarrowtail}              % cofibration
\newcommand{\fib}{\twoheadrightarrow}           % fibration
\newcommand{\bfib}{\twoheadleftarrow}           % fibration
\newcommand{\trfib}{\stackrel{\sim}{\longfib}}
\newcommand{\trcof}{\stackrel{\sim}{\cof}}
\newcommand{\btrfib}{\stackrel{{\sim}}{\longbfib}}
\newcommand{\inc}{\hookrightarrow}              % inclusion
\newcommand{\blank}{-}                          % A hyphen, as in
\newcommand{\ovcat}{\downarrow}
\newcommand{\bd}[1]{\partial\Delta^{#1}}
\newcommand{\adjoint}{\rightleftarrows}
\newcommand{\bdd}[1]{\partial\Delta^{#1}}
\newcommand{\del}[1]{\Delta^{#1}}
\newcommand{\he}{\simeq}
\newcommand{\rea}[1]{|{#1}|}             %geometric realization of #1
\newcommand{\ceck}[1]{\Cech(#1)}         %Cech complex for #1
\newcommand{\oceck}[1]{\Cech^{o}(#1)}    %Ordered Cech complex for #1
\newcommand{\oreal}[1]{\rea{\oceck{U}}}  %Realization of ordered Cech cplex
\newcommand{\creal}[1]{\rea{\ceck{U}}}   %Realization of the Cech complex
\newcommand{\Cech}{\check{C}}
\numberwithin{equation}{subsection}
\newenvironment{myequation}
  {\addtocounter{subsection}{1}\begin{eqnarray}}
  {\end{eqnarray}$\!\!$}
\newcommand{\cyl}{cyl}
\newcommand{\Ccyl}{C_{\cyl}}
\newcommand{\jC}{\mathfrak{C}}
\newcommand{\jCn}{\mathfrak{C}^{nec}}
\newcommand{\jCh}{\mathfrak{C}^{hoc}}
\newcommand{\Nec}{{\cat{N}ec}}
\DeclareMathOperator{\Spi}{Spi}
\DeclareMathOperator{\Wcofib}{Wcofib}
\DeclareMathOperator{\Wfib}{Wfib}
\DeclareMathOperator{\W}{W}
\newcommand{\nosee}[1]{}
\def\tn{\textnormal}
\def\id{\tn{id}}
\def\to{\ra}
\def\To{\xrightarrow}
\def\too{\longrightarrow}
\def\From{\xleftarrow}
\def\taking{\colon}
\def\sCat{{s\Cat}}
\def\cross{\times}
\def\bD{{\bf \Delta}}
\newcommand{\Adjoint}[4]{\xymatrix@1{#2 \ar@<.5ex>[r]^-{#1} & #3 \ar@<.5ex>[l]^-{#4}}}
\newcommand{\bpsSet}{\sSet_{*,*}}
\newcommand{\bbox}{\square}
\newcommand{\jbox}{\boxtimes}
\DeclareMathOperator{\Mor}{Mor}
\newcommand{\join}{\star}
\begin{document}

\title{Mapping spaces in quasi-categories}

\author{Daniel Dugger}
\author{David I. Spivak}

\address{Department of Mathematics\\ University of Oregon\\ Eugene, OR
97403}

\address{Department of Mathematics\\ University of Oregon\\ Eugene, OR
97403}

\email{ddugger@uoregon.edu}

\email{dspivak@uoregon.edu}

\begin{abstract}
We apply the Dwyer-Kan theory of homotopy function complexes in model
categories to the study of mapping spaces in quasi-categories.  Using
this, together with our work on rigidification 
from \cite{DS1}, we  give a streamlined proof of the
Quillen equivalence between quasi-categories and simplicial
categories.  Some useful material about relative
mapping spaces in quasi-categories is developed along the way.
\end{abstract}

\maketitle

\tableofcontents

\section{Introduction}\label{se:intro}
A quasi-category is a simplicial set $X$ with a certain extension
property, weaker than the classical Kan condition.  This property
ensures that the simplicial set behaves like a higher-dimensional
category; that is, $X$ can be thought of as something like a category with
$n$-morphisms for any $n\geq 1$.  Quasi-categories were introduced
briefly by
Boardman and Vogt \cite{BV}, and have since been studied by \cite{CP},
\cite{J1}, \cite{J2}, and \cite{L}, among others.

If $K$ is a quasi-category and $x$ and $y$ are $0$-simplices, it is
possible to construct a mapping space $K(x,y)$ which is a simplicial
set.  The trouble is that there are many different ways to exhibit
such mapping spaces, the different models being weakly equivalent but
not isomorphic.  No particular model is ideal for every application,
and so one must become versatile at changing back-and-forth.  In
\cite{L} Lurie uses several models, most prominantly the ones denoted
there $\Hom_K^R(x,y)$, $\Hom_K^L(x,y)$, and $\jC(K)(x,y)$.  In order
to understand the connections between these, Lurie develops a theory
of ``straightening and unstraightening functors,'' which is reasonably
complicated.

This paper has two main goals.  First, we explain how the
mapping spaces of quasi-categories fit into the well-understood theory
of homotopy function complexes in model categories \cite{DK3}.   The
latter technology immediately gives various tools for understanding
why different models for these mapping spaces are weakly equivalent.
Lurie's $\Hom_K^R(x,y)$ and $\Hom_K^L(x,y)$---as well as several other
useful models---fit directly into this framework, but 
$\jC(K)(x,y)$ does not.  Our second goal is to use these tools, together
with our work on $\jC(\blank)$ from \cite{DS1}, to give a direct proof
that the $\jC(K)$ mapping spaces are weakly equivalent to the
Dwyer-Kan models.  This in turn allows us to prove that the homotopy
theory of quasi-categories is Quillen equivalent to that of simplicial
categories, giving a new approach to this result of Lurie's \cite{L}.

\medskip

We now describe the contents of the paper in more detail.  Recall that
a
quasi-category is a simplicial set $X$ that satisfies the extension
condition for the so-called inner horn inclusions---the inclusions
$\Lambda^n_k \inc \Delta^n$ for $0<k<n$.  
It turns out that
there is a model category structure on $\sSet$ where the cofibrations
are the monomorphisms, the fibrant objects are the quasi-categories,
and where the weak equivalences are something we will call {\it Joyal
equivalences}.  See Section~\ref{se:quasi} for more background.  We
will call this the {\it Joyal model structure\/}, and denote it
$\sSet_J$.  The classical weak equivalences of simplicial
sets---maps that induce homotopy equivalences of the geometric
realizations---will be termed {\it Kan equivalences\/} from now on.

In any model category $\cM$, given two objects $X$ and $Y$ there is a
homotopy function complex $\hMap_{\cM}(X,Y)$.  The theory of these
objects is largely due to Dwyer and Kan.  Such a function complex can be defined in
several ways, all of which are homotopy equivalent:
\begin{enumerate}[(1)]
\item It is the mapping space between $X$ and $Y$ in the simplicial
localization $L_W\cM$, where one inverts the subcategory $W$ of weak
equivalences \cite{DK1}.
\item It is the mapping space in the hammock localization $L_H\cM$
constructed by Dwyer and Kan in \cite{DK2}.
\item It can be obtained as the simplicial set $[n]\mapsto
\cM(Q^nX,\widehat{Y})$ where $Y\ra \widehat{Y}$ is a fibrant
replacement in $\cM$ and
$Q^\bullet X\ra X$ is a cosimplicial resolution of $X$ 
\cite{DK3}. 
\item It can dually be obtained as the simplicial set $[n]\mapsto
\cM(\tilde{X},R_n Y)$ where $\tilde{X}\ra X$ is a cofibrant
replacement and $Y\ra R_\bullet Y$ is a simplicial resolution of $Y$.
\item It can be obtained  as the nerve of various categories of
zig-zags, for instance (if $Y$ is fibrant) the category whose objects are zig-zags
\[ X \bwe A \lra Y \]
and where the maps are commutative diagrams equal to the identity on
$X$ and $Y$.
\end{enumerate}

In some sense the models in (3) and (4) are the most computable, but
one finds that all the models are useful in various situations.  
One learns, as a part of model category theory, how to compare these
different models and to see that they are Kan equivalent.  See
\cite{DK3} and \cite{D}, as well as Section~\ref{se:mapping-spaces} of the present
paper.

The above technology can be applied to the Joyal model structure in the
following way.  The overcategory $(\bd{1}\ovcat \sSet_J)$ inherits a
model category structure from $\sSet_J$ (\cite[7.6.5]{H}).  Given a
simplicial set $K$ with chosen vertices $a$ and $b$, consider $K$ as a
simplicial set under $\bd{1}$ via the evident map $\bd{1}\ra K$
sending $0\mapsto a$, $1\mapsto b$.  In particular, we can apply this to
$\del{1}$ and the vertices $0$ and $1$.  This allows us to consider
the homotopy function complex
\begin{myequation}
\label{hmap} 
\hMap_{(\bd{1}\ovcat \sSet_J)}(\del{1},K),
\end{myequation}
which somehow feels like the pedagogically `correct' interpretation of the mapping
space in $K$ from $a$ to $b$.  

The following result is more like an observation than a proposition 
(but it is restated and proved as Corollary~\ref{cor:four models}):

\begin{prop}\label{prop:observation}
Let $K$ be a quasi-category.  The mapping spaces $\Hom^R_K(a,b)$ and
$\Hom^L_K(a,b)$ of \cite{L} are models for the homotopy function
complex (\ref{hmap}), obtained via two different cosimplicial resolutions of
$\del{1}$.  The pullback
\[\xymatrix{
\Hom_K(a,b)\ar@{-->}[r]\ar@{-->}[d] & K^{\del{1}}\ar[d]\\
\Delta^0\ar[r]^-{(a,b)} & K\times K
}
\]
is also a  model for the same homotopy function complex, this time
obtained via a third cosimplicial resolution --- the one sending
$[n]$ to the pushout
$$\Delta^1\times\Delta^n\la\bd{1}\times\Delta^n\ra\bd{1}.$$
\end{prop}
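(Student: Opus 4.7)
The plan is to exhibit three different cosimplicial resolutions of $\Delta^1$ in $(\bd{1}\ovcat \sSet_J)$ and identify the resulting function complexes with the three simplicial sets in the statement. Since $K$, equipped with the structure map $(a,b) \colon \bd{1} \to K$, is fibrant in $(\bd{1}\ovcat \sSet_J)$ (its underlying object being the quasi-category $K$), description (3) of the homotopy function complex from the enumerated list applies, and produces models for \eqref{hmap} as $[n]\mapsto \Map_{(\bd{1}\ovcat \sSet_J)}(Q^n,K)$.

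For the cosimplicial resolutions I would take: $Q_R^n = \Delta^{n+1}/d_0\Delta^n$ with structure map $\bd{1} \to Q_R^n$ sending $0\mapsto [0]$ and $1\mapsto$ the collapsed vertex; $Q_L^n = \Delta^{n+1}/d_{n+1}\Delta^n$ with the symmetric structure map; and $Q_P^n = \bd{1} \coprod_{\bd{1}\times\Delta^n}(\Delta^1\times\Delta^n)$ with its tautological structure map. Each inherits a cosimplicial structure from $\Delta^\bullet$ (with a reindexing for $Q_R^\bullet$ and $Q_L^\bullet$) or from $\Delta^1\times\Delta^\bullet$. Unwinding definitions, a map $Q_R^n\to K$ in the overcategory is exactly a map $\Delta^{n+1}\to K$ sending $[0]$ to $a$ and $d_0\Delta^n$ constantly to $b$, i.e.\ an $n$-simplex of $\Hom_K^R(a,b)$; the case of $Q_L^\bullet$ is symmetric, and the case of $Q_P^\bullet$ yields the pullback by its universal property.

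It remains to verify that each $Q_?^\bullet$ is a cosimplicial resolution, i.e.\ Reedy cofibrant with each $Q_?^n \too \Delta^1$ a Joyal equivalence. Reedy cofibrancy in each case reduces to the fact that the latching maps are monomorphisms, which is routine. The substantive point is the Joyal equivalence. For $Q_R^n$, I would argue that the section $\Delta^1 \hookrightarrow Q_R^n$ coming from the edge $\Delta^{\{0,n+1\}}\inc\Delta^{n+1}$ is inner anodyne, by a standard filtration attaching the nondegenerate simplices of $\Delta^{n+1}$ in order and checking that the required extensions are inner horn fillers; the argument for $Q_L^n$ is symmetric. For $Q_P^n$, I would either repeat the inner-anodyne argument on $\Delta^1\times\Delta^n$ using its standard shuffle triangulation, or produce a comparison map $Q_P^n\to Q_R^n$ over $\Delta^1$ and conclude by two-out-of-three.

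The main obstacle is the inner-anodyne verification just sketched. Tracking which faces remain inner after the collapses $\Delta^{n+1}/d_0\Delta^n$ or $\Delta^1\times\Delta^n/\bd{1}\times\Delta^n$ is purely combinatorial bookkeeping; it is standard within the Joyal framework but requires care to organize correctly, particularly to make sure the filtration never asks for an outer horn filler.
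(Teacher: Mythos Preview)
Your proposal is essentially the paper's own argument. The paper introduces exactly these three cosimplicial objects (calling them $C_R^\bullet$, $C_L^\bullet$, $C_{\mathrm{cyl}}^\bullet$), observes that Reedy cofibrancy is immediate, and then proves each level is Joyal equivalent to $\Delta^1$ by exhibiting a section as inner anodyne via an explicit filtration; for $C_{\mathrm{cyl}}^n$ the paper uses the shuffle triangulation of $\Delta^1\times\Delta^n$, just as you suggest.

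Two small remarks. First, you have swapped the labels $R$ and $L$: in Lurie's convention $\Hom_K^R(a,b)$ corepresents maps $\Delta^{n+1}\to K$ that collapse the \emph{initial} face $d_{n+1}\Delta^n$ to $a$ and send the final vertex to $b$, so your $Q_R^n=\Delta^{n+1}/d_0\Delta^n$ actually computes $\Hom_K^L$. Second, your alternative route for $Q_P^n$ (``produce a comparison map $Q_P^n\to Q_R^n$ over $\Delta^1$ and conclude by two-out-of-three'') does not quite work as stated: the natural comparison is a retraction, but you would still need an independent argument that it is a Joyal equivalence, which is no easier than the shuffle argument. The paper uses the shuffle decomposition directly, reducing to the already-established case of the collapsed simplices.
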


Note that, given the above proposition, the Dwyer-Kan technology shows
immediately that the three constructions $\Hom^R_K(a,b)$,
$\Hom_K^L(a,b)$, and $\Hom_K(a,b)$ are all Kan equivalent, and in fact
gives a `homotopically canonical' weak equivalence between any two.

\subsection{Connections with the $\jC$ mapping spaces}\label{se:connections}
One problem with using the Dwyer-Kan models in our setting is that
given $0$-simplices $a$, $b$, and $c$ in a quasi-category $K$, there
do not exist naturally arising composition maps
\[ \hMap(\Delta^1,K_{b,c})\times \hMap(\Delta^1,K_{a,b}) \too
\hMap(\Delta^1,K_{a,c}).
\]
In \cite{L} Lurie gives a functor $\jC\colon \sSet \ra s\Cat$, and the
simplicial sets $\jC(K)(a,b)$ can be thought of as models for function
complexes of $K$ which {\it do\/} admit such composition maps.  In
\cite{DS1} we gave another functor $\jCn$ which accomplished the same
task, and we proved that the mapping spaces $\jC(K)(a,b)$ and
$\jCn(K)(a,b)$ were naturally Kan equivalent.  In fact we gave an
entire collection of different models $\jC^{\kG}$, all of which were
Kan equivalent.  What is not immediately clear is how to connect
the Dwyer-Kan function complexes to the mapping spaces arising in
 $\jC$, $\jCn$, or $\jC^{\kG}$.  
This is
explained in Section~\ref{se:connect}, where they are shown to be
connected by a canonical zig-zag of
Kan equivalences.  

At one level the connection can be seen as follows. Recall from
\cite{DS1} that a {\it necklace} is a simplicial set of the form
$\Delta^{n_1}\Wedge \cdots \Wedge \Delta^{n_k}$, obtained from
simplices by gluing the final vertex of one to the initial vertex of
its successor.  There is a natural linear order on the vertices of a necklace, and there is a unique 1-simplex connecting each non-terminal vertex to its successor.  The spine $\Spi[T]$ of a necklace $T$ is the union of these 1-simplices, and the associated simplex $\Delta[T]$ of $T$ is the simplex
with the same ordered vertex set as $T$.

The mapping space $\jCn(K)(a,b)$ is just the
nerve of the evident category whose objects are pairs $[T,T\ra K_{a,b}]$
where $T$ is a necklace and $T\ra K$ is a map sending the initial and
final vertices of $T$ to $a$ and $b$.  The mapping spaces
$\jC^{\kG}(a,b)$ are defined similarly, but where one replaces the
category of necklaces with a category $\kG$ of some other suitable ``gadgets."

For any necklace $T$, there is a canonical inclusion
$T\inc \Delta[T]$; it may be checked that this inclusion is a weak equivalence in
$\sSet_J$ (see Lemma \ref{le:necklace=simplex}).  Any map $T\ra K$ therefore gives rise to a zig-zag
\[ \del{1} \lra \Delta[T] \bwe T \ra K \]
where the map $\del{1}\ra \Delta[T]$ is the unique $1$-simplex
connecting the initial and final vertices of the simplex $\Delta[T]$.
Zig-zags of the above type are known to give a model for homotopy
function complexes (see Section \ref{se:mapping-spaces}).

The considerations from the above paragraph lead to a comparison map
between the $\jC(K)(a,b)$ mapping spaces and the Dwyer-Kan mapping
spaces.  This map turns out to be a Kan equivalence, although it
should be noted that our proof of this is not direct: it is a crafty
argument using one of the $\jC^{\kG}$ constructions where one replaces
necklaces by a more general class of gadgets.  See
Section~\ref{se:connect} for the details.  In any case, we obtain the
following (where one may choose any standard model for the homotopy
function complex):

\begin{thm}
\label{th:main1}
Given a quasi-category $K$ with objects $a$ and $b$, there is a
natural zig-zag of Kan equivalences between the simplicial sets
$\jC(K)(a,b)$ and $\hMap(\Delta^1,K_{a,b})$.
\end{thm}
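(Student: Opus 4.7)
The plan is to bridge $\jC(K)(a,b)$ and $\hMap(\Delta^1, K_{a,b})$ through intermediate models.  By \cite{DS1} there is a natural Kan equivalence $\jC(K)(a,b) \simeq \jCn(K)(a,b)$, so it suffices to connect $\jCn(K)(a,b)$ to the homotopy function complex.  Recall that $\jCn(K)(a,b)$ is the nerve of the category $\cN_{a,b}$ of pairs $[T,T\to K_{a,b}]$ with $T$ a necklace.

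Next I would build the comparison functor.  As recalled in the introduction, each $[T,T\to K]\in\cN_{a,b}$ produces a zig-zag
\[ \Delta^1 \to \Delta[T] \bwe T \to K \]
under $\partial\Delta^1$, in which $T\inc\Delta[T]$ is a Joyal equivalence by Lemma~\ref{le:necklace=simplex}.  Since $K$ is Joyal-fibrant, the category $\cY$ of all such zig-zags in $(\partial\Delta^1\ovcat\sSet_J)$ is a Dwyer-Kan zig-zag category, and its nerve is Kan-equivalent to $\hMap(\Delta^1, K_{a,b})$ by the theory recalled in Section~\ref{se:mapping-spaces}.  The zig-zag construction is functorial in $T$, so it defines a functor $\Phi\taking\cN_{a,b}\to\cY$ and yields a natural comparison map $\jCn(K)(a,b)\to N\cY \simeq \hMap(\Delta^1, K_{a,b})$.

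The main obstacle is proving that $N\Phi$ is a Kan equivalence; the difficulty is that arbitrary zig-zags in $\cY$ need not be of necklace shape.  Following the hint in the introduction, the resolution is to insert an intermediate gadget category $\kG$ between $\cN_{a,b}$ and $\cY$: we enlarge the class of necklaces to encompass the gadgets that arise naturally from zig-zags, doing so (i) within the class of gadgets for which \cite{DS1} gives $\jC^\kG(K)(a,b)\simeq\jCn(K)(a,b)$, and (ii) so that the forgetful functor into $\cY$ admits a cofinality argument, typically via common refinements built by pushouts along trivial Joyal cofibrations.  Assembling then produces the desired natural chain
\[ \jC(K)(a,b)\From\jCn(K)(a,b)\From\jC^\kG(K)(a,b)\to N\cY \simeq \hMap(\Delta^1, K_{a,b}), \]
and naturality in $K,a,b$ is immediate from the functoriality of each step.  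The bulk of the technical work lies in the cofinality verification, which is the ``crafty'' aspect alluded to in the introduction.
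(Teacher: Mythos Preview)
Your overall architecture matches the paper's: reduce to $\jCn(K)(a,b)$ via \cite{DS1}, pass through an enlarged gadget category containing necklaces, and connect to a Dwyer--Kan zig-zag model of $\hMap(\Delta^1,K_{a,b})$.  The functor you call $\Phi$ is exactly the map $\phi$ of Proposition~\ref{pr:connect2}, and the paper does ultimately prove $N\phi$ is a Kan equivalence.

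However, the mechanism you propose for the ``crafty'' step---a direct cofinality argument from the gadget-over-category into the zig-zag category, ``via common refinements built by pushouts along trivial Joyal cofibrations''---is not what the paper does, and it is not clear such an argument goes through cleanly.  The paper's device is more indirect but sharper.  It defines its gadget category (also called $\cY$, unrelated to your zig-zag category) as those $Y\in\bpsSet$ with $\hMap(Y)(\alpha,\omega)\simeq *$ and $\jC(Y)(\alpha,\omega)\simeq *$, then restricts to the \emph{fibrant} gadgets $\cY_f$.  Rather than mapping gadgets into zig-zags, it maps the simplex category $\bD\bpsSet(R^\bullet,K_{a,b})$ of a \emph{Reedy-fibrant} resolution $R^\bullet$ of $\Delta^1$ into $(\cY_f\ovcat K)_{a,b}$ via $([n],R^n\to K)\mapsto [R^n,R^n\to K]$.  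The overcategory of this functor at any $[Y,Y\to K]$ is $\bD\bpsSet(R^\bullet,Y)$, which computes $\hMap(Y)(\alpha,\omega)$ since $Y$ is fibrant---and this is contractible \emph{by definition} of the gadget category.  Quillen's Theorem~A then applies for free.  The comparison with the zig-zag category (your $\Phi$) is deduced only afterward, by two-out-of-three in a larger diagram (Proposition~\ref{pr:connect2}).

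So the key insight you are missing is that the gadget condition should be \emph{chosen} precisely to trivialize the Theorem~A verification, and that the comparison runs from the resolution's simplex category \emph{into} the gadget category rather than from gadgets into zig-zags.  The passage to fibrant gadgets and a fibrant resolution is what makes the overcategories literally equal to the contractible mapping spaces.
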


The previous theorem is a key step in the proof of the following
important result:

\begin{thm}
\label{th:main2}
The adjoint pair $\jC\colon \sSet_J\adjoint s\Cat\colon N$ is a
Quillen equivalence between the Joyal model structure on $\sSet$ and
the model structure on $s\Cat$ due to Bergner \cite{Bergner}.
\end{thm}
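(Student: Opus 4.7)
The plan is in two steps: verify that $(\jC, N)$ is a Quillen adjunction, and then use Theorem \ref{th:main1} to establish the equivalence.

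\emph{Quillen adjunction.} Both sides are cofibrantly generated, so it suffices to show that $\jC$ sends generating cofibrations to cofibrations and generating trivial cofibrations to weak equivalences in $s\Cat$. Cofibrations in $\sSet_J$ are monomorphisms, and the necklace description of $\jC$ from \cite{DS1} shows directly that $\jC(\bd{n}) \to \jC(\del{n})$ is a Bergner cofibration; since $\jC$ preserves colimits, this extends to all monomorphisms. For trivial cofibrations, the main content is showing $\jC(\Lambda^n_k) \to \jC(\del{n})$ is a Dwyer-Kan equivalence for $0 < k < n$, which follows from contractibility statements for necklace categories from \cite{DS1}. Additional generators detecting categorical equivalences (such as $\{0\} \inc J$ for $J$ the nerve of the walking isomorphism) are handled similarly.

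\emph{Equivalence.} I would apply the criterion that a Quillen adjunction is a Quillen equivalence iff the derived counit is a weak equivalence on fibrant objects and the right adjoint reflects weak equivalences between fibrant objects. Cofibrant replacement in $\sSet_J$ is trivial, so the derived counit condition becomes: for every fibrant $\cC$, the counit $\jC(N(\cC)) \to \cC$ is a Dwyer-Kan equivalence. This map is a bijection on objects, so it suffices to check it is a Kan equivalence on each mapping space. Fix $a, b \in \Ob(\cC)$. By Theorem \ref{th:main1}, the space $\jC(N(\cC))(a,b)$ is naturally Kan-equivalent to $\hMap(\del{1}, N(\cC)_{a,b})$, and by Proposition \ref{prop:observation} the latter is modeled by the pullback
$$\Hom_{N(\cC)}(a,b) \;=\; N(\cC)^{\del{1}} \fprod{N(\cC)\times N(\cC)} \{(a,b)\}.$$
Using the defining formula $N(\cC)_n = s\Cat(\jC(\del{n}), \cC)$ together with the explicit necklace description of $\jC(\del{m})$ from \cite{DS1}, one identifies this pullback up to natural Kan equivalence with $\cC(a,b)$, and a naturality chase through the Theorem \ref{th:main1} zig-zag shows the composite equivalence is the map induced by the counit.

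For the reflection property: a map $\cC \to \cD$ between fibrant simplicial categories is a Dwyer-Kan equivalence iff it is essentially surjective on $\pi_0$ and a Kan equivalence on each mapping space. By the identifications above, both conditions transfer naturally to $N(\cC) \to N(\cD)$ being a Joyal equivalence, so $N$ reflects Dwyer-Kan equivalences between fibrant objects. Combined with the counit result, this completes the Quillen equivalence.

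\emph{Main obstacle.} The delicate point is the identification of $\Hom_{N(\cC)}(a,b)$ with $\cC(a,b)$ up to natural Kan equivalence, and the verification that this agrees with the comparison coming from Theorem \ref{th:main1} composed with the counit. This requires careful bookkeeping with the coherent nerve and the necklace model of $\jC(\del{m})$; once set up, the remaining work reduces to formal model-category manipulations.
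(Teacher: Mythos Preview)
Your approach to verifying the Quillen adjunction has a genuine gap. You propose checking that $\jC$ sends generating trivial cofibrations to weak equivalences, and you describe these generators as the inner horn inclusions together with maps like $\{0\}\inc J$. But as the paper explicitly remarks (just after Theorem~\ref{th:model}), no manageable set of generating acyclic cofibrations for $\sSet_J$ is known; in particular the inner horns together with $\{0\}\inc J$ do \emph{not} generate all Joyal trivial cofibrations. The paper (implicitly) and \cite{DS1} (explicitly) sidestep this by proving directly that $\jC$ takes every Joyal equivalence to a weak equivalence of simplicial categories \cite[Proposition~6.6]{DS1}; since every object of $\sSet_J$ is cofibrant, this together with preservation of cofibrations gives the left Quillen property. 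You should replace your generator-by-generator argument with this.

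For the equivalence step, your strategy and the paper's diverge. The paper proves the counit condition as Proposition~\ref{pr:counit} via a large hocolim diagram and Lemma~\ref{le:hoco}, rather than by directly identifying $\Hom_{N\cC}(a,b)$ with $\cC(a,b)$ and chasing the Theorem~\ref{th:main1} zig-zag; the paper's route avoids the delicate naturality bookkeeping you flag as your main obstacle. For the second condition, the paper does not verify that $N$ reflects weak equivalences. Instead it proves the stronger statement (Proposition~\ref{pr:allagree}) that Joyal equivalences coincide with maps $f$ for which $\jC(f)$ is a weak equivalence, and then handles the derived unit by a triangle-identity diagram: to show $K\to N\cD$ is a Joyal equivalence it suffices to show $\jC K\to \jC N\cD$ is a weak equivalence, and this follows from the counit result and two-out-of-three. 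This is cleaner than your reflection argument, whose essential-surjectivity half you have not really addressed.
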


We prove the above theorem as Corollary~\ref{cor:main theorem}.

\subsection{Relation with the work of Lurie}
Both Theorem~\ref{th:main1} and \ref{th:main2} are originally due to
Lurie, and are proven in \cite{L}.  In Lurie's book he starts by
developing the properties of mapping spaces and rigidification and
then proves the existence of the Joyal model structure as a
consequence of this work.  His methods involve a detailed and lengthy
study of what he calls ``straightening and unstraightening'' functors,
and it was a vague dissatisfaction with this material---together with
the hope of avoiding it---that first led us to the work in the present
paper.

In the present paper we {\it start\/} with the Joyal model structure.
In both \cite{L} and \cite{J2} it takes over a hundred pages to prove
its existence, but by boiling things down to the bare essentials
(and following the method of Joyal) we are able to give a streamlined
approach and create the structure fairly quickly.  At that point we
immediately have the Dwyer-Kan techniques at our disposal, and it is
through the use of these that we develop the properties of mapping
spaces and rigidification.  Thus, in some ways our approach is
opposite that of \cite{L}.

Due to the inherent differences in the two approaches, it is slightly
awkward for us to quote results from \cite{L} without creating
confusions and possible circularities.  Because of this, there are a
few minor results  whose proofs we end up repeating or
redoing in a slightly different way.  The result is that the present
paper can be read independently of \cite{L}---although this should not
be taken as a denial of the  intellectual debt we owe to that
work.

\subsection{Organization of the paper}

The material on quasi-categories needed in the paper is all reviewed
in Section~\ref{se:quasi}.  This material is mostly due to Joyal
\cite{J2}, and can also be found in Lurie \cite{L}.  Despite this, we
have included three appendices in the paper where we prove all the
assertions from Section~\ref{se:quasi}.   The reason for this is that
although the results we need are reasonably straightforward, in both
\cite{L} and \cite{J2} they are intertwined with so many other things
that it is difficult for a reader to extract their proofs.  Especially
in the case of the Joyal model structure, a basic tool in our entire
approach, we felt that it was important to have a relatively short and
self-contained argument.  The fourteen pages in our appendices are a
bit longer than ideal, but quite a bit shorter than the currently
available alternatives. 

Let us outline the rest of the paper.  In Section \ref{se:mapping-spaces}, we review homotopy function complexes in general model categories $\cM$.  We apply these ideas in Section \ref{se:Dwyer-Kan} to the case $\cM=(\bd{1}\ovcat\sSet_J)$.  That is, the function complex of maps $\Delta^1\to X$ in $\cM$ gives the ``internal mapping space" between vertices $a$ and $b$ in a quasi-category $X$.  In Section \ref{se:connect} we compare these internal mapping spaces to the rigidification mapping spaces $\jC(X)(a,b)$, and in so doing rely heavily on \cite{DS1}.  In Section \ref{se:relative} we define relative mapping spaces (the map $\bd{1}\to\Delta^1$ is the thing being generalized here).  In Section \ref{se:DK-equiv} we introduce the notion of DK-equivalence in $\sSet$, which is somehow intermediate between $\jC$-equivalence and Joyal equivalence.  In Section \ref{se:Qequiv} we prove that these three notions of equivalence agree.  Section \ref{se:Qequiv} also provides a new proof of Lurie's result that there is Quillen equivalence between $\sSet_J$ and $\sCat$ (see Corollary \ref{cor:main theorem}).  Finally, in Section \ref{se:leftover} we prove some technical results that are stated without proof in Sections \ref{se:intro} and \ref{se:Dwyer-Kan}.

\subsection{Notation and Terminology}\label{se:notation}
We will use $\sSet_K$ to refer to the usual model structure
on simplicial sets, which we'll term the {\it Kan model structure\/}.
The fibrations are the Kan fibrations, the weak equivalences (called
Kan equivalences from now on) are the maps which induce homotopy
equivalences on geometric realizations, and the cofibrations are the
monomorphisms.  

Quasi-categories are simplicial sets, and the mapping spaces between their vertices are also simplicial sets. Quasi-categories will always be viewed within the Joyal model structure, and their mapping spaces will always be viewed within the Kan model structure.  

We will often be working with the category $\bpsSet=(\bdd{1}\ovcat\sSet)$.
When we consider it as a model category, the model structure we use will always
be the one imported from the Joyal model structure on $\sSet$; we will denote it $(\bpsSet)_J=(\bdd{1}\ovcat\sSet_J)$, or just $\bpsSet$.   When we write $\Delta^1$ as an object of $\bpsSet$, we always mean to use the canonical inclusion $\bd{1}\to\Delta^1$.

An object of $\bpsSet$ is a simplicial set $X$ with two distinguished
points $a$ and $b$.  We sometimes (but not always) write $X_{a,b}$ for $X$, to remind
ourselves that things are taking place in $\bpsSet$ instead of $\sSet$. 

When $\cC$ is a category we write $\cC(a,b)$ instead of
$\Hom_{\cC}(a,b)$.  We reserve the $\Hom_K(a,b)$ notation for when $K$ is a quasi-category, to denote various models of the homotopy function complex.  See Proposition \ref{prop:observation} or Section \ref{se:cosimplicial versions}.

Finally, in certain places we have to deal with
simplices and horns.  The horn $\Lambda^n_k$ is the 
 union of all faces containing
the vertex $k$.  If $\{m_0,\ldots,m_k\}$ is an ordered
set, we use $\Delta^{\{m_0,\ldots,m_k\}}$ as notation for the
$k$-simplex whose vertices are $m_0,\ldots,m_k$.  Likewise, the
``$m_i$-face'' of $\Delta^{\{m_0,\ldots,m_k\}}$ is the codimension one
face not containing the vertex $m_i$, and 
$\Lambda^{\{m_0,\ldots,m_k\}}_{m_i}$ denotes the horn consisting of
all faces that contain  $m_i$.  
If $j_0\leq\cdots\leq j_n$, we may use ``bracket notation" and write $[m_{j_0},\ldots,m_{j_n}]$ for the subsimplex
of $\Delta^{\{m_0,\ldots,m_k\}}$ having vertices
$m_{j_0},\ldots,m_{j_n}$. 

%%%%%%%%%%%%%%%%%%%%%%%%%%%%%%%%%%%%%%%%%%%%%%%%%%%%%%%%%%%%%%%%%%%%%

\section{A quick introduction to quasi-categories}
\label{se:quasi}

In this section we give an account of the basic properties of
quasi-categories, including the Joyal model structure.  Almost all of
this material is due to Joyal \cite{J2}, and is also buried deep in
\cite{L}.  A few simple proofs are included here, but the longer proofs
are not.  The reader is referred to the appendices for a guide through
these longer
proofs.

\medskip

\subsection{Isomorphisms in quasi-categories}
For a quasi-category $X$, we refer to the $0$-simplices $a\in X_0$ as
\dfn{objects}; to the $1$-simplices $f\in X_1$ as \dfn{morphisms},
which we
denote by $f\taking a\to b$ if $d_1(f)=a, d_0(f)=b$; and to the
degenerate 1-simplices $s_0(a)$ as \dfn{identity morphisms}, which we
denote by $\id_a$.

For any set $S$, let $ES$ be the $0$-coskeleton of the set $S$.  Write
$E^n$ for $E(\{0,1,\ldots,n\})$.  Note that $E^1$ has exactly two
non-degenerate simplices in each dimension, and that its geometric
realization is the usual model for $S^\infty$.

Let $X$ be a quasi-category.
An \dfn{\mdfn{$\infty$}-isomorphism} in $X$ is a map $E^1\ra X$.  We
sometimes abuse terminology and say that a $1$-simplex $\Delta^1\ra X$
is an $\infty$-isomorphism if it extends to a map $E^1\ra X$.  

There is a closely related concept called \dfn{quasi-isomorphism}; a quasi-isomorphism
is a map $\sk_2(E^1)\ra X$.  The simplicial set $\sk_2(E^1)$ consists
of two nondegenerate $1$-simplices and two nondegenerate
$2$-simplices.  In other words, a quasi-isomorphism in $X$
consists of two objects $a$ and $b$, maps $f\colon a\ra b$ and
$g\colon b\ra a$, and two $2$-simplices of the form
\[ \xymatrix{
a\ar[r]^f \ar[dr]_{\id_a} & b \ar[d]^g && b \ar[r]^g \ar[dr]_{\id_b} &
a \ar[d]^f \\
& a && & b.
}
\]
Again, we sometimes abuse terminology and say that a $1$-simplex in
$X$ is a quasi-isomorphism if it extends to a map $\sk_2(E^1)\ra X$.  

The following results record the key properties we will need concerning
$\infty$-isomorphisms and quasi-isomorphisms:

\begin{prop}
\label{pr:quasi0}
Let $X$ be a quasi-category, and let $f\colon \Delta^1\ra X$.  The
following conditions on $f$ are equivalent:
\begin{enumerate}[(i)]
\item $f$ is an $\infty$-isomorphism;
\item $f$ is a quasi-isomorphism;
\item $f$ has a left quasi-inverse and a (possibly different) right
quasi-inverse.  That is, there exist $2$-simplices in $X$ of the form
\[ \xymatrix{
a\ar[r]^f \ar[dr]_{\id_a} & b \ar[d]^{g_1} \\
& a} \qquad\text{and}\qquad
\xymatrix{
 b \ar[r]^{g_2} \ar[dr]_{\id_b} & a \ar[d]^f \\
 & b.
}
\] 
\end{enumerate}
\end{prop}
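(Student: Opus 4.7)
The implications (i) $\Rightarrow$ (ii) $\Rightarrow$ (iii) are immediate. The first is restriction along the inclusion $\sk_2(E^1) \inc E^1$, and for the second one observes that the two $2$-simplices displayed in (iii) are precisely the two non-degenerate $2$-simplices of $\sk_2(E^1)$.

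The real content is (iii) $\Rightarrow$ (i), which I would attack in two stages. First I would establish (iii) $\Rightarrow$ (ii) by a horn-filling argument. Assemble $\sigma_1$, $\sigma_2$, and a chosen degenerate $2$-simplex into an inner horn $\Lambda^3_1 \to X$ on the tetrahedron with vertex sequence $[b,a,b,a]$: take $d_0 = \sigma_1$, $d_3 = \sigma_2$, and $d_2 = s_1(g_2)$, checking that the $1$-skeleta agree on overlaps (in particular that the edges $[1,2]=f$, $[0,1]=g_2$, $[2,3]=g_1$, $[1,3]=\id_a$, $[0,2]=\id_b$ are consistently specified). A filler provides a $2$-simplex $d_1$ with edges $\id_b$, $g_1$, and $g_2$, exhibiting $g_2$ as a composite of $g_1$ with $\id_b$ — equivalently, a ``homotopy'' $g_1 \simeq g_2$. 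A symmetric inner-horn filling then upgrades this to the existence of a single $g$ equipped with both of the $2$-simplex witnesses required by (ii).

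Second, I would deduce (ii) $\Rightarrow$ (i) by extending $\sk_2(E^1) \to X$ inductively over $\sk_n(E^1) \to X$ for all $n \geq 3$. The essential combinatorial fact about $E^1$ is that its non-degenerate $n$-simplices are exactly the two alternating sequences $[0,1,0,1,\ldots]$ and $[1,0,1,0,\ldots]$; removing a non-extremal vertex from such a sequence produces two equal adjacent entries and hence a degenerate face, while removing an endpoint produces a non-degenerate simplex of the same alternating type one dimension lower. Thus attaching each new non-degenerate $n$-cell amounts to filling an $(n-1)$-sphere in $X$ whose two outer faces are already specified by the previous stage and whose inner faces are prescribed degenerate simplices. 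To carry out such a fill one omits a chosen degenerate inner face to obtain an inner horn, lifts by the quasi-category hypothesis, and then confirms that the resulting filler restricts to the desired degenerate face.

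The principal obstacle will be the matching step in this last paragraph: arranging that the inner-horn filler actually restricts to the prescribed degenerate simplex on the omitted face. I expect this to be handled by an auxiliary inner-horn fill inside a degenerate $(n+1)$-simplex built from the specified data, comparing two candidate fillers of the same horn along one of their common faces. This is the only genuinely delicate part of the argument; the rest is bookkeeping with inclusions of subcomplexes.
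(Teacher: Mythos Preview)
Your handling of (i)$\Rightarrow$(ii)$\Rightarrow$(iii) is fine, and your horn-filling approach to (iii)$\Rightarrow$(ii) is reasonable (the paper instead routes this through the homotopy category $\pi_0\jC(X)$, but either works).

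The genuine gap is in (ii)$\Rightarrow$(i). You have correctly located the difficulty---the ``matching step''---but your proposed fix does not work, and this is not merely a bookkeeping issue. Extending along the skeletal filtration requires, at each stage, filling a full sphere $\partial\Delta^n\to X$, not a horn. Filling an inner horn $\Lambda^n_k$ produces \emph{some} $k$th face, and there is no mechanism using only inner horns to force it to equal a \emph{prescribed} degenerate simplex: two $(n{-}1)$-simplices with the same boundary in a quasi-category need not be related by any inner-horn manoeuvre (think of nontrivial higher homotopy in a mapping space). Your ``auxiliary inner-horn fill inside a degenerate $(n{+}1)$-simplex'' would, when unwound, require exactly the kind of sphere-filling you are trying to produce.

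The paper's route is essentially different and avoids this trap. Rather than the skeletal filtration, it filters $E^1$ by $Z_0\subset Z_1\subset\cdots$, where $Z_n$ is the subcomplex generated by the single alternating $n$-simplex $[0,1,0,1,\ldots]$. The point is that $Z_n\hookrightarrow Z_{n+1}$ is a pushout along the \emph{outer} horn $\Lambda^{n+1}_0\hookrightarrow\Delta^{n+1}$: every face of $[0,1,0,\ldots]$ containing vertex~$0$ either is $[0,1,0,\ldots]$ one dimension down or is degenerate, hence already in $Z_n$; only the $0$th face $[1,0,1,\ldots]$ is new. So each step is a single horn-fill rather than a sphere-fill---but it is an outer horn, and the $[01]$-edge is the quasi-isomorphism $f$. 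One therefore needs the special outer horn lifting theorem (Proposition~\ref{pr:quasi2}), which is itself the substantial technical input and is proved separately by a fairly intricate argument. In short: the work you hoped to do with an ad hoc inner-horn trick is exactly the content of special outer horn lifting, and there is no known way around it.
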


\begin{proof}
See ``Proof of Proposition \ref{pr:quasi0}" in Appendix~\ref{se:special}.
\end{proof}

\begin{prop} 
\label{pr:quasi1}
Let $X$ be a quasi-category.  Then the map $X\ra *$ has the
right-lifting-property with respect to the following:
\begin{enumerate}[(a)]
\item The maps
$(A\times E^1)\amalg_{(A\times
\{0\})} (B\times \{0\}) \inc B\times E^1$, for every monomorphism
$A\inc B$.
\item The 
inclusion $E(\{0,1\})\cup E(\{1,2\})\inc E(\{0,1,2\})$.  
\end{enumerate}
\end{prop}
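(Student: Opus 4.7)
The plan is to prove (a) first by a cellular induction on the monomorphism $A\inc B$, and then use the same skeletal technique to handle (b). The class of monomorphisms $A\inc B$ for which the required lifting property holds is closed under pushouts, transfinite compositions, and retracts, so it suffices to check it on the generating cofibrations $\bd{n}\inc\Delta^n$ of $\sSet$. This reduces (a) to the assertion that any map
\[ f\colon (\Delta^n\times\{0\}) \cup (\bd{n}\times E^1) \too X \]
extends to $\Delta^n\times E^1 \to X$. The case $n=0$ is trivial, since the vertex given by $f$ extends to the constant map $E^1\to X$.

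For $n\geq 1$, I would filter the target by $\Delta^n\times \sk_k(E^1)$. Each $\sk_k(E^1)$ adds exactly two nondegenerate $k$-simplices of $E^1$ (the alternating strings $0101\cdots$ and $1010\cdots$), and the standard shuffle decomposition of $\Delta^n\times \Delta^k$ expresses each step of the filtration as a sequence of pushouts along horn inclusions $\Lambda^m_j\inc\Delta^m$. The bookkeeping shows that each such horn is either (i) an inner horn, which $X$ fills because it is a quasi-category, or (ii) an outer horn $\Lambda^m_0\inc\Delta^m$ (resp.\ $\Lambda^m_m\inc\Delta^m$) whose short edge $\Delta^{\{0,1\}}$ (resp.\ $\Delta^{\{m-1,m\}}$) lies in a copy of $E^1$ and is therefore mapped to an $\infty$-isomorphism of $X$.

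The main obstacle is the auxiliary lemma needed for case (ii): in a quasi-category $X$, any map $\Lambda^m_0\to X$ whose restriction to $\Delta^{\{0,1\}}$ is an $\infty$-isomorphism extends over $\Delta^m$ (and symmetrically for $\Lambda^m_m$ with edge $\Delta^{\{m-1,m\}}$). This is exactly where Proposition \ref{pr:quasi0} enters: the equivalence of $\infty$-isomorphisms with quasi-isomorphisms produces a quasi-inverse $2$-simplex which, when combined with iterated inner-horn fillers, builds the required $m$-simplex by a standard induction on $m$ (the base case $m=2$ being a direct inner-horn fill since a quasi-isomorphism already supplies a 2-simplex witnessing invertibility).

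For part (b), I would run the analogous skeletal argument directly on $E(\{0,1\})\cup E(\{1,2\})\inc E(\{0,1,2\})$. Working dimension by dimension, each newly-added nondegenerate simplex $[i_0,\ldots,i_k]$ not already present uses all three labels $0,1,2$, and its boundary attachment is a horn $\Lambda^k_j$ which is either inner or else has its short edge supported inside one of the sub-complexes $E(\{0,1\})$ or $E(\{1,2\})$, hence mapping to an $\infty$-isomorphism in $X$. Applying the same auxiliary lemma as in (a) fills every outer horn that arises, completing the extension and proving (b).
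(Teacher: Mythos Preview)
Your approach to (a) is in the same spirit as the paper's: reduce to the generating cofibrations $\bd{r}\inc\del{r}$, and show that $(\{0\}\inc E^1)\bbox(\bd{r}\inc\del{r})$ can be built from inner horns and special outer horns, then invoke special outer horn lifting in quasi-categories.  Two corrections are worth making.  First, the auxiliary lemma you need is precisely Proposition~\ref{pr:quasi2} (Joyal's special outer horn lifting), not Proposition~\ref{pr:quasi0}; the latter is a consequence of the former, not a tool for proving it, and your sketch ``combined with iterated inner-horn fillers \ldots\ by a standard induction on $m$'' substantially understates the difficulty---the paper's proof of Proposition~\ref{pr:quasi2} is genuinely delicate.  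Second, the filtration by $\Delta^n\times\sk_k(E^1)$ does not do what you want: at the very first step you would need to fill $\partial\Delta^n\inc\Delta^n$ over the vertex $1\in E^1$, which a quasi-category will not do.  The paper's filtration (see Lemma~\ref{le:box-special}) is organized differently, by which ``groups'' of the vertex sequence contain a $b$, and this is what makes each stage a pushout along a horn.

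For (b), your direct cellular argument has a gap.  The new $1$-simplices $[0,2]$ and $[2,0]$ in $E(\{0,1,2\})$ are not in $E(\{0,1\})\cup E(\{1,2\})$ and do \emph{not} use all three labels, contrary to your claim; and even if you first add $[0,2]$ by filling the inner horn $\Lambda^{\{0,1,2\}}_1$, the next $2$-simplex $[0,2,1]$ then has its entire boundary already present, so it attaches along $\bd{2}$ rather than a horn.  The paper avoids all of this with a one-line trick: using the maps $j\colon E^2\to E^1\times E^1$ and $p\colon E^1\times E^1\to E^2$ determined on vertices by $j(0)=(0,0)$, $j(1)=(1,0)$, $j(2)=(1,1)$ and $p(0,0)=p(0,1)=0$, $p(1,0)=1$, $p(1,1)=2$, one checks that $pj=\id$ and that these maps exhibit the inclusion in (b) as a retract of the instance of (a) with $A=\{0,1\}$ and $B=E^1$.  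So (b) follows immediately from (a).
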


Note that part (b) implies that one may compose (in a non-unique way)
$\infty$-isomorphisms to yield another $\infty$-isomorphism.

\begin{proof}
Part (a) is a combinatorial argument which we postpone (see Appendix~\ref{se:special}).  However, (b) is an easy consequence of (a), using the maps
$j\colon E^2\ra E^1\times E^1$ and $p\colon E^1\times E^1\ra E^2$
defined by:
\[ j(0)=(0,0), \quad j(1)=(1,0), \quad j(2)=(1,1) \]
and
\[
p(0,0)=p(0,1)=0, \quad p(1,0)=1, \quad p(1,1)=2.
\]
(Both $E^2$ and $E^1\times E^1$ are $0$-coskeleta, so a map into them
is completely determined by what it does to $0$-simplices).  
The maps $j$ and $p$ exhibit the inclusion from (b)
as a retract of
$ (\{0,1\}\times E^1)\amalg_{\{0,1\}\times \{0\}} (E^1\times \{0\})
\inc E^1\cross E^1$.
\end{proof}

The proofs of Proposition~\ref{pr:quasi0} and \ref{pr:quasi1}(a) are
not at all straightforward.  They hinge on a lemma discovered by Joyal
which we also record here (although the proof is again deferred to
Appendix~\ref{se:special}).

\begin{prop}[Joyal, Special outer horn lifting]
\label{pr:quasi2}
Let $X$ be a quasi-category.   Given either of the following
lifting diagrams
\[ \xymatrix{
\Lambda^n_0\ar[r]^f\ar[d] & X &&& \Lambda^n_n\ar[r]^g\ar[d] & X\\
\Delta^n\ar@{.>}[ur] &&&& \Delta^n \ar@{.>}[ur]
}
\]
in which, respectively, $f([01])$ or $g([n-1,n])$ is a quasi-isomorphism,
there is a lifting as shown.
\end{prop}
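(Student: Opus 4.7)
The plan is to establish the $\Lambda^n_0$ case by induction on $n\geq 2$; the $\Lambda^n_n$ case will then follow by applying the $\Lambda^n_0$ result to the opposite simplicial set $X^{op}$ (which is again a quasi-category, and in which the horn $\Lambda^n_0\subset \Delta^n$ corresponds to $\Lambda^n_n\subset \Delta^n$). By Proposition~\ref{pr:quasi0}, we can switch freely between the characterizations of the special edge $f([01])$ as an $\infty$-isomorphism, a quasi-isomorphism, or a $1$-simplex with a chosen one-sided quasi-inverse realised by a witnessing $2$-simplex.

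For the base case $n=2$, set $\phi=f([01])$, let $\psi\colon 1\to 0$ be a left quasi-inverse of $\phi$, and let $\alpha$ be the chosen $2$-simplex witnessing $\psi\circ\phi\simeq\id_0$. First fill the inner horn $\Lambda^2_1$ on the vertex-sequence $(1,0,2)$ with edges $\psi$ and $f([02])$ to produce a $2$-simplex $\gamma$. Next, assemble an inner horn $\Lambda^3_2\to X$ on the (non-injective) vertex-sequence $(0,1,0,2)$, whose faces $d_0$, $d_1$, $d_3$ are $\gamma$, $s_0f([02])$, and $\alpha$ respectively; a routine check on the three pairs of shared subfaces confirms compatibility. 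Filling this inner horn using the quasi-category property of $X$ yields a $3$-simplex whose remaining face $d_2$ is a $2$-simplex on vertices $(0,1,2)$ with edges $\phi$ and $f([02])$, i.e.\ the desired filling of $\Lambda^2_0$.

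For the inductive step $n\geq 3$, iterate the same trick using an auxiliary ``doubled'' $(n+1)$-simplex on the vertex-sequence $(0,1,0,2,3,\ldots,n)$, filling a suitable inner horn and reading off the correct face. Faces of this auxiliary simplex containing both copies of the vertex $0$ are either degenerate or assembled from lower-dimensional data already constructed; faces containing only one copy of $0$ are obtained either by restricting $f$ directly or by invoking the inductive hypothesis for smaller outer horns whose special edge is the quasi-inverse $\psi$ (which is itself a quasi-isomorphism by Proposition~\ref{pr:quasi0}).

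The main obstacle is the combinatorial bookkeeping in the inductive step: specifying the correct auxiliary $(n+1)$-simplex and the correct inner horn to fill, confirming that every ``partially-filled'' face really does arise from the inductive hypothesis in the required way, and then verifying all the face-compatibility conditions on shared subfaces. This verification is exactly what makes Joyal's lemma historically delicate, and is the reason the full argument is deferred to Appendix~\ref{se:special}.
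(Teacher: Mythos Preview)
Your base case $n=2$ is correct and shares the paper's key idea: insert an auxiliary copy $0'$ of the vertex $0$ to convert the outer horn into an inner horn of a larger simplex, then fill and restrict. However, the inductive step has a real gap. The codimension-one faces $d_j$ (for $3\leq j\leq n+1$) of your auxiliary $(n+1)$-simplex live on vertex-sequences such as $(0,1,0,3,\ldots,n)$; these are \emph{not} degenerate (the repeated $0$'s are not adjacent) and are not ``lower-dimensional data already constructed''---each is an $n$-simplex of exactly the same auxiliary shape you are trying to build at stage $n$. Moreover the face $d_0$ on $(1,0,2,\ldots,n)$ cannot be produced by filling a special left horn with edge $\psi$: the $\Lambda^n_0$ horn on those vertices would require the face $(1,2,\ldots,n)$, which is precisely the $d_0$-face of the original $\Delta^n$ and is \emph{absent} from $\Lambda^n_0$. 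One can try to rescue the induction by simultaneously constructing the filling and the full auxiliary simplex at every stage, but then one must verify coherence of all these auxiliary simplices on their mutual intersections---the paper explicitly remarks that this direct route ``results in a combinatorial nightmare.''

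The paper avoids induction on $n$ entirely. Its substitute is a small theory of slices: Lemma~\ref{le:joinbox} shows $(\Lambda^n_k\hookrightarrow\Delta^n)\boxtimes(A\hookrightarrow B)$ is inner anodyne for $0<k\leq n$, which implies $X_{/K}$ is a quasi-category, and Proposition~\ref{pr:conservative} shows that a $1$-simplex in $X_{/K}$ is a quasi-isomorphism as soon as its image in $X$ is. With this in hand, extending $p$ to the inner horn $\Lambda^{\{0,1,0',2,\ldots,n\}}_{0'}$ takes three explicit, non-inductive steps: attach the degenerate face $[00'2\ldots n]$; then attach $\Delta^{\{0,1,0'\}}\star\partial\Delta^{\{2,\ldots,n\}}$ by choosing a single $2$-simplex in the slice $X_{/\partial\Delta^{\{2,\ldots,n\}}}$ (a right-inverse provider there); finally attach the remaining face $[10'2\ldots n]$ by an ordinary inner-horn fill. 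The slice machinery is what packages your troublesome faces $d_3,\ldots,d_{n+1}$ and $d_0$ into one clean step.
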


It is easy to see that in a Kan complex every morphism is a
quasi-isomorphism (use Proposition~\ref{pr:quasi0}).  Conversely,
if in a certain quasi-category $X$ every morphism is a
quasi-isomorphism then Proposition~\ref{pr:quasi2} implies that $X$ is
a Kan complex.  Thus one has the slogan ``Kan complexes are
$\infty$-groupoids.''

For a quasi-category $X$, let $J(X)\subseteq X$ be the subcomplex
consisting of those simplices having the property that every 1-dimensional face
is a quasi-isomorphism.  If $\Kan\subseteq \sSet$ denotes the full
subcategory of Kan complexes, then $J\colon \sSet\ra \Kan$ and this is
the right adjoint of the inclusion functor $\Kan\inc \sSet$.  In
particular, note that $J$ preserves limits.  The quasi-category $J(X)$
is the ``$\infty$-groupoid of $\infty$-isomorphisms in $X$.''

\subsection{Box product lemmas}

A map of simplicial sets will be called \dfn{inner
anodyne} if it can be constructed out of the maps $\Lambda^n_k\inc
\Delta^n$, $0<k<n$, by cobase changes and compositions.  Clearly any
quasi-category has the right-lifting-property with respect to all
inner anodyne maps.  

Given morphisms $f\colon A\ra B$ and $g\colon C\ra D$, let $f\bbox g$ denote
the induced map 
\[ (A\times D)\amalg_{(A\times C)} (B\times C) \lra
B\times D. 
\]

\begin{prop}[Joyal]
\label{pr:box-inner}
If $i\colon \Lambda^n_k\inc \Delta^n$ is an inner horn inclusion and
the map $j\colon A\to B$ is any monomorphism, then $i\bbox
j$ is inner anodyne.
\end{prop}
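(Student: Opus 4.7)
The plan is to proceed in two steps: first a formal reduction via the saturation of the inner anodyne maps, then a combinatorial filtration argument.

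For the first step, let $i \colon \Lambda^n_k \inc \Delta^n$ be a fixed inner horn inclusion, and let $\cS$ be the class of monomorphisms $j \colon A \to B$ such that $i \bbox j$ is inner anodyne. Since inner anodyne maps are by construction closed under cobase change, transfinite composition, and retracts, and since each of these operations commutes with the pushout-product $i \bbox (-)$ (the pushout-product is a left adjoint in the variable $j$), the class $\cS$ is saturated. By the small object argument it therefore suffices to verify the statement when $j$ is a generating cofibration, i.e., when $j \colon \bd{m} \inc \Delta^m$ for some $m \geq 0$.

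The remaining task is thus to prove that for all $n \geq 2$, all $0 < k < n$, and all $m \geq 0$, the inclusion
\[
P_{n,k,m} \;:=\; (\Lambda^n_k \times \Delta^m)\, \cup_{(\Lambda^n_k \times \bd{m})}\, (\Delta^n \times \bd{m}) \;\inc\; \Delta^n \times \Delta^m
\]
is inner anodyne. The approach is to filter $\Delta^n \times \Delta^m$ by attaching the non-degenerate simplices that are not already in $P_{n,k,m}$ one at a time. Non-degenerate $p$-simplices of $\Delta^n \times \Delta^m$ correspond to strictly increasing sequences of vertices in $[n] \times [m]$, i.e., to monotone ``staircase'' paths; and a simplex lies in $P_{n,k,m}$ exactly when either its projection to $[n]$ misses the vertex $k$, or its projection to $[m]$ is not surjective. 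The simplices to be attached are therefore precisely the staircases whose projection to $[n]$ is all of $[n]$ and whose projection to $[m]$ is all of $[m]$.

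I would attach these simplices in decreasing order of dimension $p$ (with $n+m \geq p \geq \max(n,m)$), and within each dimension in a suitably chosen lexicographic order on the sequence of horizontal/vertical steps of the staircase. For each such simplex $\sigma \colon \Delta^p \to \Delta^n \times \Delta^m$, the goal is to identify an index $0 < j < p$ such that all faces of $\sigma$ other than the $j$-th one are already present in the current stage of the filtration; this exhibits the attachment as a pushout of the inner horn inclusion $\Lambda^p_j \inc \Delta^p$. The natural choice is to take $j$ to be the position in the staircase just before the unique vertex of $\sigma$ whose first coordinate equals $k$ (this uses $0 < k < n$ precisely to guarantee $0 < j < p$). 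The main obstacle, and the genuinely combinatorial part of the proof, is verifying that with this choice of $j$ the remaining $p-1$ non-$j$-th faces are either degenerate, already in $P_{n,k,m}$, or have been attached at an earlier stage of the filtration — this requires a careful bookkeeping argument separating the faces $d_\ell \sigma$ with $\ell < j$, $\ell > j$, and analyzing when they are again full-projection staircases of strictly smaller dimension or of the same dimension but earlier in the chosen order. Once this verification is carried out, the filtration exhibits $P_{n,k,m} \inc \Delta^n \times \Delta^m$ as a transfinite composition of cobase changes of inner horn inclusions, completing the proof.
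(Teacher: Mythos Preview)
Your reduction to the generating cofibrations $\bd{m}\inc\Delta^m$ is correct and is exactly what the paper does. The combinatorial filtration, however, contains several genuine errors.

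First, your description of $P_{n,k,m}$ is wrong. A simplex of $\Delta^n\times\Delta^m$ lies in $\Lambda^n_k\times\Delta^m$ exactly when its projection to $[n]$ omits some vertex \emph{other than} $k$; omitting $k$ is neither necessary nor sufficient. In particular, a simplex whose projection to $[n]$ is exactly $[n]\setminus\{k\}$ lands in $\partial_k\Delta^n$, which is precisely the face \emph{missing} from $\Lambda^n_k$. Thus the simplices to be attached are those with surjective projection to $[m]$ and projection to $[n]$ equal to either $[n]$ or $[n]\setminus\{k\}$; you have left out the second family entirely.

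Second, attaching in decreasing order of dimension cannot work. With $n=2$, $k=1$, $m=1$, none of the three top $3$-simplices of $\Delta^2\times\Delta^1$ can be glued to $P_{2,1,1}$ along an inner horn: each one has at least two codimension-one faces not yet present. For instance $\sigma=[(0,0),(1,0),(2,0),(2,1)]$ has both $d_1\sigma=[(0,0),(2,0),(2,1)]$ and $d_2\sigma=[(0,0),(1,0),(2,1)]$ outside $P_{2,1,1}$. Lower-dimensional simplices must come first.

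Third, ``the unique vertex of $\sigma$ whose first coordinate equals $k$'' need not be unique: the simplex $[(0,0),(1,0),(1,1),(2,1)]$ has two such vertices, so your proposed horn index is not well defined.

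The paper's argument runs differently. One filters $Y=\Delta^n\times\Delta^m$ as $Y_0\subseteq Y_1\subseteq\cdots\subseteq Y_{m+1}=Y$, where $Y_i$ is $Y_{i-1}$ together with all simplices containing the specific vertex $(k,i-1)$. Each step $Y_i\inc Y_{i+1}$ is then sub-filtered by \emph{increasing} dimension, and a nondegenerate simplex $y$ added at that stage is attached along the horn at its vertex $(k,i)$. The key combinatorial point---and what makes the bookkeeping go through---is that for such $y$ the column immediately preceding $(k,i)$ is forced to be $(k-1,i)$; this guarantees that the $(k,i)$-face of $y$, and only that face, is absent from the previous stage, and since $0<k<n$ this vertex is interior.
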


\begin{proof}
See Appendix~\ref{se:box}.
\end{proof}

A map $X\ra Y$ is an \dfn{inner fibration} if it has the
right-lifting-property with respect to the inner horn inclusions.  The
following result is an immediate consequence of
Proposition~\ref{pr:box-inner}, using adjointness:

\begin{prop}
\label{pr:matching-fibration}
If $A\cof B$ is a monomorphism and $X\ra Y$ is an inner fibration,
then $X^B\ra X^A\times_{Y^A} Y^B$ is also an inner fibration.  In
particular, if $X$ is a quasi-category then so is $X^A$ for any
simplicial set $A$.  
\end{prop}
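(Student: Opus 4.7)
The plan is to carry out the standard ``pushout-product versus pullback-power'' translation. A lifting problem of $i \colon \Lambda^n_k \inc \Delta^n$ (with $0<k<n$) against the map $X^B \to X^A \times_{Y^A} Y^B$ consists of a commutative square
\[
\xymatrix{
\Lambda^n_k \ar[r] \ar[d]_i & X^B \ar[d] \\
\Delta^n \ar[r] & X^A \times_{Y^A} Y^B.
}
\]
First I would use the two-variable exponential adjunction (the one that is adjoint to the cartesian product of simplicial sets) to rewrite this lifting problem as the problem of finding a dotted arrow in
\[
\xymatrix{
(\Lambda^n_k \times B)\amalg_{\Lambda^n_k \times A}(\Delta^n \times A) \ar[r] \ar[d]_{i \bbox j} & X \ar[d] \\
\Delta^n \times B \ar[r] \ar@{.>}[ur] & Y.
}
\]
This is just the unwinding of what it means to land in the pullback $X^A \times_{Y^A} Y^B$, and is a routine check.

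Next I would invoke Proposition~\ref{pr:box-inner}: since $j \colon A \cof B$ is a monomorphism and $i$ is an inner horn inclusion, the box product $i \bbox j$ is inner anodyne. Because $X \to Y$ is an inner fibration, it has the right lifting property against every inner anodyne map (inner anodyne maps being built from inner horn inclusions by pushouts and transfinite composition, operations preserved by RLP against a fixed map). Hence the dotted lift exists, and transporting back across the adjunction produces the desired lift in the original square. This proves that $X^B \to X^A \times_{Y^A} Y^B$ is an inner fibration.

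For the final sentence, I would specialize to $Y = \Delta^0$ and $A = \emptyset$, so that $X^B \to X^A \times_{Y^A} Y^B$ becomes $X^B \to \Delta^0$, giving that $X^B$ is a quasi-category for every simplicial set $B$; renaming $B$ to $A$ gives the stated form. There is essentially no obstacle here: the only nontrivial input is Proposition~\ref{pr:box-inner}, which has already been proven (deferred to Appendix~\ref{se:box}); the rest is a formal adjointness manipulation of the type often packaged as ``Joyal's pushout-product lemma.''
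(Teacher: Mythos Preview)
Your proof is correct and is exactly the argument the paper has in mind: the paper's own proof reads simply ``Immediate,'' with the sentence preceding the proposition noting that it follows from Proposition~\ref{pr:box-inner} by adjointness. You have merely unpacked that adjointness translation explicitly.
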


\begin{proof}
Immediate.
\end{proof}

\subsection{Joyal equivalences}
Let $A$ and $Z$ be simplicial sets, and assume $Z$ is a
quasi-category. Define a relation on $\sSet(A,Z)$ by saying that
$f\sim g$ if there exists a map $H\colon A\times E^1 \ra Z$ such that
$Hi_0=f$ and $Hi_1=g$, where $i_0,i_1 \colon A\inc A\times E^1$ are
the evident inclusions.  It is easy to see that this relation is both
reflexive and symmetric, and Proposition~\ref{pr:quasi1}(b) implies
that it is also transitive (using that $Z^A$ is a quasi-category).  We
write $[A,Z]_{E^1}$ to denote the set $\sSet(A,Z)/\sim$.

\begin{defn}
A map of simplicial sets $A\ra B$ is a \dfn{Joyal equivalence} if
$[B,Z]_{E^1}
\ra [A,Z]_{E^1}$ is a bijection for every quasi-category $Z$.  A
\dfn{Joyal acyclic cofibration} is a map which is both a monomorphism
and a Joyal equivalence.
\end{defn}

%\begin{prop}
%\label{pr:E1}
%The inclusions $\{0\}\inc E^1$ and $\{1\}\inc E^1$ are Joyal equivalences.
%\end{prop}
%
%\begin{proof}
%As the maps are isomorphic, it is enough to prove the result for the
%first map.  We muse show that for any quasi-category $X$ the map
%$[E^1,X]\ra [\{0\},X]$ is injective (as surjectivity is obvious).
%
%Suppose given $f,g\colon E^1\ra X$ and a map $h\colon E^1\ra X$ such
%that $h(0)=f(0)$ and $h(1)=g(0)$.  Then putting $f$, $g$, and $h$
%together we have a map
%\[ 
% [\{0,1\}\times E^1]
% \amalg_{[\{0,1\}\times \{0\}]}
%[E^1 \times \{0\}]
%\ra X.
%\]
%By Proposition~\ref{pr:quasi1}(a) this extends to a map $E^1\times E^1\ra
%X$, and this is the desired $E^1$-homotopy between $f$ and $g$.
%\end{proof}

\begin{remark}
\label{re:E1-homotopy}
We define a map of simplicial sets $f\colon X\ra Y$ to be an
\mdfn{$E^1$}\dfn{-homotopy equivalence} if there is a map $g\colon
Y\ra X$ such that $fg$ and $gf$ are $E^1$-homotopic to their
respective identities.  It is trivial to check that every
$E^1$-homotopy equivalence is a Joyal equivalence, and that if $X$
and $Y$ are quasi-categories then a map $X\ra Y$ is a Joyal
equivalence if and only if it is an $E^1$-homotopy equivalence.
Note that $\{0\}\inc E^1$ is readily seen to be an $E^1$-homotopy equivalence.
\end{remark}

\begin{prop}\label{pr:first-properties} 
The following statements are true:
\begin{enumerate}[(a)]
\item
If $C\ra D$ is a Joyal acyclic cofibration then so is $A\times C\ra
A\times D$, for any simplicial set $A$.
\item If $X$ is a quasi-category, then $X\ra *$ has the
right-lifting-property with respect to every Joyal acyclic
cofibration.
\item Every inner horn inclusion $\Lambda^n_k \inc \Delta^n$, $0<k<n$,
is a Joyal equivalence.  
\end{enumerate}
\end{prop}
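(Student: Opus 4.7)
The plan is to dispense with (c) and (a) first by formal arguments, and then tackle (b), which requires an auxiliary homotopy extension lemma for quasi-categories.

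For (c): since $\horns{n}{k}\inc\del{n}$ is inner anodyne, the quasi-category $Z$ has the right lifting property against it, which immediately yields surjectivity of $[\del{n},Z]_{E^1}\to[\horns{n}{k},Z]_{E^1}$. For injectivity, given maps $f,g\colon\del{n}\to Z$ whose restrictions to $\horns{n}{k}$ are $E^1$-homotopic via some $H\colon\horns{n}{k}\times E^1\to Z$, we glue $f$, $g$, and $H$ to a map
$$(\horns{n}{k}\times E^1)\cup(\del{n}\times\partial E^1)\to Z.$$
Proposition~\ref{pr:box-inner}, applied to the inner horn $\horns{n}{k}\inc\del{n}$ and the monomorphism $\partial E^1\inc E^1$, identifies the inclusion $(\horns{n}{k}\times E^1)\cup(\del{n}\times\partial E^1)\inc\del{n}\times E^1$ as inner anodyne; lifting against $Z\to *$ produces the desired $E^1$-homotopy $f\sim g$.

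For (a): the exponential adjunction identifies the map $[A\times D,Z]_{E^1}\to[A\times C,Z]_{E^1}$ with $[D,Z^A]_{E^1}\to[C,Z^A]_{E^1}$. By Proposition~\ref{pr:matching-fibration}, $Z^A$ is again a quasi-category, so the hypothesis that $C\to D$ is a Joyal equivalence forces the latter map to be a bijection.

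For (b), I would first prove the auxiliary \emph{homotopy extension lemma}: for any monomorphism $C\inc D$ and any quasi-category $X$, the inclusion $(D\times\{0\})\cup(C\times E^1)\inc D\times E^1$ has the left lifting property with respect to $X\to *$. Granting this, (b) follows readily. Given a Joyal acyclic cofibration $C\cof D$ and a map $f\colon C\to X$, surjectivity of $[D,X]_{E^1}\to[C,X]_{E^1}$ produces $g\colon D\to X$ together with an $E^1$-homotopy $H\colon C\times E^1\to X$ satisfying $H|_{C\times\{0\}}=g|_C$ and $H|_{C\times\{1\}}=f$. The pair $(g,H)$ glues to a map $(D\times\{0\})\cup(C\times E^1)\to X$; the lemma extends it to $\Phi\colon D\times E^1\to X$, and then $\Phi|_{D\times\{1\}}$ is the sought extension of $f$.

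The auxiliary lemma is the main obstacle and is where Proposition~\ref{pr:quasi2} enters. I would prove it by cell induction on the cofibration $C\inc D$, reducing to the base case $\bd{n}\inc\del{n}$. Here I would order the non-degenerate simplices of $\del{n}\times E^1$ lying outside $(\del{n}\times\{0\})\cup(\bd{n}\times E^1)$ via a standard shuffle-type triangulation, attaching them one at a time. At each stage the attached simplex appears as a horn inside the already-filled subcomplex; when the horn is inner it is filled because $X$ is a quasi-category, and when it is outer its distinguished edge runs in the $E^1$ direction, hence factors as $\{v\}\times E^1\to X$ for some vertex $v\in C$, and is therefore an $\infty$-isomorphism in $X$. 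Proposition~\ref{pr:quasi2} supplies the filler in the outer case. The delicate part is arranging the ordering of attachments so that the distinguished edge of every outer horn lies along $E^1$; this is the main technical content.
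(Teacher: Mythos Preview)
Your proof is correct and follows essentially the same approach as the paper: parts (a) and (c) are identical to the paper's arguments, and for (b) your ``auxiliary homotopy extension lemma'' is precisely Proposition~\ref{pr:quasi1}(a), which the paper simply cites. Your sketch of its proof---cell induction down to $\bdd{n}\inc\del{n}$, then a filtration attaching simplices via inner and special outer horns whose distinguished edges lie in the $E^1$ direction---matches the paper's appendix argument (Lemma~\ref{le:box-special} and Proposition~\ref{pr:special-fib-matching}); note only that the case $n=0$ must be handled separately (trivially, via the retraction $E^1\to\{0\}$), since there $\bdd{0}=\emptyset$ and no vertex $v\in C$ exists.
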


\begin{proof}
For (a) we must show that $[A\times D,X]_{E^1} \ra [A\times C,X]_{E^1}$ is a
bijection for every quasi-category $X$.  But this map is readily
identified with $[D,X^A]_{E^1}\ra [C,X^A]_{E^1}$, which is a bijection
because $X^A$ is also a quasi-category
(Proposition~\ref{pr:matching-fibration}).

Part (b) is an easy exercise using the definitions and
Proposition~\ref{pr:quasi1}(a).

For part (c), if $X$ is a quasi-category then $[\Delta^n,X]\ra
[\Lambda^n_k,X]$ is surjective by the definition of quasi-category.
Injectivity follows from Proposition~\ref{pr:box-inner}, which says
that $X\ra *$ has the RLP with respect to $(\Lambda^n_k\inc \Delta^n)\bbox
(\{0,1\}\inc E^1)$.
\end{proof}

\subsection{The model category structure}

Like all the results in this section, the following is due to Joyal.  It is proven in Appendix \ref{se:model}.

\begin{thm}
\label{th:model}
There exists a unique model structure on $\sSet$ in which the
cofibrations are the monomorphisms and the fibrant objects are the
quasi-categories.  The weak equivalences in this structure are the
Joyal equivalences, and the model structure is cofibrantly-generated.
\end{thm}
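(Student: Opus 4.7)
The plan is to prove uniqueness directly and then establish existence by invoking Jeff Smith's recognition theorem for combinatorial model categories.

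\emph{Uniqueness.} In any model structure on $\sSet$ with monomorphisms as cofibrations and quasi-categories as fibrant objects, the trivial fibrations must coincide with the right orthogonal of the monomorphisms, hence agree across the two structures. Consequently, factoring $X\to\pt$ as a cofibration followed by such a trivial fibration produces an intrinsic fibrant replacement $X\bwe\widehat{X}$. By Remark~\ref{re:E1-homotopy}, the weak equivalences between fibrant objects coincide with the $E^1$-homotopy equivalences in either structure, an intrinsic notion. Hence a map is a weak equivalence iff its fibrant replacement is an $E^1$-homotopy equivalence, and this determines the weak equivalences, and thereby the fibrations, uniquely.

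\emph{Existence.} Let $\cW$ be the class of Joyal equivalences and put $I=\{\bdd{n}\inj\del{n}\}_{n\geq 0}$. Smith's theorem requires (i) two-out-of-three and retract closure for $\cW$, (ii) $I$-injectives lie in $\cW$, (iii) $(I\text{-cof})\cap\cW$ is closed under pushout, transfinite composition, and retracts, and (iv) $\cW$ is an accessible subcategory of the arrow category. Conditions (i) and (iv) are routine from the definition via the sets $[A,Z]_{E^1}$; condition (ii) is the observation that a trivial Kan fibration is a simplicial homotopy equivalence and hence an $E^1$-homotopy equivalence, so a Joyal equivalence.

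\emph{Main obstacle.} The substantive step is (iii), which I would handle by producing an explicit generating set $J$ for the Joyal acyclic cofibrations. I propose
\[
J \;=\; \{\Lambda^n_k\inj\del{n}\;:\;0<k<n\}\;\cup\;\{(\{0\}\inj E^1)\bbox(\bdd{n}\inj\del{n})\;:\;n\geq 0\}.
\]
Every map in $J$ is expected to be a Joyal acyclic cofibration: the first family by Proposition~\ref{pr:first-properties}(c), and the second by a pushout-product argument using that $\{0\}\inj E^1$ is an $E^1$-homotopy equivalence (Remark~\ref{re:E1-homotopy}) together with Proposition~\ref{pr:first-properties}(a). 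The heart of the argument is showing that a $J$-injective map $p\colon X\to Y$ which lies in $\cW$ is a trivial Kan fibration, i.e.\ has RLP against $I$. Having RLP against $J$ makes $p$ an inner fibration for which Proposition~\ref{pr:quasi2} applies; the additional Joyal-equivalence hypothesis, combined with Propositions~\ref{pr:quasi0} and~\ref{pr:quasi1}, should force the relevant edges appearing in an outer horn lift to be $\infty$-isomorphisms, so that each boundary inclusion $\bdd{n}\inj\del{n}$ admits a lift against $p$ by induction on $n$. The hardest technical point will be the low-dimensional base cases, where one must produce actual simplices in $X$ realizing prescribed vertices, and this is where the $E^1$-based family in $J$ does its work. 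Once (iii) is established, applying the small object argument to $J$ and $I$ yields both functorial factorizations, completing the verification of the model category axioms.
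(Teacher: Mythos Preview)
Your overall strategy---Smith's recognition theorem with the four conditions---matches the paper's, and your uniqueness argument is sound. But your treatment of condition (iii) has two real gaps.

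First, your argument that the box products $(\{0\}\inj E^1)\bbox(\bdd{n}\inj\del{n})$ lie in $\cW$ is circular. Proposition~\ref{pr:first-properties}(a) gives that $\{0\}\times\del{n}\to E^1\times\del{n}$ is a Joyal acyclic cofibration, but deducing from this that the box product is one requires knowing that the pushout of a Joyal acyclic cofibration remains a Joyal equivalence---which is exactly (iii). The paper avoids this by proving combinatorially (Lemma~\ref{le:box-special}) that these box products are \emph{special anodyne} for $r\geq 1$, a statement independent of (iii).

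Second, and more seriously, your key step---that a $J$-injective Joyal equivalence $p\colon X\to Y$ is a trivial Kan fibration---invokes special outer horn lifting (Proposition~\ref{pr:quasi2}), but that result and its relative version require $X$ and $Y$ to be quasi-categories. An arbitrary $J$-injective map need not be between quasi-categories. The paper circumvents this not by enlarging $J$, but by abandoning the search for explicit generators: it shows that a monomorphism lies in $\cW$ if and only if it has the LLP against special inner fibrations \emph{between quasi-categories} (Lemma~\ref{le:model-key}), and since any LLP-defined class is saturated, (iii) follows immediately. The quasi-category hypothesis is then available exactly where the horn-lifting machinery needs it. As the paper itself remarks after Theorem~\ref{th:model}, no manageable explicit generating set for the Joyal acyclic cofibrations is known; if your argument went through, your $J$ would provide one.

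Finally, condition (iv) is not routine: the paper constructs an explicit accessible functor $R$ on the arrow category (via iterated small-object factorizations) and shows $\cW=R^{-1}(\text{trivial Kan fibrations})$.
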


The model category structure provided by Theorem~\ref{th:model} will
be denoted $\sSet_J$.  Note that it is left proper because every object is cofibrant.
We will use the term {\it Joyal fibration\/} for the  fibrations in $\sSet_J$. 

\begin{remark}
It should be noted that while $\sSet_J$ is cofibrantly-generated, no
one has so far managed to write down a manageable set of generating
acyclic cofibrations. 
% For example, the inner horn inclusions together
%with $\{0\}\to E^1$ do not suffice.  
%\shortnote{Is this obvious?}
\end{remark}

The following result will be used frequently:

\begin{prop}
\label{pr:Joyal-match}
Let $f\colon A\cof B$  and $g\colon
C\cof D$ be cofibrations in $\sSet_J$, and let $h\colon X\ra Y$ be a Joyal
fibration.  Then the following statements are true:
\begin{enumerate}[(a)]
\item $f\bbox g$ is a cofibration, which is Joyal acyclic if $f$ or $g$
is so.  
\item  $X^B\ra (Y^B\times_{Y^A} X^A)$ is a Joyal fibration, which is acyclic
if $f$ or $h$ is so.
\end{enumerate}
\end{prop}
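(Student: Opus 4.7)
\medskip
\noindent\emph{Proof proposal.}
The plan is to establish part (a) directly using the pushout-product decomposition together with Proposition~\ref{pr:first-properties}(a), and then derive part (b) from (a) by the standard adjointness argument. The whole argument is the usual pushout-product/matching lemma, so the main point is verifying that the inputs we have suffice; there is no serious obstacle, and the key non-trivial input is that multiplying a Joyal acyclic cofibration by an arbitrary simplicial set yields a Joyal acyclic cofibration.

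For the cofibration statement in (a), I would first note that $f\bbox g$ is a monomorphism: the underlying map can be identified with the inclusion $(A\times D)\cup (B\times C)\hookrightarrow B\times D$ inside $B\times D$, which is injective because the two subobjects intersect exactly in $A\times C$. For the acyclic statement, suppose $f$ is a Joyal acyclic cofibration (the case where $g$ is acyclic follows by the symmetry of the product). Form the pushout square defining $P=(A\times D)\cup_{A\times C}(B\times C)$, so that $f\bbox g$ is the natural map $P\to B\times D$. By Proposition~\ref{pr:first-properties}(a) applied to $f$, both $f\times\id_C\colon A\times C\to B\times C$ and $f\times\id_D\colon A\times D\to B\times D$ are Joyal acyclic cofibrations. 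The map $A\times D\to P$ is a cobase change of $f\times\id_C$ along the cofibration $A\times C\to A\times D$, so it is also a Joyal acyclic cofibration. Since the composite $A\times D\to P\to B\times D$ is $f\times\id_D$, which is a Joyal equivalence, the two-out-of-three property forces $f\bbox g\colon P\to B\times D$ to be a Joyal equivalence as well; combined with the cofibration part, it is a Joyal acyclic cofibration.

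For part (b), I would use the standard adjointness dictionary. A lifting problem of a cofibration $g\colon C\cof D$ against $X^B\to X^A\times_{Y^A}Y^B$ translates, via the exponential adjunction, into a lifting problem of $h\colon X\to Y$ against the pushout-product $f\bbox g$. By part (a), $f\bbox g$ is always a cofibration, and is Joyal acyclic whenever $f$ or $g$ is. Therefore:
\begin{itemize}
\item If $g$ is an arbitrary cofibration, then $f\bbox g$ is a Joyal acyclic cofibration whenever $g$ is Joyal acyclic, and $h$ (being a Joyal fibration) lifts against it, so $X^B\to X^A\times_{Y^A}Y^B$ is a Joyal fibration.
\item If $f$ is Joyal acyclic, then $f\bbox g$ is a Joyal acyclic cofibration for every cofibration $g$, and again $h$ lifts, so the matching map is a Joyal acyclic fibration.
\item If $h$ is a Joyal acyclic fibration, then $h$ lifts against every cofibration $f\bbox g$, so the matching map is again a Joyal acyclic fibration.
\end{itemize}
The one place where care is needed is the invocation of Proposition~\ref{pr:first-properties}(a) for products on both sides, but this is immediate from the symmetry of $\times$. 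Beyond that, the rest is formal: pushout stability of acyclic cofibrations, two-out-of-three, and the hom-tensor adjunction.
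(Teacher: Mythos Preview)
Your proof is correct and follows essentially the same route as the paper: the paper also proves (a) by noting that $A\times C\to B\times C$ and $A\times D\to B\times D$ are Joyal equivalences via Proposition~\ref{pr:first-properties}(a), pushes out to see $A\times D\to P$ is a Joyal acyclic cofibration, and then applies two-out-of-three; it then deduces (b) from (a) by adjointness. Your version is slightly more detailed in spelling out the three cases of the adjointness argument in (b), but the logical structure is identical.
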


\begin{proof}
Part (b) follows immediately from (a) by adjointness.  The statement
in (a) that $f\bbox g$ is a cofibration is evident, so we must only
prove that it is a Joyal equivalence if $f$ is (the case when $g$ is a
Joyal equivalence following by symmetry).
Consider the diagram
\[ \xymatrix{
A\times C \ar@{ >->}[r]\ar@{ >->}[d]_\sim & A\times D\ar@{ >->}[d]\ar[ddr]^\sim \\
B\times C \ar@{ >->}[r]\ar[rrd] & P \ar@{.>}[dr]\\
&&B\times D
}
\]
where $P$ is the pushout.
The labelled maps are Joyal equivalences by Proposition~\ref{pr:first-properties}(a).  It follows
that the pushout map $A\times D \ra P$ is a Joyal acyclic cofibration,
and hence $P\ra B\times D$ is a Joyal equivalence by two-out-of-three.
\end{proof}

\subsection{Categorification and the coherent nerve}
We recall from \cite{L} that there are adjoint functors
\[ \jC \colon \sSet\adjoint \sCat \colon N.
\]
Here $\jC(\Delta^n)$ is a specific simplicial category one writes
down, and $N(\cD)$ is the simplicial set $[n]\mapsto
\sCat(\jC(\Delta^n),\cD)$.  See \cite{DS1} for more information, as
well for a  very detailed description of the functor $\jC$.  We call
$\jC$ the {\it categorification\/} functor, and $N$ the {\it
coherent nerve\/}.

It is useful to also consider the composite of adjoint pairs
\[ \xymatrix{
\sSet \ar@<0.5ex>[r]^{\jC} & \sCat
\ar@<0.5ex>[r]^{\pi_0}\ar@<0.5ex>[l]^N 
& \Cat. \ar@<0.5ex>[l]^{c}
}
\]
where the left adjoints all point left to right.
Here $c$ is the functor which regards a category as a discrete
simplicial category, and $\pi_0$ is the functor which applies the
usual $\pi_0$ to all the mapping spaces.  Note that the composite $Nc$
is the classical nerve functor; as $\pi_0\jC$ is its
left adjoint, this identifies $\pi_0\jC$ as the functor which takes a
simplicial set and forms the free category from the $0$-simplices and
$1$-simplices, imposing composition relations coming from the
$2$-simplices.  This functor is denoted $\tau_1$ in \cite{J2}, but we
will always call it $\pi_0\jC$ in the present paper.  

\begin{remark}
For a general simplicial set $X$, a map in $\pi_0\jC(X)$ from $a$ to $b$
is a formal composite of maps represented by $1$-simplices in $X$.
But in a quasi-category we may ``compose'' adjacent $1$-simplices by
filling an inner horn of dimension $2$, and this shows that every map in
$\pi_0\jC(X)$ is represented by a single $1$-simplex of $X$.  
\end{remark}

The following result is due to Joyal:

\begin{prop}
\label{pr:tau_1}
Let $X$ be a quasi-category, and let $f$, $g$, and $h$ be
$1$-simplices in $X$.  Then one has $h=g\circ f$ in $\pi_0\jC(X)$ if
and only if there exists a map $\Delta^2\ra X$ whose $0$-, $1$-, and
$2$-faces are $g$, $h$, and $f$, respectively.
\end{prop}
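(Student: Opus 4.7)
The plan is to split the proof into the easy ``if'' direction and the harder ``only if'' direction. The ``if'' direction is essentially immediate from the description of $\pi_0\jC$ given just before the proposition: a 2-simplex $\sigma\colon \Delta^2 \to X$ with $d_0\sigma = g$, $d_1\sigma = h$, $d_2\sigma = f$ is, by definition, a generating relation of the quotient defining $\pi_0\jC(X)$, and hence gives $h = g \circ f$ there. For the converse, I would first fill the inner horn $\Lambda^2_1\to X$ specified by $f$ on the edge $[0,1]$ and $g$ on $[1,2]$, producing a 2-simplex $\sigma$ with $d_2\sigma = f$, $d_0\sigma = g$, and $d_1\sigma = h'$ for some 1-simplex $h'\colon a\to c$. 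By the ``if'' direction, $[h'] = [g\circ f] = [h]$ in $\pi_0\jC(X)$, so it suffices to produce a 2-simplex with the same $d_0$ and $d_2$ faces as $\sigma$ but with $d_1 = h$ in place of $d_1 = h'$.

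The key intermediate tool is the relation on $1$-simplices from $a$ to $c$ in $X$ defined by $k \sim k'$ iff there exists a 2-simplex $\tau\colon \Delta^2\to X$ with $d_2\tau = \id_a$, $d_1\tau = k$, and $d_0\tau = k'$. Reflexivity of $\sim$ is witnessed by the degenerate 2-simplex $s_0(k)$, while symmetry and transitivity follow by filling inner $\Delta^3$-horns whose existing faces are the given homotopies together with suitable degenerate 2-simplices. One must then establish that $k \sim k'$ if and only if $[k] = [k']$ in $\pi_0\jC(X)$. The forward implication is the already-proven ``if'' direction applied to $\tau$. The reverse direction is the main obstacle: an arbitrary chain of 2-simplex relations in $X$ identifying $[k]$ with $[k']$ must be collapsed into a single 2-simplex $\tau$, which is done by iteratively absorbing successive 2-simplex relations via inner horn fillings in $\Delta^3$.

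Once $h \sim h'$ is in hand, the final step combines $\sigma$ with a witnessing homotopy $\tau$ (so $d_2\tau = \id_a$, $d_1\tau = h$, $d_0\tau = h'$) into a single 3-simplex. Consider $\Delta^3$ with vertex labeling $(a,a,b,c)$ and define a horn $\Lambda^3_1 \to X$ by setting $d_0 = \sigma$ (on the $(a,b,c)$-face), $d_2 = \tau$ (on the $(a,a,c)$-face), and $d_3 = s_0(f)$ (on the $(a,a,b)$-face). A direct check shows these agree on their common edges $[1,3] = h'$, $[1,2] = f$, and $[0,1] = \id_a$, so the horn is well-defined. Since $\Lambda^3_1 \inc \Delta^3$ is an inner horn and $X$ is a quasi-category, the horn extends to a 3-simplex $\rho\colon \Delta^3\to X$ whose remaining $d_1$-face is a 2-simplex with $d_0 = g$, $d_1 = h$, and $d_2 = f$, as required.
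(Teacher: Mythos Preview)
Your proposal is correct and follows essentially the same strategy as the paper's proof: fill an inner $\Lambda^2_1$ horn to produce a candidate $h'$, identify the relation $\sim$ with equality of morphisms in $\pi_0\jC(X)$, and then absorb the homotopy $h\sim h'$ via an inner $\Delta^3$-horn filling. The only cosmetic differences are that the paper uses the ``right'' homotopy relation (degenerate $d_0$-face) and a $\Lambda^3_2$ horn in the final step, whereas you use the ``left'' relation (degenerate $d_2$-face) and a $\Lambda^3_1$ horn; and the paper outsources the equivalence-relation and $\ho(X)\cong\pi_0\jC(X)$ verification to \cite{BV} and \cite{J2}, while you sketch it directly.
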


The proof will be given in Appendix~\ref{se:special}.  The following is an
immediate corollary:

\begin{cor}
\label{co:3iso}
Let $f\colon \Delta^1\ra X$ be a $1$-simplex in a quasi-category.
Then $f$ is a quasi-isomorphism if and only if
the image of $f$ in the category $\pi_0\jC(X)$ is an isomorphism. 
\end{cor}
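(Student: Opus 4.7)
The plan is to deduce this directly from the two preceding results: Proposition~\ref{pr:quasi0} (which characterizes quasi-isomorphisms by the existence of a left and right quasi-inverse witnessed by 2-simplices) and Proposition~\ref{pr:tau_1} (which identifies such 2-simplices with composition relations in $\pi_0\jC(X)$).

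For the forward direction, I would start with a quasi-isomorphism $f\colon a\to b$. By Proposition~\ref{pr:quasi0}, $f$ is equivalent to a quasi-isomorphism in the sense of (iii): there exist 1-simplices $g_1,g_2\colon b\to a$ together with 2-simplices whose boundaries are $(f, \id_a, g_1)$ and $(g_2, \id_b, f)$ in the $d_2,d_1,d_0$ positions. Applying Proposition~\ref{pr:tau_1} to each 2-simplex translates these into the equations $g_1\circ f = \id_a$ and $f\circ g_2 = \id_b$ in $\pi_0\jC(X)$. A morphism in a category with both a left and a right inverse is an isomorphism (and the two inverses coincide), so $[f]$ is an isomorphism in $\pi_0\jC(X)$.

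For the converse, suppose $[f]$ is an isomorphism in $\pi_0\jC(X)$, with two-sided inverse $[g]\colon b\to a$. By the Remark preceding Proposition~\ref{pr:tau_1}, every morphism in $\pi_0\jC(X)$ is represented by an honest 1-simplex, so we may choose a 1-simplex $g\colon b\to a$ whose class is $[g]$. Then in $\pi_0\jC(X)$ we have $g\circ f = \id_a$ and $f\circ g = \id_b$, and applying Proposition~\ref{pr:tau_1} in the other direction produces 2-simplices with the required boundaries. These are precisely the 2-simplices appearing in condition (iii) of Proposition~\ref{pr:quasi0} (with $g_1 = g_2 = g$), so $f$ is a quasi-isomorphism.

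There is no real obstacle here: the entire content is to match boundary conventions in Proposition~\ref{pr:tau_1} with the diagrams in Proposition~\ref{pr:quasi0}(iii), and to invoke the remark that in a quasi-category every $\pi_0\jC$-morphism lifts to a 1-simplex. The only point worth double-checking in writing it up is the identification of which face of the 2-simplex plays which role, so that the equation produced by Proposition~\ref{pr:tau_1} really is $g_i\circ f = \id$ or $f\circ g_i = \id$ as needed.
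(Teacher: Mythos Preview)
Your proof is correct and follows exactly the approach the paper has in mind: the paper's proof reads simply ``Immediate from Proposition~\ref{pr:tau_1},'' and what you have written is precisely the unpacking of that sentence (together with the evident use of Proposition~\ref{pr:quasi0} and the Remark preceding Proposition~\ref{pr:tau_1}). Your face-matching is correct, so there is nothing to fix.
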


\begin{proof}
Immediate from Proposition~\ref{pr:tau_1}.
\end{proof}

Note that Corollary~\ref{co:3iso}
adds another equivalent condition to the list given in
Proposition~\ref{pr:quasi0}.  Also observe that this corollary
implies a two-out-of-three property for quasi-isomorphisms: for a
$2$-simplex in $X$, if two of its three faces are quasi-isomorphisms then so is the third.

We now also have the following:

\begin{prop}
\label{pr:isoclasses}
For $X$ a quasi-category, there is a natural bijection between
$[*,X]_{E^1}$ and the set of isomorphism classes of objects in $\pi_0\jC(X)$.
\end{prop}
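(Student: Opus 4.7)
The plan is to identify both sides as quotients of the set $X_0$ of vertices and then show that the two equivalence relations coincide.

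First I would unpack the left-hand side. A map $\ast\to X$ is the same data as a vertex $a\in X_0$, and two maps $a,b\colon \ast\to X$ are $E^1$-homotopic precisely when there is a map $H\colon \ast\times E^1=E^1\to X$ with $H(0)=a$ and $H(1)=b$, i.e., an $\infty$-isomorphism in $X$ from $a$ to $b$. Thus $[\ast,X]_{E^1}=X_0/\!\sim_1$, where $a\sim_1 b$ iff there exists an $\infty$-isomorphism $E^1\to X$ carrying $0$ to $a$ and $1$ to $b$. (Reflexivity uses degeneracies; symmetry and transitivity reduce to Proposition~\ref{pr:quasi1}, exactly as in the construction of $[A,Z]_{E^1}$ for general $A$.)

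Next I would analyze the right-hand side. The objects of $\pi_0\jC(X)$ are in natural bijection with $X_0$, and isomorphism classes are $X_0/\!\sim_2$, where $a\sim_2 b$ iff there is an isomorphism $a\to b$ in $\pi_0\jC(X)$. By the remark preceding Proposition~\ref{pr:tau_1}, every such morphism is represented by some $1$-simplex $f\colon\Delta^1\to X$ with $d_1f=a$, $d_0f=b$, so $a\sim_2 b$ iff there is a $1$-simplex $f\colon a\to b$ whose class in $\pi_0\jC(X)$ is an isomorphism.

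The identification $\sim_1{=}\sim_2$ then follows from two results already in hand. If $a\sim_1 b$ via $H\colon E^1\to X$, restricting $H$ along $\Delta^1\hookrightarrow E^1$ (the edge from $0$ to $1$) yields a $1$-simplex $f\colon a\to b$ which is an $\infty$-isomorphism, hence by Proposition~\ref{pr:quasi0} a quasi-isomorphism, hence by Corollary~\ref{co:3iso} an isomorphism in $\pi_0\jC(X)$; so $a\sim_2 b$. Conversely, if $a\sim_2 b$ via a $1$-simplex $f$ whose class is an iso, Corollary~\ref{co:3iso} says $f$ is a quasi-isomorphism, so by Proposition~\ref{pr:quasi0} it extends to a map $E^1\to X$, giving $a\sim_1 b$.

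The main (and essentially only) obstacle is the equivalence $\sim_1\Leftrightarrow\sim_2$, but all the hard work has already been done in Propositions~\ref{pr:quasi0} and~\ref{pr:quasi1} and Corollary~\ref{co:3iso}; assembling them gives the desired bijection. Naturality in $X$ is automatic, since both constructions are functorial in maps of quasi-categories and the identification above is induced by the unit/counit-type operations (restriction along $\Delta^1\hookrightarrow E^1$ and passage to $\pi_0\jC$), which are themselves natural.
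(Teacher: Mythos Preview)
Your proof is correct and follows essentially the same approach as the paper's. The paper frames it slightly differently---constructing a natural map $[*,X]_{E^1}\to\isoc(\pi_0\jC(X))$ via the observation that $\pi_0\jC(E^1)$ is the walking isomorphism, then checking surjectivity and injectivity---whereas you explicitly identify both sides as quotients of $X_0$ and compare the equivalence relations; but the substantive content (Corollary~\ref{co:3iso} plus Proposition~\ref{pr:quasi0} to pass between quasi-isomorphisms and extensions to $E^1$) is identical.
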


\begin{proof}
First note that $\pi_0\jC(E^1)$ is the category consisting of two
objects and a unique isomorphism between them.  So it is clear that we
get a natural map $[*,X]_{E^1} \ra \isoc(\pi_0\jC(X))$, and that this
is surjective.  

Suppose two $0$-simplices $a,b\in X_0$ are isomorphic
in $\pi_0\jC(X)$.  
There is a $1$-simplex $e$ connecting $a$
and $b$ representing this isomorphism, and by Corollary~\ref{co:3iso}
$e$ is also a quasi-isomorphism.  By Proposition~\ref{pr:quasi1},
$e$ extends to a map $E^1\ra X$.  This shows that $a$ and $b$
are equal in $[*,X]_{E^1}$, as desired.  
\end{proof}

%%%%%%%%%%%%%%%%%%%%%%%%%%%%%%%%%%%%%%%%%%%%%%%%%%%%%%%%%%%%%%%%%%%%%%%%%%%%%%

\section{Background on mapping spaces in model categories}
\label{se:mapping-spaces}

Given two objects $X$ and $Y$ in a model category $\cM$,
there is an associated simplicial set  $\hMap_\cM(X,Y)$
called a ``homotopy function complex'' from $X$ to $Y$.  The basic
theory of these function complexes is due to Dwyer-Kan \cite{DK1,DK2,DK3}.  
As recounted in the introduction, there are several different ways to
write down models for these function complexes, all of which turn out
to be Kan equivalent.  In this section we give a brief review of
some of this machinery.

\subsection{Mapping spaces via cosimplicial resolutions}
Let $\cM$ be a model category, and let $c\cM$ be the Reedy model
category of cosimplicial objects in $\cM$ \cite[Chapter 15]{H}.  For
any $X\in \cM$ we will write $cX$ for the constant cosimplicial object
consisting of $X$ in every dimension, where every coface and
codegeneracy is the identity.

If $X\in \cM$, a \dfn{cosimplicial resolution} of $X$ is a Reedy cofibrant
replacement $Q^\bullet \we cX$.    Given such a cosimplicial
resolution and an object $Z\in \cM$, we may form the simplicial set
$\cM(Q^\bullet,Z)$ given by
\[ [n]\mapsto \cM(Q^n,Z).
\]
It is known \cite[16.5.5]{H} that if $Z\ra Z'$ is a weak equivalence between
fibrant objects then the induced map $\cM(Q^\bullet,Z)\ra
\cM(Q^\bullet,Z')$ is a Kan equivalence of simplicial sets.

%%%%%%%%%%%%%%%%%%%%%%%%%%%%%%%%%%%%%%%%%%%%%%%%%%%%%%%%%%%%%%%%%%%%%%%%%%%%%%

\subsection{Mapping spaces via nerves of categories}

For any object $X\in \cM$, let
$\cQ(X)$ be the category whose objects are pairs $[Q,Q\ra X]$ where
$Q$ is cofibrant and $Q\ra X$ is a weak equivalence.  For any object
$Y\in \cM$, there is a functor
\[ \cM(\blank,Y)\colon \cQ(X)^{op}\lra \Set \]
sending $[Q,Q\ra X]$ to $\cM(Q,Y)$.  
We can regard this functor as taking values in $\sSet$ by composing
with the embedding $\Set\inc \sSet$.  

Consider the simplicial set $\hocolim_{\cQ(X)^{op}} \cM(\blank,Y)$.  We fix
our model for the hocolim functor to be the result of first taking the
simplicial replacement of a diagram and then applying geometric
realization.  Notice in our case that in dimension $n$ the simplicial
replacement consists of diagrams of weak equivalences $Q_0\From{\sim} Q_1
\From{\sim}\cdots \From{\sim} Q_n$ over $X$ (with each $Q_i\ra X$ in $\cQ(X)$), 
together with a map $Q_0\ra Y$.  This
shows that the simplicial replacement is nothing but the nerve of the
category for which an object is a zig-zag $[X\bwe Q \ra Y]$, where $Q$ is
cofibrant and $Q\ra X$ is a weak equivalence; and a map from $[X\bwe Q \ra
Y]$ to $[X\bwe Q' \ra Y]$ is a map $Q'\ra Q$ making the evident
diagram commute.

Categories of zig-zags like the one considered above were first
studied in \cite{DK3}.  There are many variations, and it is basically
the case that all sensible variations have Kan equivalent nerves;
moreover, these nerves are Kan equivalent to the homotopy function
complex $\hMap_\cM(X,Y)$ (defined to be the space of maps in the
simplicial localization of $\cM$ with respect to the weak
equivalences).  We will next recall some of this machinery.  In
addition to \cite{DK3}, see \cite{D}.

Following \cite{DK3}, write $(\Wcofib)^{-1}\cM(\Wfib)^{-1}(X,Y)$ to
denote the category whose objects are zig-zags
\[ \xymatrixcolsep{1.3pc}\xymatrix{
X & U \ar[r]\ar@{->>}[l]_\sim & V & Y,\ar@{ >->}[l]_\sim 
}
\]
and where the maps are natural transformations of diagrams which are
the identity on $X$ and on $Y$.
Similarly, let $W^{-1}\cM W^{-1}(X,Y)$ be the category whose objects
are zig-zags
\[ X \bwe U \lra V \bwe Y, \]
let $\cM(\Wfib)^{-1}\cM(X,Y)$ be the category whose objects are
zig-zags
\[ X \lra U \btrfib V \lra Y,
\]
and so on.

Note that there are natural inclusions of two types: an example of the
first is $\cM(\Wfib)^{-1}(X,Y)\inc \cM \W^{-1}(X,Y)$ (induced by
$\Wfib\inc \W$), and an example
of the second is
\[ \cM \W^{-1}(X,Y) \inc \cM \W^{-1}\cM(X,Y) \]
which sends 
\[ [X \bwe A \lra Y] \mapsto [X\llra{\id} X \bwe A \lra Y].
\]

The following proposition is a very basic one in this theory, and will
be used often in the remainder of the paper; we have
included the proof for completeness, and because it is simple.  

\begin{prop}\label{prop:DK-1}
When $Y$ is fibrant, the maps in the following commutative square all
induce Kan equivalences on nerves:
\[ \xymatrix{
  \cM(\Wfib)^{-1}(X,Y) \ar@{ >->}[r]\ar@{ >->}[d] & \cM
(\Wfib)^{-1}\cM(X,Y).\ar@{ >->}[d]\\
\cM W^{-1}(X,Y) \ar@{ >->}[r] & \cM W^{-1}\cM(X,Y) 
}
\]
\end{prop}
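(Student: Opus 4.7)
The plan is to combine Quillen's Theorem A with a direct retraction argument to handle the four inclusions in the square.

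I would begin with the top horizontal inclusion $j_h\taking\cM(\Wfib)^{-1}(X,Y) \inc \cM(\Wfib)^{-1}\cM(X,Y)$, which adds a leading identity. Since trivial fibrations are stable under pullback, there is a retraction functor $r$ sending $[X \to U \btrfib V \to Y]$ to $[X \btrfib P \to Y]$, where $P = X \times_U V$ and the map $P \to Y$ is the composite $P \to V \to Y$. One has $r \circ j_h = \id$, and $j_h \circ r$ is connected to the identity by a natural transformation using the projection $P \to V$. This already exhibits $j_h$ as a Kan equivalence, and crucially no fibrancy hypothesis on $Y$ is needed.

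For the left vertical inclusion $j_v\taking\cM(\Wfib)^{-1}(X,Y) \inc \cM\W^{-1}(X,Y)$, the pullback retraction is not available, since weak equivalences are not stable under pullback in general. Instead I would apply Quillen's Theorem A. Given a target object $c = [X \bwe A \to Y]$, the comma category is nonempty: factor $A \to X$ as a trivial cofibration $A \trcof A'$ followed by a trivial fibration $A' \btrfib X$, and use the fibrancy of $Y$ to extend $A \to Y$ along $A \trcof A'$ to a map $A' \to Y$, producing the object $[X \btrfib A' \to Y]$ of the subcategory together with the required comparison morphism. Contractibility would then follow from filteredness: any two such objects admit a common refinement obtained by a further round of factorization and lifting against $Y$. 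This is where the fibrancy of $Y$ is essential.

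The right vertical inclusion $\cM(\Wfib)^{-1}\cM(X,Y) \inc \cM\W^{-1}\cM(X,Y)$ is handled by the same argument as $j_v$, applied to the middle portion of the zig-zag (factor $V \to U$ as a trivial cofibration followed by a trivial fibration and extend the map to $Y$ using fibrancy). The bottom horizontal then follows from commutativity of the square together with the 2-out-of-3 property for Kan equivalences. The main obstacle will be verifying filteredness (and hence contractibility, not just connectedness) of the comma categories for the vertical maps: nonemptiness is a direct application of factorization and lifting, but filteredness requires a compatibility argument between successive factorizations, and it is here that the proof expends most of its effort and relies most heavily on the fibrancy of $Y$.
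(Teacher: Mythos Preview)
Your treatment of the top horizontal map via pullback, and of the bottom horizontal map via two-out-of-three, matches the paper exactly.

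For the vertical maps, however, your route is both more complicated than necessary and has a genuine gap. You propose to show the relevant comma categories are filtered, but you do not carry this out, and the sketch you give (``a further round of factorization and lifting against $Y$'') does not obviously work. Given two objects $[X \btrfib B_1 \to Y]$ and $[X \btrfib B_2 \to Y]$ under $c = [X \bwe A \to Y]$, the natural candidates for a common bound both fail: the pushout $B_1 \amalg_A B_2$ need not map to $X$ by a weak equivalence (the maps $A \to B_i$ are not cofibrations in general), while the fiber product $B_1 \times_X B_2$ inherits two possibly different maps to $Y$. Coequalizing parallel morphisms is equally unclear. Moreover, since your construction of $A'$ involves a non-canonical choice of extension $A' \to Y$, it cannot be packaged into a functor, so you are genuinely forced into the Theorem~A route with no shortcut.

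The paper sidesteps all of this with one trick: instead of factoring $A \to X$ and then lifting separately to $Y$, functorially factor $A \to X \times Y$ as $A \trcof P \fib X \times Y$. Since $Y$ is fibrant the projection $X \times Y \to X$ is a fibration, so the composite $P \to X$ is a (necessarily trivial) fibration; the map $P \to Y$ is the other projection and comes for free. This is functorial, yielding $F\colon \cM W^{-1}(X,Y) \to \cM(\Wfib)^{-1}(X,Y)$ together with natural transformations $\id \to jF$ and $\id \to Fj$ (both induced by $A \trcof P$). Thus $j$ and $F$ are inverse homotopy equivalences on nerves directly---no Theorem~A, no comma categories. The same trick (factor $V \to U \times Y$) handles the right vertical map.
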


\begin{proof}
Denote all the inclusions in the square by $j$.

We start with the left vertical map.  Given a zig-zag $[X\bwe A \lra
Y]$, functorially factor the map $A\ra X\times Y$ as $A\trcof P \fib
X\times Y$.  Since $Y$ is fibrant the projection $X\times Y\ra X$ is
a fibration, and so the composite $P\ra X$ is a fibration as well.
Define a functor $F\colon \cM \W^{-1}(X,Y) \ra \cM (\Wfib)^{-1}(X,Y)$ by
sending $[X\bwe A \lra Y]$ to $[X \btrfib P \lra Y]$.  There are natural
transformations $\id \ra j\circ F$ and $\id \ra F\circ j$, which shows
that on nerves $F$ and $j$ are homotopy inverses.

A very similar proof works for the right vertical map in the diagram.  Given a
zig-zag $[X\lra U \bwe V \lra Y]$, functorially factor $V\ra U\times
Y$ as $V\trcof P \fib U\times Y$.  Define $F\colon \cM W^{-1}\cM(X,Y)
\ra \cM (\Wfib)^{-1} \cM(X,Y)$ by sending the zig-zag 
\[ [X\lra U \bwe V \lra Y] \mapsto
[X\lra U \btrfib P \lra Y].
\]
  This gives a homotopy inverse for $j$.

For the top horizontal map we do not even need to use that $Y$ is
fibrant.  Define a homotopy inverse by sending
$[X\lra U \btrfib V \lra Y]$ to the associated zig-zag $[X \btrfib P \lra Y]$
where $P$ is the pullback of $X\lra U \btrfib V$.

Finally, the bottom horizontal map induces a Kan equivalence on nerves
because the other three maps do.
\end{proof}

Let $QX^\bullet\ra X$ be a cosimplicial resolution of $X$ in $c\cM$.
Following \cite{DK3}, we now relate the simplicial set
$\cM(QX^\bullet,Y)$ to the nerves of the categories of zig-zags
considered above.

For any simplicial set $K$, let $\bD K$ be the category of
simplices of $K$.  This is none other than the overcategory $(S\ovcat
K)$, where $S\colon \bD \ra \sSet$ is the functor $[n]\mapsto
\Delta^n$.  It is known that the nerve of $\bD K$ is naturally
Kan equivalent to $K$ (see \cite[text prior to Prop. 2.4]{D} for an
explanation).  

There is a functor $\bD \cM(QX^\bullet,Y) \ra \cM(\Wfib)^{-1}(X,Y)$
sending $([n],QX^n\ra Y)$ to $[X\btrfib QX^n \lra Y]$.  

\begin{prop}\label{prop:DK-2} Let $QX^\bullet\ra X$ be a Reedy cofibrant resolution of $X$.
Then
$\bD\cM(QX^\bullet,Y) \ra \cM(\Wfib)^{-1}(X,Y)$ induces a Kan
equivalence on nerves.
\end{prop}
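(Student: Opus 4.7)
The plan is to apply Quillen's Theorem A to the functor $F\colon \bD\cM(QX^\bullet,Y)\to\cM(\Wfib)^{-1}(X,Y)$. Using that $N(\bD K)\we K$ for any simplicial set $K$ (cited in the excerpt), the nerve of the source is identified with $\cM(QX^\bullet,Y)$, and by Quillen~A it suffices to show that for every object $c=[X\btrfib U\to Y]$ of $\cM(\Wfib)^{-1}(X,Y)$ the overcategory $F\downarrow c$ has contractible nerve.

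I would first unpack $F\downarrow c$. An object is a triple $([n],f\colon QX^n\to Y,\phi\colon QX^n\to U)$ with $\phi$ lying over $X$ (so $(U\to X)\circ\phi$ equals the augmentation $QX^n\to X$) and over $Y$ (so $(U\to Y)\circ\phi=f$). Since $f$ is then determined by $\phi$, such an object is the same as a pair $([n],\phi\colon QX^n\to U\text{ over }X)$, and morphisms unpack to maps $[n]\to[m]$ in $\Delta$ intertwining the $\phi$'s. In other words $F\downarrow c\cong \bD\cM^X(QX^\bullet,U)$, where $\cM^X(QX^\bullet,U)$ denotes the simplicial set whose $n$-simplices are maps $QX^n\to U$ lying over $X$. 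Invoking $N(\bD K)\we K$ once more reduces the problem to showing that $\cM^X(QX^\bullet,U)$ is a contractible Kan complex.

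This last simplicial set is by construction the fiber of $\cM(QX^\bullet,U)\to\cM(QX^\bullet,X)$ over the vertex determined by the augmentation $QX^0\to X$. The key technical ingredient is the Reedy SM7-type statement (e.g.\ \cite[Ch.~16]{H}): since $QX^\bullet$ is Reedy cofibrant and $U\btrfib X$ is an acyclic fibration, the induced map on mapping simplicial sets is an acyclic Kan fibration, and in particular all of its fibers are Kan-contractible. This supplies the desired contractibility, and Quillen~A then delivers the conclusion. I expect the main technical content to lie precisely in this Reedy SM7 statement, which is standard but requires a latching-object induction; aside from that the argument is a formal unwinding of definitions together with Quillen~A.
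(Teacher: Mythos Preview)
Your proof is correct and is essentially the standard argument that the paper defers to in \cite{DK3} and \cite[Thm.~2.4]{D}; the paper itself does not give an independent proof. Your identification of $F\downarrow c$ with $\bD\cM^X(QX^\bullet,U)$ is accurate, the Reedy compatibility statement you invoke (that $\cM(QX^\bullet,U)\to\cM(QX^\bullet,X)$ is an acyclic Kan fibration when $QX^\bullet$ is Reedy cofibrant and $U\to X$ is an acyclic fibration) is exactly \cite[16.5.3--16.5.4]{H}, and Quillen's Theorem~A then finishes the job.
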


\begin{proof}
The result is proven in \cite{DK3}, but see also
\cite[Thm. 2.4]{D}.
\end{proof}

\begin{remark}
\label{re:DK-summary}
To briefly summarize the main points of this section, Propositions \ref{prop:DK-1} and \ref{prop:DK-2} show that
when $X$ is any object and $Y$ is a fibrant object in a model category $\cM$, the following maps of categories all induce Kan
equivalences on the nerves:
\[ \xymatrixcolsep{1.4pc}\xymatrix{
\bD\cM(QX^\bullet,Y) \ar[r] & \cM(\Wfib)^{-1}(X,Y)\ar@{ >->}[r] & \cM\W^{-1}(X,Y) \ar@{ >->}[r]
& \cM\W^{-1}\cM(X,Y). \\
}
\]
In particular, the nerves all have the homotopy type of the homotopy
function complex $\hMap(X,Y)$.  
\end{remark}
%%%%%%%%%%%%%%%%%%%%%%%%%%%%%%%%%%%%%%%%%%%%%%%%%%%%%%%%%%%%%%%%%%%%%%%%%%%%%%%%%%%

%%%%%%%%%%%%%%%%%%%%%%%%%%%%%%%%%%%%%%%%%%%%%%%%%%%%%%%%%%%%%%%%%%%%%%

\section{Dwyer-Kan models for quasi-category mapping spaces}
\label{se:Dwyer-Kan}

In this section we use the Dwyer-Kan machinery of the previous section to give models for the
mapping spaces in a quasi-category.  These models have the advantage of being
relatively easy to work with and compute.  However, they have the disadvantage that
they do not admit a composition law.

\subsection{The canonical cosimplicial framing on $\sSetJ$}
\label{se:dwyer-kan1}
Recall from Section~\ref{se:quasi} that
$E\taking\Set\to\sSet$ denotes the 0-coskeleton functor.  For a set $S$, we
may also describe $ES$ as the nerve of the groupoid $E_GS$ with object
set $S$
and a single morphism $a\to b$ for each $a,b\in S$. 

Recall the Reedy model structure \cite[Chapter 15.3]{H} on the category of
cosimplicial objects in a model category.  In the present section, the
Reedy structure we will use will always be on $c(\sSet_J)$, not
$c(\sSet_K)$.  However, note that any Reedy-Joyal equivalence is also a
Reedy-Kan equivalence.

\begin{lemma}\label{lemma:ES fibrant qcat}
For any nonempty set $S$, the map $ES \ra \Delta^0$ is an acyclic fibration in
$\sSet_J$.
\end{lemma}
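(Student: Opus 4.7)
The plan is to verify directly that $ES \to \Delta^0$ has the right-lifting-property with respect to all monomorphisms. Since the cofibrations in $\sSet_J$ are precisely the monomorphisms, this is by definition what it means to be an acyclic fibration in $\sSet_J$.

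The key observation is that $ES$ is $0$-coskeletal by construction: $ES = \cosk_0 S$. By the $(\sk_0, \cosk_0)$-adjunction, for any simplicial set $X$ there is a natural bijection
\[ \sSet(X, ES) \;\iso\; \Set(X_0, S). \]
So the data of a map $X \to ES$ is simply a function on vertices $X_0 \to S$.

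Now consider a lifting problem
\[ \xymatrix{ A \ar[r]^f \ar@{ >->}[d]_{i} & ES \ar[d] \\ B \ar[r] \ar@{.>}[ur] & \Delta^0. } \]
where $i$ is any monomorphism. By the above, $f$ corresponds to a function $\bar f \taking A_0 \to S$, and since $i$ is a monomorphism the map $i_0 \taking A_0 \to B_0$ is an injection of sets. Because $S$ is nonempty, we may extend $\bar f$ to a function $B_0 \to S$ (by choosing any element of $S$ as the image of each vertex in $B_0 \setminus i_0(A_0)$). The corresponding map $B \to ES$ provides the desired lift.

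Since $ES \to \Delta^0$ has the RLP against all monomorphisms, it is an acyclic fibration in any model structure on $\sSet$ whose cofibrations are the monomorphisms; in particular in $\sSet_J$. There is really no obstacle here, as the argument is essentially formal from the $0$-coskeletal nature of $ES$; the only thing being used about $S$ is its nonemptiness, which is needed precisely to handle the case of the cofibration $\emptyset \hookrightarrow \Delta^0$.
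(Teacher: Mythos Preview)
Your proof is correct and follows essentially the same approach as the paper's. The paper observes that acyclic fibrations in $\sSet_J$ coincide with those in $\sSet_K$ (same cofibrations) and then says it is ``easy to check'' the RLP against the maps $\bdd{n}\inc\del{n}$; you make this check explicit via the $(\sk_0,\cosk_0)$-adjunction and verify it against all monomorphisms at once, which is a perfectly good way to spell out the omitted detail.
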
  

\begin{proof}
The acyclic fibrations in the Joyal model category $\sSet_J$ are the same as
those in the Kan model category $\sSet_K$, as both model categories have the same cofibrations.  It
is easy to check that $ES\ra \Delta^0$ has the right lifting property
with respect to the maps $\bdd{n}\ra \del{n}$.
\end{proof}

The forgetful functor $\bD\inc \Set$ describes a cosimplicial set
whose $n$th object is $[n]=\{0,1,\ldots,n\}$.  Applying the functor
$E$ gives a cosimplicial object $[n]\mapsto E^n=N(E_G([n]))$ in
$\sSet_J$.  It is easy to check that $E^\bullet$ is Reedy cofibrant in
$c(\sSet_J)$, and the above lemma shows that each $E^n$ is
contractible in $\sSet_J$.  Note that the evident inclusion of
categories $[n]\inc E_G([n])$ induces a levelwise cofibration
$\Delta^\bullet \inc E^\bullet$, which in each level is a Kan
equivalence but not a Joyal equivalence.

For any simplicial set $X$, the cosimplicial object $[n]\mapsto
X\times E^n$ is a cosimplicial resolution of $X$ with respect to
the model structure $\sSet_J$.  

\subsection{Three cosimplicial versions of $\Delta^1$}\label{se:cosimplicial versions}

In his book, Lurie at various times uses three internal models for the
mapping space between vertices in a simplicial set $S$.  They are
called $\Hom^R_S$, $\Hom^L_S$, and $\Hom_S$; the descriptions of all
of these can be found in \cite[Section 1.2.2]{L}.  In this subsection
we show that  each of these can be understood as the mapping space
coming from a cosimplicial resolution of $\Delta^1$.  We also
give one new model, $\Hom^E_S$, which will be very useful later.
See Remark \ref{rem:our notation agrees with L} for more about the
Lurie models.

Recall that for any simplicial sets $M$ and $N$, the join $M\star N$
is a simplicial set with $$(M\star N)_n= \coprod_{-1\leq i\leq
n}M_i\times N_{n-i-1},$$ where we put $M_{-1}=N_{-1}=\Delta^0$ (see
\cite[1.2.8.1]{L}).  Note that $\star$ is a bifunctor and that $M\star\emptyset=\emptyset\star M=M$, so in particular there are natural inclusions
$M\inc M\star N$ and $N\inc M\star N$.  Note as well that
$M\join\Delta^0$ and $\Delta^0\join M$ are cones on $M$, and that $\Delta^n\join
\Delta^r\iso \Delta^{n+r+1}$.  

We let $C_R(M)$ and $C_L(M)$ denote the quotient $(M\star \Delta^0)/M$
and $(\Delta^0\star M)/M$, respectively.  For any simplicial set $M$,
let $\Ccyl(M)$ be the pushout
\[ \bdd{1} \la M\times \bdd{1} \inc  M\times \Delta^1.\]
Note that $C_R(M), C_L(M)$, and $\Ccyl(M)$ each has exactly
two $0$-simplices and comes with a natural map to $\Delta^1$ (which is unique over $\bd{1}$).

Let $C_R^\bullet$ (respectively $C_L^\bullet$) denote the cosimplicial
space $[n]\mapsto C_R(\Delta^n)$ (resp. $[n]\mapsto C_L(\Delta^n)$).
Write $\Ccyl^\bullet$ for the cosimplicial space $[n]\mapsto
\Ccyl(\Delta^n)$.  Finally, write $C_E^\bullet$ for the cosimplicial
space $[n]\mapsto \Ccyl(E^n)$.  

Note that there are inclusions  $C_R(\Delta^n)\inc \Ccyl(\Delta^n)$, and 
surjection maps $\Ccyl(\Delta^n)\ra C_R(\Delta^n)$ exhibiting $C_R(\Delta^n)$ as a
retract of $\Ccyl(\Delta^n)$.  
These assemble to give maps of cosimplicial spaces (in both
directions) between
$C_R^\bullet$ and 
$\Ccyl^\bullet$.  The same can be said if we replace $C_R$ with $C_L$. 
In addition, the map of cosimplicial spaces $\Delta^\bullet \ra E^\bullet$ gives us
a map $\Ccyl^\bullet\ra C_E^\bullet$.  In other words, we have maps
\begin{align}\label{dia:cosimplicial resolutions}\xymatrix@=12pt{ C_R^\bullet \ar[rrd]\\ && \Ccyl^\bullet \ar[rr]&& C_E^\bullet.\\C_L^\bullet\ar[rru]}
\end{align}
Note that in each of these cosimplicial spaces the $0$th space is
$\Delta^1$.  Hence, each of the above four cosimplicial spaces comes equipped
with a canonical map to $c\Delta^1$.  Each also comes equipped with a canonical map from $c(\bd{1})$. 

Recall the notation $\bpsSet=(\bdd{1}\ovcat\sSet_J)$, and let $c(\bpsSet)$ denote the Reedy model structure on the cosimplicial objects in $\bpsSet$, as in Section \ref{se:mapping-spaces}.

\begin{prop}\mbox{}\par
\label{pr:three-resolutions}
\begin{enumerate}[(a)]
\item Each of the maps $c(\bdd{1})\ra C_R^\bullet$,  $c(\bdd{1})\ra C_L^\bullet$,
$c(\bdd{1})\ra \Ccyl^\bullet$,  and $c(\bdd{1})\ra C_E^\bullet$ is a Reedy cofibration.
\item Each of $C_R^\bullet$, $C_L^\bullet$, $\Ccyl^\bullet$, and
$C_E^\bullet$ is Reedy cofibrant as an object of $c(\bpsSet)$.
\item Each of the maps of simplicial sets 
$C_R^n \ra \Delta^1$, $C_L^n \ra \Delta^1$, $\Ccyl^n\ra \Delta^1$, and
$C_E^n \ra \Delta^1$ is a Joyal equivalence.
\item 
Consequently, each
of $C_R^\bullet$, $C_L^\bullet$, $\Ccyl^\bullet$, and $C_E^\bullet$ is a cosimplicial
resolution of $\Delta^1$ in $c(\bpsSet)$.    
\end{enumerate}
\end{prop}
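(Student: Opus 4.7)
The plan is to notice that the proposition has a clear hierarchy. Part (d) follows immediately from (b) and (c), since a cosimplicial resolution in $c(\bpsSet)$ is by definition a Reedy-cofibrant object together with a levelwise weak equivalence to the constant diagram. Parts (a) and (b) are equivalent: because $\bpsSet=(\bdd{1}\ovcat\sSet_J)$ carries the overcategory model structure, $X^\bullet$ being Reedy cofibrant in $c(\bpsSet)$ amounts to $c(\bdd{1})\to X^\bullet$ being a Reedy cofibration in $c(\sSet_J)$. So the real work is to prove (b) and (c).

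For (b), the check is that at each level $n$ the latching map $L^n X\to X^n$ is a monomorphism. Each of the four cosimplicial objects is obtained from $\Delta^\bullet$ (or $E^\bullet$, for $C_E^\bullet$) by a colimit-preserving functor---join-then-quotient for $C_R^\bullet$ and $C_L^\bullet$, product-then-pushout for $\Ccyl^\bullet$ and $C_E^\bullet$---that carries latching sub-inclusions to latching sub-inclusions. The claim then reduces to Reedy cofibrancy of $\Delta^\bullet$ in $c(\sSet)$, which is standard, plus for $C_E^\bullet$ the analogous Reedy cofibrancy of $E^\bullet$, which is a direct inspection of the codegeneracy pattern.

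For (c), I would handle $C_E^\bullet$ separately because it admits a much cleaner argument. By Lemma~\ref{lemma:ES fibrant qcat}, $E^n\to \Delta^0$ is a $\sSet_J$-acyclic fibration; acyclic fibrations are pullback-stable, so $E^n\times \bdd{1}\to \bdd{1}$ and $E^n\times \Delta^1\to \Delta^1$ are also acyclic fibrations. Left properness of $\sSet_J$ (every object is cofibrant) applied to the pushout defining $\Ccyl(E^n)$ then makes $E^n\times \Delta^1\to \Ccyl(E^n)$ a Joyal equivalence; two-out-of-three with the composite $E^n\times \Delta^1\to \Ccyl(E^n)\to \Delta^1$ yields $\Ccyl(E^n)\to \Delta^1$ is a Joyal equivalence. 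For the remaining three cases $C_R^\bullet, C_L^\bullet, \Ccyl^\bullet$, I would exhibit an obvious section $s\colon \Delta^1\to X^n$ with $\pi s=\id$ (e.g., for $C_R^n$ the image of the edge $[n,n+1]\subset \Delta^{n+1}$), and build up $X^n$ from $s(\Delta^1)$ by a filtration of pushouts of inner horn inclusions $\Lambda^{k+1}_j\inc\Delta^{k+1}$, $0<j<k+1$, making $s$ inner anodyne. Then $\pi$ is a Joyal equivalence by two-out-of-three.

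The main obstacle is setting up the filtration for $C_R^\bullet, C_L^\bullet, \Ccyl^\bullet$. A naive dimension-by-dimension attachment fails because the full boundary of a mid-dimensional simplex can become present before the simplex itself, forcing a non-trivial boundary attachment $\bdd{k}\inc\Delta^k$. The fix is to pair each such ``problematic'' $k$-simplex $\sigma$ with a companion $(k{+}1)$-simplex $\tau$ having $\sigma$ as an inner face, and introduce both together by filling an inner horn on $\tau$. For $C_R^n$, concretely, a simplex $[T\cup\{n+1\}]$ with $T\subseteq\{0,\ldots,n-1\}$ gets paired with $\tau=[T\cup\{n,n+1\}]$; the other faces of $\tau$ are inductively-present simplices containing $n$, together with the degenerate face $[T\cup\{n\}]\subset\Delta^n$ collapsed to an iterated degeneracy at $a$, so $\Lambda^{|T|+1}_{|T|}\inc\Delta^{|T|+1}$ fills. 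Setting up such a pairing uniformly and producing analogous filtrations for $C_L^\bullet$ (by reflection) and $\Ccyl^\bullet$ (using the shuffle decomposition of $\Delta^n\times\Delta^1$) is combinatorially intricate but purely formal.
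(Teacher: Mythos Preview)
Your proposal is correct and matches the paper's approach essentially step for step: the paper likewise notes that (a), (b) are obvious and (d) follows from (b) and (c), handles $C_E^n\to\Delta^1$ by the same left-properness argument, and defers $C_R^n$, $C_L^n$, $\Ccyl^n$ to a later section where it exhibits exactly the inner-anodyne filtrations you describe (pairing simplices missing the ``pivot'' vertex with their companions, and using the shuffle decomposition for $\Ccyl^n$). The only organizational difference is that the paper packages the $C_R$/$C_L$ cases via an auxiliary family $\Delta^n_k$ (the quotient of $\Delta^n$ collapsing an initial and a terminal face), which also streamlines the $\Ccyl$ argument since each shuffle maps to some $\Delta^{n+1}_i$.
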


\begin{proof}
Parts (a) and (b) are obvious, and (d) follows immediately from (b)
and (c).  For (c), note first that $C^n_E\ra \Delta^1$ is easily seen
to be a Joyal equivalence.  Indeed, $C^n_E$ is the pushout of
\[ \bd{1} \llla{\sim} \bd{1}\times E^n \inc \Delta^1\times E^n,
\]
where the indicated map is a Joyal equivalence by Lemma~\ref{lemma:ES
fibrant qcat}.  It follows from left properness of $\sSet_J$ that
$\Delta^1 \times E^n \ra C^n_E$ is a Joyal equivalence.  Using that
$\Delta^1\times E^n \ra \Delta^1$ is a Joyal equivalence
(Lemma~\ref{lemma:ES fibrant qcat} again), it follows immediately that
$C^n_E \ra \Delta^1$ is also one.

The arguments for $C^n_R$, $C^n_L$, and $\Ccyl^n$ are more
complicated.  Picking the former for concreteness, there are various
sections of the map $C^n_R \ra \Delta^1$.  It will be sufficient to
show that any one of these is a Joyal acyclic cofibration, which we do by
exhibiting it as a composition of cobase changes of inner horn
inclusions.  This is not difficult, but it is a little cumbersome; we
postpone the proof until Section~\ref{se:leftover} (see Proposition~\ref{prop:squash}).
\end{proof}

\subsection{Application to mapping spaces}
For every $S\in \sSet_J$ and every $a,b\in S$,
define $\Hom^R_S(a,b)$ to be the simplicial set
$\bpsSet(C^\bullet_R,S)$.  Note that this is also the pullback of
\[ * \llra{(a,b)} S\times S \bfib \sSet(C_R^\bullet,S).
\]  
Define $\Hom^L_S(a,b)$, $\Hom^{\cyl}_S(a,b)$, and $\Hom^E_S(a,b)$ analogously, and note
that diagram (\ref{dia:cosimplicial resolutions}) induces natural maps
\begin{myequation}
\label{dia:comparing homs}
\xymatrix@=12pt{&&&&\Hom^R_S(a,b)\\\Hom^E_S(a,b) \ar[rr]&&
\Hom^{\cyl}_S(a,b) \ar[rru]\ar[rrd]\\ &&&&\Hom^L_S(a,b).}
\end{myequation}

\begin{cor}\label{cor:four models}
When $S\in \sSet_J$ is fibrant and $a,b\in S$, the four natural maps
in (\ref{dia:comparing homs}) are Kan equivalences of simplicial
sets.  These simplicial sets are models for the homotopy function
complex $\hMap_{\bpsSet}(\Delta^1,S)$, where $S$ is regarded as an
object of $\bpsSet$ via the map $\bdd{1}\ra S$ sending $0\ra
a$ and $1\ra b$.
\end{cor}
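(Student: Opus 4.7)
The plan is to realize this corollary as a direct application of the general Dwyer--Kan machinery from Section~\ref{se:mapping-spaces} to the four cosimplicial resolutions of $\Delta^1$ assembled in Proposition~\ref{pr:three-resolutions}.

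First I would check that $S$, viewed as an object of $\bpsSet$ via the map $\bdd{1} \to S$ picking out $a$ and $b$, is fibrant in $(\bpsSet)_J$.  This is immediate from the recipe for the overcategory model structure \cite[7.6.5]{H}: the fibrations in $\bpsSet = (\bdd{1} \ovcat \sSet_J)$ are precisely the Joyal fibrations of underlying simplicial sets, and $S$ is Joyal fibrant by hypothesis.

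Next I would argue that each of the four maps in diagram~(\ref{dia:cosimplicial resolutions}) is a Reedy equivalence between Reedy cofibrant resolutions of $\Delta^1$ in $c(\bpsSet)$.  The Reedy cofibrancy and the resolution property are exactly the content of Proposition~\ref{pr:three-resolutions}(b) and~(d).  For the Reedy equivalence claim, observe that each map in (\ref{dia:cosimplicial resolutions}) commutes, in every level, with the augmentations to $c\Delta^1$; since each of these augmentations is a Joyal equivalence by Proposition~\ref{pr:three-resolutions}(c), two-out-of-three forces every level of every such map to be a Joyal equivalence, which is precisely the condition for a Reedy equivalence in $c(\bpsSet)$.

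Finally I would invoke the standard fact from Reedy model category theory (see e.g.\ \cite[Theorem 16.5.5]{H}): if $S$ is fibrant in a model category $\cM$ and $f\colon Q^\bullet \to (Q')^\bullet$ is a Reedy equivalence of Reedy cofibrant cosimplicial objects, then the induced map $\cM((Q')^\bullet, S) \to \cM(Q^\bullet, S)$ is a Kan equivalence of simplicial sets.  Applied to the four maps in (\ref{dia:cosimplicial resolutions}), this immediately yields that the four maps in (\ref{dia:comparing homs}) are Kan equivalences, and because each of our four cosimplicial objects is a Reedy cofibrant resolution of $\Delta^1$, each of the four simplicial sets computes the homotopy function complex $\hMap_{\bpsSet}(\Delta^1, S)$ in the sense reviewed in Section~\ref{se:mapping-spaces}.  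No real obstacle is expected; all the substantive work has already been done in Proposition~\ref{pr:three-resolutions}(c), whose proof is deferred to Section~\ref{se:leftover}.
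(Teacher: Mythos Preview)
Your proposal is correct and follows exactly the same approach as the paper: the paper's proof is the single sentence ``This is immediate from Proposition~\ref{pr:three-resolutions} and \cite[16.5.5]{H},'' and your write-up simply unpacks what that citation is doing (fibrancy of $S$ in the overcategory, Reedy equivalence via two-out-of-three from the common augmentation, and the Hirschhorn result).
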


\begin{proof}
This is immediate from 
Proposition~\ref{pr:three-resolutions}
and \cite[16.5.5]{H}.  
\end{proof}

\begin{remark}\label{rem:our notation agrees with L}

For a simplicial set $S$ and vertices $a,b\in S_0$, our notation
$\Hom^R_S (a,b)$ and $\Hom^L_S(a,b)$ agrees with that of \cite[Section
1.2.2]{L}.  Our notation 
$\Hom^{\cyl}_S(a,b)$ is denoted $\Hom_S(a,b)$ in \cite{L}, and we used
the Lurie notation earlier in Proposition~\ref{prop:observation}.
Note that
$\Hom_S^{cyl}(a,b)$ can also be described as the fiber of the morphism
of simplicial mapping spaces
$\Map_\sSet(\Delta^1,S)\to\Map_\sSet(\bd{1},S)$ over the point
$(a,b)$.  The model $\Hom^E_S(a,b)$ does not seem to appear in
\cite{L}, but will be very useful in Section~\ref{se:relative}.

\end{remark} 

The following calculation will be needed in the next section.  Recall
the notion of {\it necklace\/}, from \cite[Section 3]{DS1}, and that
if $T=\Delta^{n_1}\Wedge\cdots\Wedge\Delta^{n_k}$ is a necklace then
$\Delta[T]$ denotes the simplex spanned by the ordered set of vertices
of $T$.

\begin{prop}
\label{pr:necklace,hmap}
Let $T$ be a necklace.  Then $\hMap_{\bpsSet}(\Delta^1,T)$ is contractible.
\end{prop}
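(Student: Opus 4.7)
The plan is to reduce to computing $\hMap_{\bpsSet}(\Delta^1,\Delta^N)$ for a single standard simplex, and then to carry out that computation directly using the right-mapping-space model $\Hom^R$ from Section~\ref{se:cosimplicial versions}.

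First I would invoke Lemma~\ref{le:necklace=simplex} (asserted earlier in the excerpt) to see that the canonical inclusion $T\inc\Delta[T]$ is a Joyal equivalence. Since $\Delta[T]$ is a standard simplex, it is the nerve of a linearly ordered set and hence a quasi-category. Fibrant objects and weak equivalences in $\bpsSet=(\bd{1}\ovcat\sSet_J)$ are detected by the forgetful functor to $\sSet_J$ (see \cite[7.6.5]{H}), and the inclusion $T\inc\Delta[T]$ respects the $\bd{1}$-structure because $\Delta[T]$ has the same initial and terminal vertices as $T$ by construction.  Hence $T\inc\Delta[T]$ is a fibrant replacement of $T$ in $\bpsSet$, and it suffices to prove that $\hMap_{\bpsSet}(\Delta^1,\Delta^N)$ is contractible, where $N=n_1+\cdots+n_k$ and $\Delta^N$ is viewed in $\bpsSet$ via the map $\bd{1}\to\Delta^N$ with $0\mapsto 0$ and $1\mapsto N$.

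By Corollary~\ref{cor:four models}, this last mapping space is Kan equivalent to $\Hom^R_{\Delta^N}(0,N)=\bpsSet(C_R^\bullet,\Delta^N)$.  Unwinding the definitions, a $k$-simplex of this set is a map $C_R(\Delta^k)=\Delta^{k+1}/\Delta^{\{0,\ldots,k\}}\to\Delta^N$ sending the collapsed vertex to $0$ and the cone point to $N$, or equivalently an order-preserving map $[k+1]\to[N]$ sending $\{0,\ldots,k\}$ to $0$ and $k+1$ to $N$.  There is exactly one such map in each dimension, so $\Hom^R_{\Delta^N}(0,N)\iso \Delta^0$, which is certainly contractible.

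Once Lemma~\ref{le:necklace=simplex} and Corollary~\ref{cor:four models} are available, the proof amounts to routine bookkeeping together with a one-line combinatorial count, so there is no serious obstacle.  The only subtle point to watch is that the weak equivalence $T\inc\Delta[T]$ really lives in $\bpsSet$ rather than merely in $\sSet_J$; this is exactly why one needs the distinguished vertices of $\Delta[T]$ to be its initial and terminal ones, and it is the feature of necklaces that makes the argument go through.
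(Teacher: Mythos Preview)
Your proof is correct and is essentially identical to the paper's own argument: both replace $T$ by $\Delta[T]$ via Lemma~\ref{le:necklace=simplex}, observe that $\Delta[T]$ is Joyal fibrant as the nerve of a category, and then compute $\bpsSet(C_R^\bullet,\Delta[T])$ to be a point by the same combinatorial count (the paper phrases this as ``the unique map factors through $\Delta^1$'').  The only cosmetic difference is that you invoke Corollary~\ref{cor:four models} explicitly while the paper appeals directly to the general theory of cosimplicial resolutions.
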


\begin{proof}
It is a fact that $T\ra \Delta[T]$ is a
Joyal equivalence; see
Lemma~\ref{le:necklace=simplex} for a proof.
Also, $\Delta[T]$ is fibrant in
$\sSet_J$ because it is the nerve of a category (as is any
$\Delta^k$).  We may therefore model our homotopy function complex by
\[ \bpsSet(C^\bullet_R,\Delta[T])
\]
where $C^\bullet_R$ is the cosimplicial resolution of $\Delta^1$
considered in this section.  

It is easy to check that in $\bpsSet$ there is a unique map from
$C^n_R$ to $\Delta[T]$, for each $n$ (it factors through $\Delta^1$).  Therefore we have
$\bpsSet(C^\bullet_R,\Delta[T])=*$, and this completes the proof.
\end{proof}

%%%%%%%%%%%%%%%%%%%%%%%%%%%%%%%%%%%%%%%%%%%%%%%%%%%%%%%%%%%%%%%%%%%%%%

\section{Connections with the rigidification mapping spaces}
\label{se:connect}

In this section we prove that for any simplicial set $S$ and any
$a,b\in S_0$, the categorification mapping space $\jC(S)(a,b)$
is naturally Kan equivalent to the Dwyer-Kan mapping space
$\hMap_{(\bpsSet)_J}(\Delta^1,S_{a,b})$.  As a corollary, we prove
that for any simplicial category $\cD$ the counit map $\jC(N\cD)\ra \cD$ is a
weak equivalence in $\sCat$.

\medskip

If $Y$ is an object in $\bpsSet$ we will write
$\alpha$ and $\omega$ for the images of $0$ and $1$ under the map
$\bd{1}\ra Y$.  We always use the Joyal model structure on the
category $\bpsSet=(\bd{1}\ovcat\sSet)$.

If $S$ is a simplicial set and $a,b\in S$, let us use the notation
$\hMap(S)(a,b)$ as shorthand for a homotopy function complex
$\hMap_{\bpsSet}(\Delta^1,S_{a,b})$.

Let $\cY$ denote the full subcategory of $\bpsSet$ whose objects are
spaces $Y$ such that $\hMap(Y)(\alpha,\omega)\he *$ and
$\jC(Y)(\alpha,\omega)\he *$.  Note that $\cY$ contains the category
$\Nec$ (the category of necklaces) by Proposition~\ref{pr:necklace,hmap} and
\cite[Corollary 3.8]{DS1}.
So clearly $\cY$ is a {\it category of gadgets\/} in the
sense of \cite[Definition 5.4]{DS1}.  Let $\cY_f$ denote the full
subcategory of $\cY$ consisting of those objects which are fibrant in
$\sSet_J$.  Let $\jC^\cY$ and $\jC^{\cY_f}$ be the corresponding
functors $\sSet\to\sCat$, as defined in \cite[Section 5.3]{DS1}.  So
if $S\in \sSet$ and $a,b\in S_0$, $\jC^\cY(S)(a,b)$ is the nerve of
the category $(\cY\ovcat S)_{a,b}$, and similarly for $\jC^{\cY_f}(S)(a,b)$.

\begin{remark}
The two conditions that define $\cY$ seem like they should be equivalent, and they are.  That is, we will show in Corollary \ref{co:connect1} that the conditions $\jC(Y)(\alpha,\omega)\he *$ and
$\hMap(Y)(\alpha,\omega)\he *$ are equivalent.  However, at the moment we do not know this, so including both conditions in the
definition of $\cY$ is not redundant.
\end{remark}

Let $C^\bullet$ be a cosimplicial resolution of $\Delta^1$ in
$\sSet_J$.  Let $C^\bullet\trcof R^\bullet \trfib c(\Delta^1)$ be a
factorization into a Reedy acyclic cofibration followed by Reedy
fibration (which will necessarily be acyclic as well).    By
\cite[Prop. 15.3.11]{H} the maps $R^n\ra \Delta^1$ are Joyal fibrations.  Since $\Delta^1$ is Joyal fibrant (being the nerve
of a category), the objects $R^n$ are fibrant as well.

\begin{prop}
\label{pr:connect1}
If $S$ is fibrant in $\sSet_J$ and $a,b\in S_0$, then
there is a natural commutative diagram in which all the maps are Kan
equivalences:
\[\xymatrix{
\jCn(S)(a,b) \ar[r]^\sim & \jC^{\cY}(S)(a,b) & \jC^{\cY_f}(S)(a,b) \ar[l]_\sim
\\
& N\bD \bpsSet(C^\bullet,S_{a,b})\ar[u]^\sim & N\bD\bpsSet(R^\bullet,S_{a,b}).\ar[l]^\sim\ar[u]_\sim
}
\]
\end{prop}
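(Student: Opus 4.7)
The plan is to verify that each of the five maps in the diagram is a Kan equivalence, treating them one by one with a mixture of the gadget comparison theory of \cite{DS1}, the Dwyer-Kan machinery of Section~\ref{se:mapping-spaces}, functorial Joyal fibrant replacement, and Quillen's Theorem A. A preliminary observation that is required simply for the left and right vertical maps to land in $(\cY\ovcat S)_{a,b}$ and $(\cY_f\ovcat S)_{a,b}$ is that each $C^n$ lies in $\cY$ and each $R^n$ lies in $\cY_f$. The Joyal fibrancy of $R^n$ is stated in the text. The $\hMap$-condition in both cases follows exactly as in the proof of Proposition~\ref{pr:necklace,hmap}, using that $C^n\to\Delta^1$ and $R^n\to\Delta^1$ are Joyal equivalences into the Joyal-fibrant object $\Delta^1$. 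The $\jC$-condition uses that the sections $\Delta^1\to C^n$ from Proposition~\ref{prop:squash} are built by cobase changes of inner horn inclusions, maps which $\jC$ sends to weak equivalences in $s\Cat$; the argument for $R^n$ is analogous.

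I would then handle three of the five maps directly. The top-left arrow $\jCn(S)(a,b)\to\jC^\cY(S)(a,b)$ is a Kan equivalence by the general comparison theorem for categories of gadgets from \cite[\S 5]{DS1}, since $\cY$ contains $\Nec$. The bottom arrow follows from \cite[16.5.5]{H}: as $C^\bullet\trcof R^\bullet$ is a Reedy acyclic cofibration between Reedy cofibrant resolutions of $\Delta^1$ and $S$ is Joyal fibrant, precomposition is a Kan equivalence $\bpsSet(R^\bullet,S_{a,b})\we\bpsSet(C^\bullet,S_{a,b})$, and the functor $N\bD$ preserves Kan equivalences. The right vertical map succumbs to Quillen's Theorem A: the relevant comma category at $[Y,Y\to S]\in(\cY_f\ovcat S)_{a,b}$ is $\bD\bpsSet(R^\bullet,Y)$, whose nerve is Kan equivalent to $\bpsSet(R^\bullet,Y)$, which in turn models $\hMap(Y)(\alpha,\omega)\he *$ because $Y$ is Joyal fibrant and $Y\in\cY_f\subseteq\cY$.

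For the top-right arrow $\jC^{\cY_f}\to\jC^\cY$, the idea is functorial Joyal fibrant replacement: factor $Y\to S$ as $Y\trcof Y_f\trfib S$ to obtain a functor $F\colon(\cY\ovcat S)_{a,b}\to(\cY_f\ovcat S)_{a,b}$, and use the natural maps $Y\to Y_f$ to build natural transformations $\id\Rightarrow iF$ and $\id\Rightarrow Fi$ witnessing $N(i)$ and $N(F)$ as homotopy inverses. With this in hand, the left vertical map is then a Kan equivalence by two-out-of-three applied to the square, which commutes up to the natural transformation $(C^n,f|_{C^n})\to(R^n,f)$ induced by $C^\bullet\trcof R^\bullet$ and hence up to simplicial homotopy on nerves. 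I expect the main technical obstacle to be verifying that the fibrant replacement $Y_f$ actually lies in $\cY$: the $\hMap$-condition is preserved by any Joyal equivalence, but the $\jC$-condition is not known to be so at this point in the paper. One way to avoid the issue is to choose a fibrant replacement built only from cobase changes of inner horn inclusions and of the maps in Proposition~\ref{pr:quasi1}(a), both of which $\jC$ carries to weak equivalences; an alternative route is to prove the left vertical map directly by a more delicate Theorem A argument in which one establishes contractibility of $\bpsSet(C^\bullet,Y)$ for $Y\in\cY$ by comparing it to $\bpsSet(C^\bullet,Y_f)$.
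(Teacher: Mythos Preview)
Your overall architecture matches the paper's proof closely: the same three maps are handled directly (top-left via \cite[Prop.~5.5]{DS1}, bottom via \cite[16.5.5]{H}, right vertical via Theorem~A), the top-right via a fibrant-replacement homotopy inverse, and the left vertical by two-out-of-three. You also correctly observe that the square commutes only up to the natural transformation induced by $C^n\to R^n$, which suffices on nerves.

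The genuine gap is exactly the one you flag as the ``main technical obstacle,'' and your proposed workarounds are both unnecessary and insufficient. The missing ingredient is \cite[Proposition~6.6]{DS1}, which states that $\jC$ sends Joyal equivalences to weak equivalences in $s\Cat$. This single citation resolves everything: since $C^n\to\Delta^1$ and $R^n\to\Delta^1$ are Joyal equivalences, $\jC(C^n)(\alpha,\omega)$ and $\jC(R^n)(\alpha,\omega)$ are contractible, so $C^n\in\cY$ and $R^n\in\cY_f$ for an \emph{arbitrary} cosimplicial resolution $C^\bullet$; and for any Joyal fibrant replacement $Y\to\hat Y$, one gets $\hat Y\in\cY$ immediately. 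Your appeal to Proposition~\ref{prop:squash} only treats the specific resolutions $C_R^\bullet$, $C_L^\bullet$, $C_{\cyl}^\bullet$, not a general $C^\bullet$, and your claim that ``the argument for $R^n$ is analogous'' does not go through, since the Reedy factorization producing $R^\bullet$ need not be built from inner horns. Likewise, your suggestion of a bespoke fibrant replacement built from inner horns and the maps of Proposition~\ref{pr:quasi1}(a) would require separately checking that $\jC$ inverts the latter class and that the resulting object is Joyal fibrant; neither is immediate.

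One minor difference in execution: for the top-right arrow the paper uses a functorial fibrant replacement $Y\to\hat Y$ in $\sSet_J$ together with a retraction $\hat S\to S$ (available because $S$ is fibrant), rather than factoring $Y\to S$. Both constructions work once \cite[Prop.~6.6]{DS1} is in hand.
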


\begin{proof}
The map $\jCn(S)(a,b)\ra
\jC^{\cY}(S)(a,b)$ is induced by the inclusion of categories $\Nec\inc
\cY$, and 
\cite[Proposition 5.5]{DS1} shows that is  a Kan equivalence.
The map $\jC^{\cY_f}(S)(a,b) \ra \jC^{\cY}(S)(a,b)$ is the
 nerve of the evident
inclusion of categories $j\colon (\cY_f\ovcat S)_{a,b} \ra (\cY\ovcat
S)_{a,b}$.  Let us show it is a Kan equivalence.  For a simplicial set $X$, let $X\trcof\hat{X}$ denote
a (functorial) fibrant replacement of $X$ in $\sSet_J$.  Since $S$ is
fibrant, there is a map $\hat{S}\ra S$ such that the composition $S\ra
\hat{S}\ra S$ is the identity.  Define a functor
\[ F\colon (\cY\ovcat S)_{a,b} \ra (\cY_f\ovcat S)_{a,b} \]
by sending the pair $[Y,Y\ra S]$ to the pair $[\hat{Y},\hat{Y}\ra
\hat{S}\ra S]$.  For this to make sense we need to know that $\hat{Y}$
is in $\cY_f$; this is true because changing from $Y\ra \hat{Y}$ does
not change the Dwyer-Kan mapping space $\hMap(\blank)(a,b)$ nor, by
\cite[Proposition 6.6]{DS1}, the $\jC(\blank)(a,b)$ mapping
space.  It is easy to see that there is a natural transformation
between the composite $jF$ (resp. $Fj$) and the identity, so $j$
induces a Kan equivalence of the nerves.

Next consider the map $N\bD\bpsSet(R^\bullet,S_{a,b}) \ra
\jC^{\cY_f}(S)(a,b)$.  This is again the nerve of a functor
\[ f\colon \bD\bpsSet(R^\bullet,S_{a,b}) \ra (\cY_f\ovcat S)_{a,b} 
\]
which sends $[[n],R^n\ra S]$ to $[R^n,R^n\ra S]$.  We will verify that
the overcategories of $f$ are contractible, hence it induces a Kan
equivalence of the nerves by Quillen's Theorem A.  (For typographical
reasons, we will drop the subscripts $a,b$, etc.)  Pick an object
$y=[Y,Y\ra S]$ in $(\cY_f\ovcat S)$.  The overcategory $(f\ovcat y)$
has objects $[[n],R^n\ra Y]$ and the evident morphisms; that is,
$(f\ovcat y)=\bD\bpsSet(R^\bullet,Y)$.  But since $Y$ is Joyal fibrant,
$\bpsSet(R^\bullet,Y)$ is a model for $\hMap(Y)(a,b)$, and this is
contractible because $Y\in \cY_f$.

The map $\bpsSet(R^\bullet,S)\ra \bpsSet(C^\bullet,S)$ is a Kan equivalence
because $C^\bullet \ra R^\bullet$ is a Reedy weak equivalence between
Reedy cofibrant objects and $S$ is Joyal fibrant; see
\cite[16.5.5]{H}.  Hence, the map $N\bD\bpsSet(R^\bullet,S_{a,b})\ra
N\bD\bpsSet(C^\bullet,S_{a,b})$ is a Kan equivalence, using the fact
that $N\Delta K\he K$, for any simplicial set $K$ (see
\cite[Theorem 18.9.3]{H}).

The final map $\bD\bpsSet(C^\bullet,S_{a,b})\ra \jC^{\cY}(S)(a,b)$ is a Kan
equivalence by the two-out-of-three property.
\end{proof}

\begin{cor}
\label{co:connect1}
Let $C^\bullet$ be any cosimplicial resolution for $\Delta^1$ in
$\bpsSet$.  For a quasi-category $S$ and $a,b\in S_0$, there is a
natural zig-zag of Kan equivalences between $\jC(S)(a,b)$ and
$\hMap(S)(a,b)=\bpsSet(C^\bullet,S_{a,b})$.  
\end{cor}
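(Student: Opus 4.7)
The plan is to bootstrap Proposition~\ref{pr:connect1} using two further ingredients: the comparison $\jC\we\jCn$ from \cite{DS1}, and the independence of the homotopy function complex from the choice of cosimplicial resolution.

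Proposition~\ref{pr:connect1} already constructs, for the particular cosimplicial resolution $C^\bullet$ and its factorization $R^\bullet$ built just before it, a natural zig-zag of Kan equivalences relating $\jCn(S)(a,b)$ to $N\bD\bpsSet(C^\bullet,S_{a,b})$, under the sole assumption that $S$ is a quasi-category.  To reach the target $\jC(S)(a,b)$ on one end, I would prepend the natural Kan equivalence $\jC(S)(a,b)\bwe\jCn(S)(a,b)$ proved in \cite{DS1}.  To reach $\bpsSet(C^\bullet,S_{a,b})$ on the other end, I would append the natural equivalence $N\bD K\we K$ of \cite[Theorem 18.9.3]{H} with $K=\bpsSet(C^\bullet,S_{a,b})$.

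To remove the dependence on the particular cosimplicial resolution used in Proposition~\ref{pr:connect1}, suppose $C'^{\bullet}$ is any other cosimplicial resolution of $\Delta^1$ in $\bpsSet$.  I would choose a zig-zag of Reedy weak equivalences between Reedy cofibrant objects connecting $C^\bullet$ to $C'^{\bullet}$ in $c(\bpsSet)$; such a zig-zag exists because any two Reedy cofibrant resolutions of a common object are connected this way.  Since $S$ is Joyal fibrant, \cite[16.5.5]{H} then implies that $\bpsSet(\blank,S_{a,b})$ sends this to a zig-zag of Kan equivalences between $\bpsSet(C^\bullet,S_{a,b})$ and $\bpsSet(C'^{\bullet},S_{a,b})$, which one splices into the chain above.

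The main obstacle is quite modest: one must check that the assembled zig-zag is natural in $S$ and in the pair $(a,b)$, which follows from naturality of each constituent construction.  There is essentially no new content beyond Proposition~\ref{pr:connect1} and the cited results; the role of the corollary is to repackage that proposition in a form that is independent of the chosen cosimplicial resolution and stated in terms of Lurie's $\jC$ rather than the necklace variant $\jCn$.
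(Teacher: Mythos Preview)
Your proof is correct and follows essentially the same approach as the paper's: invoke \cite[Theorem 5.2]{DS1} to pass from $\jC$ to $\jCn$, use Proposition~\ref{pr:connect1}, and apply \cite[Theorem 18.9.3]{H} to pass from $N\bD K$ to $K$.  Your third paragraph is unnecessary, however: the $C^\bullet$ appearing in Proposition~\ref{pr:connect1} is already an \emph{arbitrary} cosimplicial resolution of $\Delta^1$ (see the sentence introducing it just before the proposition), so no further comparison of resolutions is needed.
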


\begin{proof}
Recall from \cite[Theorem 5.2]{DS1} that there is a natural zig-zag of Kan
equivalences between $\jCn(S)(a,b)$ and $\jC(S)(a,b)$.  Also recall
that for any simplicial set $K$, there is a natural zig-zag of Kan
equivalences between $K$ and $N\Delta K$ by \cite[Theorem 18.9.3]{H}.  The corollary
follows immediately from combining these facts with
Proposition~\ref{pr:connect1}.
\end{proof}

For the rest of this section we write $\cA=\bpsSet$, to ease the
 typography.

Proposition~\ref{pr:connect1} gives a simple zig-zag of Kan equivalences
between $\jCn(S)(a,b)$ and $N\bD\bpsSet(C^\bullet,S_{a,b})$ for any
cosimplicial resolution $C^\bullet$ of $\Delta^1$ in $\sSet_J$.  In Proposition \ref{pr:connect2} we will present another simple zig-zag which is sometimes useful.  Define
\begin{align}\label{dia:phi} \phi\colon (\Nec\ovcat S)_{a,b} \ra \cA W^{-1}\cA(\Delta^1,S_{a,b})
\end{align}
by sending $[T,T\ra S]$ to $[\Delta^1 \ra \Delta[T] \bwe T \ra S]$.  
Here $\Delta[T]$ is the associated simplex to $T$, described in
\cite[Section 3]{DS1}, which is functorial in $T$.  The map
$\Delta^1 \ra \Delta[T]$ is the unique $1$-simplex connecting the
initial and final vertices.  Note that there is also a functor
\begin{myequation}
\label{dia:j} j\colon \bD\bpsSet(C^\bullet,S)\ra \cA W^{-1}\cA
(\Delta^1,S)
\end{myequation}
which sends $[[n],C^n\ra S]$ to $[\Delta^1 \llra{\id}
\Delta^1 \bwe C^n \ra S]$, and by Remark~\ref{re:DK-summary}
this functor induces a Kan equivalence on nerves.

\begin{prop}
\label{pr:connect2}
For any fibrant simplicial set $S\in \sSet_J$ and $a,b\in S_0$, the
maps
\[ \jCn(S)(a,b) \llra{N\phi} N \bigl [\cA W^{-1}\cA(\Delta^1,S_{a,b})\bigr ] \llla{Nj}
N\bigl [\bD \bpsSet(C^\bullet,S_{a,b})\bigr ],
\] 
are Kan equivalences,
where $\phi$ and $j$ are as in (\ref{dia:phi}) and (\ref{dia:j}\!).
\end{prop}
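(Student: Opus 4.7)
The plan is to show each of $N\phi$ and $Nj$ is a Kan equivalence. For $Nj$, this is essentially a direct consequence of Remark \ref{re:DK-summary} applied in $\cM = \bpsSet$, with $X = \Delta^1$, $Y = S_{a,b}$, and $QX^\bullet = C^\bullet$: the functor $j$ is precisely the composite $\bD\cM(C^\bullet,S_{a,b}) \to \cM \W^{-1}(\Delta^1,S_{a,b}) \hookrightarrow \cM \W^{-1}\cM(\Delta^1,S_{a,b})$ appearing there, and $S_{a,b}$ is Joyal fibrant in $\bpsSet$ since $S$ is Joyal fibrant in $\sSet_J$.

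For $N\phi$, the plan is to apply two-out-of-three against Proposition \ref{pr:connect1}. That proposition provides a zig-zag of Kan equivalences $\jCn(S)(a,b) \to \jC^{\cY}(S)(a,b) \From{\sim} N\bD\bpsSet(C^\bullet, S_{a,b})$; composing with $Nj$ yields a Kan equivalence $\jCn(S)(a,b) \to N[\cA \W^{-1}\cA(\Delta^1,S_{a,b})]$ in $\Ho(\sSet)$. It then suffices to check that $N\phi$ agrees with this composite in $\Ho(\sSet)$.

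To verify this, I would exhibit a natural zig-zag of natural transformations between the two functors on $(\Nec \ovcat S)_{a,b}$. The key observation is that for each necklace $[T,T\to S]$, the $\phi$-image $[\Delta^1 \to \Delta[T] \bwe T \to S]$ and the composite image both land in $\cA \W^{-1}\cA(\Delta^1, S_{a,b})$, and can be naturally compared via a richer ``length-three'' zig-zag category, such as $\cA \W^{-1}\cA \W^{-1}\cA(\Delta^1, S_{a,b})$, whose inclusion into $\cA \W^{-1}\cA$ induces a Kan equivalence on nerves by an argument parallel to Proposition \ref{prop:DK-1}. In this enlarged category one can construct a functor that projects down to both $\phi$ and the Proposition \ref{pr:connect1} composite, using the Joyal equivalence $T \to \Delta[T]$ of Lemma \ref{le:necklace=simplex} together with any choice of resolution $C^n \to T$.

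The main obstacle is making this homotopy commutativity rigorous in a way that is genuinely functorial in $[T,T\to S]$. The delicate point is that the Proposition \ref{pr:connect1} route passes through $\jC^{\cY}$ via a zig-zag (not a direct functor), so the comparison with $\phi$ requires an intermediate construction. Quillen's Theorem A, combined with the contractibility of $\hMap(Y)(\alpha,\omega)$ for $Y \in \cY_f$ (which holds by definition of $\cY$), should suffice to show that the relevant overcategories and fiber categories have contractible nerves, closing the argument.
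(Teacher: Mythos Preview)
Your handling of $Nj$ is correct and matches the paper: it is exactly the composite from Remark~\ref{re:DK-summary}.

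Your strategy for $N\phi$ --- use Proposition~\ref{pr:connect1} and two-out-of-three --- is also the paper's strategy. But the step you flag as ``the main obstacle'' is where your argument breaks down. Your proposed comparison via a length-three zig-zag category $\cA W^{-1}\cA W^{-1}\cA$ is not made to work: the phrase ``any choice of resolution $C^n\to T$'' does not make sense, since $C^\bullet$ is a resolution of $\Delta^1$, not of an arbitrary necklace $T$, and there is no natural map $C^n\to T$ to speak of. The final appeal to Quillen's Theorem~A and contractibility of $\hMap(Y)(\alpha,\omega)$ is too vague to close the gap; those ingredients were already used inside the proof of Proposition~\ref{pr:connect1} and do not by themselves produce the homotopy you need here.

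The paper resolves this cleanly by going in the opposite direction: rather than pushing both functors into a larger zig-zag category, it defines an explicit functor $i\colon \cA W^{-1}(\Delta^1,S)\to(\cY\ovcat S)$ sending $[\Delta^1\bwe X\to S]$ to $[X,X\to S]$ (well-defined since $X\he\Delta^1$ forces $X\in\cY$), together with explicit homotopy inverses $\pi_1,\pi_2$ to the inclusions $j_1,j_2$. One then checks directly that $i\pi_1\pi_2\phi$ is connected by a zig-zag of natural transformations to the inclusion $(\Nec\ovcat S)\to(\cY\ovcat S)$, and that the right-hand triangle commutes on the nose. Since $j_0$ is a Kan equivalence by Remark~\ref{re:DK-summary}, two-out-of-three gives that $i$ is one, and then so is $\phi$. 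The key device you are missing is this functor $i$ back into $(\cY\ovcat S)$.
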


\begin{proof}
We consider the following diagram of categories, where we have
suppressed all mention of $a$ and $b$, but everything is suitably over
$\bdd{1}$:
\[ \xymatrixcolsep{1.2pc}\xymatrix{
(\Nec\ovcat S) \ar[d]_\phi\ar[rr] && (\cY\ovcat S) &
\bD\bpsSet(C^\bullet,S).\ar[dl]^{j_0}\ar[l]
\\
\cA W^{-1}\cA(\Delta^1,S) \ar@<0.5ex>[r]^-{\pi_2} & \cA (\Wfib)^{-1}\cA(\Delta^1,S)
\ar@<0.5ex>[r]^-{\pi_1}\ar@<0.5ex>[l]^-{j_2} & \cA W^{-1}(\Delta^1,S) \ar[u]_i\ar@<0.5ex>[l]^-{j_1} }
\]  

The nerve of each map in the top row is from Proposition~\ref{pr:connect1}, where
it is shown to be a Kan equivalence.  The map $\phi$ was defined above.  The maps $j_0,j_1,j_2,i,\pi_1,$ and $\pi_2$ are in some sense self-evident, but we describe them now (in that order).  The symbol ``\; $\sim$\;\;" in this proof always denotes a Joyal equivalence.

The map $j_0$ sends $[[n],C^n\ra S]$ to $[\Delta^1
\bwe C^n \ra S]$; $j_1$ sends $[\Delta^1 \bwe X \ra S]$ to $[\Delta^1
\llra{\id}\Delta^1 \btrfib X' \ra S]$, where $X\trcof X'\fib\Delta^1\cross S$ is a functorial factorization of $X\to\Delta^1\cross S$; and $j_2$ is induced by the
inclusion $\Wfib\inc W$.  Note that there is a natural transformation
$j\ra j_2j_1j_0$, so these maps induce homotopic maps on nerves.  The map $i$ sends
$[\Delta^1 \bwe X \ra S]$ to the pair $[X,X\ra S]$ (note that if $X\he
\Delta^1$ then $X\in \cY$ by \cite[Proposition 6.6]{DS1} and the
homotopy invariance of the Dwyer-Kan mapping spaces).  Finally, the
maps $\pi_1$ and $\pi_2$ are functors giving homotopy  inverses to
$j_1$ and $j_2$.  The functor $\pi_1$ sends $[\Delta^1 \ra X \btrfib Y
\ra S]$ to $[\Delta^1 \bwe (\Delta^1\times_X Y) \ra S]$, and $\pi_2$
sends the zig-zag $[\Delta^1 \ra X \bwe Y \ra S]$ to $[\Delta^1 \ra X \btrfib Y' \ra
S]$ where $Y'$ is obtained from the functorial factorization of $Y\ra
X\times S$ into $Y\trcof Y' \fib X\times S$.  It is easy to see that
there are natural transformations between the composite $j_i\pi_i$,
$\pi_ij_i$, and their respective identities, thus showing that these
maps are homotopy inverses.

Next one should check that the functor $i\pi_1\pi_2\phi$ is connected
to the top map $(\Nec\ovcat S)\ra (\cY\ovcat S)$ by a zig-zag of natural
transformations (this is easy), and hence the two maps induce
homotopic maps on nerves.  So the (nerve of the) large rectangle in the above diagram
commutes in the homotopy category.  The right-hand triangle commutes on the nose.

The map $j_0$ induces a Kan equivalence on nerves by
Remark~\ref{re:DK-summary}.  Returning to our original diagram and the
sentence immediately following it, the two-out-of-three property implies
that $i$ induces a Kan equivalence on nerves.  We have already shown
that $\pi_1\pi_2$ and $j_2j_1$ do so as well; therefore the same is
true for $\phi$ and $j$.
\end{proof}

\begin{remark}
The above result in some sense explains why necklaces might arise in
models for mapping spaces, as they did in \cite{DS1}.  If $T$ is a
necklace then a map $T\ra S_{a,b}$ gives us, in a canonical way, a
zig-zag
\[ \Delta^1 \inc \Delta[T] \bwe T \ra S\] in $\cA W^{-1}\cA(\Delta^1,S)$, 
which represents a map $\Delta^1 \ra S$ in $\ho(\sSet_J)$.  
\end{remark}

\subsection{The counit of categorification}
Our next result concerns the counit $\epsilon\taking \jC
N\to\id_\sCat$ for the adjunction $\jC\colon \sSet_J \adjoint
\sCat\colon N$.  The proof is only a slight modification of
that for Proposition~\ref{pr:connect1} above.  For a proof using very
different methods, see \cite[Theorem 2.2.0.1]{L}.

\begin{prop}
\label{pr:counit}
Let $\cD$ be a simplicial category all of whose mapping spaces are Kan
complexes.  Then the counit map $\jC N\cD\to\cD$ is a weak equivalence
in $\sCat$.
\end{prop}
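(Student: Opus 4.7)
The plan is to verify the two conditions for a weak equivalence in Bergner's model structure on $\sCat$: essential surjectivity on $\pi_0$ and Kan equivalence on every mapping space.  Since the counit $\epsilon\taking \jC N\cD \to \cD$ is a bijection on object sets (objects of $\jC(K)$ are the $0$-simplices of $K$, and $(N\cD)_0 = \ob\cD$), essential surjectivity is automatic.  The content of the proposition is therefore that $\jC(N\cD)(a,b) \to \cD(a,b)$ is a Kan equivalence for every $a,b \in \ob\cD$.

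First I would note that $N\cD$ is a quasi-category.  This is a standard verification: lifting an inner horn $\Lambda^n_k \to N\cD$ against $\Delta^n$ is equivalent via the $\jC\dashv N$ adjunction to a simplicial-functor extension problem that can be solved because each $\cD(x,y)$ is a Kan complex.  Proposition~\ref{pr:connect1} therefore applies to $S = N\cD$ and produces, for any cosimplicial resolution $R^\bullet$ of $\Delta^1$ in $(\bpsSet)_J$ with each $R^n$ Joyal fibrant, a zig-zag of Kan equivalences
\[
 \jC(N\cD)(a,b) \;\simeq\; \jCn(N\cD)(a,b) \;\simeq\; N\bD\bpsSet\bigl(R^\bullet,(N\cD)_{a,b}\bigr).
\]

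The ``slight modification'' of the proof of Proposition~\ref{pr:connect1} consists of extending this zig-zag by one further natural Kan equivalence to $\cD(a,b)$, and tracking through the identifications to see the total zig-zag realizes the counit.  By the $\jC\dashv N$ adjunction, $\bpsSet(R^n, N\cD)_{a,b} \iso \sCat_{a,b}(\jC(R^n),\cD)$, and restricting such a simplicial functor to $\Hom(\alpha,\omega)$ produces a map $\jC(R^n)(\alpha,\omega) \to \cD(a,b)$ whose source is weakly contractible since $R^n \to \Delta^1$ is Joyal acyclic.  I would use this to set up a Quillen-Theorem-A comparison, in the style of Proposition~\ref{pr:connect1}: build a natural functor $\bD\bpsSet(R^\bullet,(N\cD)_{a,b}) \to \bD\cD(a,b)$ and compose the nerve with the canonical Kan equivalence $N\bD\cD(a,b) \we \cD(a,b)$ from \cite[Theorem 18.9.3]{H}.

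The main obstacle is verifying this last step is a Kan equivalence.  By Quillen's Theorem~A, one must show that for each simplex $\sigma\taking \Delta^n \to \cD(a,b)$, the overcategory of pairs $([m], R^m \to N\cD)$ compatible with $\sigma$ is contractible.  This is a Dwyer--Kan-style function-complex computation inside $N\cD$ lying over $\sigma$, and the contractibility follows by precisely the argument used for the corresponding step in Proposition~\ref{pr:connect1}, invoking the Joyal fibrancy of the $R^m$, the fact that $N\cD$ is a quasi-category, and the Kan property of $\cD(a,b)$.  The compatibility of the resulting zig-zag with the counit map is then traced directly from the naturality of the $\jC\dashv N$ adjunction.
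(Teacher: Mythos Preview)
Your overall architecture is right and is indeed close to the paper's: reduce to showing $\jC(N\cD)(a,b)\to\cD(a,b)$ is a Kan equivalence, and route the comparison through the Dwyer--Kan function complex $N\bD\bpsSet(C^\bullet,(N\cD)_{a,b})$ using Proposition~\ref{pr:connect1}. The gap is in your final step, where you assert a functor
\[
\bD\bpsSet\bigl(R^\bullet,(N\cD)_{a,b}\bigr)\;\longrightarrow\;\bD\cD(a,b)
\]
and then invoke Theorem~A, claiming the overcategory computation is ``precisely the argument'' from Proposition~\ref{pr:connect1}. Neither part holds up. First, there is no evident such functor: an object $([m],R^m\to N\cD)$ adjoints to a simplicial functor $\jC(R^m)\to\cD$, hence a map $\jC(R^m)(\alpha,\omega)\to\cD(a,b)$, but there is no natural map $\Delta^m\to\jC(R^m)(\alpha,\omega)$ to compose with, so you do not get a simplex of $\cD(a,b)$ in any functorial way. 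Second, the Theorem~A step in Proposition~\ref{pr:connect1} worked because the overcategory over $[Y,Y\to S]\in(\cY_f\ovcat S)$ was literally $\bD\bpsSet(R^\bullet,Y)$, contractible since $Y\in\cY_f$. Over a simplex $\sigma\colon\Delta^n\to\cD(a,b)$ there is no analogous identification; the situation is genuinely different.

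What the paper does instead is choose $C^\bullet=C^\bullet_R$ specifically, so that $\jC(C^n_R)$ has \emph{exactly two} objects and one may set $Q^n=\jC(C^n_R)(\alpha,\omega)$. Then giving $C^n_R\to N\cD$ over $(a,b)$ is the same as giving $Q^n\to\cD(a,b)$, which converts the indexing category $\bD\bpsSet(C^\bullet,(N\cD)_{a,b})$ into $\bD\sSet(Q^\bullet,\cD(a,b))$. The comparison with $\cD(a,b)$ is then not a Theorem~A argument but a separate lemma (Lemma~\ref{le:hoco}): for any cosimplicial resolution $U^\bullet$ of a point in $\sSet_K$ and any Kan complex $X$, the composite $\hocolim_{[n],U^n\to X}U^n\to X$ is a Kan equivalence, proved by zig-zagging through $\Delta^\bullet$. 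The whole thing is organized via a large commutative diagram of colimits and hocolimits over the three indexing categories $(\Nec\ovcat N\cD)_{a,b}$, $(\cY\ovcat N\cD)_{a,b}$, and $\bD\bpsSet(C^\bullet,(N\cD)_{a,b})$, which also takes care of the compatibility with the counit that you flag at the end. Your sketch is missing both the two-object observation for $C^\bullet_R$ and the substitute for Theorem~A; once you supply these, the proof goes through.
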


\begin{proof}

Since $\jC(N\cD)$ is a simplicial category with the same object set as
$\cD$, it suffices to show that for any $a,b\in\ob\cD$ the map \[
\jC(N\cD)(a,b) \ra \cD(a,b)\] is a Kan equivalence.

Let $C^\bullet$ be the cosimplicial resolution $C^\bullet_R$ from
Section~\ref{se:Dwyer-Kan}, so that we have $C^n=(\Delta^n \star
\Delta^0)/\Delta^n$.  Observe that $\jC(C^n)$ is a simplicial category
with two objects $0$ and $1$, and following \cite[Section 2.2.2]{L}
let $Q^n$ denote the mapping space $\jC(C^n)(0,1).$  By
\cite[Proposition 6.6]{DS1} and
Proposition~\ref{pr:three-resolutions}(c) the map $Q_n\ra
\jC(\Delta^1)(0,1)=*$ is a Kan equivalence, hence $Q^n$ is
contractible.  Also, since $C^\bullet_R$ is Reedy cofibrant it follows
readily that $Q^\bullet$ is also Reedy cofibrant.  So
the cosimplicial
space $Q^\bullet$ is a Reedy cosimplicial resolution of a point in
$\sSet_K$. 

Consider the following diagram in $\sSet_K$:
\[ \xymatrix{
\jC(N\cD)(a,b) \ar[r] & \cD(a,b) \\
\colim\limits_{T\ra N\cD} \jC(T)(\alpha,\omega) \ar[u]\ar[r]&
\colim\limits_{Y\ra N\cD} \jC(Y)(\alpha,\omega) \ar[u]
& \colim\limits_{[n],C^n\ra N\cD} \jC(C^n)(\alpha,\omega)\ar[l] \\
\hocolim\limits_{T\ra N\cD} \jC(T)(\alpha,\omega) \ar[u]\ar[r]\ar[d]_\sim &
\hocolim\limits_{Y\ra N\cD} \jC(Y)(\alpha,\omega) \ar[d]_\sim\ar[u]
& \hocolim\limits_{[n],C^n\ra N\cD} \jC(C^n)(\alpha,\omega)\ar[d]_\sim\ar[l]\ar[u] \\
\hocolim\limits_{T\ra N\cD} \,{*} \ar[r] & \hocolim\limits_{Y\ra N\cD} \,{*} &
\hocolim\limits_{[n],C^n\ra N\cD} {*}\ar[l]
}
\]
For the colimits in the left-hand column the indexing category is
$(\Nec\ovcat N\cD)_{a,b}$.  For the middle column it is $(\cY\ovcat
N\cD)_{a,b}$, where $\cY$ is the category of gadgets described at the beginning
of this section.  For the right-hand column the colimits are indexed
by the category $\bD\bpsSet(C^\bullet,N\cD_{a,b})$.  The maps between
columns (except at the very top) come from the evident maps between
indexing categories.  Finally, the top vertical map in the middle
column comes from taking a map $Y\ra N\cD$, adjointing it to give
$\jC(Y)\ra \cD$, and then using the induced map
$\jC(Y)(\alpha,\omega)\ra \cD(a,b)$.  It is easy to see that the
diagram commutes.

The indicated maps are Kan equivalences because the mapping spaces in
$\jC(T)$, $\jC(Y)$ and $\jC(C^n)$ are all contractible.  The bottom
horizontal row is $\jCn(N\cD)(a,b) \ra \jC^\cY(N\cD)(a,b) \la
N\bD\bpsSet(C^\bullet,N\cD_{a,b})$, and these maps are Kan equivalences by
Proposition~\ref{pr:connect1}.  It follows that the horizontal maps in
the third row are all Kan equivalences as well. 

Now, the map $\hocolim_{[n],C^n\ra N\cD} \jC(C^n)(\alpha,\omega) \ra
\cD(a,b)$ can be written as
\[ \hocolim_{[n],\jC(C^n)\ra \cD} \jC(C^n)(\alpha,\omega) \ra 
\colim_{[n],\jC(C^n)\ra \cD} \jC(C^n)(\alpha,\omega) \ra \cD(a,b).
\]
To give a map $\jC(C^n)\ra \cD$ over $a,b$ is exactly the same as giving a map
$Q^n=\jC(C^n)(\alpha,\omega) \ra \cD(a,b)$.  So the above maps may
also be written as
\[ \hocolim_{[n],Q^n\ra \cD(a,b)} Q^n \lra
\colim_{[n],Q^n\ra \cD(a,b)} Q^n \lra \cD(a,b).
\]
By Lemma~\ref{le:hoco} below (using that $\cD(a,b)$ is a Kan complex),
this composite is a Kan equivalence.

It now follows from our big 
diagram that $\hocolim_{Y\ra N\cD} \jC(Y)(\alpha,\omega) \ra \cD(a,b)$
is a Kan equivalence.  Finally, by \cite[Theorem 5.2]{DS1}
the map 
\[ \hocolim_{T\ra N\cD} \jC(T)(\alpha,\omega) \ra
\jC(N\cD)(a,b)
\] 
is a Kan equivalence (this is the map $\jCh(N\cD)(a,b)
\ra \jC(N\cD)(a,b)$ from the statement of that theorem). 
 It now follows at once that $\jC(N\cD)(a,b) \ra
\cD(a,b)$ is a Kan equivalence.
\end{proof}

\begin{lemma}
\label{le:hoco}
Let $U^\bullet$ be any cosimplicial resolution of a point with respect
to $\sSet_K$.  Then
for any Kan complex $X$, the composite
\[ \hocolim_{[n],U^n\ra X} U^n \lra
\colim_{[n],U^n\ra X} U^n \lra X.
\]
is a Kan equivalence. 
\end{lemma}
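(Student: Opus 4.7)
Write $M=\sSet(U^\bullet,X)$, the simplicial set whose $n$-simplices are $\sSet(U^n,X)$. The indexing category of the (ho)colimits in the statement is exactly the category of simplices $\bD M$, and $\colim_{\bD M}U^n$ is canonically isomorphic to the coend $\int^{[n]}M_n\cdot U^n$.

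The plan is to produce a zig-zag of Kan equivalences $\hocolim_{\bD M}U^n\simeq X$ passing through $|N(\bD M)|$ and $M$, and then verify compatibility with the canonical composite $\hocolim\to\colim\to X$. For the first leg, each $U^n$ is Kan-contractible since $U^\bullet$ is a cosimplicial resolution of $\Delta^0$ in $\sSet_K$, so the natural transformation from $F\taking\bD M\to\sSet$, $([n],\sigma)\mapsto U^n$, to the constant functor at $*$ is a levelwise Kan equivalence. Reedy cofibrancy of $U^\bullet$ ensures this induces a Kan equivalence
\[\hocolim_{\bD M}U^n\xrightarrow{\sim}\hocolim_{\bD M}*=|N(\bD M)|.\]
For the second leg, $|N(\bD M)|\simeq M$ by \cite[Theorem 18.9.3]{H}. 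For the third, since $X$ is a Kan complex and $U^\bullet$ is a Reedy cofibrant cosimplicial resolution of $\Delta^0$, the simplicial set $M=\sSet(U^\bullet,X)$ is a Dwyer--Kan model for the homotopy function complex $\hMap_{\sSet_K}(\Delta^0,X)\simeq X$, by \cite[16.5.5]{H}.

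It remains to verify that the composition of these three Kan equivalences is homotopic in $\sSet_K$ to the canonical composite $\hocolim\to\colim\to X$. The key observation is that the evaluation maps $\sigma\taking U^n\to X$ assemble into a natural transformation $\sigma_\bullet\taking F\Rightarrow c_X$ fitting over the levelwise weak equivalence $F\to c_*$; this square identifies the canonical map $\hocolim F\to\hocolim c_X\to X$ with the zig-zag constructed above. The main obstacle is precisely this compatibility check; the equivalences in the middle paragraph are standard applications of Reedy machinery and Dwyer--Kan homotopy function complex theory as reviewed in Section~\ref{se:mapping-spaces}.
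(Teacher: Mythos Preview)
Your first three steps are sound: each $U^n$ is contractible, so $\hocolim_{\bD M} U^n \xrightarrow{\sim} N(\bD M)$; then $N(\bD M)\simeq M$ and $M\simeq X$ by the results you cite. But these only show that $\hocolim_{\bD M} U^n$ and $X$ have the same homotopy type, not that the \emph{specific} composite $\hocolim\to\colim\to X$ is a Kan equivalence. Your compatibility argument does not close this gap. The triangle of natural transformations $F\Rightarrow c_X\Rightarrow c_*$ gives, after applying $\hocolim$, the commutative square
\[
\xymatrix{
\hocolim F \ar[r]^-{(\alpha,\beta)} \ar[d]_\alpha^\sim & N(\bD M)\times X \ar[d]^{\pi_1} \\
N(\bD M) \ar@{=}[r] & N(\bD M),
}
\]
where $\alpha$ is your first equivalence and $\beta=\pi_2\circ(\alpha,\beta)$ is the map in question. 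This square relates $\beta$ to $\alpha$ only through the projection $\pi_1$, which is not a weak equivalence unless $X$ is contractible; it says nothing about how $\beta$ interacts with your zig-zag $N(\bD M)\simeq M\simeq X$. Indeed there is in general no direct map $M\to X$ realizing that last equivalence (it would require a cosimplicial map $\Delta^\bullet\to U^\bullet$, which need not exist), so the zig-zag cannot be collapsed into a single map against which to compare $\beta$.

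The paper's proof handles exactly this difficulty, but by a different route: it takes the lemma for $U^\bullet=\Delta^\bullet$ as known, connects an arbitrary $U^\bullet$ to $\Delta^\bullet$ by a zig-zag $U^\bullet\to V^\bullet\leftarrow\Delta^\bullet$ of Reedy weak equivalences, and proves that validity of the lemma transfers along each leg. The transfer along $U^\bullet\to V^\bullet$ uses an auxiliary functor $\Theta=\Gamma_U\circ f$ on $J=\bD\sSet(V^\bullet,X)$ to build a commuting diagram that directly links the two canonical maps $\hocolim_I\Gamma_U\to X$ and $\hocolim_J\Gamma_V\to X$; the key input is that $\sSet(V^\bullet,X)\to\sSet(U^\bullet,X)$ is a Kan equivalence (the same \cite[16.5.5]{H} you invoke), so $N\bD(\blank)$ of it is too, and two-out-of-three finishes. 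To make your direct approach work you would need an argument of comparable strength for the compatibility step, and the square you wrote is not it.
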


\begin{proof}
The result is true for the cosimplicial resolution $\Delta^\bullet$ by
a standard result; see \cite[Prop. 19.4]{D2}, for instance.  There is a zig-zag
$U^\bullet \ra V^\bullet \la \Delta^\bullet$ of Reedy weak
equivalences, where $V^\bullet$ is a cofibrant-fibrant replacement of
$\Delta^\bullet$ in $c(\sSet_K)$.   Because of this it is sufficient to show that if
$U^\bullet \ra V^\bullet$ is a map between cosimplicial resolutions of
a point and we know the result for one of them, then we also know it
for the other.

Let $I=\Delta\cM(U^\bullet,X)$ and
$J=\Delta\cM(V^\bullet,X)$, and observe that our map
$U^\bullet\ra V^\bullet$ induces a
functor $f\colon J\ra I$.  

Let $\Gamma_U\colon I\ra \cM$ be the functor $[[n],U^n\ra X]\mapsto U^n$
and let $\Gamma_V\colon J\ra \cM$ be the functor $[[n],V^n\to X]\mapsto V^n$.  Finally, let $\Theta\colon J\ra \cM$
be the functor $[[n],V^n\mapsto X]\mapsto U^n$.  Note that there
is a natural transformation $\Theta\ra \Gamma_V$, and also that 
$\Theta=\Gamma_U\circ f$.

One considers the following diagram:
\[ \xymatrix{
N\Delta\cM(U^\bullet,X) \ar@{=}[r] & \hocolim_I {*} 
& \hocolim_{I} \Gamma_U \ar[l]_\sim\ar[r] & \colim_I \Gamma_U
\ar[dr] \\
N\Delta\cM(V^\bullet,X)\ar@{=}[r]\ar[u] & \hocolim_J {*} \ar[u] 
& \hocolim_J \Theta \ar[u]\ar[l]_\sim\ar[d]^\sim\ar[r] & \colim_J \Theta \ar[u]
\ar[d]\ar[r] & X. \\
&& \hocolim_J \Gamma_V \ar[r] & \colim_J \Gamma_V \ar[ur]
}
\]
The maps labelled $\sim$ are Kan equivalences because all the values
of $\Gamma_V$, $\Gamma_U$, and $\Theta$ are contractible.  

The key observation is that the
map $\cM(V^\bullet,X)\ra \cM(U^\bullet,X)$ is a Kan equivalence
by \cite[16.5.5]{H}, since both $V^\bullet$ and $U^\bullet$ are
cosimplicial resolutions of a point in $\sSet_K$ and $X$ is Kan fibrant.
It follows that $N\Delta\cM(V^\bullet,X)\ra
N\Delta\cM(U^\bullet,X)$ is also a Kan equivalence, and applying the
two-out-of-three axiom to the diagram we obtain that
$\hocolim_I\Gamma_U \ra X$ is a Kan equivalence if and only if
$\hocolim_J \Gamma_V\ra X$ is a Kan equivalence.  This is what we wanted.
\end{proof}

%%%%%%%%%%%%%%%%%%%%%%%%%%%%%%%%%%%%%%%%%%%%%%%%%%%%%%%%%%%%%%%%%%%%%%

\section{Relative mapping spaces}
\label{se:relative}

In previous sections we studied the mapping spaces
$\hMap_{(\bd{1}\ovcat \sSet_J)}(\Delta^1,X)$.  There is an evident
generalizaton of this construction which replaces $\bd{1}\cof\del{1}$
with an arbitrary cofibration $A\cof B$.  This turns out to be very
useful, and the purpose of the present section is to develop the basic
properties of these relative mapping spaces.  

\medskip

Recall from  Lemma~\ref{lemma:ES fibrant qcat} 
that the map $A\cross E^n\to A$ is a Joyal acyclic fibration, for all
$A$.  It follows that the cosimplicial object $[n]\mapsto A\times E^n$ is a
cosimplicial resolution for $A$ with respect to $\sSet_J$.

\begin{defn}
Fix a cofibration $A\cof B$.
\begin{enumerate}[(a)]
\item
Define $C_E^\bullet(B,A)$ to be the cosimplicial object obtained as
the pushout
\[ \xymatrix{cA\times E^\bullet \ar@{ >->}[r]\ar[d]_\sim & cB\times E^\bullet \ar@{.>}[d]
\\
 cA \ar@{ >.>}[r] & C_E^\bullet(B,A).
}
\]
Note that the pushout of a Joyal equivalence is still a Joyal
equivalence by left properness, and so $C_E^\bullet(B,A)$ is a
cosimplicial resolution of $B$ in the model category
$(A\ovcat \sSet_J)$.  
\item For any quasi-category $X$ and any fixed map $f\colon A\ra X$,
let $\hMap_A(B,X)$ be the pullback of simplicial sets
\[ \xymatrix{
\hMap_A(B,X) \ar@{.>}[r]\ar@{.>>}[d] & \sSet(C_E^\bullet(B,A),X)\ar@{->>}[d] \\
   {*} \ar[r]_f & \sSet(cA,X).
}
\]
Note that $\sSet(cA,X)$ is a discrete simplicial set, and also that
the right vertical map is a Kan fibration since $cA\ra
C_E^\bullet(B,A)$ is a Reedy cofibration.  One should also observe
that $\hMap_A(B,X)$ depends on the map $f$ as well as on the fixed cofibration, although this is obscured
in the notation.  
\end{enumerate}
\end{defn}

\begin{remark}
\label{re:rel}
The simplicial set $\hMap_A(B,X)$ is simply a particular model for the
homotopy function complex from $B$ to $X$ in the model category
$(A\ovcat \sSet_J)$.  Note that of the four resolutions considered
in Proposition~\ref{pr:three-resolutions}, only the $E^\bullet$
resolution is relevant in our present context; the others 
are specific to $\bd{1}\to\Delta^1$.

For later use, observe that $\hMap_A(B,X)$ can also be described as the pullback
\[  * \lra \sSet(A\times E^\bullet,X) \lla \sSet(B\times E^\bullet,X)
\]
where the left map is the composite $* \ra \sSet(cA,X) \ra
\sSet(A\times E^\bullet,X)$.  
\end{remark}

\begin{prop} Let $A\cof B$ be a cofibration, and let $C$ be any
simplicial set.  Let $X$ be a quasi-category and $f\colon A\ra X^C$ be
a map.  Then
\[ \hMap_A(B,X^C)\iso \hMap_{A\times C}(B\times C,X)
\]
where the right mapping space is relative to the map $A\times C\ra X$
adjoint to $f$.
\end{prop}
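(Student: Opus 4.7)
The plan is to unravel both sides into the explicit pullback descriptions given in Remark~\ref{re:rel} and then apply the exponential-product adjunction in $\sSet$ levelwise in the cosimplicial direction. Concretely, by Remark~\ref{re:rel}, the left-hand side $\hMap_A(B,X^C)$ is the pullback of
\[ * \lra \sSet(A\times E^\bullet, X^C) \lla \sSet(B\times E^\bullet, X^C), \]
where the left-hand map is $f$ followed by the restriction $\sSet(cA,X^C) \to \sSet(A\times E^\bullet, X^C)$. Similarly, $\hMap_{A\times C}(B\times C, X)$ is the pullback of
\[ * \lra \sSet((A\times C)\times E^\bullet, X) \lla \sSet((B\times C)\times E^\bullet, X), \]
with the left map determined by the adjoint $\tilde f\colon A\times C \to X$ of $f$.

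Next I would invoke the standard adjunction $\sSet(Y, X^C)\cong \sSet(Y\times C, X)$, natural in $Y$. Applied with $Y=A\times E^n$ and $Y=B\times E^n$, this gives natural isomorphisms
\[ \sSet(A\times E^\bullet, X^C)\iso \sSet((A\times C)\times E^\bullet, X), \qquad \sSet(B\times E^\bullet, X^C)\iso \sSet((B\times C)\times E^\bullet, X), \]
using the obvious rearrangement of factors $A\times E^n\times C \cong (A\times C)\times E^n$ (and similarly for $B$). These isomorphisms are compatible with the restriction maps coming from $A\cof B$, so the right-pointing arrows in the two pullback diagrams are identified.

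It remains to check that the left-pointing arrows (those out of $*$) are also identified under the adjunction. This is exactly the statement that the basepoint $f\in \sSet(A,X^C)=\sSet(cA,X^C)$ corresponds, under the adjunction, to $\tilde f\in \sSet(A\times C, X)$, and that both basepoints are further transported along $cA\cof C_E^\bullet(B,A)$ and $c(A\times C)\cof C_E^\bullet(B\times C, A\times C)$ respectively in the same way. Since the entire construction of $C_E^\bullet(B,A)$ is a pushout of products with $E^\bullet$, this compatibility is immediate from naturality of the adjunction. Therefore the two pullbacks are naturally isomorphic, proving the claim. The argument is essentially formal, and I do not expect a real obstacle; the only bookkeeping step is verifying that the basepoint in the pullback corresponds correctly under the adjunction, which is straightforward.
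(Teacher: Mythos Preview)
Your argument is correct and follows exactly the approach the paper intends: the paper's proof is simply ``Easy, using the second statement in Remark~\ref{re:rel},'' and you have spelled out precisely those details---the pullback description from that remark combined with the exponential adjunction $\sSet(Y,X^C)\iso\sSet(Y\times C,X)$, checking compatibility of the basepoints.
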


\begin{proof}
Easy, using the second statement in Remark~\ref{re:rel}.
\end{proof}

Given a square
\begin{myequation}
\label{eq:square}
 \xymatrix{ A \ar@{ >->}[r]\ar[d] & B\ar[d] \\
 A' \ar@{ >->}[r] & B'}
\end{myequation}
and a map $A'\ra X$,
there is an induced map $\hMap_{A'}(B',X) \ra \hMap_A(B,X)$.  
The next two results give properties of these natural maps.

\begin{prop}
\label{pr:rel-fib}
Assume given a square such as (\ref{eq:square}), and let $L\colon
A'\amalg_A B\ra B'$ denote the induced ``latching'' map.  Then for any
quasi-category $X$ and any map $A'\To{f} X$, the induced map
$\hMap_{A'}(B',X) \ra \hMap_A(B,X)$ is a Kan fibration if $L$ is a
cofibration, and it is a Kan acyclic fibration if $L$ is a Joyal acyclic
cofibration.
\end{prop}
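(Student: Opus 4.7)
The plan is to exhibit the map as a pullback and then apply the Joyal pushout-product axiom from Proposition~\ref{pr:Joyal-match}. Set $F(Y) := \sSet(Y\times E^\bullet, X)$ for any simplicial set $Y$. By construction $\hMap_A(B,X)$ is the pullback of $\{*\}\to F(A) \bfib F(B)$, and likewise for $\hMap_{A'}(B',X)$. A straightforward diagram chase shows that $\hMap_{A'}(B',X)\to\hMap_A(B,X)$ is the pullback of the natural map
\[ \phi\colon F(B') \lra F(B)\times_{F(A)} F(A') \]
along the canonical map $\hMap_A(B,X)\to F(B)\times_{F(A)} F(A')$. Since pullbacks of Kan (acyclic) fibrations are Kan (acyclic) fibrations, it suffices to prove that $\phi$ has the relevant property under the given hypothesis on $L$.

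Define $W\colon\sSet\to\sSet$ by $W(K):=\int^{[n]} K_n\cdot E^n$; this is the left adjoint of $Y\mapsto\sSet(E^\bullet,Y)$, and a routine coend Yoneda calculation yields a natural bijection $\sSet(K,F(Y))\iso\sSet(Y\times W(K),X)$. Hence, by adjunction, a lifting problem for $\phi$ against an inclusion $i\colon K'\cof K$ is equivalent to a lifting problem for $X\to *$ against the box product
\[ L\bbox W(i)\colon \bigl[(A'\cup_A B)\times W(K)\bigr]\cup_{(A'\cup_A B)\times W(K')}\bigl[B'\times W(K')\bigr] \lra B'\times W(K). \]
Since $X$ is Joyal fibrant, it suffices to show that $L\bbox W(i)$ is a Joyal acyclic cofibration in each of the two cases. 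By Proposition~\ref{pr:Joyal-match}(a), this reduces to two properties of $W$: (i) $W$ sends monomorphisms to monomorphisms, and (ii) $W$ sends Kan acyclic cofibrations to Joyal acyclic cofibrations. Indeed, if $L$ is a cofibration and $i$ is a Kan acyclic cofibration, apply (ii) and Proposition~\ref{pr:Joyal-match}(a); if $L$ is a Joyal acyclic cofibration and $i$ is an arbitrary monomorphism, apply (i) and the same proposition.

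The main obstacle is to verify (i) and (ii). Property (i) is immediate: on the generator $\partial\Delta^n\cof\Delta^n$, $W$ returns the $n$-th latching inclusion $L^n E^\bullet\cof E^n$, which is a monomorphism because $E^\bullet$ is Reedy cofibrant in $c(\sSet_J)$ (Section~\ref{se:dwyer-kan1}); the general case follows because $W$ preserves colimits. Property (ii) is the assertion that $(W,\sSet(E^\bullet,-))$ forms a Quillen pair between $\sSet_K$ and $\sSet_J$. It is the standard consequence of general Reedy theory applied to the cosimplicial resolution $E^\bullet\to c(*)$ of the terminal object in $\sSet_J$ -- which is Reedy cofibrant with each $E^n\to *$ a Joyal equivalence by Lemma~\ref{lemma:ES fibrant qcat} -- see for instance \cite[Ch.~16]{H}.
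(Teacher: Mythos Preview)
Your proof is correct and follows essentially the same route as the paper's.  Both arguments reduce the question to showing that the map
\[
\phi\colon \sSet(B'\times E^\bullet,X)\lra \sSet\bigl((A'\amalg_A B)\times E^\bullet,X\bigr)
\]
is a Kan fibration (resp.\ Kan acyclic fibration) under the given hypothesis on $L$, and then observe that the map in question is a pullback of $\phi$.  Your $F(B)\times_{F(A)}F(A')$ is exactly $\sSet((A'\amalg_A B)\times E^\bullet,X)$, so your $\phi$ is the paper's $(L\times\id)^*$.

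The only difference is in how much detail is supplied for the key step.  The paper simply asserts that $(L\times\id)^*$ has the required lifting property ``directly,'' implicitly invoking the standard Reedy fact that mapping out of a Reedy (acyclic) cofibration between cosimplicial resolutions into a fibrant object yields a Kan (acyclic) fibration.  You instead unwind this via the left adjoint $W(K)=K\otimes_\Delta E^\bullet$ and reduce to the pushout-product axiom (Proposition~\ref{pr:Joyal-match}(a)) together with the two properties (i) and (ii) of $W$.  Your justification of (i) via Reedy cofibrancy of $E^\bullet$ is fine; for (ii), the statement that $K\mapsto K\otimes_\Delta E^\bullet$ is left Quillen from $\sSet_K$ to $\sSet_J$ when $E^\bullet$ is a cosimplicial resolution of $*$ is indeed a consequence of the material in \cite[Ch.~16]{H}, though a more specific pointer (e.g.\ the discussion around \cite[16.3--16.5]{H}) would strengthen the citation.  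In short: same argument, more of the Reedy machinery made explicit.
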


\nosee{%2009/10/09

\begin{lemma}
\label{le:fibers}
Let
\[ \xymatrix{
 X\ar[d]_f\ar[r]  & Y\ar[d]^g \\
 Z\ar[r] & W
}
\]
be a diagram of simplicial sets, and let $z\in Z$ be some basepoint.
Let $F_1$ be the fiber of $f$ over $z$, and let $F_2$ be the fiber of
$g$ over the image of $z$ .  Then if the ``matching map'' $X\ra Y\times_Z W$ has
the right-lifting-property with respect to some map $A\ra B$, so does
the map of fibers $F_1\ra F_2$.

In particular, if the matching map is a Kan fibration (resp. acyclic
fibration) so is $F_1\ra F_2$.  
\end{lemma}

\begin{proof}
Left to the reader.
\end{proof}

}%2009/10/09

\begin{proof}
Consider the diagram below:
$$\xymatrix{\hMap_{A'}(B',X)\ar[r]\ar[d]&\sSet(B'\cross
E^\bullet,X)\ar[d]^{(L\cross
\id)^*}\\\hMap_A(B,X)\ar[r]\ar[d]&\sSet((A'\amalg_AB)\cross
E^\bullet,X)\ar[r]\ar[d]&\sSet(B\cross E^\bullet,X)\ar[d]\\
{*}\ar[r]_-f&\sSet(A'\cross E^\bullet,X)\ar[r]&\sSet(A\cross
E^\bullet,X).}
$$
Note that the two large rectangles are pullback squares, and that the
lower right square is also a pullback.  It follows by category
theory that the lower left square is also a pullback, and then that
the same is true for the upper left square.
The desired result now follows directly, since the map labelled $(L\times
\id)^*$ will be either a Kan fibration or acyclic Kan fibration under
the respective hypothesis on $L$.
\end{proof}

\begin{prop}
\label{pr:relative-pullback}
Let $A\cof B$ be a cofibration, and let $B_1\cof B$ and $B_2\cof B$ be
such that $B_1\cup B_2=B$.  Let $X$ be a quasi-category and $f\colon A\ra X$ any map.  
Then the following is a pullback square, as well as a homotopy
pullback square in $\sSet_K$:
\[ \xymatrix{
\hMap_A(B,X) \ar@{->>}[r]\ar@{->>}[d] & \hMap_{B_1\cap A}(B_1,X)\ar@{->>}[d] \\
\hMap_{B_2\cap A}(B_2,X) \ar@{->>}[r] & \hMap_{B_1\cap B_2\cap A}(B_1\cap
B_2,X).
}
\]
\end{prop}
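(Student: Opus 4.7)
The plan is to work one simplicial level at a time, using the description of $\hMap_A(B,X)$ from Remark~\ref{re:rel}: an $n$-simplex is a map $g\colon B\times E^n\to X$ whose restriction to $A\times E^n$ is the composite $A\times E^n\twoheadrightarrow A\To{f}X$. The first goal is to see that the square is a pullback on the nose in each dimension; the second is to observe that one (in fact, every) edge is a Kan fibration, which will force the strict pullback to also be a homotopy pullback in $\sSet_K$.

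For the strict pullback, the essential input is that because $B_1$ and $B_2$ are subcomplexes with $B_1\cup B_2=B$, the evident square with corners $B_1\cap B_2$, $B_1$, $B_2$, $B$ is a pushout in $\sSet$. Since $\sSet$ is cartesian closed, multiplying by $E^n$ preserves this pushout, giving $B\times E^n=(B_1\times E^n)\cup_{(B_1\cap B_2)\times E^n}(B_2\times E^n)$. Dually, a map $g\colon B\times E^n\to X$ is the same as a compatible pair $(g_1,g_2)$ of maps out of $B_i\times E^n$ agreeing on the overlap. The condition that $g$ restricts to the constant version of $f$ on $A\times E^n$ can be checked separately on $(A\cap B_1)\times E^n$ and $(A\cap B_2)\times E^n$, since $A=(A\cap B_1)\cup(A\cap B_2)$ and $(A\cap B_1)\cap(A\cap B_2)=A\cap B_1\cap B_2$. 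This is exactly the data of an $n$-simplex in the pullback of the other three corners of the diagram, so the square is a pullback in $\sSet$.

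To see that every edge is a Kan fibration, I would apply Proposition~\ref{pr:rel-fib} to each of the four comparison squares. For instance, for the map $\hMap_A(B,X)\to \hMap_{B_1\cap A}(B_1,X)$ the relevant square is
\[
\xymatrix{
B_1\cap A \ar@{ >->}[r]\ar[d] & B_1 \ar[d] \\
A \ar@{ >->}[r] & B,
}
\]
whose latching map $A\cup_{B_1\cap A}B_1\to B$ is the inclusion of the subcomplex $A\cup B_1\ss B$, and in particular a monomorphism; by Proposition~\ref{pr:rel-fib}, the induced map on $\hMap$'s is a Kan fibration. The other three edges are handled by exactly the same argument, with $A,B,B_1$ replaced by the corresponding intersections.

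Since we have a strict pullback square in which (at least) one leg is a Kan fibration, standard model category facts for $\sSet_K$ imply that the square is also a homotopy pullback. The main potential pitfall is just making sure that the lattice identities $A\cap(B_1\cup B_2)=(A\cap B_1)\cup(A\cap B_2)$ and the equality of pushouts $(A\cup B_1)=A\cup_{A\cap B_1}B_1$ as subcomplexes of $B$ are valid — these hold because we are dealing with monomorphisms of simplicial sets and limits/colimits in $\sSet$ are computed levelwise — but beyond that the argument is essentially formal.
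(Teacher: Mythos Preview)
Your proof is correct and follows essentially the same approach as the paper: both verify the strict pullback using the pushout decomposition $B=B_1\cup_{B_1\cap B_2}B_2$ (and the induced decomposition of $A$), then invoke Proposition~\ref{pr:rel-fib} to get Kan fibrations and hence a homotopy pullback. The only difference is presentational---the paper packages the pullback verification as a ``limits commute with limits'' argument via a $3\times 3$ grid of mapping spaces, whereas you unpack this directly at the level of $n$-simplices; the underlying content is identical.
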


\begin{proof}
Note that the pullback statement immediately implies the homotopy
pullback statement, since we know by four applications of
Proposition~\ref{pr:rel-fib} that the indicated maps are Kan
fibrations.

To prove the pullback statement 
we simply consider the following diagram:
\[ \xymatrix{
{*}\ar[r]\ar[d] & \sSet((B_1\cap A)\times E^\bullet,X) \ar[d]  &
\sSet(B_1\times E^\bullet,X) \ar[d]\ar[l] \\
{*}\ar[r] & \sSet((B_1\cap B_2\cap A)\times E^\bullet,X)   &
\sSet((B_1\cap B_2)\times E^\bullet,X) \ar[l] \\
{*}\ar[r]\ar[u] & \sSet((B_2\cap A)\times E^\bullet,X) \ar[u]  &
\sSet(B_2\times E^\bullet,X). \ar[u]\ar[l] 
}
\]
The limit of this diagram may be constructed by first forming the
pullbacks of the columns, and then forming the resulting pullback; and
it may also be constructed by first forming the pullbacks of the {\it rows\/}
and then forming the resulting pullback.  
The former method gives the pullback of
\[ * \ra \sSet(A\times E^\bullet,X) \la \sSet(B\times E^\bullet,X), \]
which is just $\hMap_A(B,X)$.   The latter method gives the pullback of
\[ \hMap_{B_1\cap A}(B_1,X) \ra \hMap_{B_1\cap B_2\cap A}(B_1\cap
B_2,X) 
\la \hMap_{B_2\cap A}(B_2,X).
\]
This completes the proof.
\end{proof}

The following proposition demonstrates the use of the above results:

\begin{prop}
\label{pr:interval-mapping}
Suppose that $X\ra Y$ is a map of quasi-categories and that for all
$a,b\in X$ the induced map $\hMap_{\bdd{1}}(\Delta^1,X) \ra
\hMap_{\bdd{1}}(\Delta^1,Y)$ is a Kan equivalence.  Then for any pair of
$1$-simplices $f,g\colon \Delta^1\ra X$ (regarded as a single map
$\bdd{1}\times \del{1}\ra X$), the map
\[ \hMap_{\bdd{1}\times \Delta^1} (\Delta^1\times \Delta^1,X) \ra
\hMap_{\bdd{1}\times \Delta^1}(\Delta^1\times
\Delta^1,Y) \]
is also a Kan equivalence.
\end{prop}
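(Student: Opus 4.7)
The plan is to filter $\Delta^1\times\Delta^1$ by cofibrations starting from $A_0 := \bd{1}\times\Delta^1$, and prove by induction on $i$ that $\hMap_{A_0}(A_i,X)\to\hMap_{A_0}(A_i,Y)$ is a Kan equivalence. Define $A_1 = A_0\cup\Delta^{\{01,11\}}$ (top edge), $A_2 = A_1\cup\Delta^{\{00,10\}}$ (bottom edge), $A_3 = A_2\cup\Delta^{\{00,01,11\}}$ (upper $2$-simplex plus diagonal), and $A_4 = A_3\cup\Delta^{\{00,10,11\}} = \Delta^1\times\Delta^1$ (lower $2$-simplex). The analysis of each step will repeatedly apply Propositions~\ref{pr:rel-fib} and \ref{pr:relative-pullback}.

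Each of the steps $A_0\to A_1$ and $A_1\to A_2$ is a pushout of $\bd{1}\inc\Delta^1$ whose attaching map factors through $A_0$; Proposition~\ref{pr:relative-pullback} yields a pullback square over a point identifying, for instance, $\hMap_{A_0}(A_1,X)\cong\hMap_{\bd{1}}(\Delta^{\{01,11\}},X_{f(1),g(1)})$, and the hypothesis directly supplies the Kan equivalence with the $Y$-version. The step $A_2\to A_3$ is a pushout of the inner horn inclusion $\Lambda^{\{00,01,11\}}_{01}\inc\Delta^{\{00,01,11\}}$, a Joyal acyclic cofibration; Proposition~\ref{pr:rel-fib} makes $\hMap_{A_0}(A_3,X)\to\hMap_{A_0}(A_2,X)$ a Kan acyclic fibration, so two-out-of-three promotes the inductive Kan equivalence from level $A_2$ to level $A_3$.

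The main obstacle is the final step $A_3\to A_4$, whose attaching cofibration $\bd{\{00,10,11\}}\inc\Delta^{\{00,10,11\}}$ is \emph{not} Joyal acyclic; Proposition~\ref{pr:rel-fib} provides only a Kan fibration $\hMap_{A_0}(A_4,X)\to\hMap_{A_0}(A_3,X)$, and we must verify Kan equivalence on fibers over each $\sigma\in\hMap_{A_0}(A_3,X)$. A standard pullback-comparison argument then upgrades this to the Kan equivalence of total spaces, given that the induced map on bases is a Kan equivalence. For each such $\sigma$, the fiber coincides with $\hMap_{\bd{2}}(\Delta^2,X)$ relative to the boundary of the lower triangle determined by $\sigma$. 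To analyze it, I will factor through the inner horn $\Lambda^2_1 := \Lambda^{\{00,10,11\}}_{10}$: Proposition~\ref{pr:rel-fib} shows that $\hMap_{\Lambda^2_1}(\Delta^2,X)$ is contractible, while Proposition~\ref{pr:relative-pullback} applied to $\bd{2} = \Lambda^2_1\cup\Delta^{\{00,11\}}$ identifies $\hMap_{\Lambda^2_1}(\bd{2},X)\cong\hMap_{\bd{1}}(\Delta^{\{00,11\}},X_{f(0),g(1)})$. Then $\hMap_{\bd{2}}(\Delta^2,X)$ is the fiber of the Kan fibration $\hMap_{\Lambda^2_1}(\Delta^2,X)\to\hMap_{\Lambda^2_1}(\bd{2},X)$ over the diagonal edge picked out by $\sigma$. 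Since both total spaces of this fibration (in $X$ and $Y$) are contractible and the base map is a Kan equivalence by the hypothesis at $f(0),g(1)$, the resulting commutative square of Kan fibrations is a homotopy pullback, and the comparison map on fibers is a Kan equivalence. This completes the induction.
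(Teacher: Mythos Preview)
Your proof is correct, but it takes a different route from the paper's.

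The paper decomposes $\Delta^1\times\Delta^1$ symmetrically into its two nondegenerate $2$-simplices $S=[00,01,11]$ and $T=[00,10,11]$, and applies Proposition~\ref{pr:relative-pullback} \emph{once} to exhibit $\hMap_{\bd{1}\times\Delta^1}(\Delta^1\times\Delta^1,X)$ as the pullback of
\[
\hMap_{S_0}(S,X)\longrightarrow \hMap_{S_0\cap T_0}(S\cap T,X)\longleftarrow \hMap_{T_0}(T,X),
\]
where $S_0=S\cap(\bd{1}\times\Delta^1)$ and $T_0=T\cap(\bd{1}\times\Delta^1)$.  The middle term is exactly $\hMap_{\bd{1}}(\Delta^{\{00,11\}},X)$, and each outer term is shown to be Kan equivalent to a single $\hMap_{\bd{1}}(\Delta^1,X)$ by one application of the \emph{acyclic} case of Proposition~\ref{pr:rel-fib} (the relevant latching map being the inner horn $\Lambda^2_1\inc\Delta^2$).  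All three vertical comparison maps are then Kan equivalences directly from the hypothesis, and the conclusion follows from the standard fact about pullbacks along Kan fibrations.

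Your sequential filtration treats the two triangles asymmetrically: by the time you attach the second one, its entire boundary is already present, so you are forced into the non-acyclic case of Proposition~\ref{pr:rel-fib} and a fiber-by-fiber analysis.  That analysis is sound---the contractibility of $\hMap_{\Lambda^2_1}(\Delta^2,-)$ combined with the hypothesis on the base does yield a Kan equivalence on fibers, and the pullback-comparison upgrade to total spaces is the standard argument---but it is extra work that the paper's parallel treatment avoids entirely.  The paper's decomposition buys brevity and uses only the ``easy'' (acyclic-fibration) conclusion of Proposition~\ref{pr:rel-fib}; your filtration is more pedestrian but perhaps more obviously generalizable to longer products.
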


\begin{proof}
Let $S$ and $T$ be the nondegenerate $2$-simplices $[00,01,11]$ and
$[00,10,11]$ in $\Delta^1\times \Delta^1$.  Write $S_0=[00,01]\cup
[11]$ and $T_0=[00]\cup [10,11]$.  Then by Proposition~\ref{pr:relative-pullback}
 the map we are
interested in is the induced map on pullbacks of the following
diagram:
\begin{myequation}
\label{eq:pb} 
\xymatrix{
\hMap_{S_0}(S,X) \ar@{->>}[r]\ar[d] & \hMap_{S_0\cap T_0}(S\cap T,X)\ar[d] &
\hMap_{T_0}(T,X) \ar@{->>}[l]\ar[d] \\
\hMap_{S_0}(S,Y) \ar@{->>}[r] & \hMap_{S_0\cap T_0}(S\cap T,Y) &
\hMap_{T_0}(T,Y). \ar@{->>}[l]
}
\end{myequation}
Now, $S_0\cap T_0\inc S\cap T$ is the inclusion of the boundary of a
$1$-simplex; hence the assumptions of the proposition imply that the
middle vertical map is a Kan equivalence.  

To analyze the left  vertical map in (\ref{eq:pb}) we let $S_1=[01,11]$
and consider the square
\[ \xymatrix{
\hMap_{S_0}(S,X) \ar[r] \ar[d] & \hMap_{S_0\cap S_1}(S_1,X) \ar[d]\\
\hMap_{S_0}(S,Y)\ar[r] &
\hMap_{S_0\cap S_1}(S_1,Y).
}
\]
As the map $S_1\amalg_{S_1\cap S_0} S_0 \inc S$ is a Joyal acyclic
cofibration, it follows by Proposition~\ref{pr:rel-fib} that the two
horizontal maps are Kan equivalences.  Finally, since $S_0\cap S_1\inc
S_1$ is the inclusion of the boundary of a $1$-simplex, the right
vertical map is a Kan equivalence by assumption.  Therefore the left
vertical map is a Kan equivalence as well.

A similar proof shows that the right vertical map in (\ref{eq:pb}) is
a Kan equivalence, which means this is true for all the vertical maps.
Since the horizontal maps are Kan fibrations, it follows that the
induced map on pullbacks is a Kan equivalence as well.  This is what
we wanted.
\end{proof}

%%%%%%%%%%%%%%%%%%%%%%%%%%%%%%%%%%%%%%%%%%%%%%%%%%%%%%%%%%%%%%%%%%%%%%

\section{DK-equivalences for quasi-categories}
\label{se:DK-equiv}
In this section we introduce a new notion of equivalence for
simplicial sets, called ``$DK$-equivalence."  It is designed to be
analagous to the corresponding notion for simplicial categories
\cite{Bergner}.  We will eventually prove that the class of
$DK$-equivalences is the same as the class of Joyal equivalences.
In the present section we set out to accomplish this by
establishing some basic properties of
$DK$-equivalences.

\medskip

\begin{defn}
\label{de:DK}
A map $f\colon X\ra Y$ of simplicial sets is said to be a
\dfn{DK-equivalence} if two conditions are satisfied:
\begin{enumerate}[(1)]
\item The induced map $\ho(\sSet_J)(*,X)\ra \ho(\sSet_J)(*,Y)$ is a
bijection;
\item For every two $0$-simplices $a,b\in X$, the induced map $
\hMap_{\bpsSet}(\Delta^1,X) \ra \hMap_{\bpsSet}(\Delta^1,Y)$ is a Kan equivalence.
\end{enumerate}
\end{defn}

The following lemma gives us three ways of recognizing $DK$-equivalences:

\begin{lemma}
\label{le:DK-char}
Let $f\colon X\ra Y$ be a Joyal fibration where both $X$ and $Y$ are
quasi-categories, and assume that $f$ satisfies condition (2) of
Definition~\ref{de:DK}.  Then the following statements are equivalent:
\begin{enumerate}[(a)]
\item $f$  has the right-lifting-property with
respect to $\emptyset\ra \Delta^0$ and $\{0,1\}\inc E^1$;
\item $[*,X]_{E^1} \ra [*,Y]_{E^1}$ is a bijection;
\item $f$ has the right-lifting-property with respect to $\emptyset
\ra \Delta^0$ (equivalently, $f$ is surjective).
\item $f$ satisfies condition (1) in Definition \ref{de:DK};
\end{enumerate}
\end{lemma}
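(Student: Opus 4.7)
I will establish (b)$\Leftrightarrow$(d) separately and then the cycle (a)$\Rightarrow$(c)$\Rightarrow$(b)$\Rightarrow$(a).

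For (b)$\Leftrightarrow$(d): the inclusion $\{0,1\}\inc E^1$ is a cofibration and $E^1\to\Delta^0$ is a Joyal equivalence by Lemma~\ref{lemma:ES fibrant qcat}, so $E^1$ serves as a cylinder object for $\Delta^0$ in $\sSet_J$; since $X$ is Joyal-fibrant, $\ho(\sSet_J)(*,X)=X_0/\!\sim_{E^1}=[*,X]_{E^1}$, and similarly for $Y$. The implication (a)$\Rightarrow$(c) is immediate.

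For (c)$\Rightarrow$(b), surjectivity on $[*,-]_{E^1}$ is clear from (c). For injectivity, Corollary~\ref{co:connect1} translates the Kan equivalence of condition~(2) into a bijection $\pi_0\jC(X)(a,b)\iso\pi_0\jC(Y)(f(a),f(b))$ for every pair of objects; since $\pi_0\jC(f)$ is a functor that is fully faithful at the level of $\pi_0\jC$, it reflects isomorphisms. An $\infty$-iso $\bar g\colon f(a)\to f(b)$ corresponds, by Corollary~\ref{co:3iso}, to an iso in $\pi_0\jC(Y)$; its preimage in $\pi_0\jC(X)(a,b)$ is an iso, which by Corollary~\ref{co:3iso} is represented by an $\infty$-iso $\gamma\colon a\to b$, giving $a\sim_{E^1}b$.

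For (b)$\Rightarrow$(a), first (c) follows from (b): given $y\in Y_0$, (b) supplies $x\in X_0$ with $f(x)\sim_{E^1}y$ via an $\infty$-iso $\bar g\colon E^1\to Y$; since $\{0\}\inc E^1$ is a Joyal acyclic cofibration (both $\{0\}$ and $E^1$ being Joyal-contractible by Lemma~\ref{lemma:ES fibrant qcat}), the Joyal fibration $f$ lifts $\bar g$ to $g\colon E^1\to X$ with $g(0)=x$, so $g(1)\in f^{-1}(y)$. For the RLP with respect to $\{0,1\}\inc E^1$, my strategy is to show that the induced map $J(f)\colon J(X)\to J(Y)$ on the $\infty$-groupoids of $\infty$-isomorphisms (see Section~\ref{se:quasi}) is a trivial Kan fibration; since every map $E^1\to X$ factors through $J(X)$ (and similarly for $Y$), the lifting problem for $\{0,1\}\inc E^1$ against $f$ reduces to the same problem against $J(f)$, which is solved because trivial Kan fibrations have the RLP against every monomorphism. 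To verify $J(f)$ is a trivial Kan fibration, I would check the RLP against each $\bd n\inc\Delta^n$ inductively: fill an inner horn (or a special outer horn via Proposition~\ref{pr:quasi2}) using $f$'s Joyal fibration property, then ``correct'' the resulting simplex to match the prescribed boundary on the nose, via the $\pi_0\jC$-bijection supplied by condition~(2) together with a higher-dimensional analogue of the 2-simplex argument sketched in the (c)$\Rightarrow$(b) step.

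The main obstacle will be this last strict-correction step: after horn-filling produces an $n$-simplex in $X$ covering the given $n$-simplex in $Y$, adjusting it to match the prescribed boundary strictly requires a careful combination of higher-dimensional inner-horn lifting against $f$ with the $\pi_0\jC$-level bijection furnished by condition~(2).
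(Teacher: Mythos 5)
Your plan shares the paper's treatment of (b)$\Leftrightarrow$(d) and the trivial implication out of (a), but runs the cycle in the opposite direction; that in itself is fine. The problem is that the hard part of the cycle — producing a lift against $\{0,1\}\inc E^1$ — is where your proposal stops being a proof. You propose to show that $J(f)\colon J(X)\to J(Y)$ is a trivial Kan fibration by inductively lifting against $\bd n\inc\Delta^n$, invoking a ``strict-correction step'' to fix up the boundary after horn-filling. You acknowledge yourself that this step is the main obstacle, and as written it is not an argument: there is no specified mechanism for taking an $n$-simplex of $X$ that maps correctly to $Y$ and rectifying it to match a prescribed $\bd n$ while staying over the given $n$-simplex of $Y$. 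Even in dimension $1$ this is delicate, and in higher dimensions the ``$\pi_0\jC$-level bijection'' only gives homotopical information — it does not by itself produce a strict replacement of a simplex with prescribed boundary. This is precisely why the paper does not attempt to prove that $J(f)$ is a trivial Kan fibration.

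The paper's argument avoids the problem entirely. Starting from (c): lift $\gamma\colon E^1\to Y$ along $\{0\}\inc E^1$ (a Joyal acyclic cofibration) to $\beta\colon E^1\to X$ with $\beta(0)=a$, and set $a'=\beta(1)$. Now $a'$ and $b$ both lie in the fiber $F$ of $f$ over $\gamma(1)$, and condition~(2) — in the precise form that $\bpsSet(C_E^\bullet,X_{a',b})\to\bpsSet(C_E^\bullet,Y_{\gamma(1),\gamma(1)})$ is a Kan acyclic fibration — implies that its fiber $\bpsSet(C_E^\bullet,F_{a',b})$ is contractible. By Corollary~\ref{co:connect1} the spaces $\jC(F)(a',b)$, $\jC(F)(a',a')$, $\jC(F)(b,a')$, $\jC(F)(b,b)$ are all contractible, so $a'\cong b$ in $\pi_0\jC(F)$; by Corollary~\ref{co:3iso} this iso is realized by an $E^1\to F$. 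Composing $\beta$ with this $E^1$ via a lift of $E^1\Wedge E^1\inc E^2$ (a Joyal acyclic cofibration) and restricting to the end vertices solves the original lifting problem. So the correction happens only once, at the level of endpoints in a single fiber, where the Dwyer–Kan / rigidification machinery applies directly. Your (c)$\Rightarrow$(b) argument via fully-faithfulness of $\pi_0\jC(f)$ is a valid (if heavier) alternative to the paper's direct lift, but without a completed (b)$\Rightarrow$(a) the proposal does not establish the lemma.
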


\begin{proof}
Since $E^1$ is a cylinder object for $*$ in $\sSet_J$, (b) and (d) are equivalent.  We prove (a) $\Rightarrow$ (b) $\Rightarrow$ (c) $\Rightarrow$ (a).
The first implication is trivial, using the evident map of coequalizer diagrams defining $[-,-]_{E^1}$.

Assume (b) is true and
pick a map $a\colon
\Delta^0\ra Y$.  Since $[*,X]_{E^1}\ra [*,Y]_{E^1}$ is surjective, there is a map
$b\colon \Delta^0\ra X$ and a map $h\colon E^1\ra Y$ such that
$h(0)=f(b)$ and $h(1)=a$.  As $X\ra Y$ is a Joyal fibration, it has
the right-lifting-property with respect to $\{0\}\inc E^1$.  By lifting
the evident square we find a preimage for $a$ in $X$.  

Finally, assume (c) holds and suppose given a square
\[ \xymatrix{
\{0,1\} \ar[r]^{a\amalg b}\ar@{ >->}[d] & X \ar[d]^f \\
E^1\ar[r]^\gamma & Y.
}
\]
As $X\ra Y$ has the right-lifting-property with respect to $\{0\}\ra
E^1$, we can use this to lift $\gamma$ to a map $\beta\colon E^1\ra X$
satisfying $\beta(0)=a$.  Setting $a'=\beta(1)$, we then have
$f(a)=f(a')=\gamma(1)$.  

Let $F$ be the fiber of $f$ over $\gamma(1)$, and let $\bdd{1}\ra F$
send $0\mapsto a'$ and $1\mapsto b$.  Then we obtain a pullback square
\[ \xymatrix{
\bpsSet(C_E^\bullet,F_{a',b})  \ar[r]\ar[d] &
\bpsSet(C_E^\bullet,X_{a',b})\ar@{->>}[d]^\sim \\
\bpsSet(C_E^\bullet,\Delta^0)  \ar[r]\ &
\bpsSet(C_E^\bullet,Y_{\gamma(1),\gamma(1)})
}
\]
where the indicated map is a Kan acyclic fibration by our assumptions
on $f$.  As the pullback will also be a Kan acyclic fibration, this
shows that $\bpsSet(C_E^\bullet,F_{a',b})$ is contractible.  By
Corollary~\ref{co:connect1} we then know that $\jC(F)(a',b)$ is
contractible.

The same argument shows that $\jC(F)(a',a')$, $\jC(F)(b,b)$, and
$\jC(F)(b,a')$ are contractible, and this immediately yields that the
objects $a'$ and $b$ are isomorphic in the category $\pi_0\jC(F)$.
Hence by Corollary~\ref{co:3iso} there is a map $E^1\ra F$ connecting
$a'$ and $b$.  Let $\delta$ denote the composite $E^1\ra F\ra X$.

Let $h$ be the composite 
\[ E^2\llra{\pi} E^1 \llra{\gamma} Y \]
where $\pi$ sends $0\mapsto 0$, $1\mapsto 1$, and $2\mapsto
1$.  We then have a commutative square
\[ \xymatrix{
E^1\Wedge E^1\ar@{ >->}[d] \ar[r]^{\beta\Wedge \delta} & X\ar[d]^f \\
E^2\ar[r]^h & Y, 
}
\] 
where the left vertical map is the evident inclusion.  This map
is a Joyal acyclic
cofibration (both the domain and codomain are contractible in
$\sSet_J$), and so the above square has a lift $E^2\ra X$.  Restricting
to the vertices $0$ and $2$ gives a map $E^1\ra X$ which lifts our
original map $\gamma$.
\end{proof}

\begin{prop} 
\label{pr:DK}
Let $X$, $Y$, $X'$, and $Y'$ be quasi-categories.
\begin{enumerate}[(a)]
\item If $f\colon X\ra Y$ is a Joyal fibration and a $DK$-equivalence,
and $g\colon Y'\ra Y$ is any map, then the pullback
$X\times_Y Y' \ra Y'$ is
a Joyal fibration and a
$DK$-equivalence.
\item Let 
\[ \xymatrix{ X\ar@{->>}[r]^f\ar@{->>}[dr]_h & Y \ar@{->>}[d]^g \\
& Y'
}
\]
be a diagram in which all maps are Joyal fibrations.  Then if two of
the three maps are $DK$-equivalences, so is the third.
\item Consider a diagram
\[ \xymatrix{
X \ar@{->>}[r]\ar@{->>}[d] & Y \ar@{->>}[d] & Z\ar@{->>}[l]\ar@{->>}[d] \\
X' \ar@{->>}[r] & Y'  & Z'\ar@{->>}[l]
}
\]
where all the maps are Joyal fibrations and the vertical maps are
$DK$-equivalences.  Let $P$ and $P'$ be the pullbacks of the two rows,
and assume that $P\ra P'$ is also a Joyal fibration.  Finally, assume
that $X\ra X'\times_{Y'} Y$ and $Z\ra Z'\times_{Y'} Y$ are Joyal
fibrations.  Then $P\ra P'$ is also a $DK$-equivalence.
\end{enumerate}
\end{prop}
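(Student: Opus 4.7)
My plan is to verify separately the two conditions of Definition~\ref{de:DK} for $P\to P'$. First observe that $P$ and $P'$ are themselves quasi-categories: each is the pullback of a Joyal fibration over a quasi-category, hence Joyal fibrant. Combined with the hypothesis that $P\to P'$ is a Joyal fibration, this will let me invoke Lemma~\ref{le:DK-char} at the end, so that condition~(1) reduces to surjectivity of $P\to P'$ on $0$-simplices.

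For the mapping-space condition~(2), I would fix vertices $a,b\in P$ and equip every object in the diagram with the induced base-point data. Picking a Reedy cofibrant cosimplicial resolution $C^\bullet$ of $\Delta^1$ in $\bpsSet$ and applying $\sSet(C^\bullet,-)$ to the pullback $P=X\times_Y Z$ exhibits
\[
\hMap_{\bpsSet}(\Delta^1,P)\;\cong\;\hMap_{\bpsSet}(\Delta^1,X)\times_{\hMap_{\bpsSet}(\Delta^1,Y)}\hMap_{\bpsSet}(\Delta^1,Z),
\]
and similarly for $P'$. Since $X\to Y$ is a Joyal fibration of Joyal-fibrant objects, standard framing arguments imply that $\hMap_{\bpsSet}(\Delta^1,X)\to\hMap_{\bpsSet}(\Delta^1,Y)$ is a Kan fibration of Kan complexes, so the pullback on the right is also a homotopy pullback. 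Condition~(2) applied to each of $X\to X'$, $Y\to Y'$, $Z\to Z'$ gives Kan equivalences between the three pairs of $\hMap$ spaces; the gluing lemma for homotopy pullbacks of Kan complexes then yields that $\hMap_{\bpsSet}(\Delta^1,P)\to\hMap_{\bpsSet}(\Delta^1,P')$ is a Kan equivalence.

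For surjectivity of $P\to P'$ on $0$-simplices, I would let $p'=(x',z')\in P'_0$ have common image $y'\in Y'_0$. By Lemma~\ref{le:DK-char} applied to the Joyal fibration $DK$-equivalences $X\to X'$ and $Z\to Z'$, I can lift $x'$ and $z'$ to vertices $x_0\in X$ and $z_0\in Z$. The vertices $f(x_0),g(z_0)\in Y$ then both lie over $y'$, but will generally fail to be equal; this is the key obstruction. To repair it, apply part~(a) of the present proposition to the Joyal fibration $DK$-equivalence $Y\to Y'$ and the map $\Delta^0\to Y'$ picking out $y'$: the fiber $Y\times_{Y'}\Delta^0$ is a quasi-category $DK$-equivalent to $\Delta^0$, so by Lemma~\ref{le:DK-char} all of its vertices are $E^1$-equivalent. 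Hence there is a map $\alpha\colon E^1\to Y$ from $f(x_0)$ to $g(z_0)$ whose composite to $Y'$ is constant at $y'$.

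I would then form $(c_{x'},\alpha)\colon E^1\to X'\times_{Y'}Y$, where $c_{x'}$ is constant at $x'$; this is well-defined since $\alpha$ and $c_{x'}$ have the same projection to $Y'$, and at $0\in E^1$ it agrees with the image of $x_0$. Because $\{0\}\hookrightarrow E^1$ is a Joyal acyclic cofibration by Remark~\ref{re:E1-homotopy} and $X\to X'\times_{Y'}Y$ is a Joyal fibration by hypothesis, this lifting problem has a solution $\widetilde\alpha\colon E^1\to X$ with $\widetilde\alpha(0)=x_0$. Setting $x=\widetilde\alpha(1)$ gives $x\mapsto x'$ in $X'$ and $f(x)=g(z_0)$ in $Y$, so $(x,z_0)\in X\times_Y Z=P$ lifts $p'$. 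The main obstacle is exactly this surjectivity step: naive lifts of $x'$ and $z'$ need not be compatible over $Y$, and the delicate move is using~(a) to produce an $\infty$-isomorphism living entirely inside the fiber of $Y\to Y'$ over $y'$, since only then can the hypothesis on $X\to X'\times_{Y'}Y$ be usefully brought to bear.
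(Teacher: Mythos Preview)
Your proof is correct, and your treatment of condition~(2) is essentially identical to the paper's. The difference lies in how you establish surjectivity on $0$-simplices.

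The paper first shows, using parts~(a) and~(b), that the maps $X\to Q:=X'\times_{Y'}Y$ and $Z\to R:=Z'\times_{Y'}Y$ are themselves Joyal fibrations and $DK$-equivalences, hence surjective. Then given $p'=(a_1,a_2,a_3)\in P'$, one lifts the \emph{middle} vertex first: choose $b_2\in Y$ over $a_2$, and then lift $(a_1,b_2)\in Q$ and $(a_3,b_2)\in R$ directly via the surjectivity of $X\to Q$ and $Z\to R$. The resulting triple is automatically compatible over $Y$, so no correction step is needed.

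Your approach instead lifts the outer vertices independently and then repairs the mismatch over $Y$ by producing an $E^1$-path in the fiber and transporting $x_0$ along it via the fibration $X\to X'\times_{Y'}Y$. This is a perfectly valid alternative; indeed it uses only one of the two extra fibration hypotheses (you never need $Z\to Z'\times_{Y'}Y$ to be a Joyal fibration), so in that sense it is slightly sharper. The paper's route is more symmetric and avoids the homotopy-lifting maneuver by investing in the preliminary reduction that $X\to Q$ and $Z\to R$ are $DK$-equivalences---information which, as a side benefit, makes the overall logical structure of part~(c) a clean consequence of parts~(a) and~(b).
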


\begin{proof}
For (a), the pullback is certainly a Joyal fibration and its domain
$X\times_Y Y'$ is a quasi-category.  Since the map
$f$ has the RLP with respect to $\emptyset\ra \Delta^0$, so does the
pullback.  So we need only check condition (2) of
Defintion~\ref{de:DK}.  
Let $a=(a_1,a_2)$ and $b=(b_1,b_2)$ be
two points in $X\times_{Y} Y'$, where $a_1,b_1\in X$ and $a_2,b_2\in Y'$.
Let $C^\bullet$ be a cosimplicial resolution for $\Delta^1$ in $(\bpsSet)_J$.
Then we have a pullback square
\[ \xymatrix{
\bpsSet(C^\bullet,X\times_Y Y') \ar[d]\ar@{->>}[r] & \bpsSet(C^\bullet,Y')
\ar[d] \\
\bpsSet(C^\bullet,X)\ar@{->>}[r] & \bpsSet(C^\bullet,Y)
}
\]
where the indicated maps are Kan fibrations.  By assumption the bottom
horizontal map is a Kan equivalence, hence so is the top horizontal
map.  This is what we wanted.  

For (b), condition (1) of Definition~\ref{de:DK} clearly satisfies the
two-out-of-three property; we must check the same for condition (2).
This is trivial in the case where $f$ and $g$ are $DK$-equivalences,
and also in the case where $g$ and $h$ are $DK$-equivalences.  Assume
then that $f$ and $h$ are $DK$-equivalences. Given two points $a,b\in
Y$, we map lift them to $a',b'\in X$ because $f$ has the RLP with
respect to $\emptyset\ra \Delta^0$.  At this point the proof becomes
trivial again.

For (c), let $Q=X'\times_{Y'} Y$ and $R=Z'\times_{Y'} Y$.
The maps $Q\ra X'$ and $R\ra Z'$ are Joyal fibrations and
$DK$-equivalences by part (a).  Therefore the maps $X\ra Q$ and $Z\ra
R$, which are Joyal fibrations by assumption, are also
$DK$-equivalences by (b).  
In particular,
these maps have the right-lifting-property with respect to
$\emptyset\ra \Delta^0$.

Let $a$ be a $0$-simplex in $P'$.  This gives rise to $0$-simplices
$a_1\in X'$, $a_2\in Y'$, and $a_3\in Z'$.  Since $Y\ra Y'$ is a Joyal
fibration and $DK$-equivalence, there is a lift of $a_2$ to a point
$b_2$ in $Y$.  Then $(a_1,a_2,b_2)$ describes a $0$-simplex in $Q$,
hence there is a point $b_1$ of $X$ lifting it.  Likewise,
$(a_3,a_2,b_2)$ describes a $0$-simplex of $R$, so there is a point
$b_3$ in $Z$ lifting it.  The triple $(b_1,b_2,b_3)$ gives a
$0$-simplex of $P$ that lifts $a$.   Hence $P\ra P'$ has the
right-lifting-property with respect to $\emptyset\ra \Delta^0$.

To see that $P\ra P'$ satisfies condition (2) of
Definition~\ref{de:DK}, let $a=(a_1,a_2,a_3)$ and $b=(b_1,b_2,b_3)$
denote two points of $P$.  Let $C^\bullet$ denote any cosimplicial
resolution of $\Delta^1$ in $(\bpsSet)_J$.  Then we have a diagram
\[ \xymatrix{
\bpsSet(C^\bullet,X) \ar@{->>}[r]\ar@{->>}[d]^\sim & \bpsSet(C^\bullet,Y) 
\ar@{->>}[d]^\sim & \bpsSet(C^\bullet,Z) \ar@{->>}[l]\ar@{->>}[d]^\sim \\
\bpsSet(C^\bullet,X') \ar@{->>}[r] & \bpsSet(C^\bullet,Y')  & 
\bpsSet(C^\bullet,Z').\ar@{->>}[l]
}
\]
in which the indicated maps are Kan fibrations and Kan equivalences.
By a standard property of simplicial sets, the induced map on
pullbacks is a Kan equivalence.  But the two pullbacks coincide with
$\bpsSet(C^\bullet,P)$ and $\bpsSet(C^\bullet,P')$, so we have shown
that $\hMap_{\bpsSet}(\Delta^1,P)\ra \hMap_{\bpsSet}(\Delta^1,P')$ is a
Kan equivalence for every two points $a,b\in P$.  This completes the proof.
\end{proof}

\subsection{Comparing $DK$-equivalences and Joyal equivalences}

Clearly every Joyal equivalence is a $DK$-equivalence.  In this
section we will prove a partial converse, namely 
Proposition~\ref{pr:DKthenJoyal} 
below.  The complete converse is proven in Proposition~\ref{pr:allagree}.

\begin{lemma}
\label{le:DK2}
Let $X$ and $Y$ be quasi-categories and $f\colon X\fib Y$ be a Joyal
fibration and a $DK$-equivalence.
Then for every $n\geq 0$ the following
maps are also Joyal fibrations and $DK$-equivalences:
\begin{enumerate}[(a)]
\item $X^{\del{n}}\ra Y^{\del{n}}$;
\item $X^{\Lambda^n_k}\ra Y^{\Lambda^n_k}$ for any $0<k<n$;
\item $X^{\bdd{n}}\ra Y^{\bdd{n}}$;
\item $X^{\del{n}}\ra [Y^{\del{n}}\times_{Y^{\bdd{n}}} X^{\bdd{n}}]$.
\end{enumerate}
\end{lemma}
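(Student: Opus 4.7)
The Joyal-fibration assertions in (a)--(d) all follow directly from Proposition~\ref{pr:Joyal-match}(b) applied to the cofibrations $\emptyset\cof\del{n}$, $\emptyset\cof\Lambda^n_k$, $\emptyset\cof\bdd{n}$, and $\bdd{n}\cof\del{n}$; the sources and targets are quasi-categories by Proposition~\ref{pr:matching-fibration}. I would prove the $DK$-equivalence assertions by induction on $n$, with trivial base $n=0$. The base case $n=1$ requires a direct argument, since the horn trick used for larger $n$ is unavailable here: (c)$_1$ is $X\times X\to Y\times Y$, clearly a $DK$-equivalence since both conditions of Definition~\ref{de:DK} factor through projections; (a)$_1$ follows by combining Proposition~\ref{pr:interval-mapping} (which gives condition~(2) of $DK$-equivalence) with a direct $E^1$-lifting argument for condition~(1), using that $f$ is a Joyal fibration with the RLP against the Joyal acyclic cofibrations $\{0\}\cof E^1$ and $\Delta^1\times\{0,1\}\cof\Delta^1\times E^1$; and (d)$_1$ follows from (a)$_1$ and (c)$_1$ by factoring through the pullback $Y^{\del{1}}\times_{Y\times Y}(X\times X)$, applying Propositions~\ref{pr:DK}(a) and~\ref{pr:DK}(b).

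For the inductive step at $n\geq 2$, the central move is to establish the following strengthening of the hypothesis: \emph{for every simplicial set $K$ with $\dim K<n$, the map $X^K\to Y^K$ is a $DK$-equivalence}. This is proved by a secondary cell-induction on $K$. The base case $K=\emptyset$ is trivial, and the inductive step attaches a $d$-cell $\bdd{d}\cof\del{d}$ with $d<n$ to form $K'=K\cup_{\bdd{d}}\del{d}$, yielding the pullback $X^{K'}=X^K\times_{X^{\bdd{d}}}X^{\del{d}}$. Applying Proposition~\ref{pr:DK}(c) to the resulting square between $X$- and $Y$-powers, the three required vertical $DK$-equivalences are furnished by the secondary induction hypothesis (for $X^K\to Y^K$), by the primary inductive hypothesis (c)$_d$ (for $X^{\bdd{d}}\to Y^{\bdd{d}}$), and by (a)$_d$ (for $X^{\del{d}}\to Y^{\del{d}}$). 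The matching-map hypotheses of Proposition~\ref{pr:DK}(c) are verified using Proposition~\ref{pr:Joyal-match}(b) (for the map $X^K\to Y^K\times_{Y^{\bdd{d}}}X^{\bdd{d}}$) together with the Joyal-fibration part of (d)$_d$ from the primary IH (for $X^{\del{d}}\to Y^{\del{d}}\times_{Y^{\bdd{d}}}X^{\bdd{d}}$); the pullback map $X^{K'}\to Y^{K'}$ is itself a Joyal fibration by Proposition~\ref{pr:Joyal-match}(b) applied to $\emptyset\cof K'$.

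With the strengthened hypothesis in hand, all four parts at level $n$ drop out. (c)$_n$ and (b)$_n$ are immediate, since $\dim\bdd{n}=\dim\Lambda^n_k=n-1<n$. For (a)$_n$, the inner-anodyne inclusion $\Lambda^n_k\cof\del{n}$ makes $X^{\del{n}}\to X^{\Lambda^n_k}$ and $Y^{\del{n}}\to Y^{\Lambda^n_k}$ Joyal acyclic fibrations (Proposition~\ref{pr:Joyal-match}(b)), hence $DK$-equivalences; combined with the $DK$-equivalence (b)$_n$ and two-out-of-three (Proposition~\ref{pr:DK}(b)), this gives (a)$_n$. For (d)$_n$, one factors $X^{\del{n}}\to Y^{\del{n}}$ as $X^{\del{n}}\to Y^{\del{n}}\times_{Y^{\bdd{n}}}X^{\bdd{n}}\to Y^{\del{n}}$; the second factor is the pullback of the $DK$-equivalence (c)$_n$ along the Joyal fibration $Y^{\del{n}}\to Y^{\bdd{n}}$, hence itself a $DK$-equivalence by Proposition~\ref{pr:DK}(a), and two-out-of-three applied to the composite (a)$_n$ yields (d)$_n$. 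The main obstacle in executing this plan is identifying the strengthened inductive hypothesis; once in place, the remainder is a careful but formal verification of the Joyal-fibration and matching-map conditions throughout the secondary cell-induction, relying only on Proposition~\ref{pr:Joyal-match}(b) and the outer hypothesis (d)$_d$.
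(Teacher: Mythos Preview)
Your approach is correct and takes a genuinely different route from the paper's.  The paper never formulates your strengthened hypothesis (``$X^K\to Y^K$ is a $DK$-equivalence for all $K$ of dimension $<n$''); instead it works with the \emph{spine} $I[n]=\Spi[\Delta^n]$, proving $X^{I[n]}\to Y^{I[n]}$ is a $DK$-equivalence by inducting up the chain $I[n+1]=I[n]\Wedge\Delta^1$ via Proposition~\ref{pr:DK}(c), and then deducing (a) from the Joyal acyclic cofibration $I[n]\inc\Delta^n$.  Part (c) is then its own separate induction using the pushout $\bdd{n}=\Delta^{n-1}\cup_{\bdd{n-1}}\Lambda^n_{n-1}$.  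Your cell-induction is more systematic and subsumes both of these; the paper's version is more economical because it only ever invokes Proposition~\ref{pr:DK}(c) on diagrams where all legs are manifestly induced by monomorphisms, so the fibration hypotheses are automatic.

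That last point is the one technical gap in your write-up.  When you attach a $d$-cell to form $K'=K\cup_{\bdd{d}}\Delta^d$, the attaching map $\bdd{d}\to K$ need not be injective, so $X^K\to X^{\bdd{d}}$ need not be a Joyal fibration, and Proposition~\ref{pr:DK}(c) as stated requires \emph{all} maps in the diagram to be Joyal fibrations.  (You even appeal to Proposition~\ref{pr:Joyal-match}(b) for the matching map $X^K\to Y^K\times_{Y^{\bdd{d}}}X^{\bdd{d}}$, which literally requires $\bdd{d}\inc K$ to be a cofibration.)  This is easily repaired: either restrict your strengthened hypothesis to subcomplexes of a simplex---which is all you need, since $\bdd{n}$ and $\Lambda^n_k$ are subcomplexes of $\Delta^n$ and hence build up with injective attaching maps---or observe that the proof of Proposition~\ref{pr:DK}(c) only uses the horizontal fibrations to make the row pullbacks into homotopy pullbacks, so one fibrant leg per row (here $X^{\Delta^d}\to X^{\bdd{d}}$) suffices.
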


\begin{proof}
Note first that all the maps are Joyal fibrations between quasi-categories, by
Proposition~\ref{pr:Joyal-match}.
We next prove that $X^{\del{1}}\ra Y^{\del{1}}$ is a 
$DK$-equivalence.  
Condition (2) in the definition of $DK$-equivalence is verified by
Proposition~\ref{pr:interval-mapping}.  Using Lemma~\ref{le:DK-char}, it
will be enough to verify that $X^{\del{1}}\ra Y^{\del{1}}$ is
surjective.  That is, we must show that $X\ra Y$ has the
right-lifting-property with respect to $\emptyset\ra \Delta^1$.

Given a map $g\colon \Delta^1\ra Y$, we may lift $g(0)$ and $g(1)$ to
points $a$ and $b$ in $X$ since $X\ra Y$ is surjective by
Lemma~\ref{le:DK-char}.  As the map $\bpsSet(C_E^\bullet,X) \ra
\bpsSet(C_E^\bullet,Y)$ is a Kan acyclic fibration, and $g$ represents a
$0$-simplex in the target, we can lift $g$ to a $0$-simplex in the
domain.  This is what was wanted (recall that $C_E^0=\Delta^1$).

Now let $I[n]=\Spi[\Delta^n]$, so that $I[n]$ consists of $n$ copies of
$\Delta^1$ wedged together.  We next prove by induction that
$X^{I[n]}\ra Y^{I[n]}$ is a $DK$-equivalence for all $n\geq 1$.  The case
$n=1$ was handled above, so assume it is true for some $n\geq 1$.  Note
that $I[n+1]$ is a pushout of $I[n] \la \Delta^0 \ra \Delta^1$, and
therefore $X^{I[n+1]}$ is the pullback of $X^{I[n]}\ra X
\la X^{\del{1}}$.  Consider the diagram
\[ \xymatrix{
 X^{I[n]} \ar[r]\ar[d] & X \ar[d] & X^{\del{1}} \ar[d] \ar[l]\\
 Y^{I[n]} \ar[r] & Y  & Y^{\del{1}}.\ar[l]
}
\]
This diagram satisfies all the hypotheses of
Proposition~\ref{pr:DK}(c), therefore the induced map on pullbacks is
a $DK$-equivalence.  But this induced map is precisely $X^{I[n+1]}\ra
Y^{I[n+1]}$.

Now we turn to the proofs of (a) and (b).
The inclusion $I[n]\inc \del{n}$ is a Joyal acyclic cofibration, so
$X^{\del{n}}\ra X^{I[n]}$ is a Joyal acyclic fibration; in particular,
it is a $DK$-equivalence.  Applying Proposition~\ref{pr:DK}(b) (in
various ways) to the diagram
\[ \xymatrix{X^{\del{n}} \ar@{->>}[r]^{DK}\ar@{->>}[d] & X^{I[n]} \ar@{->>}[d]^{DK} \\
Y^{\del{n}} \ar@{->>}[r]_{DK} & Y^{I[n]}
}
\]
now gives that $X^{\del{n}}\ra Y^{\del{n}}$ is a $DK$-equivalence.  

For $0<k<n$, the inclusion $\Lambda^n_k\inc \del{n}$ is a Joyal
acyclic cofibration, and therefore $X^{\del{n}}\ra X^{\Lambda^n_k}$ is
a Joyal acyclic fibration.  A similar argument to the last paragraph
shows that $X^{\Lambda^n_k}\ra Y^{\Lambda^n_k}$ is a $DK$-equivalence.

We will prove part (c) by induction.  The cases $n=0$ and $n=1$
follow by hypothesis.  For $n\geq 2$ note that $\bdd{n}$ is the pushout of
$\del{n-1} \la \bdd{n-1}\ra \Lambda^n_{n-1}$.  This leads us to the
diagram
\[ \xymatrix{
 X^{\Lambda^n_{n-1}} \ar@{->>}[r]\ar@{->>}[d]_{DK} & X^{\bdd{n-1}} \ar@{->>}[d]
& X^{\del{n-1}} \ar@{->>}[d]^{DK} \ar@{->>}[l]\\
 Y^{\Lambda^n_{n-1}} \ar@{->>}[r] & Y^{\bdd{n-1}}  & Y^{\del{n-1}},\ar@{->>}[l]
}
\]
where the induced map on pullbacks is $X^{\bdd{n}}\ra Y^{\bdd{n}}$.
By parts (a) and (b) the indicated maps are $DK$-equivalences, and the
middle map is a $DK$-equivalence by induction.  One readily checks
that the diagram satisfies the conditions of
Proposition~\ref{pr:DK}(c), hence $X^{\bdd{n}}\ra Y^{\bdd{n}}$ is also
a $DK$-equivalence.  

Finally, to prove (d) let $n\geq 0$ and consider the diagram
\[\xymatrix{
X^{\del{n}}\ar@{->>}[dr]\ar@{->>}[drr]\ar@{->>}[ddr]\\
&P\ar@{->>}[r]\ar@{->>}[d] & X^{\bdd{n}} \ar@{->>}[d] \\
&Y^{\del{n}}\ar@{->>}[r] & Y^{\bdd{n}},
}
\]
where $P$ is the pullback.
We have proven that the right vertical map is a $DK$-equivalence,
hence so is the pullback by Proposition~\ref{pr:DK}(a).  We have also
proven that $X^{\del{n}}\ra Y^{\del{n}}$ is a $DK$-equivalence, so the
same is true for $X^{\del{n}}\ra P$ by Proposition~\ref{pr:DK}(b).
\end{proof}

\begin{prop}
\label{pr:DKthenJoyal}
If $X\ra Y$ is a $DK$-equivalence between quasi-categories and a Joyal
fibration then $X\ra Y$ is a Kan acyclic fibration (and so, in
particular, a Joyal equivalence).
\end{prop}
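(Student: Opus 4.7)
The plan is to deduce the proposition directly from Lemma~\ref{le:DK2}(d) together with Lemma~\ref{le:DK-char}. Since both model structures have the monomorphisms as cofibrations, a Kan acyclic fibration is the same as a map with the right-lifting-property against every $\bdd{n}\inc\del{n}$. So it suffices to verify this lifting property for $f\colon X\fib Y$ for all $n\geq 0$.

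The case $n=0$ is immediate: $\bdd{0}=\emptyset$, and Lemma~\ref{le:DK-char} applied to $f$ itself (which is a Joyal fibration between quasi-categories satisfying condition~(2)) gives that $f$ is surjective on $0$-simplices.

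For $n\geq 1$, the key observation is the translation between $0$-simplices of matching objects and lifting squares. A $0$-simplex of the pullback $Y^{\del{n}}\times_{Y^{\bdd{n}}} X^{\bdd{n}}$ is precisely a commutative square
\[\xymatrix{\bdd{n}\ar[r]\ar@{ >->}[d] & X\ar[d]^f\\\del{n}\ar[r] & Y,}\]
and a preimage in $X^{\del{n}}$ is precisely a diagonal lift $\del{n}\ra X$. So the desired RLP is equivalent to the matching map $M_n\colon X^{\del{n}}\ra Y^{\del{n}}\times_{Y^{\bdd{n}}} X^{\bdd{n}}$ being surjective on $0$-simplices. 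But Lemma~\ref{le:DK2}(d) guarantees that $M_n$ is both a Joyal fibration between quasi-categories and a $DK$-equivalence; hence Lemma~\ref{le:DK-char} (the implication (d)$\Rightarrow$(c)) gives that $M_n$ is surjective, which is exactly what we need.

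Assembling both cases, $f$ has the right-lifting-property with respect to every $\bdd{n}\inc\del{n}$, hence against every monomorphism, hence $f$ is a Kan acyclic fibration, which in particular is a Joyal acyclic fibration and thus a Joyal equivalence. There is no serious obstacle here: Lemma~\ref{le:DK2} and Lemma~\ref{le:DK-char} do essentially all the work; the only nontrivial point is the dictionary ``surjectivity of the matching map on $0$-simplices $=$ the relative lifting property against $\bdd{n}\inc\del{n}$,'' which is routine adjunction.
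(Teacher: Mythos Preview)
Your proof is correct and follows essentially the same approach as the paper's own proof. The paper argues uniformly for all $n\geq 0$ via Lemma~\ref{le:DK2}(d) (note that for $n=0$ the matching map $M_0$ is just $f$ itself, so your separate treatment of that case is harmless but unnecessary), and then invokes exactly the surjectivity statement from Lemma~\ref{le:DK-char} that you cite; your write-up merely makes the appeal to Lemma~\ref{le:DK-char} and the adjunction dictionary more explicit.
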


\begin{proof}
By Lemma~\ref{le:DK2}(d) we know that for any $n\geq 0$ the map
\[ {X}^{\del{n}}\ra {X}^{\bdd{n}}\times_{Y^{\bdd{n}}}
Y^{\del{n}}
\]
is a Joyal fibration and $DK$-equivalence.  In particular, it has the
right-lifting-property with respect to $\emptyset\ra \Delta^0$.  This
is equivalent to saying that any square
\[ \xymatrix{ \bdd{n}\ar[r]\ar[d] & {X} \ar[d]\\
\del{n}\ar[r] & Y
}
\]
has a lifting.
\end{proof}

%%%%%%%%%%%%%%%%%%%%%%%%%%%%%%%%%%%%%%%%%%%%%%%%%%%%%%%%%%%%%%%%%%%%%%

\section{Quillen equivalence of quasi-categories and simplicial
categories}
\label{se:Qequiv}
In this final section of the paper we use our previous results to
establish the equivalence between the homotopy theories of
quasi-categories and simplicial categories.  This result was
originally proven by Lurie \cite{L}.

\begin{prop}  
\label{pr:allagree}
For a map $X\ra Y$ of simplicial sets, the following are
equivalent:
\begin{enumerate}[(i)]
\item $f$ is a Joyal equivalence;
\item The map $\jC(f)\colon \jC(X)\ra \jC(Y)$ is a weak equivalence of
simplicial categories;
\item $f$ is a $DK$-equivalence.
\end{enumerate}
\end{prop}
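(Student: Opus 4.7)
The plan is to prove the cycle (i)$\Rightarrow$(iii)$\Rightarrow$(i) first, and then obtain (i)$\Leftrightarrow$(ii) by translating between $\jC$ mapping spaces and Dwyer-Kan models via Corollary~\ref{co:connect1}. The implication (i)$\Rightarrow$(iii) is essentially a formality: a Joyal equivalence induces a bijection of $\ho(\sSet_J)(*,-)$ by general model-category nonsense, and for any pair of basepoints $a,b\in X_0$ the induced map $X_{a,b}\to Y_{f(a),f(b)}$ is a weak equivalence in the slice model structure $\bpsSet$ (since slice weak equivalences are detected by the underlying map), so $\hMap_{\bpsSet}(\Delta^1,-)$ sends it to a Kan equivalence.

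For (iii)$\Rightarrow$(i), take a fibrant replacement $Y\trcof\hat Y$ in $\sSet_J$ and factor the composite $X\to\hat Y$ as $X\trcof\hat X\fib\hat Y$ with $\hat X$ a quasi-category. The Joyal acyclic cofibrations $X\trcof\hat X$ and $Y\trcof\hat Y$ are themselves DK-equivalences by the implication just established. Granting two-out-of-three for DK-equivalences (immediate for condition (1); for condition (2) one reduces to vertex pairs in the image of $X\trcof\hat X$ by using that any vertex of $\hat X$ is $E^1$-equivalent to one coming from $X$ and that $\infty$-isomorphic vertices in a quasi-category yield Kan-equivalent mapping spaces), the Joyal fibration $\hat X\to\hat Y$ becomes a DK-equivalence between quasi-categories. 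Proposition~\ref{pr:DKthenJoyal} then upgrades it to a Kan acyclic fibration, and a final two-out-of-three for Joyal equivalences closes the argument.

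For (i)$\Leftrightarrow$(ii) I handle the quasi-category case first. If $X$ and $Y$ are quasi-categories, Corollary~\ref{co:connect1} gives a natural zig-zag of Kan equivalences identifying $\jC(S)(a,b)$ with $\hMap_{\bpsSet}(\Delta^1,S_{a,b})$, so condition (2) of DK-equivalence becomes Kan equivalence of $\jC$ mapping spaces; meanwhile Proposition~\ref{pr:isoclasses} identifies $\isoc(\pi_0\jC(S))$ with $[*,S]_{E^1}=\ho(\sSet_J)(*,S)$, so condition (1) becomes essential surjectivity of $\pi_0\jC(f)$. For general $X,Y$ one reduces to the quasi-category case by fibrant replacement: the $\jC$ mapping spaces are invariant under Joyal acyclic cofibration by \cite[Prop.~6.6]{DS1} (as already used in the proof of Proposition~\ref{pr:connect1}), and combined with Proposition~\ref{pr:isoclasses} this shows $\jC(X)\to\jC(\hat X)$ is a Bergner equivalence for any fibrant replacement. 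The main obstacle throughout is the two-out-of-three property for DK-equivalences used in (iii)$\Rightarrow$(i), since transferring condition (2) to vertex pairs outside the image of an acyclic cofibration takes a bit of care; once that is in place, everything else is bookkeeping around the mapping-space identifications of Corollary~\ref{co:connect1} and Proposition~\ref{pr:isoclasses}.
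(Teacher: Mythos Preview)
Your proposal is correct and uses the same ingredients as the paper (the fibrant-replacement square, Proposition~\ref{pr:DKthenJoyal}, Corollary~\ref{co:connect1}, Proposition~\ref{pr:isoclasses}, and \cite[Prop.~6.6]{DS1}), but with a small organizational difference worth noting. The paper proves (i)$\Rightarrow$(ii) first (from \cite{DS1}), then (ii)$\Leftrightarrow$(iii), and finally (ii)$\Rightarrow$(i); in that last step the two-out-of-three argument in the fibrant-replacement square is carried out in $\sCat$, where it is automatic from the model structure. Your route instead proves (iii)$\Rightarrow$(i) directly, which forces you to run two-out-of-three for $DK$-equivalences themselves---precisely the ``main obstacle'' you flag. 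Your resolution is correct: if $a\sim a'$ and $b\sim b'$ via $E^1$'s in a quasi-category $\hat X$, then the two maps $\bd{1}\to\hat X$ are left-homotopic (cylinder $\bd{1}\times E^1\cong E^1\amalg E^1$), hence $\hat X_{a,b}$ and $\hat X_{a',b'}$ are connected by a zig-zag of weak equivalences in $\bpsSet$ (use a path object for $\hat X$), so their $\hMap_{\bpsSet}(\Delta^1,-)$ agree. The paper sidesteps this by routing through (ii), though it implicitly invokes the same Joyal-invariance of condition (iii) when proving (ii)$\Leftrightarrow$(iii) for general simplicial sets, simply asserting it as ``built into the definition.''
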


\begin{proof}
The implication (i)$\Rightarrow$(ii) is  \cite[Proposition 6.6]{DS1}.

The equivalence of (ii) and (iii) can be argued as follows.  First,
note that both conditions are invariant under Joyal equivalence; that
is, if
\[ \xymatrix{ X\ar[r] \ar[d]_f & X' \ar[d]^{f'} \\
Y \ar[r] & Y'
}
\]
is a commutative square in which the horizontal maps are Joyal
equivalences, then $f$ satisfies (ii) (resp. (iii)) if and only if
$f'$ does.  For condition (iii) this is built into the definition,
whereas for condition (ii) it follows from the fact (i)$\Rightarrow$(ii).
It is therefore enough to prove that (ii) and (iii) are
equivalent under the assumption that $X$ and $Y$ are
quasi-categories.

But recall from Proposition~\ref{pr:isoclasses} that if $X$ is a quasi-category
then $[*,X]_{E^1}$ is in bijective correspondence with the isomorphism
classes in $\pi_0\jC(X)$.  Also, we know by
Corollary~\ref{co:connect1} that for any $a,b\in X$ the simplicial
set $\hMap_{\bpsSet}(\Delta^1,X_{a,b})$ is connected to $\jC(X)(a,b)$
by a natural zig-zag of Kan equivalences.  The equivalence of (ii) and
(iii) now follows at once from the definitions.
  
Finally, we prove (ii)$\Rightarrow$(i).  Let $Y\trcof \hat{Y}$ be a
fibrant-replacement in $\sSet_J$, and then factor the composite $X\ra
Y \ra \hat{Y}$ as a Joyal acyclic cofibration followed by a Joyal fibration.  This
produces a square
\[
\xymatrix{ X\ar[r]^f \ar[d]_\sim & Y \ar[d]^{\sim} \\
\hat{X} \ar@{->>}[r]^{\hat{f}} & \hat{Y}
}
\]
in which the vertical maps are Joyal equivalences (and therefore
become weak equivalences after applying $\jC$).  It follows that
$\hat{f}$ also becomes a weak equivalence after applying $\jC$, and
therefore $\hat{f}$ is a $DK$-equivalence by (ii)$\Rightarrow$(iii).
Then by Proposition~\ref{pr:DKthenJoyal} $\hat{f}$ is a
Joyal equivalence, and hence the same is true for the original map $f$
(by two-out-of-three applied to the above square).
\end{proof}

\begin{cor}\label{cor:main theorem}
The adjoint functors $\jC\colon \sSet_J\adjoint \sCat\colon N$ are a
Quillen equivalence.
\end{cor}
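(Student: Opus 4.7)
The plan is to verify the standard two conditions defining a Quillen equivalence: first that $(\jC,N)$ is a Quillen adjunction, and second that for every cofibrant $X\in\sSet_J$ and fibrant $\cD\in\sCat$, a map $f\colon X\to N\cD$ is a Joyal equivalence if and only if its adjoint $\tilde f\colon \jC(X)\to\cD$ is a weak equivalence in $\sCat$. The heavy lifting has already been done in Proposition~\ref{pr:allagree} and Proposition~\ref{pr:counit}, so most of this is formal.

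For the Quillen adjunction, I would argue that $\jC$ preserves cofibrations and weak equivalences. Cofibrations in $\sSet_J$ are monomorphisms and hence generated by the boundary inclusions $\bdd{n}\to\Delta^n$; the Bergner model structure on $\sCat$ is cofibrantly generated with $\{\jC(\bdd{n})\to\jC(\Delta^n)\}$ among its generators, so $\jC$ takes generating cofibrations to cofibrations and hence all cofibrations to cofibrations. That $\jC$ takes Joyal equivalences to weak equivalences in $\sCat$ is precisely the implication (i)$\Rightarrow$(ii) of Proposition~\ref{pr:allagree}. Together this shows $\jC$ is a left Quillen functor (preserving in particular trivial cofibrations, since every object of $\sSet_J$ is cofibrant).

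For the equivalence criterion, fix $\cD\in\sCat$ fibrant (meaning all mapping spaces $\cD(a,b)$ are Kan complexes) and a map $f\colon X\to N\cD$ with $X$ any simplicial set. The adjoint $\tilde f\colon \jC(X)\to\cD$ factors as
\[ \jC(X)\llra{\jC(f)}\jC(N\cD)\llra{\epsilon_\cD}\cD, \]
where $\epsilon_\cD$ is the counit. By Proposition~\ref{pr:counit}, $\epsilon_\cD$ is a weak equivalence in $\sCat$, so by the two-out-of-three property $\tilde f$ is a weak equivalence if and only if $\jC(f)$ is. But by Proposition~\ref{pr:allagree}, the map $\jC(f)$ is a weak equivalence in $\sCat$ if and only if $f$ is a Joyal equivalence in $\sSet_J$. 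This gives exactly the required criterion.

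The main obstacle is not in this corollary at all but in the two inputs feeding it: Proposition~\ref{pr:allagree} (which rests on Proposition~\ref{pr:DKthenJoyal} and on Corollary~\ref{co:connect1} identifying the Dwyer-Kan and rigidification mapping spaces) and Proposition~\ref{pr:counit} (comparing $\jC(N\cD)\to\cD$). With those in hand, the proof of the corollary is a short diagram chase and a check on generating cofibrations, and no further technical work is required.
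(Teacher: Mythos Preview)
Your argument is essentially the same as the paper's: both reduce everything to Proposition~\ref{pr:counit} (counit is a weak equivalence on fibrant objects) and Proposition~\ref{pr:allagree} (Joyal equivalences coincide with $\jC$-equivalences). The paper checks the derived unit and counit separately, while you use the equivalent ``$f$ is a weak equivalence iff its adjoint is'' criterion; these are interchangeable and the diagram chase is the same in spirit.

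There is one genuine slip in your Quillen-adjunction paragraph. Bergner's generating cofibrations are \emph{not} the maps $\jC(\bdd{n})\to\jC(\Delta^n)$. Her generators are $\emptyset\to\{x\}$ together with the two-object inclusions $U(\bdd{m})\to U(\Delta^m)$, where $U(K)$ is the simplicial category with objects $\{0,1\}$ and $\Hom(0,1)=K$. The maps $\jC(\bdd{n})\to\jC(\Delta^n)$ \emph{are} cofibrations in $\sCat$, but this is a nontrivial fact requiring a direct analysis of these simplicial categories (carried out, for instance, in \cite{L}); it is not immediate from the list of generators. The paper sidesteps this entirely by citing \cite{L} for the overall shape of the argument and not re-proving the Quillen adjunction. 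So your attempt to fill in this step is commendable, but the justification you give does not work as written; you would need to either cite the relevant result or supply the actual argument that $\jC$ of a monomorphism is a Bergner cofibration.
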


\begin{proof}
This is now easy, and we follow the same argument as in \cite{L}.  We
must prove two things, the first saying that for any fibrant
simplicial category $\cD$, the map
\[ \jC(N\cD) \ra \cD 
\]
is a weak equivalence in $\sCat$.  This has already been done in
Proposition~\ref{pr:counit}.

The second thing to be proven is that for any simplicial set $K$ and
any fibrant-replacement $\jC(K)\ra \cD$ in $\sCat$, the induced map
\[ K \ra N\jC(K) \ra N\cD
\]
is a Joyal equivalence.  By Proposition~\ref{pr:allagree}, it is
enough to show instead that
\[ \jC(K) \ra \jC N\jC(K)\ra \jC(N\cD) \]
is a weak equivalence in $\sCat$.  But now we consider the larger
diagram
\[ \xymatrix{
\jC(K) \ar[r]\ar@{=}[dr] & \jC N\jC(K) \ar[r]\ar[d] & \jC N\cD \ar[d]^\sim \\
& \jC(K) \ar[r]_{\sim} & \cD,
}
\]
where the right vertical map is a weak equivalence by
Proposition~\ref{pr:counit}.
It follows at once that the desired map is also a weak equivalence.
\end{proof}

%%%%%%%%%%%%%%%%%%%%%%%%%%%%%%%%%%%%%%%%%%%%%%%%%%%%%%%%%%%%%%%%%%%%%%

\section{Leftover proofs}
\label{se:leftover}
In this section we give two combinatorial proofs which were postponed
in Section~\ref{se:Dwyer-Kan}.

\medskip

The following situation is very useful.
Let $X$ be a simplicial set with the property that no two distinct
simplices have the same ordered sequence of vertices.  Examples of
such include $\Delta^n$, $E^1$, as well as subcomplexes and 
products of such things.  Suppose
that $X'$ is obtained from $X$ by pushing out $\Lambda^n_k\ra
\Delta^n$ along a map $\Lambda^n_k\ra X$; we say that $X'$ is obtained
from $X$ by filling a horn.  Then $X'$ will inherit the above property
of $X$ if
and only if $f$ is a non-bounding horn in the sense of the
following definition.

\begin{defn}
A map $\Lambda^n_i\to X$ is called a \dfn{non-bounding horn} in $X$ if
it does not extend to a map $\bd{n}\to X$.
\end{defn}

 See Section~\ref{se:connections} to
recall the notation used below.

\begin{lemma}
\label{le:necklace=simplex}
For any necklace $T$, the maps $\Spi[T]\inc T \inc \Delta[T]$ are both
Joyal equivalences.
\end{lemma}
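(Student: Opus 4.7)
The plan is to prove both inclusions in a unified way by observing that $\Delta[T]$ is itself a simplex whose spine (in the classical sense) is exactly $\Spi[T]$, and that this reduces everything to the single known fact that the inclusion of the spine into a simplex is inner anodyne.

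\textbf{Step 1 (the core fact).} First I would prove that for each $n\geq 0$ the inclusion $\Spi[\Delta^n]\inc \Delta^n$ is inner anodyne. For $n\leq 1$ there is nothing to do; for $n=2$ it is the inner horn inclusion $\Lambda^2_1\inc \Delta^2$. For $n\geq 3$ one argues by induction, filtering $\Delta^n$ by subcomplexes $\Spi[\Delta^n]=F_1\subseteq F_2\subseteq\cdots\subseteq F_n=\Delta^n$, where $F_k$ is obtained from $F_{k-1}$ by attaching the $k$-dimensional nondegenerate simplices $[i,i+1,\ldots,i+k]$ along inner horns of the form $\Lambda^k_1$ (the missing face being $[i+1,\ldots,i+k]$, which already lies in $F_{k-1}$ by induction). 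Each attachment is a cobase change of an inner horn inclusion, so the composite is inner anodyne.

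\textbf{Step 2 (the right map).} Next I observe that $\Delta[T]$ is itself a simplex (on the ordered vertex set of $T$), and that $\Spi[T]$ is exactly the union of its consecutive edges $[v_i,v_{i+1}]$ — that is, the spine of $\Delta[T]$ in the sense of Step 1. Hence $\Spi[T]\inc \Delta[T]$ is inner anodyne, and in particular a Joyal equivalence.

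\textbf{Step 3 (the left map).} Write $T=\Delta^{n_1}\Wedge\cdots\Wedge\Delta^{n_k}$; then $\Spi[T]=\Spi[\Delta^{n_1}]\Wedge\cdots\Wedge\Spi[\Delta^{n_k}]$, the wedges being taken at the same shared vertices as in $T$. I would build $T$ from $\Spi[T]$ by attaching one bead at a time: the $i$th attachment is a pushout of $\Spi[\Delta^{n_i}]\inc \Delta^{n_i}$ along the inclusion $\Spi[\Delta^{n_i}]\inc \Spi[T]\cup \Delta^{n_1}\Wedge\cdots\Wedge\Delta^{n_{i-1}}$. By Step 1, each attachment is inner anodyne, so the composite $\Spi[T]\inc T$ is inner anodyne, and in particular a Joyal equivalence.

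\textbf{Step 4 (conclusion).} Since $\Spi[T]\inc T$ and the composite $\Spi[T]\inc T\inc \Delta[T]$ are both Joyal equivalences, two-out-of-three in $\sSet_J$ forces $T\inc \Delta[T]$ to be a Joyal equivalence as well.

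The main obstacle is Step 1. Everything else is formal, but the combinatorial statement that $\Spi[\Delta^n]\inc \Delta^n$ is inner anodyne requires choosing the right filtration so that at each stage the face opposite the inner vertex of the attaching horn has already been constructed; the induction on $k$ above is set up precisely to make this work. Once Step 1 is in hand, the decomposition of $T$ as a wedge and the identification of $\Delta[T]$ as a single simplex make Steps 2 and 3 essentially immediate.
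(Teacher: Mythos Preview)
Your overall strategy matches the paper's exactly: prove that $\Spi[\Delta^n]\inc \Delta^n$ is inner anodyne, observe that $\Spi[T]\inc\Delta[T]$ is a special case, build $\Spi[T]\inc T$ bead by bead as pushouts of such maps, and finish by two-out-of-three. Steps 2--4 are fine and are essentially what the paper does.

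The gap is in Step 1: your filtration fails already for $k=3$. To attach the simplex $[i,i+1,i+2,i+3]$ along its inner horn $\Lambda^3_1$ (horn at vertex $i+1$) you need the faces containing $i+1$, namely $[i+1,i+2,i+3]$, $[i,i+1,i+3]$, and $[i,i+1,i+2]$, to lie in $F_2$. But $[i,i+1,i+3]$ is not a consecutive $2$-simplex, so it is not in your $F_2$. The same obstruction arises for every inner horn of $[i,\ldots,i+k]$ when $k\geq 3$: only the two consecutive codimension-one faces $d_0$ and $d_k$ lie in $F_{k-1}$, and no inner horn of a $k$-simplex is contained in $d_0\cup d_k$ for $k\geq 3$. (Your parenthetical identifies the ``missing face'' as $[i+1,\ldots,i+k]=d_0$, which is the face absent from the outer horn $\Lambda^k_0$, not from $\Lambda^k_1$; but switching to $\Lambda^k_0$ would not help, both because it is not inner and because its faces are equally non-consecutive.)

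The paper handles Step 1 differently: it first shows directly that $\Delta^n\Wedge\Delta^1\inc\Delta^{n+1}$ is inner anodyne, via a filtration that successively adds the simplices containing both $n$ and $n+1$, each along its horn at vertex $n$; one checks that all faces of this horn are present at the previous stage. The spine statement then follows by induction on $n$, since $\Spi[\Delta^{n+1}]\inc\Delta^{n+1}$ factors as a pushout of $\Spi[\Delta^n]\inc\Delta^n$ followed by $\Delta^n\Wedge\Delta^1\inc\Delta^{n+1}$. Plugging this argument (or any other correct proof that spine inclusions are inner anodyne) into your Step 1 makes the rest of your outline go through.
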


\begin{proof}
We will argue that the map $\Delta^n\Wedge\Delta^1\to\Delta^{n+1}$ is
a composite of cobase changes along inner horn inclusions.  By
induction the same is therefore true for $\Spi[\Delta^{r}]\inc
\Delta^{r}$, for any $r>0$.  The map $\Spi[T]\inc \Delta[T]$ is
precisely one of these maps, therefore it is a Joyal equivalence.

The fact that $\Spi[T]\to T$ is a Joyal equivalence can then be proven,
bead by bead, using cobase changes of the maps $\Spi[\Delta^r]\to
\Delta^r$.  The desired result follows by two-out-of-three.

So everything follows once we have shown that $\Delta^n\Wedge \Delta^1
\inc \Delta^{n+1}$ is inner anodyne.  Let $X=\Delta^{n+1}$, and define
a filtration on $X$ by
\[ X_0=[01\ldots n]\cup [n,n+1], \quad
X_1=X_0\cup \bigcup_{i<n} [i,n,n+1], \quad X_2=X_1\cup
\bigcup_{i<j<n} [i,j,n,n+1],
\]
and so on.  That is, $X_0=\Delta^n\Wedge \Delta^1$ and $X_{i+1}$ is
the union of $X_i$ and all $(i+2)$-simplices of $X$ which
contain $n$
and $n+1$.  Note that $X_{n-1}=X$ and $X_{n-2}=\Lambda^{n+1}_n$.  

One readily checks that each inclusion $X_i\inc X_{i+1}$ is a cobase
change of $\binom{n}{i+1}$ inner horn inclusions.  For instance, the
inclusion $X_0\inc X_1$ is obtained by gluing the $2$-simplices
$[i,n,n+1]$  to $X_0$ along their inner horns $\Lambda^{\{i,n,n+1\}}_n$,
one $2$-simplex for each $i\in \{0,1,\ldots,n-1\}$.  The inclusion $X_1\inc X_2$
is obtained by gluing the $3$-simplices $[i,j,n,n+1]$  to $X_1$ along their inner
horns $\Lambda^{\{i,j,n,n+1\}}_n$, and so forth.
\end{proof}

Our next goal is to complete the proof of
Proposition~\ref{pr:three-resolutions} by showing that for any $n\geq
1$ the canonical maps $C^n_R \ra \Delta^1$, $C^n_L\ra
\Delta^1$, and $\Ccyl^n\ra \Delta^1$ are all Joyal equivalences.  The
proof will proceed by a combinatorial argument similar to the above.

Given integers $0\leq k< n$, define $\Delta^n_k$ to be the
quotient of $\Delta^n$ obtained by collapsing the initial $\Delta^k$
to a point and the terminal $\Delta^{n-k-1}$ to a (different) point.  Note that
$\Delta^n_k$ has exactly two vertices, and there is a unique
surjection $\Delta^n_k \ra \Delta^1$.  Note also that
$\Delta^n_{n-1}=C^n_R$ and $\Delta^n_{0}=C^n_L$.  

\begin{lemma}
\label{le:squash}
For integers $0\leq k< n$, 
the surjection $\Delta^n_k \ra \Delta^1$ is a Joyal equivalence.
\end{lemma}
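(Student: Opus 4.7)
The projection $\pi\colon \Delta^n_k\to\Delta^1$ has a canonical section $s\colon \Delta^1\to\Delta^n_k$ given by the image of the $1$-simplex $[k,k+1]\subseteq\Delta^n$ under the quotient. Since $\pi\circ s=\id_{\Delta^1}$, the two-out-of-three property for Joyal equivalences reduces the lemma to showing that $s$ is a Joyal equivalence; I will prove the stronger statement that $s$ is inner anodyne.

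A direct analysis of the quotient shows that the non-degenerate positive-dimensional simplices of $\Delta^n_k$ are in bijection with pairs of non-empty subsets $(V_-,V_+)$ with $V_-\subseteq \{0,\ldots,k\}$ and $V_+\subseteq\{k+1,\ldots,n\}$; the corresponding simplex $\sigma(V_-,V_+)$ is the image of $[V_-\cup V_+]\subseteq\Delta^n$ under the quotient, and has dimension $|V_-|+|V_+|-1$. The base section $s$ selects the $1$-simplex $\sigma(\{k\},\{k+1\})$.

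The plan is to attach the remaining non-degenerate simplices of $\Delta^n_k$ one at a time via pushouts along inner horn inclusions of the form $\Lambda^d_1\hookrightarrow \Delta^d$. For $\sigma=\sigma(V_-,V_+)$ of dimension $d\geq 2$ with ordered vertex sequence $[v_1,\ldots,v_\alpha,w_1,\ldots,w_\beta]$, the face $d_1\sigma$ (the face omitting the vertex in position $1$) equals $\sigma(V_-\setminus\{v_2\},V_+)$ if $\alpha\geq 2$, and $\sigma(V_-,V_+\setminus\{w_1\})$ if $\alpha=1$; in either case $d_1\sigma$ is non-degenerate of dimension $d-1$, and the horn $\Lambda^d_1$ is inner because $d\geq 2$. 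Attaching $\sigma$ via such a pushout adds $\sigma$ as the top cell and $d_1\sigma$ as the missing face simultaneously; the remaining faces $d_i\sigma$ for $i\neq 1$ are either degenerate or of strictly smaller size $|V_-|+|V_+|$, and so will already be present in the filtration when $\sigma$ is attached, provided the attachments are ordered by increasing size.

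What requires real care is specifying, for each non-base positive-dimensional simplex, whether it is attached as a top cell (``parent'') or as a missing face (``child''). This amounts to selecting a perfect matching in the directed graph on non-base non-degenerate simplices whose edges are the pairs $\sigma\to d_1\sigma$ with $\dim\sigma\geq 2$. I would construct such a matching by induction on $|V_-|+|V_+|$ with an explicit tie-breaking rule to resolve the cases where a simplex admits more than one potential parent; for example, matching each dim-$1$ child $\sigma(\{v\},\{w\})$ with the preferred parent $\sigma(\{v,k\},\{w\})$ when $v<k$, and with $\sigma(\{k\},\{k+1,w\})$ when $v=k$ and $w>k+1$, and extending this ``add $k$ or $k+1$ first'' principle to higher dimensions. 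The main obstacle is the combinatorial bookkeeping required to verify that the resulting matching is well-defined (each non-base simplex covered exactly once) and that at each attachment step all faces of the horn other than the missing one are already present; this is analogous to, though somewhat more intricate than, the filtration argument used for $\Delta^n\Wedge\Delta^1\hookrightarrow\Delta^{n+1}$ in the proof of Lemma~\ref{le:necklace=simplex}.
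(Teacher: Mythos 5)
Your reduction to showing that the section $s\colon\Delta^1\to\Delta^n_k$ (the image of $[k,k+1]$) is inner anodyne is sound, and your description of the non-degenerate positive-dimensional simplices of $\Delta^n_k$ as pairs $(V_-,V_+)$ is correct. However, the proposal has a genuine gap, and in fact contains a false claim. You assert that after choosing a matching of the form $\sigma\leftrightarrow d_1\sigma$, it suffices to attach parent simplices ``in order of increasing size $|V_-|+|V_+|$,'' because the remaining faces $d_i\sigma$ ($i\neq 1$) are of strictly smaller size and ``so will already be present.'' This does not follow: a simplex of smaller size is present only if it is the base, or a child of a previously-attached parent, and the parent that introduces it may have the \emph{same} size as $\sigma$. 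Concretely, for $\Delta^3_1$ the simplex $[013]=\sigma(\{0,1\},\{3\})$ has size $3$ and its horn $\Lambda^2_1$ requires the size-$2$ face $[13]=\sigma(\{1\},\{3\})$; but $[13]$ is not the base, and under your matching it only enters the filtration as the $d_1$-face of the size-$3$ parent $[123]$. So the order within a single size level matters, and ``ordered by increasing size'' alone is not a valid attaching order. You would need to prove that your (only partially specified) tie-breaking rule produces a linear order in which each parent's non-$d_1$ faces are genuinely present, which is exactly the bookkeeping you flag as the ``main obstacle'' and do not carry out.

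For comparison, the paper avoids this difficulty entirely. It first handles $k=0$ and $k=n-1$ by a filtration where level $i$ is obtained from level $i-1$ by simultaneously filling $\binom{n-1}{i+1}$ inner horns whose non-missing faces all lie in level $i-1$; there is no within-level dependency. For $0<k<n-1$ it inducts on $n$: one takes $X_0=\Delta^{n-1}_{k-1}$ (the image of the face $[1\ldots n]$), which is handled by the inductive hypothesis, and then attaches the remaining simplices (those through both $0$ and $1$) level by level; because $X_0$ is already present, the auxiliary faces $[1j_1\ldots j_i]$ needed for the $\Lambda^{i+1}_1$-horns come for free, and again each level depends only on the previous one. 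In short, the paper pre-loads a large subcomplex via induction on $n$ precisely so that each horn's remaining faces are guaranteed to be in place, whereas your direct approach leaves that verification open.
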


\begin{proof}
We first do the case $k=0$.  Let $X=\Delta^n_0$, and note that every non-degenerate
simplex $\sigma$ of $X$ is the image of a unique non-degenerate simplex in
$\Delta^n$; hence we can denote $\sigma$ by the vertices of its preimage.  Define a
filtration on $X$ by
\[ X_0=[01], \quad X_1=\bigcup_{1<i\leq n} [01i], \quad X_2=\bigcup_{1<i<j\leq n}
[01ij],
\]
and so on.  Note that $X_{n-1}=X$.  It is easy to check that $X_i\inc
X_{i+1}$ is a cobase change along $\binom{n-1}{i+1}$ inner horn
inclusions ($0\leq i\leq n-2$), and therefore is a Joyal equivalence.  So $X_0\inc X$ is
also a  Joyal equivalence, and the desired result follows by
two-out-of-three.  

The proof in the case $k=n-1$ is completely symmetric to the $k=0$ case.

It remains to tackle the case $0<k<n-1$.  Consider the inclusion
$\Delta^{n-1}\inc \Delta^n$ given by $[12\ldots n]$, and the induced
inclusion $\Delta^{n-1}_{k-1} \inc \Delta^n_k$.  We may assume by
induction that $\Delta^{n-1}_{k-1}\ra \Delta^1$ is a Joyal
equivalence, so it suffices to prove the same for
$\Delta^{n-1}_{k-1}\inc \Delta^n_k$.  We will prove that this map is
inner anodyne.

Define a filtration on $X=\Delta^n_k$ by $X_0=\Delta^{n-1}_{k-1}$ and
\[ X_1=X_0\cup \!\!\bigcup_{k<j_1} [01j_1], \ \ \ X_2=X_1\cup\!\! \bigcup_{k<j_1<j_2}
[01j_1j_2], \ \ \ 
X_3=X_2\cup \!\!\!\!\!\!\bigcup_{k<j_1<j_2<j_3} [01j_1j_2j_3],
\]
and so on.
So $X_i$ is the union of $X_{i-1}$ and all $(i+1)$-simplices of $X$
containing $0$ and $1$.  It is again easy to see that each $X_i\inc
X_{i+1}$ is a cobase change along non-bounding inner horn inclusions (the $1$-horn
in each case), and so $X_0\inc X_{n-1}=X$ is inner anodyne.  
\end{proof}

\begin{prop}\label{prop:squash}
For every $n\geq 0$, the maps $C^n_R\ra\Delta^1$, $C^n_L\ra \Delta^1$,
and $\Ccyl^n\ra \Delta^1$ are Joyal equivalences. 
\end{prop}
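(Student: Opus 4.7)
The plan is to reduce all three cases to Lemma~\ref{le:squash}, which has just been proved. For $C^n_R$ and $C^n_L$, the reduction is immediate: from the join/quotient definitions,
\[ C^n_R = (\Delta^n\join\Delta^0)/\Delta^n \iso \Delta^{n+1}_n, \qquad C^n_L = (\Delta^0\join\Delta^n)/\Delta^n \iso \Delta^{n+1}_0,\]
so these two cases follow at once from Lemma~\ref{le:squash}.

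For $\Ccyl^n$, my plan is to exhibit the inclusion $C^n_R\hookrightarrow\Ccyl^n$ as a Joyal acyclic cofibration; two-out-of-three applied to the composite $C^n_R\hookrightarrow\Ccyl^n\to\Delta^1$ then completes the proof. Recall that the non-degenerate top-dimensional simplices of $\Delta^n\times\Delta^1$ are the $n+1$ ``staircase'' $(n+1)$-simplices
\[ \sigma_i = [(0,0),(1,0),\ldots,(i,0),(i,1),(i+1,1),\ldots,(n,1)], \qquad 0\le i\le n,\]
and a direct inspection of the pushout defining $\Ccyl^n$ shows that the image $\bar\sigma_i\subseteq\Ccyl^n$ is naturally isomorphic to $\Delta^{n+1}_i$ (the vertices $\{0,\ldots,i\}$ collapsing to $a$ and $\{i+1,\ldots,n+1\}$ to $b$); in particular $\bar\sigma_n = C^n_R$. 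I would then filter $\Ccyl^n$ by $Y_0 = \bar\sigma_n$ and $Y_k = Y_{k-1}\cup\bar\sigma_{n-k}$ for $1\le k\le n$, so that $Y_n = \Ccyl^n$.

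The heart of the argument is the identification
\[ \bar\sigma_{n-k}\cap Y_{k-1} \;=\; d_{n-k+1}(\bar\sigma_{n-k}),\]
valid for $1\le k\le n$, which under $\bar\sigma_{n-k}\iso\Delta^{n+1}_{n-k}$ becomes the face inclusion $\Delta^n_{n-k}\hookrightarrow\Delta^{n+1}_{n-k}$. Granting this, both $\Delta^n_{n-k}\to\Delta^1$ and $\Delta^{n+1}_{n-k}\to\Delta^1$ are Joyal equivalences by Lemma~\ref{le:squash}, so the face inclusion is a Joyal acyclic cofibration by two-out-of-three. Each step $Y_{k-1}\hookrightarrow Y_k$ is a pushout of this face inclusion, and is therefore itself a Joyal acyclic cofibration; composing over $k$ yields that $C^n_R = Y_0\hookrightarrow Y_n = \Ccyl^n$ is a Joyal acyclic cofibration, as required.

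The main obstacle is the combinatorial verification of the intersection formula. The inclusion ``$\supseteq$'' uses that $d_{n-k+1}(\sigma_{n-k}) = \sigma_{n-k}\cap\sigma_{n-k+1}$ and $\bar\sigma_{n-k+1}\subseteq Y_{k-1}$; for ``$\subseteq$'', one checks that every other codim-$1$ face $d_\ell(\sigma_{n-k})$ with $\ell\ne n-k+1$ still contains the vertex $(n-k,1)$, which is not a vertex of any $\sigma_j$ for $j>n-k$, and that higher-codimension intersections $\sigma_{n-k}\cap\sigma_j$ for $j>n-k+1$ are automatically sub-faces of $d_{n-k+1}(\sigma_{n-k})$. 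Once this bookkeeping is in place, the remainder is purely formal.
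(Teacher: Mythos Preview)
Your proof is correct and follows essentially the same approach as the paper: both use the staircase decomposition of $\Delta^n\times\Delta^1$, identify each cell's image in $\Ccyl^n$ with a $\Delta^{n+1}_j$, filter by adjoining one cell at a time, identify the attaching map as a face inclusion $\Delta^n_j\hookrightarrow\Delta^{n+1}_{j'}$, and invoke Lemma~\ref{le:squash} with two-out-of-three. The only difference is cosmetic---you start from $\bar\sigma_n=C^n_R$ and add cells in decreasing index, while the paper starts from $E_0=C^n_L$ and goes up---and your verification of the intersection formula is in fact more detailed than the paper's (which simply asserts it); just note that your vertex argument for ``$\subseteq$'' is stated upstairs in $\Delta^n\times\Delta^1$, so to conclude for the $\bar\sigma$'s one should observe that non-degenerate simplices of $\Ccyl^n$ touching both $a$ and $b$ have unique preimages under the quotient map.
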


\begin{proof}
The cases of $C^n_R$ and $C^n_L$ follow immediately from
Lemma~\ref{le:squash}.  For $\Ccyl^n$ we argue as follows.
Let $\{0,1,\ldots,n\}$ and $\{0',1',\ldots,n'\}$ denote the vertices
in $\Delta^n\cross\{0\}$ and $\Delta^n\cross\{1\}$, respectively.
Note that each simplex of
 $\Delta^n\cross\Delta^1$ is completely determined by its vertices,
and that
$\Delta^n\cross\Delta^1$ contains exactly
$(n+1)$ non-degenerate $(n+1)$-simplices.  For example if $n=2$ we
have:
\[
\Delta^2\cross\Delta^1=
\Delta^{\{0,1,2,2'\}}\cup \Delta^{\{0,1,1',2'\}}
\cup \Delta^{\{0,0',1',2'\}}.
\]
Let $D_i$ denote the $(n+1)$-simplex
$\Delta^{\{0,1,\ldots,i,i',\ldots,n'\}}$, for $0\leq i\leq n$.  

Recall that $\Ccyl^n$ is obtained from $\Delta^n\cross\Delta^1$ by
collapsing any simplex whose vertices are contained in
$\{0,\ldots,n\}$ or in $\{0',\ldots,n'\}$.
Let $E_i$ be the image of $D_i$ in $\Ccyl^n$, and note that $E_i$ is
isomorphic to   
$\Delta^{n+1}_{i}$.  

Define a filtration on $X=\Ccyl^n$ by setting 
\[ X_i=E_0\cup E_1 \cup \cdots \cup E_i.
\]
Note that $X_n=X$.  We will prove by induction that the composite
$X_i\inc X \ra \Delta^1$ is a Joyal equivalence, for every $i$.  The
base case $i=0$ is covered by Lemma~\ref{le:squash}.  

Note that $X_{i+1}=X_i\cup E_{i+1}$, and $E_{i+1}\cap X_i \iso
\Delta^{\{0,1,\ldots,i,(i+1)',\ldots,n'\}}_i$.    
This gives us a diagram in which both squares are pushouts:
\[ \xymatrix{
\Delta^{n}_i \ar@{=}[r]\ar@{ >->}[d] & E_{i+1}\cap X_i \ar[r]\ar[d] & X_i \ar[d]\\
\Delta^{n+1}_i \ar@{=}[r] & E_{i+1} \ar[r] & X_{i+1}.
}
\]
The left vertical map is a Joyal acyclic cofibration (using
Lemma~\ref{le:squash} and two-out-of-three), so the same is true for $X_i\inc X_{i+1}$.
This completes the proof.
\end{proof}

%%%%%%%%%%%%%%%%%%%%%%%%%%%%%%%%%%%%%%%%%%%%%%%%%%%%%%%%%%%%%%%%%%%
\appendix

%%%%%%%%%%%%%%%%%%%%%%%%%%%%%%%%%%%%%%%%%%%%%%%%%%%%%%%%%%%%%%%%%

\section{The Box-product lemmas}
\label{se:box}

In these appendices we develop all the properties of quasi-categories
used in this paper, from first principles.  This material is a summary
of
Joyal's work \cite{J2}, and  culminates in the
proof of the existence of the Joyal model category structure
(Theorem~\ref{th:model}). 
Note that
the only results in this paper which directly rely on the appendices
occur in Section~\ref{se:quasi}.

Appendix~\ref{se:box} is totally self-contained: we prove two
box product lemmas, one yielding Proposition~\ref{pr:box-inner} and
the other serving as a key step for Proposition~\ref{pr:quasi1}(a).
Appendix~\ref{se:special} is also self-contained: there  we prove
every unjustified result of Section~\ref{se:quasi} except for
Theorem~\ref{th:model}.  Appendix~\ref{se:model} proves this final
theorem.

\medskip

\begin{lemma}
The box product
$ (\Lambda^n_k\inc \Delta^n)\bbox (\bdd{r}\inc \del{r})$
is inner anodyne when $0<k<n$ and $0\leq r$.
\end{lemma}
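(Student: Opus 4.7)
To prove this, the plan is to filter $\Delta^n \cross \Delta^r$ above
\[ A := (\Lambda^n_k \cross \Delta^r) \cup_{\Lambda^n_k \cross \bdd{r}} (\Delta^n \cross \bdd{r}) \]
by a tower of subcomplexes, with each inclusion realized as a pushout of an inner horn inclusion $\Lambda^{n+r}_{i} \inc \Delta^{n+r}$ for some $0 < i < n+r$. The non-degenerate simplices of $\Delta^n \cross \Delta^r$ correspond to strictly monotone chains $((p_0, q_0), \ldots, (p_d, q_d))$ in the product poset $[n] \cross [r]$; such a chain lies in $A$ if and only if its first projection $\{p_i\}$ omits some element of $[n] \setminus \{k\}$, or its second projection $\{q_i\}$ omits some element of $[r]$. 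I will call the remaining non-degenerate simplices \emph{essential}. The maximal essential simplices are exactly the $\binom{n+r}{n}$ shuffles (lattice paths from $(0,0)$ to $(n,r)$ using only $(+1,0)$ and $(0,+1)$ steps), each encoded by the subset $S(\sigma) \ss \{1, \ldots, n+r\}$ of positions of horizontal steps.

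I would totally order the shuffles by lexicographic comparison on $S(\sigma)$, producing a list $\sigma_1, \ldots, \sigma_N$, and then set $X_0 = A$ and $X_\ell = X_{\ell-1} \cup \sigma_\ell$, augmenting each step (if needed) by any non-maximal essential subsimplex of $\sigma_\ell$ not yet present. Non-maximal essential simplices have dimension $n+r-1$ and arise from a shuffle by deleting the unique vertex with first coordinate equal to $k$, in the case that this vertex sits between two consecutive horizontal steps; such simplices exist precisely because $0 < k < n$. A vertex-by-vertex case analysis then shows that $\sigma_\ell \cap X_{\ell-1}$ is exactly an inner horn $\Lambda^{n+r}_{i_\ell}$ of $\sigma_\ell$, where $i_\ell$ marks the position of the corner at which $\sigma_\ell$ first diverges from the lexicographically earlier shuffles.

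The main obstacle is verifying (a) that this intersection really is the claimed horn, and (b) that the horn index $i_\ell$ lies strictly between $0$ and $n+r$. Point (b) is where the hypothesis $0 < k < n$ enters critically: the initial and final vertices $(0,0)$ and $(n,r)$ of every shuffle contribute $0$ and $n$ respectively to the first projection, and both lie in $[n] \setminus \{k\}$, so the faces of $\sigma_\ell$ obtained by deleting either endpoint are non-essential and hence already in $A$. This forces $i_\ell \in \{1, \ldots, n+r-1\}$, so we obtain inner horns; without $0 < k < n$ one would instead produce outer horns, breaking the argument. Point (a) reduces to the observation that each essential codimension-one face of $\sigma_\ell$ is either (i) shared with some lex-earlier shuffle via a ``V-H to H-V'' corner swap and therefore already in $X_{\ell-1}$, or (ii) the non-maximal essential face (when it exists), which we attach in the same pushout; the unique missing face corresponds to the first H-V corner of $\sigma_\ell$. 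I expect this combinatorial bookkeeping — choosing the order so that exactly one face is missing at each stage, and accounting correctly for the non-maximal essential simplices — to be the principal technical burden.
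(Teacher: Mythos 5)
Your plan --- attach shuffles one at a time in lexicographic order, each along an inner horn --- is a genuinely different route from the paper's, which filters instead by which vertex $(k,i)$ (for $i=0,\ldots,r$) a simplex contains and then, within each stage, by dimension, always attaching along the horn opposite $(k,i)$.  A shuffle (prism) decomposition is not unreasonable in principle, but two of your specific claims are false, and the lexicographic ordering does not by itself produce one missing inner face per stage.

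First, ``non-maximal essential simplices have dimension $n+r-1$'' is false.  For $n=2$, $r=1$, $k=1$ the $1$-simplex with vertices $(0,0)$ and $(2,1)$ has first projection $\{0,2\}$, omitting only $k$, and second projection $\{0,1\}$, omitting nothing; it is essential but has dimension $1=n+r-2$.  Essential simplices occur in many dimensions, because the chain in $[n]\times[r]$ is free to take diagonal or $k$-skipping steps.  Second, and more seriously, your classification of essential codimension-one faces of $\sigma_\ell$ into (i) V-H corner faces, shared with a lex-earlier shuffle, and (ii) the $k$-crossing face, misses the H-V corner faces: deleting a vertex at an H-V corner also gives an essential face, and the neighboring shuffle sharing it (obtained by swapping that H-V to a V-H) is lex-\emph{later}, not earlier, so those faces are \emph{not} in $X_{\ell-1}$.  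Shuffles typically have several H-V corners, so $\sigma_\ell\cap X_{\ell-1}$ is typically missing several codimension-one faces and is not a horn at all.  The problem appears already at $\ell=1$: $\sigma_1=H^nV^r$ is missing both the face opposite its H-V corner vertex $(n,0)$ and the face opposite the $k$-crossing vertex $(k,0)$.  The ``augmenting if needed'' step is thus the whole content of the argument, and as sketched it has no mechanism for ensuring each intermediate attachment is an inner horn.  The paper sidesteps all of this by always attaching along the horn opposite $(k,i)$, whose index is automatically inner because $(k,i)$ is never the initial or terminal vertex of a simplex lying outside $A$; repairing the shuffle approach would require a comparably clever reorganization, not supplied here.
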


\begin{proof}
Let $Y=\Delta^n\cross\Delta^r$ and let 
$Y_0=(\Lambda^n_k\cross\Delta^r)\amalg_{\Lambda^n_k\cross\bd{r}}(\Delta^n\cross\bd{r})$.
We will produce a filtration $Y_0\subseteq Y_1\subseteq\cdots\subseteq Y_{r+1}=Y$
and prove that each $Y_i\inc Y_{i+1}$ is inner anodyne.

Let us establish some notation.  An $m$-simplex $y$ in $Y$ is
determined by its vertices, and we can denote it in the form
\begin{myequation}
\label{eq:y}
y=\begin{bmatrix}
a_0&a_1&\ldots&a_m\\u_0&u_1&\ldots&u_m\end{bmatrix}
\end{myequation}
where $0\leq a_i\leq a_{i+1}\leq n$ and $0\leq u_i\leq u_{i+1}\leq r$,
for $0\leq i<m$.  Here we are writing $a\brack u$ instead of the usual
$(a,u)$, because in the above notation the faces and degeneracies are
obtained by just omitting or repeating columns.  The simplex $y$ is
degenerate if and only if two successive columns are identical.  Note
that two distinct non-degenerate simplices of $Y$ will not have any
horn in common.

One checks that the simplex $y$
is an element of $Y_0$ if and only if it satisfies one of the following
two conditions:
\begin{enumerate}
[\hspace{.15in}(i)]\item
$k\in\{a_0,a_1,\ldots,a_m\}\neq\{0,1,\ldots,n\},$ \hspace{.2in} -OR-
\item $\{b_0,\ldots,b_m\}\neq\{0,1,\ldots,r\}$.
\end{enumerate}

Let $Y_1$ be the union of $Y_0$ together with all simplices that
contain the vertex $k\brack 0$, and in general let $Y_i$ be the union of $Y_{i-1}$
together with all simplices containing $k\brack i-1$.  Note that
$Y_{r+1}=Y$: this follows from the fact that
every simplex of $Y$ is a face of a simplex that contains some
$k\brack i$.  

Our goal is to show that each inclusion $Y_i\inc Y_{i+1}$ is inner
anodyne, and we will do this by producing another filtration
\[ Y_i=Y_i[n-1]\subseteq Y_i[n] \subseteq \cdots \subseteq
Y_i[n+r]=Y_{i+1}. \]
Notice that every simplex of $Y$ of dimension $n-1$ or less, containing
$k\brack i$, lies in $Y_0$ (it satisfies condition (i)).  For $t>n-1$
we define $Y_i[t]$ to be the union of $Y_i[t-1]$ and all nondegenerate
simplices of $Y$ that have dimension $t$ and contain $k\brack i$.  We
claim that $Y_i[t]\inc Y_i[t+1]$ is a cobase change of a disjoint
union of inner horn inclusions; justifying this will conclude our
proof.

Let $y$ be a nondegenerate simplex of $Y$ of dimension $t+1$,
where $t\geq n-1$, and assume $y$ contains $k\brack i$ but
$y\notin Y_i[t]$.  
Then every
face of $y$ except possibly for the $k\brack i$-face is contained
in $Y_i[t]$.  We must show that $Y_i[t]$ cannot contain this final
face of $y$, and also that this final face is not simultaneously
filled by another inner horn in $Y_i[t]$;  the latter is clear by an above note, so we will
concentrate on the former.

Write $y$ in the form of (\ref{eq:y}), and consider the column
immediately preceding the $k\brack i$.  This column cannot have $k$ in
the top row, for then the bottom row would be at most $i-1$ and we would have
 $y\in Y_{i-1}$.  Likewise, the top
entry of this column cannot be $k-2$ or less because otherwise we
would have $y\in Y_0$.  So immediately preceding the $k\brack i$-column is a
$k-1\brack ?$-column.  
This cannot be a $k-1\brack j$-column for
$j<i$, since otherwise $y$ would be a face of the $(t+2)$-dimensional 
simplex obtained by inserting a $k\brack i-1$-column before the
$k\brack i$; this would imply that $y\in Y_{i-1}$, a contradiction.
So the $k\brack i$-column is preceded by $k-1\brack i$.

Now consider the $k\brack i$-face of $y$; call this face $dy$.
Note that the {\it set\/} of entries in the second row of $dy$ is the
same  
as the corresponding set for the second row of $y$.  A little thought
then shows that $dy$ cannot
satisfy condition (i) or (ii) above, since $y$ did  not.  So $dy\notin
Y_0$.   Then the only way $dy$ could be in $Y_{i-1}$ is if it were
added somewhere along the way, but every simplex that was added was
part of a simplex containing a $k\brack j$ for $j<i$; and clearly $dy$
is not part of any such simplex.  So $dy\notin Y_{i-1}$.  Finally, the
$t$-dimensional simplices in $Y_i[t]-Y_{i-1}$ all contain $k\brack i$,
and therefore $dy$ is not one of these either.  Hence $dy\notin
Y_i[t]$, and this shows that adjoining $y$ to $Y_i[t]$ amounts to
filling a non-bounding horn, which is inner because $k\notin\{0,n\}$.
 This completes the argument.
\end{proof}

The previous lemma allows us to wrap up a loose end from Section~\ref{se:quasi}.

\begin{proof}[Proof of Proposition~\ref{pr:box-inner}]
This follows immediately from the preceding lemma, by a standard
argument.
\end{proof}

We next turn to box products of maps with $\{0\}\inc E^1$.  
Recall that in Section~\ref{se:quasi} we introduced the notion of a
quasi-isomorphism in a quasi-category $X$.  It is convenient to extend
this notion to all simplicial sets $S$ by saying that a $1$-simplex
$e$ in $S$ is a quasi-isomorphism if for every quasi-category
$X$ and every map $S\ra X$, the image of $e$ in $X$ is a
quasi-isomorphism as previously defined.  Note that if $e\colon\Delta^1\ra S$
extends to a map $E^1\ra S$, then $e$ is necessarily a
quasi-isomorphism.  

\begin{defn}

A map $\ell\taking\Lambda^n_k\to X$ is called a \dfn{special right
horn} (resp. \dfn{special left horn}) if $k=n$ (resp. $k=0$) and
$\ell(\Delta^{\{n-1,n\}})$ (resp. $\ell(\Delta^{\{0,1\}})$) is a
quasi-isomorphism in $X$.  A \dfn{special outer horn} is defined to be
either a special left horn or a special right horn.  Finally $\ell$ is
a \dfn{special horn} if it is either a special outer horn or any inner
horn.

A map $f\taking X\to Y$ is called \dfn{special outer anodyne} if it is
the composition of cobase extensions along special outer horns.  The
map $f$ is called \dfn{special anodyne} if it is the composition of
cobase extensions along inner horns and special outer horns.

\end{defn}

The next box product lemma will be a key step in proving
Proposition~\ref{pr:quasi1}(a).  Note the restriction of $r\geq 1$,
which will be important later.

\begin{lemma}
\label{le:box-special}
For any $r\geq 1$ the box product $f=(\{0\}\inc 
E^1)\square(\bd{r}\to\Delta^r)$ is special anodyne.
\end{lemma}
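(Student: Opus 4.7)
The argument is a combinatorial filtration analogous to the one in the preceding lemma, but more delicate because some of the horns to be filled will be outer rather than inner.

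Let $Y=E^1\cross\Delta^r$ and let
\[ Y_0=(\{0\}\cross\Delta^r)\amalg_{\{0\}\cross\bd{r}}(E^1\cross\bd{r}), \]
so that $f\taking Y_0\inc Y$. I will use the matrix notation from the preceding proof: a non-degenerate $m$-simplex of $Y$ is a $2\times(m+1)$ matrix $\bigl[\begin{smallmatrix}\epsilon_0&\cdots&\epsilon_m\\u_0&\cdots&u_m\end{smallmatrix}\bigr]$ with $\epsilon_i\in\{0,1\}$, $u_i\in\{0,\ldots,r\}$, the bottom row non-decreasing, and consecutive columns distinct. Because $r\geq 1$, every vertex of $Y$ already lies in $Y_0$, so there are no vertices to adjoin. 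A non-degenerate simplex lies in $Y\setminus Y_0$ precisely when its top row contains at least one $1$ \emph{and} its bottom row exhausts $\{0,1,\ldots,r\}$.

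\textbf{Building a filtration.} I will construct $Y_0=Z_0\subset Z_1\subset\cdots\subset Z_N=Y$ in which each inclusion $Z_i\subset Z_{i+1}$ is a pushout along a horn inclusion $\Lambda^n_k\inc\Delta^n$. The simplices in $Y\setminus Y_0$ are adjoined in a lexicographic order---first by the number of $1$'s in the top row, then by dimension, then by a further tie-breaker---and for each such simplex $y$ a unique pivot index $k\in\{0,\ldots,\dim y\}$ is assigned, characterized by the requirement that $d_k y$ is the only face of $y$ not already in the previous stage. Filling the horn $\Lambda^{\dim y}_k\inc\Delta^{\dim y}$ then adjoins both $y$ and its missing face $d_k y$ simultaneously.

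\textbf{Classifying the horns.} When $0<k<\dim y$ the horn is inner and nothing more is needed. The delicate case is $k\in\{0,\dim y\}$, where the horn is outer and one must verify that the edge $[0,1]$ (resp.\ $[\dim y-1,\dim y]$) of $y$ is a quasi-isomorphism. The filtration will be arranged so that in every outer case the two adjacent columns of $y$ agree in the bottom row; that is, the relevant edge factors through $E^1\cross\{u\}\iso E^1$ for some $u\in\{0,\ldots,r\}$. Since $E^1\ra\Delta^0$ is a Joyal acyclic fibration (Lemma~\ref{lemma:ES fibrant qcat}), any $1$-simplex of $E^1$ extends to a map $E^1\ra E^1$, and hence is a quasi-isomorphism. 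Consequently every outer horn that arises is special, and $f$ is special anodyne.

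\textbf{Main obstacle.} The combinatorial heart of the proof is designing the pivot assignment so that whenever the pivot lies at a boundary index the corresponding pair of columns share the same $u$-coordinate; this constraint is what forces the precise recipe for the ordering. A natural choice is to pick the pivot at a distinguished transition in the top row (for instance at the first $01$-block, together with its right neighbor), and then verify case-by-case that at a boundary pivot the adjacent columns satisfy $u_0=u_1$ or $u_{n-1}=u_n$, while every other face $d_j y$ with $j\neq k$ has already been attached at an earlier step. This twofold verification is the main technical obstacle. The hypothesis $r\geq 1$ enters essentially here: for $r=0$ the inclusion $\{0\}\inc E^1$ is not special anodyne (one would have to adjoin the vertex $1\in E^1$, which is not a horn-filling move), whereas for $r\geq 1$ both vertices of $E^1$ are automatically present via $E^1\cross\bd{r}\subset Y_0$.
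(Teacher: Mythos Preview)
Your proposal correctly identifies the overall strategy---a filtration of $Y_0\inc Y$ by horn-fillings in which each outer horn is special because the critical edge lies in a fiber $E^1\times\{u\}$---and your account of why $r\geq 1$ is needed is right. However, the proof as written is not complete: you explicitly label the central combinatorial verification as the ``main obstacle'' and then do not carry it out. A proof of this lemma cannot end with ``verify case-by-case''; that verification is exactly what is at stake.

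Moreover, the particular scheme you float (order simplices first by the number of $1$'s in the top row, then by dimension, pivot at ``the first $01$-block'') is too vague to evaluate and does not obviously work. For example, the simplex $\bigl[\begin{smallmatrix}1&0\\0&1\end{smallmatrix}\bigr]\in Y\setminus Y_0$ (for $r=1$) has no $0\to 1$ transition in its top row, so your rule assigns it no pivot. More seriously, you never specify the bijection between the ``top'' simplices and their ``missing faces'': if the pivot column has top entry $0$, then $d_ky$ has the same number of $1$'s as $y$ and lies in the same stratum of your ordering, so you must explain which one is adjoined as a horn-filler and which as a missing face, without collision. The unspecified ``tie-breaker'' is thus carrying all the weight.

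For comparison, the paper's argument uses a different and fully explicit filtration. Rather than counting $1$'s, one writes each simplex as a concatenation of an $i$-group (the block with second coordinate $i$) for $i=0,\ldots,r$, and sorts by the position of the \emph{last} group containing a $b$: let $Y_m$ consist of those simplices whose $i$-groups for $i\geq m$ contain only $a$'s (together with those missing some group entirely, which lie in $Y_0$). To pass from $Y_m$ to $Y_{m+1}$ one attaches, in order of increasing dimension, precisely those simplices whose $m$-group ends in $a_m$; the pivot is always this terminal $a_m$, and one checks directly that omitting it gives the unique face not already present. The outer horns occur only at $m=r$, where the terminal $a_r$ is the last vertex of the simplex and its predecessor also lies in the $r$-group, so the edge in question is in $E^1\times\{r\}$ and hence a quasi-isomorphism. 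This explicit pairing (simplices whose $m$-group ends in $a_m$, paired with their $a_m$-face) is what replaces your missing tie-breaker and makes the argument go through.
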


\begin{proof}
Let $Y=E^1\cross\Delta^r$ and let
$Y_0=(\{0\}\cross\Delta^r)\amalg_{\{0\}\times \bd{r}}(E^1\cross\bd{r})$.
We will produce a filtration of simplicial sets
$Y_0\subseteq Y_1\subseteq \cdots \subseteq Y_{r+1}=Y$
such that $Y_i\inc Y_{i+1}$ is inner anodyne for $0<i<r$ and special
outer anodyne for $i=0$ and $i=r$.

Let us establish some notation.  It is convenient to denote the
$0$-simplices of $E^1$ by $a$ and $b$ rather than $0$ and $1$.  Note
that every simplex of $E^1$ is uniquely determined by its vertices,
and so the same is true of $Y$.
The vertices of $Y$ are of the form
$(a,i)$ or $(b,i)$ for $0\leq i\leq r$; to ease the typography we
will denote these $a_i$ and $b_i$, respectively.  Since a simplex $y$
of $Y$ is determined by its ordered set of vertices, we can denote it
\begin{myequation}
\label{eq:y2}
y=\Bigl
([y_0^1,\ldots,y_0^{k_0}],[y_1^1,\ldots,y_1^{k_1}],\ldots,[y_r^1,\ldots,y_r^{k_r}]
\Bigr )
\end{myequation}
where for each $0\leq i\leq r$ we have $0\leq k_i$ and each $y_i^j$ is
either the vertex $a_i$ or $b_i$.  Note that the superscripts only
serve as counters.  Similarly, the brackets are not part of the data
here; they are written for the ease of the reader.  

Said differently, the simplex $y$ corresponds to a sequence of $a_0$'s
and $b_0$'s, followed by a sequence of $a_1$'s and $b_1$'s, and so on
up through the final sequence of $a_r$'s and $b_r$'s.  We refer to the
portion of the sequence consisting of the $a_i$'s and $b_i$'s as the
``$i$-group''; note that this can be empty.  The $i$-group corresponds
to the sequence inside of the $i$th set of brackets in (\ref{eq:y2}).
Note that the simplex $y$ is
degenerate if there exists $0\leq i\leq r$ and $1\leq j\leq k_i-1$
such that $y_i^j=y_i^{j+1}$ --- that is, if there is a repetition inside
one of the groups. 

Let $Y_m$ denote the simplicial subset of $Y$ consisting of all
simplices $y\in Y$ such that one of the following conditions is
satisfied:
\begin{enumerate}[\hspace{.15in}(i)]\item there exists
$0\leq i\leq r$ such that $k_i=0$ (i.e., one of the groups is empty) \hspace{.2in} -OR- 
\item for all
$i\geq m$ and all $1\leq j\leq k_i$, one has
$y_i^j=a_i$ (the $m$-group and higher consist only of $a$'s).
\end{enumerate} 
Note that $Y_0$ agrees with our previous definition, and $Y_{r+1}=Y$
because condition (ii) is vacuously satisfied when $m=r+1$.  

Consider the inclusion $Y_m\inc Y_{m+1}$ for some $1\leq m\leq r-1$
(the cases $m=0$ and $m=r$ will be handled separately).
The simplices in $Y_{m+1}-Y_m$ are of the form
\[x=([x_0^1,\ldots,x_0^{k_0}],\ldots,[x_m^1,\ldots,x_m^{k_m}],
[a_{m+1}^1,\ldots,a_{m+1}^{k_{m+1}}],\ldots,[a_r^1,\ldots,a_r^{k_r}])
\]
where $a_i^j=a_i$ and where there is at least one $b$ in the
$m$-group.
Note that every such simplex is the face of a simplex of the form
\[
x'=([x_0^1,\ldots,x_0^{k_0}],\ldots,[x_m^1,\ldots,x_m^{k_m}a_m],
[a_{m+1}^1,\ldots,a_{m+1}^{k_{m+1}}], \ldots,[a_r^1,\ldots,a_r^{k_r}]).
\]
Define an infinite sequence 
\[ Y_m=Y_m[0]\subseteq Y_m[1]\subseteq Y_m[2] \subseteq \cdots
\]
whose union is $Y_{m+1}$ by letting $Y_m[t]$ be the union of
$Y_m[t-1]$ and all $t$-simplices of the form $x'$ above.

We claim that each $Y_m[t]\inc Y_m[t+1]$ is a cobase change of a
coproduct of inner horn inclusions.  To do this we'll show that every
nondegenerate simplex $x'\in Y_m[t+1]-Y_m[t]$ comes from a unique 
non-bounding horn in $Y_m[t]$.  

By the ``$a_m$-face'' of $x'$ we mean the face corresponding to the
final $a_m$ in the $m$th group.  It is clear that every other face of
$x'$ lies in $Y_m[t]$, so we need only show that this $a_m$-face does
not lie in $Y_m[t]$.  But the $m$-group of $x'$ must contain at least
one $b$, and this shows that the $a_m$-face of $x'$ is not in
$Y_{m}$.  The only way it could be in $Y_m[t]$ is as a face of a
simplex whose $m$-group ends in $a$, and clearly this is not possible for dimensional reasons.

This completes our analysis of $Y_m\inc Y_{m+1}$ for $0<m<r$.  When
$m=0$ or $m=r$ the same idea works, but the horns involved are special
outer horns (they are special because every $1$-simplex within a
single group---in particular, the $0$th group or the $r$th group---is
a quasi-isomorphism).  In fact one literally copies the previous
paragraphs, replacing all instances of the word ``inner" with the
phrase ``special outer."  There is one subtlety that occurs, which is
why one needs $r>0$.  In passing from $Y_0$ to $Y_1$, the first stage
of the argument involves attaching
the simplex $[b_0a_0a_1\ldots a_r]$ along its $0$-horn.  But in the
case $r=0$ this is a $0$-horn of a $1$-simplex, which is not allowed.
This problem does not appear when $r>1$, and so
this completes the proof.
\end{proof}

%%%%%%%%%%%%%%%%%%%%%%%%%%%%%%%%%%%%%%%%%%%%%%%%%%%%%%%%%%%%%%%%%%%%%%%

\section{Special outer horns, and applications}
\label{se:special}
Quasi-categories do not satisfy the lifting condition for general outer
horns.  But it turns out they {\it do\/} satisfy the lifting condition
for special outer horns---outer horns where a particular map is a
quasi-isomorphism.  The purpose of this section is to prove various
lifting results related to this phenomenon.

\medskip

\subsection{The quasi-isomorphism lemmas}

Let $X$ be a quasi-category and let $f$ be a $1$-simplex in $X$.
Recall that a $1$-simplex $h$ is a \dfn{right inverse} (or right
quasi-inverse) for $f$ if there exists a $\sigma\ra\Delta^2\ra X$ such
that $d_0(\sigma)=f$, $d_1(\sigma)$ is degenerate, and
$d_2(\sigma)=h$.  We call the $2$-simplex $\sigma$ a \dfn{right
inverse provider} for $f$.

Similarly,  $h$ is a \dfn{left inverse} for $f$ if there
exists a $\tau\colon \Delta^2\ra X$ with $d_2(\tau)=f$,
$d_1(\tau)$ degenerate, and $d_0(\tau)=h$; and $\tau$ is called a
\dfn{left inverse provider} for $f$.

In Proposition~\ref{pr:tau_1} we stated that if $X$ is a
quasi-category and $f$, $g$, and $h$ are 1-simplices in $X$, then $gf=h$ in
$\pi_0\jC(X)$ if and only if there is a $2$-simplex $\Delta^2\ra X$
with $d_0=g$, $d_1=h$, and $d_2=f$.  Here is the proof:

\begin{proof}[Proof of Proposition~\ref{pr:tau_1}]
Let $X$ be a quasi-category.
Define a relation on the $1$-simplices of $X$ by $f\sim g$ if there
exists a map $\sigma\colon \Delta^2\ra X$  with $d_1(\sigma)=g$,
$d_2(\sigma)=f$, and $d_0(\sigma)$ is degenerate.  It is proven in
\cite[Lemma 4.11]{BV} (as well as \cite{J2}) that this gives an
equivalence relation, and that there is a category $\ho(X)$ where the
maps are equivalence classes of $1$-simplices.  It is then easy to
produce maps of categories $\ho(X)\ra \pi_0\jC(X)$ and $\pi_0\jC(X)\ra
\ho(X)$ showing that the two categories are isomorphic.

Now suppose that $f$, $g$, and $h$ are $1$-simplices of $X$ and that
$gf=h$ in $\pi_0\jC(X)$.  Let $\sigma\colon \Delta^2\ra X$ be any
$2$-simplex with $d_0(\sigma)=g$ and $d_2(\sigma)=f$ (such a simplex
exists by the quasi-category condition).  Let $u=d_1(\sigma)$. 
Then $u$ and $h$ will represent the same map in
$\pi_0\jC(X)$, so we must have $u\sim h$.  
Thus, there is a $\tau\colon \Delta^{\{0,2,3\}}\ra X$ with
$d_2(\tau)=h$, $d_3(\tau)=u$, and $d_0(\tau)$ degenerate.  
We obtain in this way a map $F\colon \Lambda^3_2\ra X$ which equals
$\sigma$ on $[012]$, $\tau$ on $[023]$, and is a degeneracy of $g$ on $[123]$.
Extending $F$ to $\Delta^3$ and restricting to $[013]$ gives the
desired $2$-simplex with boundary $(g,h,f)$. 
\end{proof}

\begin{prop}
\label{pr:3iso-again}  
Let $X$ be a quasi-category, and let $f\colon
\Delta^1\ra X$.  Then the following
conditions are equivalent:
\begin{enumerate}[(i)]
\item $f$ is a quasi-isomorphism;
\item $f$ has a left inverse  and a (possibly different)  right
inverse.
\item The image of $f$ in $\pi_0\jC(X)$ is an isomorphism.
\end{enumerate}
\end{prop}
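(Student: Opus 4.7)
The plan is to argue $(i) \Rightarrow (ii) \Rightarrow (iii) \Rightarrow (i)$; once Proposition~\ref{pr:tau_1} is in hand, each implication is essentially a transcription. The implication $(i) \Rightarrow (ii)$ I would read directly off the definition: a quasi-isomorphism is by construction a map $\sk_2(E^1) \to X$, and the two $2$-simplices in the image are literally a right inverse provider and a left inverse provider for $f$, both using the single $1$-simplex $g$ supplied as the image of the other non-degenerate $1$-simplex of $\sk_2(E^1)$.

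For $(ii) \Rightarrow (iii)$ I would feed the two inverse providers into Proposition~\ref{pr:tau_1}. A right inverse provider $\sigma$ with $d_0\sigma = f$, $d_1\sigma = \id_b$, $d_2\sigma = h_R$ certifies that $[f]\cdot [h_R] = [\id_b]$ in $\pi_0\jC(X)$, and symmetrically a left inverse provider $\tau$ certifies that $[h_L]\cdot [f] = [\id_a]$. A morphism in any category possessing both a one-sided left inverse and a one-sided right inverse is automatically an isomorphism (with $[h_L] = [h_R]$ as its two-sided inverse), so $[f]$ is an isomorphism in $\pi_0\jC(X)$.

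For $(iii) \Rightarrow (i)$, suppose $[f]$ has an inverse $\gamma$ in $\pi_0\jC(X)$. By the remark immediately preceding Proposition~\ref{pr:tau_1}, every morphism in $\pi_0\jC(X)$ between vertices of a quasi-category $X$ is represented by a single $1$-simplex of $X$; this is where the quasi-category hypothesis enters, via filling $2$-dimensional inner horns to compose adjacent $1$-simplices. Choose such a representative $g\colon b \to a$ for $\gamma$. The identities $[g]\cdot [f] = [\id_a]$ and $[f]\cdot [g] = [\id_b]$ then produce, via Proposition~\ref{pr:tau_1}, two $2$-simplices of $X$ whose boundaries match exactly the diagrams in the definition of quasi-isomorphism. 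These $2$-simplices together with $f$ and $g$ assemble into a map $\sk_2(E^1) \to X$, so $f$ is a quasi-isomorphism.

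There is no real obstacle here; the combinatorial work has all been absorbed into Proposition~\ref{pr:tau_1}. What remains is the clean observation that the four pieces of data specifying a map $\sk_2(E^1) \to X$ — two $1$-simplices and two $2$-simplices with degenerate middle face — correspond precisely to a $1$-simplex $f$, a chosen quasi-inverse $g$, and $2$-simplex witnesses of the two composition identities $gf = \id_a$ and $fg = \id_b$ in $\pi_0\jC(X)$.
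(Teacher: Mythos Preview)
Your proof is correct and follows essentially the same route as the paper: the paper simply records that $(i)\Rightarrow(ii)\Rightarrow(iii)$ are clear and that $(iii)\Rightarrow(i)$ follows immediately from Proposition~\ref{pr:tau_1}. You have filled in precisely the details the paper suppresses, and your use of the remark preceding Proposition~\ref{pr:tau_1} (to represent the inverse by a single $1$-simplex) is the right ingredient for the final implication.
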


\begin{proof}
Clearly (i)$\Rightarrow$(ii)$\Rightarrow$(iii), and
(iii)$\Rightarrow$(i) follows immediately from Proposition~\ref{pr:tau_1}. 
\end{proof}

%We will also have need of the following:
%
%
%\begin{lemma}\label{lemma:right-left inverse}
%Let $X$ be a quasi-category and 
%suppose that $f\in X_1$ is a quasi-isomorphism.  Then any right inverse
%$h\in X_1$ of $f$ is also a left inverse (and vice-versa).
%\end{lemma}
%
%\begin{proof}
%We prove that right inverses are left inverses; the dual
%statement is similar.
%
%Let $\sigma$ denote a $2$-simplex with $d_0(\sigma)=f$,
%$d_2(\sigma)=h$, and $d_1(\sigma)$ degenerate.  
%Since $f$ is a
%quasi-isomorphism it does have some left inverse: that is, 
%there exist a $2$-simplex $\tau$ such that $d_2(\tau)=f$ and
%$d_1(\tau)$ is degenerate.  Let $d_0(\tau)=h'$.
%
%Define $G\taking\Lambda^3_1\to X$ by
%\[
%G([0,1,2])=\sigma, \quad G([0,1,3])=s_1(h),\quad\textnormal{ and }\quad
%G([1,2,3])=\tau.
%\]
%Since $X$ is a quasi-category, we can fill this horn to get a simplex
%$H\taking\Delta^{\{0,1,2,3\}}\to X$.  Note that the new 2-simplex
%$\theta=H|_{[023]}$ has faces $\theta|_{[03]}=h$, $\theta|_{[23]}=h'$,
%and $\theta|_{[02]}$ is degenerate.
%
%Next define a map $G'\taking\Lambda^{\{-1,0,2,3\}}_2\to X$ by
%\[ G'([0,2,3])=\theta,\quad
%G'([-1,2,3])=\tau,\quad  G'([-1,0])=f,
%\]  
%and take $G'([-1,0,2])$ to be the evident degeneracy of $f$.
%Since $X$ is a quasi-category there is an
%extension of $G'$ to
%$H'\taking\Delta^{\{-1,0,2,3\}}\to X$.  The face $H'|_{[-103]}$ is a 2-simplex with
%$H'([-1,0])=f$, $H'([0,3])=h$, and $H'([-13])$ is degenerate.  This 
%shows that $h$ is a left inverse of $f$.
%\end{proof}

\subsection{Special outer horn lifting}
Suppose given a map $f\colon \Lambda^n_0\ra X$ such that $f([01])$ is
a quasi-isomorphism.  Since $f$ has an inverse, one could imagine
producing a corresponding horn in which the direction of $f$ has been
``flipped''.  This would be an inner horn, which could be extended to an
$n$-simplex $\Delta^n\ra X$.  One could imagine flipping the $[01]$
simplex again to create an extension of the original map $f$.  

Although vague, the above paragraph gives an idea of why
quasi-categories should have liftings for special outer horns.
Attempting to prove this result in the above manner results in a
combinatorial nightmare.  A different approach, using some clever
techniques of Joyal \cite{J2}, allows one to simplify it.

Given two maps $f\colon A\inc B$ and $g\colon C\inc D$, let us use the notation
$f\jbox g$ for the map
\[ (A\join D)\amalg_{A\join C} (B\join C) \inc B\join D.
\]

\begin{lemma}
\label{le:joinbox}
Let $A\inc B$ be a monomorphism of simplicial sets.  Then for any
$n\geq 0$ and any $0<k\leq n$, the map
\[ (\Lambda^n_k\inc \Delta^n) \jbox (A\inc B) \]
is inner anodyne.  (Note that the case $k=n$ is allowed.) 
\end{lemma}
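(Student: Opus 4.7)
The plan is to reduce to the case where $A \inc B$ is a boundary inclusion $\partial\Delta^r \inc \Delta^r$, then identify the box product as a single inner horn inclusion inside a higher-dimensional simplex.

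First, I would set up a skeletal filtration $A = B_{-1} \subseteq B_0 \subseteq B_1 \subseteq \cdots$ where $B_m$ is obtained from $B_{m-1}$ by attaching the nondegenerate $m$-simplices of $B$ not in $A$; each stage is a pushout of a coproduct of boundary inclusions $\partial\Delta^m \inc \Delta^m$. Since the join is cocontinuous in each variable and $\jbox$ inherits this property through its defining pushout, and since inner anodyne maps are closed under pushouts, coproducts, and transfinite compositions, it suffices to verify the lemma when $(A \inc B) = (\partial\Delta^r \inc \Delta^r)$ for each $r \geq 0$.

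Next, under the identification $\Delta^n \star \Delta^r = \Delta^{n+r+1}$ with ordered vertex set $\{0,\ldots,n,0',\ldots,r'\}$, I would classify which simplices $\sigma = \sigma_L \cup \sigma_R$ (with $\sigma_L \subseteq \{0,\ldots,n\}$ and $\sigma_R \subseteq \{0',\ldots,r'\}$) fail to lie in the source
\[
(\Lambda^n_k \star \Delta^r) \cup_{\Lambda^n_k \star \partial\Delta^r}(\Delta^n \star \partial\Delta^r).
\]
A direct analysis shows this happens precisely when $\sigma_L \supseteq \{0,\ldots,n\}\setminus\{k\}$ and $\sigma_R = \{0',\ldots,r'\}$, and only two simplices satisfy both conditions: the top simplex $\Delta^{n+r+1}$ itself and its $k$-face. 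This identifies the source as exactly the horn $\Lambda^{n+r+1}_k$ inside $\Delta^{n+r+1}$.

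Finally, the desired map is the horn inclusion $\Lambda^{n+r+1}_k \inc \Delta^{n+r+1}$, which is inner because $0 < k$ holds by hypothesis while $k \leq n < n+r+1$. In particular, this is how the case $k = n$ (an outer horn of $\Delta^n$) is subsumed: joining with any nonempty simplex promotes the horn to an \emph{inner} horn of the larger join. The combinatorial identification of the source in the middle step is the only point requiring care, but it is routine once the correct vertex labeling is in hand.
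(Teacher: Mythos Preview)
Your proof is correct and follows essentially the same approach as the paper: reduce to boundary inclusions, then identify the resulting map as the single inner horn inclusion $\Lambda^{n+r+1}_k \inc \Delta^{n+r+1}$. One small correction: the join is \emph{not} literally cocontinuous in each variable as a functor $\sSet \to \sSet$ (e.g.\ $\emptyset \star K = K$, so it fails to preserve the initial object and coproducts); what is true---and what your skeletal-filtration argument actually uses---is that it preserves \emph{connected} colimits (pushouts and transfinite compositions), or equivalently that $(-)\star K$ is cocontinuous as a functor into the undercategory $(K \downarrow \sSet)$, and this is enough for the reduction.
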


\begin{proof}
By a routine argument one reduces to the case where $A\inc B$ is
$\bdd{r}\inc \del{r}$.  So we are looking at the map
\begin{myequation}
\label{eq:mmap}
(\Lambda^n_k\join \del{r})\amalg_{\Lambda^n_k\join \bdd{r}}
(\del{n}\join \bdd{r}) 
\inc \del{n}\join \del{r}.
\end{myequation}
Note that the codomain may be identified with
$\Delta^{\{0,1,\ldots,n,n+1,\ldots,n+r+1\}}$.  Under this
identification $\del{n}\join \bdd{r}$ is the union of the faces
$[01\ldots n,n+1,\ldots\hat{i}\ldots,n+r+1]$ where $n+1\leq i\leq
n+r+1$.  Likewise, the simplicial set $\Lambda^n_k \join \del{r}$ is the union of the faces
$[01\ldots\hat{i}\ldots n,n+1\ldots n+r+1]$ where $0\leq i\leq n$
and $i\neq k$.  It follows that the domain of the map in
(\ref{eq:mmap})
is just $\Lambda^{\{0,1,\ldots,n+r+1\}}_k$, and so our map is an inner
horn (even if $k=n$ or $r=0$).  
\end{proof}

Let $K$ be a simplicial set.  As pointed out in \cite[Section 5.2]{J2},
the join functor
$(\blank)\join K\colon \sSet \ra (K\ovcat \sSet)$ has a right
adjoint.  This right adjoint sends a simplicial set $X$ with a map
$K\ra X$ to a simplicial set denoted $X_{/K}$; the $n$-simplices of
$X_{/K}$ are the maps $\Delta^n\join K\ra X$ which extend the given
map $K\ra X$.  Note that $X_{/\emptyset}=X$.  

The following lemma will be useful:

\begin{lemma}
\label{le:slices of qcats are qcats}
Let $A\ra X$ be a map of simplicial sets and let $X\ra Y$ be an inner fibration.
Then $X_{/A}\ra Y_{/A}$ is also an inner fibration.  In particular, if
$X$ is a quasi-category then so is $X_{/A}$.
\end{lemma}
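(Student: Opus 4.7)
The plan is to exploit the adjunction $(-)\star A \dashv (-)_{/A}$ between $\sSet$ and $(A\ovcat\sSet)$. Under this adjunction a map $K\to X_{/A}$ is the same as a map $K\star A\to X$ whose restriction to $A=\emptyset\star A$ equals the given $A\to X$. So a lifting square
\[\xymatrix{\Lambda^n_k \ar[r]\ar[d] & X_{/A}\ar[d]\\ \Delta^n\ar[r] & Y_{/A}}\]
translates into a lifting problem
\[\xymatrix{\Lambda^n_k\star A \ar[r]\ar[d] & X\ar[d]\\ \Delta^n\star A\ar[r] & Y}\]
in $\sSet$ (the compatibility with the fixed map $A\to X$ is automatic, since any filler must send $A\subseteq\Lambda^n_k\star A$ to the given image in $X$). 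Since $X\to Y$ is assumed to be an inner fibration, it then suffices to show that $\Lambda^n_k\star A\inc \Delta^n\star A$ is inner anodyne when $0<k<n$.

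To do this, I would factor the inclusion as
\[\Lambda^n_k\star A \;\inc\; (\Lambda^n_k\star A)\cup \Delta^n \;\inc\; \Delta^n\star A,\]
where the union is taken inside $\Delta^n\star A$ and $\Delta^n$ denotes $\Delta^n\star\emptyset$. The first inclusion is the cobase change of $\Lambda^n_k\inc\Delta^n$ along $\Lambda^n_k\to \Lambda^n_k\star A$, hence inner anodyne for $0<k<n$. The second inclusion is exactly $(\Lambda^n_k\inc\Delta^n)\jbox(\emptyset\inc A)$, and so is inner anodyne by Lemma~\ref{le:joinbox}. The composite is therefore inner anodyne, completing the proof of the first statement.

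For the ``in particular'' clause, take $Y=\ast$ and any map $A\to X$. Then $\ast_{/A}=\ast$ (an $n$-simplex is a map $\Delta^n\star A\to\ast$ extending the unique $A\to\ast$, and there is exactly one), so $X_{/A}\to\ast$ is an inner fibration, i.e.\ $X_{/A}$ is a quasi-category.

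There is no real obstacle here once Lemma~\ref{le:joinbox} is in hand; the only thing one must be careful about is verifying that the lifting problem in the slice really does correspond cleanly to the problem against $\Lambda^n_k\star A\inc \Delta^n\star A$, and in particular that the compatibility with the base map $A\to X$ does not introduce an extra obstruction. The factorization through $(\Lambda^n_k\star A)\cup\Delta^n$ is what lets us bootstrap from the box-join lemma (which involves the top face $\Delta^n$) to the cleaner inclusion we actually need.
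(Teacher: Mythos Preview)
Your proof is correct and takes essentially the same approach as the paper, which simply says the result is ``immediate from Lemma~\ref{le:joinbox}, using adjointness.'' You have just spelled out the two implicit steps: the adjunction translation of the lifting square, and the factorization of $\Lambda^n_k\star A\inc\Delta^n\star A$ through $(\Lambda^n_k\star A)\cup\Delta^n$ so that Lemma~\ref{le:joinbox} (with $\emptyset\inc A$) can be invoked for the second piece.
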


\begin{proof}
This is immediate from Lemma~\ref{le:joinbox}, using adjointness.
\end{proof}

\begin{prop}
\label{pr:conservative}
Let $X$ be a quasi-category and let $K\ra X$ be a map of simplicial sets.  Given
$f\colon \Delta^1\ra X_{/K}$, let $f'$ denote the composite $\Delta^1\ra
X_{/K}\ra X_{/\emptyset}=X$.  Then if $f'$ is a quasi-isomorphism, so is $f$.

\end{prop}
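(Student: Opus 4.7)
The plan is to reduce the statement to an algebraic computation in $\pi_0\jC(X_{/K})$ together with a single inner-anodyne filling. Note first that $X_{/K}$ is a quasi-category by Lemma \ref{le:slices of qcats are qcats}, so Proposition \ref{pr:3iso-again} reduces the task to showing that $f$ becomes an isomorphism in $\pi_0\jC(X_{/K})$. The central claim I will establish is the following: any $1$-simplex $g\colon\Delta^1\to X_{/K}$ whose image $g'$ in $X$ is a quasi-isomorphism admits a right inverse in $X_{/K}$.

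To prove this claim I will start from a right-inverse provider $\sigma'\colon\Delta^2\to X$ for $g'$ supplied by Proposition \ref{pr:3iso-again} and attempt to build an extension $\tilde\sigma\colon\Delta^2\join K\to X$ of its adjoint data: on $\Delta^2$ placing $\sigma'$, on $\Delta^{\{1,2\}}\join K$ placing the adjoint $\tilde g$ of $g$ (identifying $\Delta^1=\Delta^{\{1,2\}}$), and on $\Delta^{\{0,2\}}\join K$ placing the constant cone at $g(1)$ (factored through $\Delta^0\join K$). A routine check shows these three pieces agree on overlaps, so together they define a map out of $\Delta^2\cup(\Lambda^2_2\join K)$, which is precisely the codomain of $(\Lambda^2_2\inc\Delta^2)\jbox(\emptyset\inc K)$. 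Lemma \ref{le:joinbox} (in the allowed case $n=k=2$) says its inclusion into $\Delta^2\join K$ is inner anodyne, so the extension $\tilde\sigma$ exists since $X$ is a quasi-category; its adjoint is the desired $2$-simplex in $X_{/K}$ exhibiting a right inverse $h$ of $g$.

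Finally I will apply the claim twice. A first application to $f$ produces a right inverse $h$ of $f$ in $X_{/K}$ whose image $h'$ in $X$ satisfies $f'h'=\id$ in $\pi_0\jC(X)$; since $f'$ is already an isomorphism there, so is $h'=(f')^{-1}$, and hence $h'$ is a quasi-isomorphism in $X$ by Proposition \ref{pr:3iso-again}. A second application---to $h$---then produces a right inverse $k$ of $h$ in $X_{/K}$. Proposition \ref{pr:tau_1} translates the two resulting $2$-simplices into the equalities $fh=\id_b$ and $hk=\id_a$ in $\pi_0\jC(X_{/K})$, and the purely categorical identity $k=(fh)k=f(hk)=f$ forces $hf=hk=\id_a$; combined with $fh=\id_b$, this exhibits $h$ as a two-sided inverse of $f$, and Proposition \ref{pr:3iso-again} completes the argument. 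The main obstacle to avoid is the natural urge to produce a left inverse of $f$ directly; that would amount to filling an outer horn of the shape $\Delta^2\cup(\Lambda^2_0\join K)\inc\Delta^2\join K$, a case lying outside the range $0<k\leq n$ of Lemma \ref{le:joinbox} and essentially equivalent to the special outer horn lifting theorem (Proposition \ref{pr:quasi2}) that this result is designed to feed into. The double-right-inverse trick bypasses that difficulty by extracting the left inverse algebraically.
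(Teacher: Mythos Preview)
Your proof is correct and follows essentially the same approach as the paper's: both arguments construct a right inverse for $f$ in $X_{/K}$ by choosing a right-inverse provider $\sigma'$ for $f'$ in $X$ and filling the inner-anodyne inclusion $(\Lambda^2_2\inc\Delta^2)\jbox(\emptyset\inc K)$ from Lemma~\ref{le:joinbox}, then repeat on the right inverse and finish algebraically. The paper uses the vertex labels $\{-1,0,1\}$ (so the relevant horn is written $\Lambda^{\{-1,0,1\}}_1$) and wraps up by citing two-out-of-three, whereas you use labels $\{0,1,2\}$ and spell out the identity $k=(fh)k=f(hk)=f$; these are cosmetic differences only.
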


\begin{proof}
We will prove that $f$ has a right inverse $g$.  Then we will prove
that $g$ itself has a right inverse.  Therefore $g$ has a left and
right inverse, hence is a quasi-isomorphism by
Proposition~\ref{pr:3iso-again}.  Then $f$ is a quasi-isomorphism by
two-out-of-three.

Choose a right inverse provider $\sigma\colon \Delta^{\{-1,0,1\}} \ra
X$ for $f'\taking\Delta^{\{0,1\}}\to X$: so $\sigma|_{[01]}=f'$ and $\sigma|_{[-11]}$ is degenerate.  
Consider the lifting diagram
\begin{myequation}
\label{dia:conservative}\xymatrixcolsep{2.6pc}\xymatrix{
(\Delta^{\{-1,0,1\}} \star \emptyset) \cup ((\Delta^{\{-1,1\}} \cup
\Delta^{\{0,1\}})\join K) \ar[r]^-F \ar@{ >->}[d] & X \\
\Delta^{\{-1,0,1\}}\join K \ar@{.>}[ur]_G
}
\end{myequation}
where the map $F$ is given by $\sigma$ on the left piece, the adjoint
of $f$ on the $\Delta^{\{0,1\}}\join K$ piece, and on the remaining
piece is equal to the composite
\[ \Delta^{\{-1,1\}}\join K \ra \Delta^{\{1\}} \join K \inc
\Delta^{\{0,1\}}\join K \llra{f} X.
\]
The vertical map in (\ref{dia:conservative}) is $(\Lambda^{\{-1,0,1\}}_1 \inc
\Delta^{\{-1,0,1\}})\jbox (\emptyset \inc K)$, hence is inner anodyne
by Lemma~\ref{le:joinbox}.  So there is a lifting $G$ in the above
diagram.
The adjoint $G^\sharp\taking\Delta^{\{-1,0,1\}}\to X_{/K}$ of $G$ is a right inverse provider for $f$.

Let $h$ denote the map $\Delta^{\{-1,0\}}\ra X_{/K}$ obtained by
restricting $G^\sharp$.  So $h$ is a right inverse for $f$.
The composite $h'\colon \Delta^{\{-1,0\}}\ra X_{/K}\ra X$ will be a right inverse for $f'$.  Hence $h'$ is a quasi-isomorphism and
itself has a right inverse.  Repeating exactly the same argument as
above, but replacing all occurences of $f$ and $f'$ by $h$ and $h'$,
we can construct a $k\colon \Delta^1\ra X_{/K}$ giving a right
inverse for $h$.  Returning to our outline from the first paragraph,
this completes the proof.
\end{proof}

\begin{prop}\label{prop:special left horn-absolute}
Let $X$ be a quasi-category and suppose given
a solid arrow diagram
$$\xymatrix{\Lambda^n_0\ar[r]^p\ar[d]& X\\
\Delta^n\ar@{.>}[ur]}
$$ 
in which $p$ is a special left horn.  Then
there exists a dotted lift as shown.  

A dotted lift also exists when $p$ is replaced by a special right horn $q\taking\Lambda^n_n\to X$.
\end{prop}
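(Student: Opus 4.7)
The plan is to prove the right horn case by induction on $n$; the left horn case follows by a symmetric argument after reversing the order of vertices. The case $n=1$ is vacuous since the edge $[01]$ is not present in $\Lambda^1_1=\{1\}$.

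For the base case $n=2$, I would construct the filling by hand. Given $q\taking\Lambda^2_2\to X$ with $e:=q([12])$ a quasi-iso and $f:=q([02])$, Proposition \ref{pr:3iso-again} produces a right inverse $e^R$ for $e$ together with a witnessing $2$-simplex $\tau$ satisfying $d_0\tau=e$, $d_1\tau=\id_{q(2)}$, $d_2\tau=e^R$. Fill an inner horn $\Lambda^2_1\to X$ whose faces are $d_0=e^R$ and $d_2=f$ to produce a $2$-simplex $\rho$ with $d_1\rho=h$ for some edge $h\taking q(0)\to q(1)$. Then assemble an inner horn $\Lambda^3_1\to X$ whose remaining three faces are $d_0=\tau$, $d_2=s_1(f)$, and $d_3=\rho$; a routine check of simplicial identities shows these agree on their common sub-edges. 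Fill this inner horn by the definition of quasi-category; its $d_1$-face is the desired $2$-simplex filling $q$.

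For the inductive step $n\geq 3$, I would use slicing. From $\Delta^n=\Delta^{n-1}\star\{n\}$ and $\Lambda^n_n=\partial\Delta^{n-1}\star\{n\}$, the adjunction $(\blank)\star\{n\}\dashv X_{/\{c\}}$ (with $c:=q(n)$) translates the lifting problem into the problem of extending a map $q^\flat\taking\partial\Delta^{n-1}\to X_{/\{c\}}$ to a map $\Delta^{n-1}\to X_{/\{c\}}$. By Lemma \ref{le:slices of qcats are qcats} the slice $X_{/\{c\}}$ is a quasi-category, and the vertex $n-1$ of $\partial\Delta^{n-1}$ is sent under $q^\flat$ to the vertex of $X_{/\{c\}}$ represented by the quasi-iso $e$. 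I would then decompose $\partial\Delta^{n-1}=\Lambda^{n-1}_{n-1}\cup d_{n-1}\Delta^{n-1}$ and extend in two stages: first fill the sub-horn $\Lambda^{n-1}_{n-1}\to X_{/\{c\}}$ using the inductive hypothesis applied inside the quasi-category $X_{/\{c\}}$; then combine with the remaining data on $d_{n-1}\Delta^{n-1}$ via an inner horn filling in $X_{/\{c\}}$ to produce the full extension $\Delta^{n-1}\to X_{/\{c\}}$.

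The main obstacle is the first stage of the inductive step. To invoke the inductive hypothesis, the $[n-2,n-1]$-edge of the sub-horn in $X_{/\{c\}}$ must be a quasi-iso in $X_{/\{c\}}$. By Proposition \ref{pr:conservative} it suffices that its image in $X$ is a quasi-iso, but that image is only $q([n-2,n-1])$, which is not given to have this property. The workaround---carrying the combinatorial heart of the proof---is to preprocess $q^\flat$ by routing the problematic edges through the quasi-iso $e$, using auxiliary inner horn fillings together with $e$'s right inverse from Proposition \ref{pr:3iso-again}, so that every sub-horn whose inductive filling one invokes really does carry a quasi-iso on its last edge. Verifying that this replacement is consistent and that the two stages of the extension glue compatibly along $\partial d_{n-1}\Delta^{n-1}$ constitutes the main technical content.
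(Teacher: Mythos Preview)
Your base case $n=2$ is correct and cleanly executed. The inductive step, however, has a genuine gap---and to your credit you identify it yourself. After passing to the slice $X_{/\{c\}}$, the problem becomes filling a \emph{boundary} $\partial\Delta^{n-1}\to X_{/\{c\}}$, not a horn. Your plan is to fill the sub-horn $\Lambda^{n-1}_{n-1}$ by induction and then ``combine'' with the prescribed $d_{n-1}$-face via an inner horn filling; but filling the horn produces an $(n-1)$-simplex $\sigma$ whose $d_{n-1}$-face is uncontrolled, and there is no evident inner-horn move that replaces $d_{n-1}\sigma$ by the given face while fixing the rest. More seriously, invoking the inductive hypothesis requires the $[n-2,n-1]$-edge to be a quasi-isomorphism in the slice, and Proposition~\ref{pr:conservative} only reduces this to $q([n-2,n-1])$ being a quasi-isomorphism in $X$---which is not part of the data. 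Your proposed ``preprocessing'' to route edges through $e$ is exactly where the content would have to live, and it is not supplied; as written, the induction does not close.

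The paper's argument avoids induction entirely and is worth contrasting with your approach. For the left-horn case, it inserts a new vertex $0'$ between $1$ and $2$, so that $\Lambda^{\{0,1,\ldots,n\}}_0$ sits inside the \emph{inner} horn $\Lambda^{\{0,1,0',2,\ldots,n\}}_{0'}$; one then only needs to extend $p$ over a short filtration $Z_0\subseteq Z_1\subseteq Z_2\subseteq Z_3$ of this larger horn. The step $Z_1\subseteq Z_2$ is handled by a single application of Proposition~\ref{pr:conservative} in the slice $X_{/\partial\Delta^{\{2,\ldots,n\}}}$ (producing a right-inverse provider for the quasi-isomorphism $p|_{[01]}$ there), and $Z_2\subseteq Z_3$ is one inner horn. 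The point is that by going up one dimension and choosing the new vertex carefully, the special outer horn becomes a face of an inner horn, so the quasi-isomorphism hypothesis is used exactly once rather than having to propagate through an induction.
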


\begin{proof}
We only prove the first statement; the second follows from a dual argument.  To do so, we will produce a solid-arrow diagram
\begin{align}\label{dia:liftingspecialhorns-absolute}
\xymatrix{
\Lambda^{\{0,1,2,\ldots,n\}}_0\ar[r]\ar@/^1.6pc/[rr]^p\ar[d]
&\Lambda^{\{0,1,0',2,\ldots,n\}}_{0'}\ar[r]^<<{p'}\ar[d]
&X\\
\Delta^{\{0,1,2,\ldots,n\}}\ar[r]^{\delta^{\{0'\}}}
&\Delta^{\{0,1,0',2,\ldots,n\}}\ar@{.>}[ur]
}
\end{align}
and the dotted arrow (which will exist because $X$ is a quasi-category)
will compose with $\delta^{\{0'\}}$ to produce the desired lift.

To construct $p'$ we must extend $p\colon \Lambda^n_0\ra X$ to
$\Lambda^{\{010'2\ldots n\}}_{0'}$.
%\[ \xymatrix{
%\Lambda^n_0 \ar[r]^p \ar@{ >->}[d] & X  \\
%\Lambda^{\{010'2\ldots n\}}_{0'}. \ar@{.>}[ur]
%}
%\]
Write $Z=\Lambda^{\{010'2\ldots n\}}_{0'}$ and $Z_0=\Lambda^n_0$.  
Let $Z_1=Z_0\cup [00'23\ldots n]$, and let
$Z_2=Z_1\cup [\Delta^{\{010'\}} \join \partial \Delta^{\{23\ldots n\}}]$.  We now
have a filtration
\[ Z_0 \subseteq Z_1 \subseteq Z_2 \subseteq Z_3=Z,
\]
and we will construct $p'$ on each term. 

Extend $p$ to $Z_1$ via the composite $\Delta^{\{00'23\ldots n\}} \ra
\Delta^{\{023\ldots n\}} \ra X$ (in other words, via a degeneracy).  Then consider the following diagram
\[ \xymatrix{
[\Delta^{\{00'\}} \join \partial\Delta^{\{23\ldots n\}}]\cup
[\Delta^{\{01\}} \join \partial\Delta^{\{23\ldots n\}}] \ar[r]\ar[d] &
Z_1 \ar[r]^{p'}\ar[d] & X \\
[\Delta^{\{010'\}} \join \partial \Delta^{\{23\ldots n\}}] \ar[r] & Z_2
\ar@{.>}[ur]
}
\]
and note that the square is a pushout.
By adjointness we need to construct a lift in the diagram  
\[ \xymatrix{
\Delta^{\{00'\}} \cup \Delta^{\{01\}} \ar[r]\ar[d] &
X_{/\Delta^{\{23\ldots n\}}}  \\
 \Delta^{\{010'\}\ar@{.>}[ur]} 
}
\]
but such a lift exists by Proposition~\ref{pr:conservative} (applied to the quasi-isomorphism $p|_{\Delta^{\{01\}}}$).
After adjointing, this defines $p'$ on $Z_2$.

Finally, we note that $Z_2\inc Z_3$ is a cobase change of an inner
horn inclusion.  Indeed, the only nondegenerate simplex lying in
$Z_3-Z_2$ is $[10'23\ldots n]$, and all of its faces containing $0'$
lie in $Z_2$.  So $Z_2\inc Z_3$ is a cobase change of
$\Lambda^{\{10'2\ldots n\}}_{0'} \inc \Delta^{\{10'2\ldots n\}}$.  
Since $X$ is a quasi-category, we may extend our lift $p'$ from
$Z_2$ to $Z_3$.  This completes the construction of $p'$, and thereby
also completes the proof.

\end{proof}

We will also have need for a relative version of the above result:

\begin{prop}\label{prop:special left horn}
Let $X$ and $Y$ be quasi-categories.
Suppose given a solid arrow diagram
$$\xymatrix{\Lambda^n_0\ar[r]^p\ar[d]& X\ar[d]\\
\Delta^n\ar@{-->}[ur] \ar[r]_m & Y}
$$ 
in which $p$ is a special left horn and $X\ra Y$ is an inner fibration.  Then
there exists a dotted arrow making the diagram commute.

A dotted lift also exists when $p$ is replaced by a special right horn $q\taking\Lambda^n_n\to X$.

\end{prop}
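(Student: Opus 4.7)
The plan is to adapt the proof of the absolute version (Proposition~\ref{prop:special left horn-absolute}) to the relative setting, incorporating compatibility with $m$ at each step. I would first extend $m\colon\Delta^n\to Y$ along the degeneracy $s\colon \Delta^{\{0,1,0',2,\ldots,n\}}\to\Delta^{\{0,1,2,\ldots,n\}}$ collapsing $0'\mapsto 0$, obtaining $m'=m\circ s\colon \Delta^{\{0,1,0',2,\ldots,n\}}\to Y$. It then suffices to produce a lift $L\colon\Delta^{\{0,1,0',2,\ldots,n\}}\to X$ of $m'$ extending $p$ along the composition $\Lambda^n_0 \inc \Lambda^{\{0,1,0',2,\ldots,n\}}_{0'}\inc\Delta^{\{0,1,0',2,\ldots,n\}}$; the composite $L\circ\delta^{\{0'\}}$ will then be the required lift.

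The construction of $L$ proceeds in two stages, closely mirroring the absolute argument. First I would construct an extension $p'\colon\Lambda^{\{0,1,0',2,\ldots,n\}}_{0'}\to X$ of $p$ lifting $m'$ over this horn, via the same filtration $Z_0\subseteq Z_1\subseteq Z_2\subseteq Z_3 = \Lambda^{\{0,1,0',2,\ldots,n\}}_{0'}$ used in the absolute proof. The step $Z_0\to Z_1$ uses that the adjoined face $[0,0',2,\ldots,n]$ is a degeneracy of $[0,2,\ldots,n]\in\Lambda^n_0$, so $p'$ is defined there as the corresponding degeneracy of $p$, which is automatically compatible with $m'$ since $m'$ was defined by the analogous degeneracy. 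The step $Z_2\to Z_3$ is a cobase change of the inner horn inclusion $\Lambda^{\{1,0',2,\ldots,n\}}_{0'}\inc\Delta^{\{1,0',2,\ldots,n\}}$, so the lift compatible with $m'$ exists by the inner fibration hypothesis on $X\to Y$. The step $Z_1\to Z_2$ reduces by adjunction to a lifting problem in the slice $X_{/K}\to Y_{/K}$ with $K=\partial\Delta^{\{2,\ldots,n\}}$; by Lemma~\ref{le:slices of qcats are qcats}, this is an inner fibration between quasi-categories, and resolving the lift requires a relative version of Proposition~\ref{pr:conservative}. Once $p'$ is built, the inclusion $\Lambda^{\{0,1,0',2,\ldots,n\}}_{0'}\inc\Delta^{\{0,1,0',2,\ldots,n\}}$ is itself an inner horn inclusion (as $0'$ is neither the initial nor the terminal vertex in the ordering $0<1<0'<2<\cdots<n$), so the inner fibration property of $X\to Y$ produces the lift $L$.

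The main obstacle is establishing the relative version of Proposition~\ref{pr:conservative}: for any inner fibration $A\to B$ between quasi-categories, any map $K\to A$, and any $1$-simplex $f\colon\Delta^1\to A_{/K}$ whose underlying map to $A$ is a quasi-isomorphism, one must show that $f$ is a quasi-isomorphism in $A_{/K}$, with inverses and inverse providers chosen compatibly with any prescribed lift to $B_{/K}$. The proof of Proposition~\ref{pr:conservative} reduces (via Lemma~\ref{le:joinbox}) to solving lifting problems against inner anodyne cofibrations, and each such lift can be performed against any inner fibration rather than only against the terminal map; the required compatibility with the map to $Y$ is automatic throughout. The special right horn case follows by a dual argument.
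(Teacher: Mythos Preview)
Your approach has a genuine gap at the very first step: the map $s\colon \Delta^{\{0,1,0',2,\ldots,n\}}\to\Delta^{\{0,1,2,\ldots,n\}}$ ``collapsing $0'\mapsto 0$'' does not exist as a simplicial map. In the ordering $0<1<0'<2<\cdots<n$ the vertex $0'$ sits strictly above $1$, so an order-preserving surjection can only send $0'$ to $1$ or to $2$. With either of those choices the compatibility you claim at the $Z_0\to Z_1$ step fails: you extend $p'$ over $[0,0',2,\ldots,n]$ by the degeneracy collapsing $0'\to 0$ (which is legitimate on that face, since $1$ is absent there), so the image of $p'|_{[0,0',2,\ldots,n]}$ in $Y$ is the degeneracy of $m|_{[0,2,\ldots,n]}$; but $m'|_{[0,0',2,\ldots,n]}$ with $s(0')=1$ equals $m|_{[0,1,2,\ldots,n]}$, which is generically nondegenerate. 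So $p'$ does not lift $m'$, and the diagram does not commute.

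The paper resolves this by defining $m'$ differently. Rather than a degeneracy, it adjoins $m$ to a $1$-simplex $f\colon\Delta^{\{01\}}\to Y_{/\Delta^{\{2\ldots n\}}}$, observes via Proposition~\ref{pr:conservative} that $f$ is a quasi-isomorphism, and takes $m'$ to be the adjoint of a \emph{left inverse provider} $\sigma\colon\Delta^{\{010'\}}\to Y_{/\Delta^{\{2\ldots n\}}}$. This guarantees $m'|_{[00']}$ is degenerate, restoring the compatibility at $Z_1$. The $Z_1\to Z_2$ step then becomes a lifting square in the slice with both $p'|_{[00']}$ and $m'|_{[00']}$ degenerate; this is handled by the separately proved Lemma~\ref{le:special case} (the $n=2$ case with $p|_{[02]}$ degenerate), whose proof invokes the absolute version. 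Your proposed ``relative Proposition~\ref{pr:conservative}'' is also more delicate than you suggest: the proof of Proposition~\ref{pr:conservative} begins by \emph{choosing} an inverse provider for $f'$ in $X$, which is not an inner-anodyne lift, so making that choice compatible with prescribed data in $Y$ is exactly the special-horn lifting problem you are trying to establish.
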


Before proving this we need a lemma, which takes care of a special
case.  The proof of the lemma uses Proposition~\ref{prop:special left
horn-absolute}.

\begin{lemma}
\label{le:special case}
Let $X$ and $Y$ be quasi-categories, and suppose given a diagram
\[ \xymatrix{ \Lambda^2_0 \ar[r]^{p}\ar[d] & X \ar[d]^a \\
 \Delta^2 \ar[r]_m & Y
}
\]
where $a$ is an inner fibration, $p|_{[01]}$ is a quasi-isomorphism,
and $p|_{[02]}$ is degenerate.  Then
the above square has a lift.

A similar result holds when $p$ is a map $\Lambda^2_2\ra X$ and
$p|_{[12]}$ is a quasi-isomorphism.
\end{lemma}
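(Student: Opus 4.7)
Here is my plan for proving the first statement; the second follows by a dual argument applied to the opposite simplicial sets.

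The strategy is to reformulate the lifting problem via the join/slice adjunction. Using $\Delta^2 = \Delta^{\{01\}} \star \Delta^{\{2\}}$ and the adjunction $(\blank)\star \Delta^0 \dashv (\blank)_{/\{-\}}$, a $2$-simplex $\Delta^2 \to X$ with final vertex $a$ corresponds to a $1$-simplex $\Delta^1 \to X_{/\{a\}}$; likewise for $Y$. The hypothesis that $p|_{[02]}$ and $m|_{[02]}$ are degenerate at $a$ and $\bar a = a(a)$ translates to the condition that the source of the corresponding slice $1$-simplex is the degenerate vertex $s_0 a$ (resp.\ $s_0 \bar a$). The original lifting problem thus becomes: lift the $1$-simplex $(\tilde m, f)\colon \Delta^1 \to Y_{/\{\bar a\}}\times_Y X$ to $\tilde\ell\colon \Delta^1 \to X_{/\{a\}}$ across the natural map $\Phi\colon X_{/\{a\}}\to Y_{/\{\bar a\}}\times_Y X$, with prescribed source $\tilde\ell(0) = s_0 a$.

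Via the join/slice adjunction, the lifting problem for $\Phi$ against any inclusion $\Lambda^n_k \inc \Delta^n$ transforms into a lifting problem for $a\colon X\to Y$ against the box-join $(\Lambda^n_k \inc \Delta^n)\jbox(\emptyset \inc\{a\})$. By Lemma~\ref{le:joinbox}, the latter inclusion is inner anodyne for $0 < k \leq n$, so $\Phi$ is a right fibration. Moreover $(\tilde m, f)$ is a quasi-isomorphism in $Y_{/\{\bar a\}}\times_Y X$: its $X$-coordinate $f$ is a quasi-iso by hypothesis, and its $Y_{/\{\bar a\}}$-coordinate $\tilde m$ is a quasi-iso by Proposition~\ref{pr:conservative} applied to $Y$ (since its composite with the forgetful map $Y_{/\{\bar a\}}\to Y$ is $\bar f = a(f)$, a quasi-iso).

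The main obstacle is the remaining step: lifting a quasi-isomorphism across a right fibration when the source lift is prescribed. The plan is to use the right-fibration property applied to $\Lambda^1_1 = \{1\} \inc \Delta^1$ to first produce some lift $\tilde\ell_0$ with an arbitrary source, and then modify $\tilde\ell_0$ so that its source becomes $s_0 a$. The modification will proceed by exhibiting a morphism in the fiber of $\Phi$ over $(s_0\bar a, a)$ connecting $\tilde\ell_0(0)$ to $s_0 a$ (such a morphism is constructed using that $s_0 a$ is degenerate and that the fiber admits enough inner-anodyne extensions), and then using an inner horn filling in $X$ over $Y$ (again justified by Lemma~\ref{le:joinbox}) to combine this morphism with $\tilde\ell_0$ into the desired $\tilde\ell$. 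Adjointing back through the join/slice correspondence produces the $2$-simplex $\ell\colon\Delta^2\to X$ required by the lemma.
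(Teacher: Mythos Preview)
Your slice-category reformulation is a reasonable idea, but there is a genuine gap at the key step.  To invoke the RLP of the right fibration $\Phi\colon X_{/c}\to Y_{/\bar c}\times_Y X$ against $\{1\}\hookrightarrow\Delta^1$, you must first supply a lift of the \emph{target} vertex $(\tilde m(1),p(1))$ to $X_{/c}$.  Unwinding the adjunction, such a lift is precisely an edge $p(1)\to c$ in $X$ lying over $m|_{[12]}$ in $Y$; equivalently, it is a solution to the lifting problem $\partial\Delta^1\hookrightarrow\Delta^1$ against the inner fibration $a\colon X\to Y$.  Nothing in the hypotheses guarantees this, and your sketch does not construct it.  (A quasi-inverse $g$ of $f$ gives an edge $p(1)\to c$ in $X$, but $a(g)$ has no reason to equal $m|_{[12]}$.)  The subsequent ``modification'' step---finding an edge in the fiber joining $\tilde\ell_0(0)$ to $s_0c$---is also not justified: the fiber of $\Phi$ over $(s_0\bar c,c)$ may well have several components, and the phrase ``using that $s_0 a$ is degenerate'' is not a construction.

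The paper's argument avoids the slice machinery entirely.  It chooses a left-inverse provider $\sigma\colon\Delta^2\to X$ for $f$, then assembles from $a\sigma$, $m$, and a degeneracy a special left horn $\tau\colon\Lambda^3_0\to Y$, which extends to $\Delta^3$ by the \emph{absolute} special-horn lifting already established (Proposition~\ref{prop:special left horn-absolute}).  Two successive \emph{inner} horn lifts for $a$ (against $\Lambda^{\{123\}}_2$ and then $\Lambda^3_2$) then produce a $3$-simplex $\omega$ in $X$ over $\tau$, and $\omega|_{[013]}$ is the desired lift.  The point is that the absolute result for $Y$ manufactures the missing face $[12]$ down in $Y$ as part of a larger simplex, after which only inner lifts against $a$ are needed.
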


\begin{proof}
We only prove the first statement, the second one being dual.
Let $f=p|_{[01]}$,
and let $\sigma\colon \Delta^{\{0,1,2\}}\ra X$ be a left inverse
provider for $f$.  Let $h=\sigma|_{[12]}$.

Now define a map $\tau\colon \Lambda^3_0 \ra Y$ by
$\tau|_{[012]}=a\circ \sigma$, $\tau|_{[013]}=m$,
and where $\tau|_{[023]}$ is a double degeneracy.  Then since
$\tau|_{[01]}$ is a quasi-isomorphism (being the image of $f$), it
follows by Proposition~\ref{prop:special left horn-absolute} that we
may extend $\tau$ to a map $\Delta^3\ra Y$; call this extension $\tau$
as well.

As $X\ra Y$ is an inner fibration, we may choose a lift in the
following square:
\[ \xymatrixcolsep{2.5pc}\xymatrix{
\Lambda^{\{1,2,3\}}_2 \ar[r]^{h\Wedge {*}}\ar[d] & X \ar[d] \\
\Delta^{\{1,2,3\}} \ar[r]_-{\tau|_{[123]}}\ar@{.>}[ur]^\lambda & Y. 
}
\]  
Extend $\lambda$ to a map $\Lambda^{\{0,1,2,3\}}_1\ra X$ by 
$\lambda|_{[012]}=\sigma$ and letting $\lambda|_{[023]}$ be a double degeneracy.
Then we have the diagram
\[ \xymatrix{
\Lambda^3_2 \ar[r]^{\lambda}\ar[d] & X \ar[d] \\
\Delta^3 \ar[r]_\tau\ar@{.>}[ur]^\omega & Y, 
}
\]  
and there must exist a lifting $\omega$.  One checks that
$p=\lambda|_{\Lambda^{013}_0}$ and $m=\tau|_{\Delta^{013}}$, so
$\omega|_{[013]}$ provides the lift for our original square.
\end{proof}

\begin{proof}[Proof of Proposition~\ref{prop:special left horn}]
We only prove the first statement; the second follows from a dual
argument.  The proof follows the same general outline as that of
Proposition~\ref{prop:special left horn-absolute}.  We will produce a
solid-arrow diagram
\begin{myequation}
\label{dia:liftingspecialhorns}
\xymatrix{\Lambda^{\{0,1,2,\ldots,n\}}_0\ar[r]\ar@/^1.6pc/[rr]^p\ar[d]&\Lambda^{\{0,1,0',2,\ldots,n\}}_{0'}\ar[r]^<<{p'}\ar[d]&X\ar[d]^h\\\Delta^{\{0,1,2,\ldots,n\}}\ar[r]^{\delta^{\{0'\}}}\ar@/_1.6pc/[rr]_m&\Delta^{\{0,1,0',2,\ldots,n\}}\ar@{.>}[ur]\ar[r]_<<{m'}&Y}
\end{myequation}
and the dotted arrow (which exists because $X\ra Y$ is an inner fibration)
will compose with $\delta^{\{0'\}}$ to produce the desired lift.

Our map $m\taking\Delta^n\ra Y$ may be regarded as a map $\Delta^{\{01\}}\join
\Delta^{\{23\ldots n\}} \ra Y$.  The adjoint $f\colon \Delta^{\{01\}}\ra
Y_{/\Delta^{\{2\ldots n\}}}$ is such that its composite with
$Y_{/\Delta^{\{2\ldots n\}}} \ra Y$ is a quasi-isomorphism, hence
by Proposition~\ref{pr:conservative} $f$ is itself a quasi-isomorphism.  So there is a
left inverse provider $\sigma\colon \Delta^{\{010'\}}\ra
Y_{/\Delta^{\{2\ldots n\}}}$ for $f$.   Let $m'$ be the adjoint of this
map.

To construct $p'$ we must produce a lift for the diagram
\[ \xymatrix{
\Lambda^{\{012\ldots n\}}_0 \ar[r]^-p \ar@{ >->}[d] & X \ar[d] \\
\Lambda^{\{010'2\ldots n\}}_{0'} \ar[r] & Y
}
\]
where the bottom horizontal map is the restriction of $m'$.  Define
$Z_0, Z_1, Z_2$ and $Z_3$ exactly as in the proof of Proposition~\ref{prop:special
left horn-absolute}; we will construct the lift $p'$ on each $Z_i$.  

Extend $p$ to $Z_1$ via the composite $\Delta^{\{00'23\ldots n\}} \ra
\Delta^{\{023\ldots n\}} \ra X$.  Continuing to argue as in the proof
of Proposition~\ref{prop:special left horn-absolute},
extending $p'$ to $Z_2$ amounts to constructing a lift in the diagram
\[ \xymatrix{
\Delta^{\{00'\}} \cup \Delta^{\{01\}} \ar[r]\ar[d] &
X_{/\Delta^{\{23\ldots n\}}} \ar[d] \\
 \Delta^{\{010'\}} \ar[r] & Y_{/\Delta^{\{23\ldots n\}}}.
}
\]
But this has a lift by Lemma~\ref{le:special case}.
After adjointing, this defines $p'$ on $Z_2$.

Finally, the extension of $p'$ from $Z_2$ to $Z_3$ follows exactly as
in Proposition~\ref{prop:special left horn-absolute}.  
This completes the construction of $p'$, and
also completes the proof.
\end{proof}

\subsection{Consequences of special outer horn lifting}

\begin{proof}[Proof of Proposition~\ref{pr:quasi2}]
This is precisely Proposition~\ref{prop:special left horn-absolute}.
\end{proof}

\begin{prop}
\label{pr:special-fib-matching}
Let $f\colon X\ra Y$ be an inner fibration between quasi-categories
and suppose that $f$ has the
RLP with respect to $\{0\}\inc E^1$.  Then $f$ also has the RLP with
respect to
\[ (\{0\}\inc E^1)\bbox (A\inc B) \]
for any monomorphism $A\inc B$.  Equivalently, the map $X^{E^1}\ra
X^{\{0\}}\times_{Y^{\{0\}}} Y^{E^1}$ is a Kan acyclic fibration.
\end{prop}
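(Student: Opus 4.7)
The plan is to reduce the box-product statement to the two box-product lemmas already established (Proposition~\ref{pr:box-inner} and Lemma~\ref{le:box-special}) together with the special outer horn lifting result of Proposition~\ref{prop:special left horn}. By a standard small-object-type argument, to check that $f$ has the RLP with respect to $(\{0\}\inc E^1)\bbox (A\inc B)$ for every monomorphism $A\inc B$, it suffices to check this when $A\inc B$ is a generating cofibration $\bd{r}\inc\Delta^r$ for some $r\geq 0$.

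The case $r=0$ is literally the hypothesis on $f$, since $(\{0\}\inc E^1)\bbox (\emptyset\inc \Delta^0)$ is exactly $\{0\}\inc E^1$. For $r\geq 1$, Lemma~\ref{le:box-special} says that $(\{0\}\inc E^1)\bbox(\bd{r}\inc\Delta^r)$ is special anodyne, meaning it is built by iterated cobase change from inner horn inclusions and special outer horn inclusions. Thus it suffices to show that $f$ has the RLP with respect to each of these building blocks. For inner horn inclusions this is immediate because $f$ is an inner fibration. For special outer horn inclusions, this is precisely the content of Proposition~\ref{prop:special left horn}, whose hypotheses are all in force: $X$ and $Y$ are both quasi-categories and $f$ is an inner fibration. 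This completes the first (and main) statement.

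For the equivalent formulation, I would simply invoke adjointness: by the standard cartesian-closed adjunction the map $X^{E^1}\to X^{\{0\}}\times_{Y^{\{0\}}} Y^{E^1}$ has the RLP with respect to a monomorphism $A\inc B$ if and only if $f$ has the RLP with respect to $(\{0\}\inc E^1)\bbox(A\inc B)$. Since the $A\inc B$ range over all monomorphisms (equivalently, all cofibrations in $\sSet_K$), the first statement says exactly that this matching map is a Kan acyclic fibration.

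The only conceivable obstacle is verifying that Proposition~\ref{prop:special left horn} actually applies at each stage of the transfinite construction of the special anodyne filtration in Lemma~\ref{le:box-special}; but since the RLP is preserved by cobase change, transfinite composition, and retracts, and each individual horn being filled is either inner or special outer, no issue arises. All of this is essentially bookkeeping once Lemma~\ref{le:box-special} and Proposition~\ref{prop:special left horn} are in hand.
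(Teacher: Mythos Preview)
Your proof is correct and follows essentially the same route as the paper's: reduce to the generating cofibrations $\bd{r}\inc\Delta^r$, handle $r=0$ by hypothesis, and for $r\geq 1$ invoke Lemma~\ref{le:box-special} together with Proposition~\ref{prop:special left horn} to see that $f$ lifts against the resulting special anodyne map. Your final paragraph on why the lifting persists along the filtration, and your adjointness remark for the equivalent formulation, are a bit more explicit than the paper's version but add nothing new.
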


\begin{proof}
By Lemma~\ref{le:box-special} and
Proposition~\ref{prop:special left horn} we know that $f$ has the RLP
with respect to $(\{0\}\inc E^1)\bbox(\bdd{r}\inc \del{r})$ for any
$r>0$.  The assumptions on $f$ take care of the case $r=0$.  Since
every monomorphism is generated by the boundary inclusions
$\bdd{r}\inc \del{r}$, the result follows by an easy induction on
simplices.
\end{proof}

\begin{proof}[Proof of  Proposition~\ref{pr:quasi1}(a)]
This follows immediately from
Proposition~\ref{pr:special-fib-matching}, since $X\ra *$ has the RLP
with respect to $\{0\}\inc E^1$.  
\end{proof}

Our last task is to prove Proposition~\ref{pr:quasi0}.  This is 
taken care of by the following lemma, which we learned from 
Nichols-Barrer \cite{N}.

\begin{lemma}
\label{le:NB}
Let $h\taking X\to Y$ be an inner fibration and suppose that
$f\taking\Delta^1\to X$ is a quasi-isomorphism.
Then for every solid-arrow commutative diagram of
the form
$$\xymatrix{\Delta^1\ar[r]^f\ar[d]&X\ar[d]^h\\E^1\ar[r]\ar@{-->}[ur]^\tau&Y,}$$
there exists a dotted arrow making the diagram commute.
\end{lemma}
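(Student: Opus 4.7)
The plan is to construct $\tau$ by inductively defining it on the non-degenerate simplices of $E^1$, using the special outer horn lifting property (Proposition~\ref{prop:special left horn}) at each stage. Write $\alpha_n=[0,1,0,1,\ldots]$ and $\beta_n=[1,0,1,0,\ldots]$ for the two non-degenerate $n$-simplices of $E^1$.  Since the entries alternate, one computes $d_0\alpha_n=\beta_{n-1}$, $d_n\alpha_n=\alpha_{n-1}$, and $d_k\alpha_n=s_{k-1}\alpha_{n-2}$ for $1\leq k\leq n-1$ (symmetrically for $\beta_n$).

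Inductively construct maps $\sigma_n\colon\Delta^n\to X$ satisfying $h\circ\sigma_n=g\circ\alpha_n$: set $\sigma_1=f$, and for $n\geq 2$ assemble a horn map $\Lambda^n_0\to X$ whose $n$-th face is $\sigma_{n-1}$ and whose $k$-th face (for $1\leq k\leq n-1$) is $s_{k-1}\sigma_{n-2}$.  The simplicial identities ensure that these faces agree on their pairwise overlaps, so the horn map is well-defined.  Its edge $[0,1]$ is $f$, which is a quasi-isomorphism in $X$ by hypothesis, so this is a special left horn, and Proposition~\ref{prop:special left horn} provides the required lift $\sigma_n$ over $g\circ\alpha_n$.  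Define $\tau\colon E^1\to X$ on non-degenerate simplices by $\tau(\alpha_n):=\sigma_n$ and $\tau(\beta_{n-1}):=d_0\sigma_n$, and extend to all of $E^1$ by simpliciality.

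The main obstacle is verifying that $\tau$ is a well-defined simplicial map, i.e.\ that $d_k\tau(\rho)=\tau(d_k\rho)$ for every non-degenerate simplex $\rho$ of $E^1$.  For $\rho=\alpha_n$ these identities are built into the horn specification of $\sigma_n$.  For $\rho=\beta_{n-1}=d_0\alpha_n$, the identity $d_k d_0=d_0 d_{k+1}$ (for $k\geq 0$) reduces the problem to identifying $d_0 d_{k+1}\sigma_n$: when $k+1=n$ it equals $d_0\sigma_{n-1}=\tau(\beta_{n-2})$ (matching $\tau(d_{n-1}\beta_{n-1})=\tau(\beta_{n-2})$); when $1\leq k+1\leq n-1$ it equals $d_0 s_k\sigma_{n-2}=s_{k-1}d_0\sigma_{n-2}=s_{k-1}\tau(\beta_{n-3})$ by $d_0 s_k=s_{k-1}d_0$ (matching $\tau(s_{k-1}\beta_{n-3})$); and when $k=0$ it equals $d_0 s_0\sigma_{n-2}=\sigma_{n-2}=\tau(\alpha_{n-2})$ by $d_0 s_0=\id$ (matching $\tau(d_0\beta_{n-1})$).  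The bookkeeping is routine but somewhat elaborate.  Once simpliciality is established, $\tau|_{\Delta^1}=f$ and $h\circ\tau=g$ follow immediately from the construction of the $\sigma_n$.  (If one is concerned that Proposition~\ref{prop:special left horn} is stated for inner fibrations between quasi-categories, one may first reduce to that case by replacing $h$ with an inner fibration of quasi-categories into which it embeds.)
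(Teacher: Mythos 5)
Your proof is correct and follows essentially the same strategy as the paper's. Where you construct the individual lifts $\sigma_n\colon\Delta^n\to X$ and then verify by hand that they assemble into a simplicial map $\tau\colon E^1\to X$, the paper packages the identical induction as a filtration $Z_0\subseteq Z_1\subseteq\cdots$ of $E^1$ by the subcomplexes generated by the $\alpha_n$, observes that each $Z_n\inc Z_{n+1}$ is a cobase change of the special left horn $\Lambda^{n+1}_0\inc\Delta^{n+1}$, and thereby gets the gluing and simpliciality "for free" from the pushout description; this is just a tidier bookkeeping device for the same argument. (Your parenthetical concern about quasi-category hypotheses in Proposition~\ref{prop:special left horn} is one the paper's own proof also silently glosses over.)
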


\begin{proof}
Each simplex in $E^1$ is determined by its sequence of vertices.  
We can therefore denote any $a\in E^1_n$ by a sequence
$[a_0,a_1,\ldots,a_n]$ where each $a_i\in\{0,1\}$.  
Define a filtration $Z_0\subseteq Z_1\subseteq \cdots \subseteq E^1$
by letting $Z_0=[0]$, $Z_1=[01]$, $Z_2=[010]$, and so on.  The subset
$Z_n$ is the unique nondegenerate $n$-simplex whose vertex sequence starts with $0$.  Note that $E^1=\bigcup_{n\in\N}Z_n$.

We claim that for each $n\geq 1$ there is a pushout diagram
\[ \xymatrix{ \Lambda^{n+1}_0 \ar[r]^g\ar@{ >->}[d] & Z_n \ar[d] \\
      \Delta^{n+1} \ar[r] & Z_{n+1}.
}
\]
Here the map $g$ sends each simplex $[01\cdots \hat{i}\cdots n+1]$, for
$0<i\leq n+1$, to the simplex in $E^1$ specified by reducing all the
entries in the vertex sequence modulo $2$.  It is easy to check that
this description is compatible on the overlap between simplices,
therefore defines a map  $\Lambda^{n+1}_0\ra Z_n$, and that the
pushout is $Z_{n+1}$.  

Starting with a lifting square as in the statement of the lemma, one
inductively produces lifts $Z_n\ra X$ using special outer horn lifting
(Proposition~\ref{prop:special left horn}).  
Taking the colimit gives the resulting lift $E^1\ra Y$.
\end{proof}

\begin{proof}[Proof of Proposition~\ref{pr:quasi0}]
We have already proved (ii)\!\!$\iff$\!\!(iii) as part of
Proposition~\ref{pr:3iso-again}; and (i)$\Rightarrow$(ii) is obvious.
Finally, (ii)$\Rightarrow$(i) results
immediately from applying Lemma \ref{le:NB} to $X\ra *$.
\end{proof}

At this point we have proven all the assertions in
Section~\ref{se:quasi} except for the existence of the Joyal model category
structure.  We take up that in the next section.

%%%%%%%%%%%%%%%%%%%%%%%%%%%%%%%%%%%%%%%%%%%%%%%%%%%%%%%%%%%%%

%%%%%%%%%%%%%%%%%%%%%%%%%%%%%%%%%%%%%%%%%%%%%%%%%%%%%%%%%%%%%

\section{Development of the Joyal model structure}
\label{se:model}
In this section we prove the existence of the Joyal model structure.
The approach
we follow is entirely due to Joyal \cite{J2}, our only contribution
being to streamline the presentation so that it occupies only a few
pages.

\medskip

Say that a map $X\ra Y$ is a \dfn{special inner fibration} if it has the
right-lifting-property with respect to all inner horn inclusions as
well as with respect to the map $\{0\}\inc E^1$.  Note that such a map
also has the RLP with respect to $\{1\}\inc E^1$, using the evident
automorphism of $E^1$.   Also note that if $X$ is a quasi-category
then $X\ra *$ is a special inner fibration (using the retraction
$E^1\ra \{0\}$).  

\begin{lemma}
\label{le:special-inner1}
Let $X$ and $Y$ be quasi-categories.  
If $X\ra Y$ is a special inner fibration and $A\ra B$ is a
monomorphism, then $X^B\ra X^A\times_{Y^A} Y^B$ is also a special inner
fibration.
\end{lemma}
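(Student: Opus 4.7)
The plan is to verify the two lifting properties separately by adjointness, reducing each to a box-product lemma that has already been established.

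First I would handle the inner-horn part. By the standard two-variable adjunction, the map $X^B\to X^A\times_{Y^A}Y^B$ has the right-lifting-property with respect to an inner horn inclusion $\Lambda^n_k\inc\Delta^n$ (for $0<k<n$) if and only if $X\to Y$ has the right-lifting-property with respect to the box product $(\Lambda^n_k\inc\Delta^n)\bbox(A\inc B)$. By Proposition~\ref{pr:box-inner} this box product is inner anodyne, and since a special inner fibration is in particular an inner fibration, the required lifts exist.

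Next I would handle the outer lifting against $\{0\}\inc E^1$. By the same adjointness, $X^B\to X^A\times_{Y^A}Y^B$ has the right-lifting-property with respect to $\{0\}\inc E^1$ if and only if $X\to Y$ has the right-lifting-property with respect to $(\{0\}\inc E^1)\bbox(A\inc B)$. This is exactly the statement of Proposition~\ref{pr:special-fib-matching}, whose hypotheses are satisfied: $X$ and $Y$ are quasi-categories, $X\to Y$ is an inner fibration (because it is a special inner fibration), and $X\to Y$ has the right-lifting-property with respect to $\{0\}\inc E^1$ by hypothesis.

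No step looks like a real obstacle; the whole argument is a direct assembly of Proposition~\ref{pr:box-inner} and Proposition~\ref{pr:special-fib-matching} via adjointness. The only thing worth being careful about is making sure the hypotheses of Proposition~\ref{pr:special-fib-matching} transfer correctly — in particular, that the codomain of the map in question is also a quasi-category, which follows because the fibre product of quasi-categories along an inner fibration is again a quasi-category (the $X^A$, $Y^A$, and $Y^B$ appearing are all quasi-categories by Proposition~\ref{pr:matching-fibration}, and $Y^B\to Y^A$ is an inner fibration, making the fibre product a quasi-category).
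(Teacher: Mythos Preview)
Your proof is correct and follows essentially the same approach as the paper: invoke Proposition~\ref{pr:box-inner} (via adjointness) for the inner-fibration part, and Proposition~\ref{pr:special-fib-matching} for the lifting against $\{0\}\inc E^1$. One small remark: your final paragraph's worry is unnecessary, since Proposition~\ref{pr:special-fib-matching} is being applied to $X\to Y$ itself (whose domain and codomain are quasi-categories by hypothesis), not to the induced map $X^B\to X^A\times_{Y^A}Y^B$; there is no need to check that the pullback is a quasi-category.
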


\begin{proof}
We know by Proposition~\ref{pr:box-inner} that $X^B\ra X^A\times_{Y^A}
Y^B$ is an inner fibration.  Using
Proposition~\ref{pr:special-fib-matching} we also know it has the RLP
with respect to $\{0\}\inc E^1$.  
\end{proof}

\begin{lemma}
\label{le:special-inner2}
Let $f \colon X\ra Y$ be a special inner fibration between
quasi-categories, and assume that $f$ is also a Joyal equivalence.
Then $f$ has the RLP with respect to both $\emptyset \ra \Delta^0$ and
$\{0,1\}\inc E^1$.
\end{lemma}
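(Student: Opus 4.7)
The plan is to dispatch (i) quickly using the $E^1$-homotopy equivalence structure, and then reduce (ii) to a fiber-level lifting problem that is ultimately solved by inner-horn and special-outer-horn manipulations.

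For (i), Remark~\ref{re:E1-homotopy} gives that the Joyal equivalence $f$ between quasi-categories is an $E^1$-homotopy equivalence; in particular $[\ast,X]_{E^1}\to[\ast,Y]_{E^1}$ is a bijection. Given $y\in Y_0$, I would pick $x\in X_0$ together with a map $\alpha\colon E^1\to Y$ satisfying $\alpha(0)=f(x)$ and $\alpha(1)=y$ which witnesses the $E^1$-equivalence of $f(x)$ and $y$, and then lift $\alpha$ using the RLP for $\{0\}\inc E^1$ starting at $x$ to obtain $\tilde\alpha\colon E^1\to X$; the endpoint $\tilde\alpha(1)$ is a preimage of $y$.

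For (ii), suppose given $a,b\in X_0$ and $\gamma\colon E^1\to Y$ with boundary $(f(a),f(b))$. The first observation is that by Lemma~\ref{le:NB} it is enough to produce a quasi-isomorphism $\tau\colon\Delta^1\to X$ from $a$ to $b$ with $f\tau=\gamma|_{\Delta^1}$, since Lemma~\ref{le:NB} then extends $\tau$ to a map $E^1\to X$ lifting $\gamma$. To build $\tau$, I would first lift $\gamma$ using the RLP for $\{0\}\inc E^1$ starting at $a$, obtaining $\beta_0\colon E^1\to X$ with $\beta_0(0)=a$ and $\beta_0(1)=a'$, where $f(a')=f(b)$. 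The restriction $\beta_0|_{\Delta^1}$ is a quasi-iso from $a$ to $a'$ lifting $\gamma|_{\Delta^1}$. The crucial intermediate claim is that there exists a quasi-iso $e\colon\Delta^1\to X$ from $a'$ to $b$ with $f(e)=s_0(f(b))$ (i.e., $e$ lies in the fiber over $f(b)$). Given such an $e$, one obtains $\tau$ as the $d_1$-face of a lift of the inner-horn square whose $\Lambda^2_1\to X$ piece is $\beta_0|_{\Delta^1}\cup e$ and whose $\Delta^2\to Y$ piece is a $2$-simplex $\sigma$ chosen by Proposition~\ref{pr:tau_1} with $d_2\sigma=\gamma|_{\Delta^1}$, $d_1\sigma=\gamma|_{\Delta^1}$ and $d_0\sigma=s_0(f(b))$ (which exists since $s_0(f(b))\circ\gamma|_{\Delta^1}=\gamma|_{\Delta^1}$ in $\pi_0\jC(Y)$). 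The resulting $\tau$ is then a quasi-iso by Corollary~\ref{co:3iso} and satisfies $f\tau=\gamma|_{\Delta^1}$ by construction.

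The main obstacle is producing the fiber-level quasi-iso $e$. Since $f$ induces a bijection on $[\ast,-]_{E^1}$, Proposition~\ref{pr:isoclasses} gives $a'\cong b$ in $\pi_0\jC(X)$, so Corollary~\ref{co:3iso} produces a quasi-iso $e_0\colon a'\to b$ in $X$; its $f$-image $\zeta=f(e_0)$ is a quasi-iso loop at $f(b)$, but not necessarily degenerate. To adjust $e_0$ to lie over $s_0(f(b))$, I would realize the inverse $\zeta^{-1}$ in $\pi_0\jC(Y)$ as an actual $1$-simplex $\alpha$ in $Y$ (Corollary~\ref{co:3iso}), use Proposition~\ref{pr:tau_1} to build a $2$-simplex in $Y$ witnessing $\alpha\circ\zeta=s_0(f(b))$, extend $\alpha$ to a map $E^1\to Y$ and lift it through $f$ via the RLP for $\{0\}\inc E^1$ to get a suitable $[12]$-face, and finally lift the $Y$-$2$-simplex along the inner-fibration $f$ from the inner horn $\Lambda^2_1\to X$ whose $[01]$-face is $e_0$. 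The subtle point is that the target vertex of the resulting edge in the fiber may not literally equal $b$; coordinating the choice of lifts so that it does—equivalently, showing that every two vertices of $F_{f(b)}=f^{-1}(f(b))$ are isomorphic in $\pi_0\jC(F_{f(b)})$—is the principal technical difficulty, and this is where the full strength of the $E^1$-homotopy equivalence from Remark~\ref{re:E1-homotopy} must be exploited, together with Proposition~\ref{prop:special left horn} to lift the required quasi-isomorphisms through $f$ with prescribed endpoints.
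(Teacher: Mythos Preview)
Your argument for the RLP with respect to $\emptyset\to\Delta^0$ is correct.

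For the RLP with respect to $\{0,1\}\inc E^1$, however, the proof is incomplete: you correctly identify that the crux is producing a quasi-isomorphism $e\colon a'\to b$ lying in the fiber $F_{f(b)}$, but you do not actually construct it. Your attempted construction in the final paragraph lifts an edge representing $\zeta^{-1}$ through $f$ starting at $b$ via the RLP for $\{0\}\inc E^1$, and then fills an inner $\Lambda^2_1$-horn; but as you yourself note, the resulting $[02]$-edge ends at some $b''$ with $f(b'')=f(b)$, not at $b$. You have traded the problem of connecting $a'$ to $b$ in the fiber for the problem of connecting $a'$ to $b''$, and nothing in your argument breaks this cycle. The appeal to Remark~\ref{re:E1-homotopy} and Proposition~\ref{prop:special left horn} does not close the gap: the $E^1$-homotopy equivalence only controls vertices up to $E^1$-homotopy in $X$ (not in the fiber), and special outer horn lifting does not let you prescribe \emph{both} endpoints of a lifted edge.

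The paper avoids this difficulty entirely with a single clean move: pass to the maximal sub-Kan-complexes $J(X)\subseteq X$ and $J(Y)\subseteq Y$ consisting of simplices all of whose $1$-faces are quasi-isomorphisms. The restricted map $J(X)\to J(Y)$ is a Kan fibration, since inner horns lift because $f$ is an inner fibration (the lift lands in $J(X)$ by two-out-of-three for quasi-isomorphisms), and outer horns lift because every outer horn in $J(Y)$ is automatically special, so Proposition~\ref{prop:special left horn} applies. Because $J(E^1)=E^1$ and $J$ preserves products, $J(X)\to J(Y)$ inherits the $E^1$-homotopy equivalence property from $f$, hence is a Kan equivalence and therefore a Kan acyclic fibration. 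Both lifting problems $\emptyset\to\Delta^0$ and $\{0,1\}\inc E^1$ against $f$ factor through $J(X)\to J(Y)$, so they have solutions. This bypasses all the fiber-level bookkeeping you are attempting.
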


\begin{proof}
%The RLP with respect to $\emptyset\ra \Delta^0$ is easy.  First note
%that $f$ is an $E^1$-homotopy equivalence, by
%Remark~\ref{re:E1-homotopy}.  Therefore if $y$ is a
%$0$-simplex in $Y$, there exists a $0$-simplex $x\in X$ and a map
%$E^1\ra Y$ sending $0\mapsto f(x)$ and $1\mapsto y$.  Using that $f$
%has the RLP with respect to $\{0\}\inc E^1$, we get a map $E^1\ra X$
%where the image of $1$ lifts the original $0$-simplex $y$.

Recall from Section~\ref{se:quasi} the subcomplexes $J(X)\subseteq X$ and $J(Y)\subseteq Y$
consisting of all simplices whose $1$-faces are quasi-isomorphisms.
Note in particular that $X$ and $J(X)$ have the same $0$-simplices.

It is easy to see that since $X\ra Y$ is a special inner fibration,
$J(X)\ra J(Y)$ is a Kan fibration (lifting with respect to inner horns
is easy, and for outer horns follows from
Proposition~\ref{prop:special left horn}).
Moreover, $X\ra Y$ is an
$E^1$-homotopy equivalence by Remark~\ref{re:E1-homotopy}.  Since
$J(X\times E^1)\iso J(X)\times J(E^1)=J(X)\times E^1$, it follows that
$J(X)\ra J(Y)$ is an $E^1$-homotopy equivalence as well.  This implies
that it is a Kan equivalence, therefore it is a Kan acyclic
fibration.  

But now observe that any lifting square
\[ \xymatrix{
\{0,1\} \ar[r]\ar@{ >->}[d] & X \ar[d]^f \\
E^1 \ar[r]_h & Y
}
\]
necessarily factors through $J(X)\ra J(Y)$, and therefore has a lift.
The same is true for lifting squares with $\emptyset\ra \Delta^0$.  
This completes the proof.
\end{proof}

We need one more lemma:

\begin{lemma}
\label{le:model-key}
Let $X$ and $Y$ be quasi-categories.  
\begin{enumerate}[(a)]
\item If $X\ra Y$ is a special inner fibration and a Joyal equivalence, then
it is a Kan acyclic fibration.
\item If $A\ra B$ is a monomorphism and a Joyal equivalence, then
$X^B\ra X^A$ is a Kan acyclic fibration.
\end{enumerate}
\end{lemma}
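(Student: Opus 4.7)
The plan is to establish (a) first and then deduce (b) as a corollary.

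For (a), I would induct on $n\geq 0$, showing that $f\taking X\to Y$ has the right lifting property against $\bdd{n}\inc\del{n}$. The base case $n=0$ follows directly from Lemma~\ref{le:special-inner2}. For the inductive step, the lifting problem is adjoint to showing that the matching map
\[
M_n\taking X^{\del{n}}\to X^{\bdd{n}}\times_{Y^{\bdd{n}}} Y^{\del{n}}
\]
has the right lifting property against $\emptyset\to\Delta^0$. By Lemma~\ref{le:special-inner1} applied to the monomorphism $\bdd{n}\inc\del{n}$, $M_n$ is a special inner fibration, and all four corners of the defining pullback are quasi-categories by Proposition~\ref{pr:matching-fibration}. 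Thus it will suffice to show $M_n$ is also a Joyal equivalence, after which Lemma~\ref{le:special-inner2} completes the argument.

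Establishing that $M_n$ is a Joyal equivalence is the main obstacle. Since $f$ is a Joyal equivalence between quasi-categories, by Remark~\ref{re:E1-homotopy} it admits an $E^1$-homotopy inverse $g\taking Y\to X$ with homotopies $H_2\taking X\cross E^1\to X$ witnessing $gf\simeq\id_X$ and $H_1\taking Y\cross E^1\to Y$ witnessing $fg\simeq\id_Y$. The plan is to use these to construct an $E^1$-homotopy inverse $N_n$ for $M_n$. Given a pair $(x_0,y)$ in the pullback with $x_0\taking\bdd{n}\to X$, $y\taking\del{n}\to Y$, and $f\circ x_0=y|_{\bdd{n}}$, I would start from $g\circ y\taking\del{n}\to X$; this agrees with $x_0$ on $\bdd{n}$ only up to the boundary homotopy $H_2\circ(x_0\cross\id_{E^1})$. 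To extend this boundary homotopy to one on all of $\del{n}\cross E^1$, with specified starting face $g\circ y$ on $\del{n}\cross\{0\}$, and then evaluate at the endpoint, one uses that the box product $(\{0\}\inc E^1)\bbox(\bdd{n}\inc\del{n})$ is special anodyne by Lemma~\ref{le:box-special}, and that $X$ has the RLP against special anodyne maps by its quasi-category property together with Joyal's special outer horn lifting (Proposition~\ref{pr:quasi2}). Parallel arguments using $H_1$ together with the special inner fibration property of $f$ produce homotopies witnessing $M_n\circ N_n\simeq\id$ and $N_n\circ M_n\simeq\id$, so that $M_n$ is an $E^1$-homotopy equivalence, hence a Joyal equivalence by Remark~\ref{re:E1-homotopy}.

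For (b), the plan is to verify that $X^B\to X^A$ satisfies the hypotheses of (a). Both $X^B$ and $X^A$ are quasi-categories by Proposition~\ref{pr:matching-fibration}. The map is an inner fibration: by adjunction, the RLP against an inner horn inclusion reduces to $X$ having the RLP against $(A\inc B)\bbox(\Lambda^n_k\inc\Delta^n)$, which is inner anodyne by Proposition~\ref{pr:box-inner}. It has the RLP against $\{0\}\inc E^1$ directly from Proposition~\ref{pr:quasi1}(a), so it is a special inner fibration. Finally, for any quasi-category $K$ the induced map $[K,X^B]_{E^1}\to[K,X^A]_{E^1}$ can be rewritten via the adjunction $(-)\cross K\dashv(-)^K$ as $[B,X^K]_{E^1}\to[A,X^K]_{E^1}$, which is a bijection because $X^K$ is a quasi-category (Proposition~\ref{pr:matching-fibration}) and $A\inc B$ is a Joyal equivalence. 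Hence $X^B\to X^A$ is also a Joyal equivalence, and applying part (a) to it finishes the proof.
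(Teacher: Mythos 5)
Your part~(b) is essentially the paper's own argument: verify $X^B\to X^A$ is a special inner fibration and deduce the Joyal equivalence by rewriting $[S,X^B]_{E^1}\to [S,X^A]_{E^1}$ via the $(-)\times S\dashv (-)^S$ adjunction.  The substantive issue is part~(a).

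Your plan for (a) is to prove that each matching map $M_n\colon X^{\del{n}}\to X^{\bdd{n}}\times_{Y^{\bdd{n}}}Y^{\del{n}}$ is a Joyal equivalence (via an $E^1$-homotopy inverse) and then invoke Lemma~\ref{le:special-inner2}.  The gap is in the homotopy-inverse construction.  You build $N_n(x_0,y)$ as the endpoint of a lift of the homotopy $H_2\circ(x_0\times\id)$ starting at $g\circ y$; this is fine and gives an object agreeing with $x_0$ on $\bdd{n}$.  But to produce the $E^1$-homotopy $M_n\circ N_n\simeq\id_P$ you must exhibit, \emph{relative to the boundary}, a homotopy $f\tilde x\rightsquigarrow y$ in $Y^{\del{n}}$ that restricts over $\bdd{n}$ to the \emph{constant} homotopy at $f\circ x_0$.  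The natural candidate is the concatenation of $(f H_2(x_0,-))^{-1}$ and $H_1(f x_0,-)$, and for this to be nullhomotopic rel endpoints (hence removable) one needs the coherence $f H_2\simeq H_1 f$ rel endpoints.  That coherence does \emph{not} hold for an arbitrary $E^1$-homotopy inverse $g$; it is exactly the kind of ``improvement'' of a homotopy equivalence that costs real work.  There is also a secondary issue you did not address: the lift used to define $N_n$ involves a choice, and you need to make those choices simplicially/functorially to get a genuine map of simplicial sets $N_n\colon P\to X^{\del{n}}$.  In fact, ``$M_n$ is a Joyal equivalence for all $n$'' is essentially equivalent to the conclusion of (a) itself (an acyclic special inner fibration has the relevant box products as Kan acyclic fibrations), so the proposed route does not actually reduce the problem.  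Note also the stated base case is too weak: Lemma~\ref{le:special-inner2} only covers $\{0,1\}\inc E^1$, not $\bdd{1}\inc\del{1}$, so already $n=1$ requires the full argument.

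The paper sidesteps all of this by proving the weaker statement one actually needs---surjectivity of $M_n$ on $0$-simplices---by a direct three-step zig-zag: (i) lift $w_1\in Y^B_0$ to $x\in X^B_0$ using surjectivity of the acyclic special inner fibration $X^B\to Y^B$ (Lemma~\ref{le:special-inner2}); (ii) connect the resulting boundary restriction $x_3\in X^A_0$ to the target boundary $w_3$ by a path $E^1\to X^A$ lying over the constant at $w_2$, again by Lemma~\ref{le:special-inner2} applied to $X^A\to Y^A$; and (iii) lift this $E^1$-path along the special inner fibration $X^B\to P$ starting at $x$, whose other endpoint is the desired preimage of $w$.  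This keeps all the homotopies controlled and avoids any coherence claim.  I'd suggest restructuring your argument along these lines, or else supplying an honest (and necessarily longer) proof that the $E^1$-homotopy inverse to $f$ can be chosen so that $H_1 f$ and $f H_2$ agree.
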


\begin{proof}
For (a), we will show that $X\ra Y$ has the RLP with respect to any
monomorphism $A\inc B$.  This is equivalent to proving that $X^B\ra
[X^A\times_{Y^A} Y^B]$ is surjective on $0$-simplices.  Consider the diagram
\[ \xymatrix{
X^B\ar[dr]\ar[drr]\ar[ddr] \\
& P \ar[r]\ar[d] & Y^B \ar[d] \\
& X^A \ar[r] & Y^A,
}
\] 
where $P$ is the pullback,
and suppose $w$ is a $0$-simplex of $P$.  Write $w_1$, $w_2$, and
$w_3$ for the images of $w$ in $Y^B$, $Y^A$, and $X^A$, respectively.

By Remark~\ref{re:E1-homotopy}, $X\ra Y$ is an $E^1$-homotopy equivalence, from
which it follows at once that $X^B\ra Y^B$ is also an $E^1$-homotopy
equivalence.  Hence $X^B\ra Y^B$ is a Joyal equivalence, and it is a
special inner fibration by Lemma~\ref{le:special-inner1}.
By Lemma~\ref{le:special-inner2} it is therefore
surjective, so there is a $0$-simplex $x\in X^B$ whose image is
$w_1$.  Let $x_3$ denote the image of $x$ in $X^A$.  Then $x_3$ and
$w_3$ have the same image in $Y^A$, so we have a square
\[ \xymatrix{
\{0,1\} \ar[r]^{x_3\amalg w_3}\ar@{ >->}[d] & X^A \ar[d] \\
E^1 \ar[r]_{w_2} & Y^A
}
\]
where the bottom map collapses everything to $w_2$.  But
$X^A\ra Y^A$ is a special inner fibration and a Joyal
equivalence (by the same arguments used for $X^B\ra Y^B$), so
Lemma~\ref{le:special-inner2} shows that the above square has a
lifting $\lambda\colon E^1\ra X^A$.  Consider the induced map $\tilde{\lambda}\taking E^1\ra
P$ which projects to $w_1$ in $Y^B$, $w_2$ in $Y^A$, and $\lambda$ in
$X^A$.  

At this point we have a new square
\[ \xymatrix{
\{0\} \ar[r]^{x}\ar@{ >->}[d] & X^B \ar[d] \\
E^1 \ar[r]_{\tilde{\lambda}} & P,
}
\]
which has a lift because Lemma~\ref{le:special-inner1} tells us that 
$X^B\ra P$ is a special inner fibration.  The
image of the vertex $1$ is the desired preimage of the original $0$-simplex $w$.

To prove (b), first note that $X^B\ra X^A$ is a special inner fibration
between quasi-categories.  Using (a), it suffices to show that this map
is also a Joyal equivalence.  Let $S$ be any simplicial set.  Then
$[B,X^S]_{E^1} \ra [A,X^S]_{E^1}$ is a bijection for every
quasi-category $X$.  But $[B,X^S]_{E^1}=[B\times
S,X]_{E_1}=[S,X^B]_{E^1}$, and similarly with $B$ replaced by $A$.
Hence
\[ [S,X^B]_{E^1} \ra [S,X^A]_{E^1} \]
is a bijection for every simplicial set $S$.  Taking $S=X^A$ and
$S=X^B$ one sees at once that $X^B\ra X^A$ is an $E^1$-homotopy
equivalence, hence it is a Joyal equivalence by Remark~\ref{re:E1-homotopy}.
\end{proof}

We now prove the existence of the Joyal model structure:

\begin{proof}[Proof of Theorem~\ref{th:model}]
We use a result of Jeff Smith's about producing model structures on
locally presentable categories, written up by Beke.  By
\cite[Theorem 1.7 and  Proposition 1.15]{Be} we are guaranteed the existence of a cofibrantly-generated model
structure on $\sSet$ where the cofibrations are the monomorphisms and
the weak equivalences are the Joyal equivalences if we can verify the
following:
\begin{enumerate}[(1)]
\item The Joyal equivalences are closed under retracts and satisfy the
two-out-of-three property;
\item Every Kan acyclic fibration is a Joyal equivalence;
\item The class of cofibrations which are Joyal equivalences is closed
under pushouts and transfinite composition;
\item The class of Joyal equivalences is an accessible class of maps,
in the sense of \cite[Definition 1.14]{Be}.
\end{enumerate}

Point (1) is obvious.   For (2), if $f\colon X\ra Y$ is a Kan acyclic
fibration then there is a map $\chi\colon Y\ra X$ such that
$f\chi=\id$.  Then the two maps $f\chi f$ and $f$ are equal, hence
they are $E^1$-homotopic (trivially).  So we have a square
\[ \xymatrix@=.37in{
X\amalg X \ar[r]^-{\id \amalg (\chi f)}\ar@{ >->}[d] & X  \ar@{->>}[d]^\sim \\
X\times E^1 \ar[r] & Y
}
\]
and this square must have a lifting.  This shows that $f$ is an
$E^1$-homotopy equivalence, hence it is a Joyal equivalence by
Remark~\ref{re:E1-homotopy}.  

We prove (3) by showing that a monomorphism $f\colon A\inc B$ is a
Joyal equivalence if and only if it has the left-lifting-property with
respect to special inner fibrations between quasi-categories.  Since
monomorphisms and maps with a left-lifting-property are closed under
pushouts and transfinite compositions, this will be enough.

First suppose that $f$ is a monomorphism with the indicated
left-lifting-property.  Then, in particular, it has this property with
respect to the maps $X\ra *$ and $X^{E^1}\ra X^{\{0,1\}}$ for every
quasi-category $X$ (using Lemma~\ref{le:special-inner1}).  It follows
immediately that $f$ is a Joyal equivalence.

Now suppose that $f$ is a monomorphism and a Joyal equivalence.  For
any special inner fibration $X\ra Y$ between quasi-categories, consider the
diagram
\[ \xymatrix{
X^B\ar[dr]\ar[drr]\ar[ddr] \\
& P \ar[r]\ar[d] & Y^B \ar[d] \\
& X^A \ar[r] & Y^A,
}
\]
where $P=X^A\times_{Y^A} Y^B$.  The map $X^B\ra P$ is a special inner
fibration by Lemma~\ref{le:special-inner1}.    
 By
Lemma~\ref{le:model-key}(b), the maps $Y^B\ra Y^A$ and $X^B\ra X^A$
are Kan acyclic fibrations. 
Hence the pullback $P\ra X^A$ is also a
Kan acyclic fibration.  But then $P\ra X^A$ and $X^B\ra X^A$ are both
Joyal equivalences, hence so is $X^B\ra P$.  By 
Lemma~\ref{le:model-key}(a)  we have that $X^B\ra P$ is a trival Kan
fibration, hence it is surjective.  But this precisely says that $A\ra
B$ has the left-lifting-property with respect to $X\ra Y$.
This now completes the proof of (3).

Finally, for point (4) we argue as follows.  Let $S$ be the set
consisting of all inner horn inclusions together with the map
$\{0\}\inc E^1$. Consider the functorial factorizations
\[ [X\llra{f} Y] \mapsto [X \llra{i_f} P_f \llra{p_f} Y]
\]
provided by the small object argument.  Here
 $X\ra P_f$ is a transfinite composition of pushouts of
coproducts of the
maps in $S$, and $P_f\ra Y$ has the right-lifting-property with
respect to maps in $S$.  It is an observation of Smith's (and a
straightforward exercise) that these
factorization functors preserve $\lambda$-filtered colimits for large
enough regular cardinals $\lambda$.  Also note that $X\ra P_f$ is a
Joyal equivalence, using point (3) together with the fact that the
maps in $S$ are Joyal equivalences and cofibrations.

For brevity write $X\to L_S(X)$ for the factorization applied to
$X\ra *$.  For $f\colon X\ra Y$, let $R(f)$ be the map indicated in
the following diagram:
\[ \xymatrix{ X \ar[rr]^f\ar@{ >->}[d]^\sim && Y\ar@{ >->}[d]^{\sim} \\
 L_S(X) \ar[dr]_{\sim}\ar[rr]^{L_Sf} && L_S(Y) \\
  &P_{L_Sf}.\ar[ur]_{R(f)=p_{L_S(f)}}
}
\]
Note that $R(f)$ is a special fibration between quasi-categories, and that  
the indicated maps are Joyal equivalences.
It
is easy to see that $f$ is a Joyal equivalence if and only if $R(f)$
is a Joyal equivalence; by Lemma~\ref{le:model-key}(a) together
with point (2), the latter is true if and only
if $R(f)$ is a Kan acyclic fibration.  So the class of Joyal
equivalences is $R^{-1}(\cT)$, where $\cT$ is the class of Kan acyclic
fibrations.  But $R$ is an accessible functor because it preserves
large enough filtered colimits, and $\cT$ is an accessible class by
Lemma~\ref{le:accessible} below. So
by \cite[Prop. 1.18]{Be} 
the class of Joyal equivalences is also accessible.

This completes the proof of the existence of a model structure where
the cofibrations are the monomorphisms and the weak equivalences are
the Joyal equivalences.  To characterize the fibrant objects, note
first that it follows at once from
Proposition~\ref{pr:first-properties}(c) 
that every fibrant object will be a quasi-category.
Let $X$ be a quasi-category, and let $X\cof
\hat{X}$ be a fibrant-replacement in our model structure.  By
Proposition~\ref{pr:first-properties}(b), there is a lifting
\[ \xymatrix{
X \ar@{=}[r] \ar@{ >->}[d] & X \\
\hat{X} \ar@{.>}[ur]
}
\]
But then $X$ is a retract of the fibrant object $\hat{X}$, hence $X$
is fibrant.

For uniqueness of the model structure, see the explanation in
\cite[Section 2.3]{DS1}.
\end{proof}

The following lemma was used in the above proof:

\begin{lemma}
\label{le:accessible}
The class of Kan acyclic fibrations in $\sSet$ is an accessible class
of maps in the sense of \cite[Definition 1.14]{Be}.
\end{lemma}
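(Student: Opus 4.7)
The plan is to identify the class of Kan acyclic fibrations as $I$-inj for the set $I = \{\bdd{n} \inc \del{n} : n\geq 0\}$, and then appeal to a standard accessibility fact for such classes. The identification itself is classical: a map is a Kan acyclic fibration if and only if it has the right-lifting-property with respect to every boundary inclusion $\bdd{n}\inc \del{n}$. This is a \emph{set}, not a proper class, of maps in the locally presentable category $\sSet$, which is the key finiteness input.

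Next I would invoke the general fact (see \cite[Proposition 1.18]{Be}, or the discussion in Adámek--Rosický on accessibly embedded subcategories) that for any set $I$ of morphisms in a locally presentable category $\cC$, the class $I$-inj of maps in $\cC$ with the right-lifting-property against $I$ is accessible. The underlying reason is that for a fixed map $i\colon A\ra B$, the condition that $f\colon X\ra Y$ lie in $\{i\}$-inj is equivalent to the surjectivity of the canonical map of sets
\[ \sSet(B,X) \lra \sSet(A,X)\times_{\sSet(A,Y)}\sSet(B,Y), \]
and the formation of this map is accessible in $f$ (it commutes with sufficiently filtered colimits since $A$ and $B$ are finitely presentable). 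The class of surjective maps of sets is certainly accessible, so the preimage under this accessible functor is an accessible class.

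Finally I would note that $I$-inj is the intersection, over $i\in I$, of the classes $\{i\}$-inj; since $I$ is a set and a set-indexed intersection of accessible classes is accessible (choose a common regular cardinal $\lambda$ for which all the individual classes are $\lambda$-accessible, which exists because $I$ is small), we conclude that $I$-inj, and hence the class of Kan acyclic fibrations, is accessible. The only potential obstacle is lining up precisely which form of Beke's definition one is using and the corresponding citation; this is a bookkeeping matter rather than a mathematical one, since the statement is a well-known consequence of the small object argument in the locally presentable setting.
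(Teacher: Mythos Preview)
Your proposal is correct and is essentially the same argument as the paper's: both express the right-lifting-property against $\bdd{n}\inc\del{n}$ as surjectivity of the map of sets $\sSet(\del{n},X)\ra\sSet(\bdd{n},X)\times_{\sSet(\bdd{n},Y)}\sSet(\del{n},Y)$, observe this is a finitary (hence accessible) functor of $f$, and invoke \cite[Proposition 1.18]{Be}. The only organizational difference is that the paper bundles all $n$ together into a single functor $G$ via a coproduct (so that one application of \cite[1.18]{Be} suffices), whereas you treat each $n$ separately and then intersect; the paper's packaging avoids having to justify the intersection step, but the content is the same.
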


\begin{proof}
It is easy to check that the category of surjections in $\Set$ is an
accessible class.  Consider the functor $G\colon \Mor(\sSet) \ra
\Mor(\Set)$ which sends a map $f\colon X\ra Y$ to the map
\[  \coprod_{n\geq 0} \, \bigl [X^{\del{n}}  \ra
X^{\bdd{n}}\times_{Y^{\bdd{n}}} Y^{\del{n}} \bigr ]
\]
restricted to the $0$-simplices.  Note that $G$ preserves filtered
colimits, because the functors $(\blank)^{\del{n}}$ and
$(\blank)^{\bdd{n}}$ do, and hence $G$ is an accessible functor.  The
class of Kan acyclic fibrations is exactly the inverse image under $G$ of
the surjections; hence by
\cite[Proposition 1.18]{Be} the class of Kan acyclic fibrations is an
accessible class of maps.
\end{proof}

%%%%%%%%%%%%%%%%%%%%%%%%%%%%%%%%%%%%%%%%%%%%%%%%%%%%%%%%%%%%%%%%%%%%%%%%%%%%%%%%%%%%%%%

\bibliographystyle{amsalpha}

\end{document}